\documentclass[a4paper,18pt]{article}
\oddsidemargin 0.1in \evensidemargin 1.0in \textwidth 6.0in
\headheight 0.01in \topmargin 0.01in \textheight 9.0in
\usepackage{amsmath}
\usepackage{indentfirst}
\usepackage[english]{babel}
\usepackage{graphics}
\usepackage{tabularx}
\usepackage{mathtools}
\usepackage{float}
\usepackage{amssymb}
\usepackage{multirow}
\usepackage{caption}
\usepackage{subcaption}
\usepackage{fancyhdr}
\usepackage{dsfont}
\usepackage{float}
\usepackage{amsfonts}
\usepackage{amsthm}
\usepackage[dvips]{graphicx}
\usepackage[latin1]{inputenc}
\usepackage{enumerate}
\usepackage{amsxtra} 
\usepackage{amstext}
\usepackage{amssymb} 
\usepackage{latexsym}
\usepackage[pdftex,colorlinks]{hyperref}
\usepackage[font=small,labelfont=bf]{caption}   
\newcommand{\N}{\mathbb N} 

\newcommand{\LL}{\mathcal L}
\newcommand{\GL}{\textrm{GL}}
\newcommand{\SL}{\textrm{SL}}
\newcommand{\PV}{\textrm{P$(V)$}}
\newcommand{\PS}{\textrm{P$(V^*)$}} 
\newcommand{\C}{\mathbb C}

\newcommand{\R}{\mathbb R}
\newcommand{\E}{\mathbb E}
\newcommand{\kk}{\mathrm{k}}
\newcommand{\Q}{\mathbb Q}
\newcommand{\p}{\mathbb P}
 \newtheorem{prop}{Proposition}[section]
\newtheorem{lemme}[prop]{Lemma}
\newtheorem{propdef}[prop]{Proposition/Definition}
\newtheorem{corollaire}[prop]{Corollary}
\newtheorem{theo}[prop]{Theorem}

\newtheorem{remarque}[prop]{Remark}

\setcounter{tocdepth}{2}

\newcommand{\Z}{\mathbb{Z}}

\setcounter{tocdepth}{2}
\providecommand{\keywords}[1]{\textbf{\textit{Keywords: \,}} #1}
\title{{Random matrix products when the top Lyapunov exponent is simple}}
\author{\Large{Aoun Richard}\footnote{
  Richard~Aoun, \textsc{American University of Beirut, Department of Mathematics,  
Faculty of Arts and Sciences, 
 P.O. Box 11-0236 
Riad El Solh,
Beirut 1107 2020, 
LEBANON} 
  \textit{E-mail address}:  \texttt{ra279@aub.edu.lb}
} \,and\, \Large{Guivarc'h Yves}  \footnote{Yves~Guivarc'h, \textsc{UFR math\'{e}matiques 
Universit\'{e} de Rennes 1 
Beaulieu - B\^{a}timent 22 et 23 
263 avenue du G\'{e}n\'{e}ral Leclerc 
35042 Rennes, FRANCE  }   \textit{E-mail address}:  \texttt{yves.guivarch@univ-rennes1.fr}
}}
\date{}
    \begin{document}
    \setlength{\footnotesep}{\baselineskip}

 \maketitle

 \abstract  

    In the present paper, we treat random matrix products on the general linear group $\GL(V)$, where $V$ is a vector space defined on any local field,   when the top Lyapunov exponent is simple, without irreducibility assumption. In particular, we show the existence and uniqueness of the stationary measure $\nu$ on $\PV$ that is relative to the top Lyapunov exponent and we describe the projective subspace generated by its support. We observe that   the dynamics takes place in a open set of $\PV$ which has the structure of a    skew product space. Then,  we relate this support to the limit set of the semigroup $T_{\mu}$ of  $\GL(V)$ generated by the random walk. Moreover, we show that $\nu$ has H\"older regularity and give some limit theorems concerning the behavior of the random walk and the probability of hitting a hyperplane. These results generalize known ones  when  $T_{\mu}$ acts strongly irreducibly and proximally (i-p to abbreviate) on $V$.  In particular, when applied to the affine group  in the so-called contracting case or more generally when the Zariski closure of $T_{\mu}$ is not necessarily reductive, the H\"older regularity of the stationary measure together with the description of the limit set are new. We mention that we don't use results from the i-p setting; rather we see it as a particular case. 
 
\vspace{0.5cm}\noindent \keywords{Random matrix products, Stationary measures, Lyapunov exponents, Limit sets, Large deviations}

\vspace{0.5cm}\noindent \textbf{\emph{MSC2010}}:   37H15, 60B15, 20P05

\tableofcontents
  \newcommand{\Addresses}{{ 
  \bigskip

  Richard~Aoun, \textsc{American University of Beirut, Department of Mathematics,  
Faculty of Arts and Sciences, 
 P.O. Box 11-0236 
Riad El Solh,
Beirut 1107 2020, 
LEBANON}\par\nopagebreak
  \textit{E-mail address}:  \texttt{ra279@aub.edu.lb}

  \medskip

  Yves~Guivarc'h, \textsc{UFR math\'{e}matiques 
Universit\'{e} de Rennes 1 
Beaulieu - B\^{a}timent 22 et 23 
263 avenue du G\'{e}n\'{e}ral Leclerc 
35042 Rennes, FRANCE  }\par\nopagebreak
  \textit{E-mail address}:  \texttt{yves.guivarch@univ-rennes1.fr}
 
}}

\maketitle

\section{Introduction}
 Let $V$ be a finite dimensional vector space over a local field $\mathrm{k}$ and $\mu$ a probability measure on the general linear group $\GL(V)$.   Random Matrix Products Theory studies the behavior of a random walk on $\GL(V)$ whose increments are taken independently with respect to $\mu$. This theory is well-developed when the sub-semigroup $T_{\mu}$ generated by the support of $\mu$   is strongly irreducible (algebraic assumption) and contains a proximal element (dynamical assumption) \cite{furstenberg}, \cite{bougerol}, \cite{guivarch-raugi}, \cite{BQbook}.  The latter framework, which will be abbreviated by i-p,  had shown to be  a  powerful tool for   understanding     the actions of reductive algebraic groups \cite{Guivarch3}, \cite{BQstationaire}, \cite{aoun-free}, \cite{Breuillard-super-strong}...  One reason is that a great information  on the structure of a reductive algebraic group is encoded in its  irreducible and proximal representations. This setting had also proved its efficiency in the solution to some fundamental problems involving stochastic recursions \cite{kesten1}, \cite{guivarch-page-affine}. \\

 In this article, we extend this theory from the i-p setting to a more general and natural framework. More precisely, we consider a     probability measure $\mu$ on $\GL(V)$ and assume only that its first Lyapunov exponent is simple; in some sense we keep the dynamical condition and assume no     algebraic condition  on the support of $\mu$.  Recall that by a fundamental theorem of Guivarc'h-Raugi \cite{guivarch-raugi}, our setting includes the i-p setting.  But it also includes new settings as  random walks  on the affine group in the called contracting case or more generally  any probability measure on a subgroup $G$ of $\GL(V)$ that may fix    some proper subspace $L$ of $V$ provided the action on $L$ is    less expanding than that on the quotient $V/L$.\\

 Our goal is then to obtain  limit theorems concerning the random walk and the existence, uniqueness and regularity of stationary probability measure on the projective space of $V$.  Our results give also new information  about the limit sets of some non irreducible linear groups. In our proofs we don't use results from the i-p setting but rather see it as a particular case where our assumption concerning the Lyapunov exponent is satisfied. When applied to a probability measure on the affine group in the contracting case, the regularity of the stationary probability measure as well as the description of its support using the limit set of $T_{\mu}$ are new.  
More generally, we show that the dynamics takes place on an open subset of $\PV$ which has essentially the structure of a skew product     space with   basis   a projective space and  fiber  an  affine space.
  We believe that   this generalization  can be useful to treat random walks on non necessarily reductive algebraic groups just as the i-p setting has proved its efficiency.

Here is the structure of the article.
\begin{itemize}
\item  In Section 2 we state formally our results. We note that Section \ref{guiding} shows the geometry behind our results and gives main examples that can be guiding ones through our paper. 
\item Section 3 consists of some preliminary results concerning orthogonality in non-Archimedean local fields and some results on Lyapunov exponents. 

\item  In Section 4, we show the existence and uniqueness of the stationary measure on the projective space whose cocycle average is the top Lyapunov exponent (Theorem \ref{existence_uncite} stated in Section 2). In addition, we describe the projective subspace generated by its support and show that it is not degenerate on it.   \\ 
  The existence appeals to   Oseledets theorem. The uniqueness is explicit: 
  we show in Proposition \ref{propunicite}  that when $\lambda_1>\lambda_2$, every limit point of the  right random walk 
   $(R_n)_{n\in \N^*}$ suitable normalized  is almost surely of rank    one, and the projection of its image   in $\PV$ is a random variable of  law $\nu$.

\item In Section 5, we   make more precise the results of Section 4 by
 relating the support of our unique stationary measure to the limit set of $T_{\mu}$ (Theorem \ref{limitset} stated in   Section 2). 
\item In Section 6, we show the H\"older regularity of the stationary measure (stated in Theorem \ref{th1}). Moreover, we describe an important related large deviation estimate for the hitting probability of a hyperplane (Proposition \ref{hitting}). 
\end{itemize}

  \section*{Acknowledgements}
\noindent Both authors have the pleasure to thank Emmanuel Breuillard for fruitful discussions. 
It is also a pleasure to thank \c Ca$\breve{\textrm{g}}$ri Sert for enlightening discussions
 on the joint  spectral  radius/spectrum.  Part of this project was financed by the  European Research Council, grant no 617129.  RA thanks  also UFR Math\'{e}matiques, Universit\'{e} de Rennes 1 for the facilities given in January 2017. 

 \section{Statement of the results}
 
 \subsection{Uniqueness of the Stationary Measure}\label{uniqueness1}
 
From now on,  $\mathrm{k}$ is a local field of any characteristic, $V$ a finite dimensional vector space
defined over $\mathrm{k}$. Denote by $\PV$ the projective space of $V$. We consider a
probability measure $\mu$ on the general linear group $\GL(V)$ and
denote by $T_{\mu}$ (resp.~$G_{\mu}$) the semigroup (resp.~subgroup) of $\GL(V)$ generated by the support of
$\mu$. We define on the same probabilistic space $(\Omega,
\mathcal{A},\p)$ a sequence $(X_i)_{i\in \N^*}$ of independent
identically distributed random variables of law $\mu$. The right
(resp.~left) random walk  a time $n$ is by definition the random
variable $R_n=X_1\cdots X_n$ (resp.~$L_n=X_n\cdots X_1$).  Endow $V$ with any norm $||\cdot||$ and keep for simplicity the same symbol for the operator norm on $\textrm{End}(V)$. We will
always assume that $\mu$ has a moment of order one, i.e.~$\E(\log^{+}||X_1^{\pm 1}||)<+\infty$ 
and denote by $\lambda_1(\mu)\geq
\cdots \geq \lambda_d(\mu)$ the Lyapunov exponents of $\mu$ defined
recursively by:

$$\lambda_1(\mu)+\cdots + \lambda_i(\mu) = \lim_{n\rightarrow +\infty} \frac{1}{n} \E (\log||\bigwedge^i L_n||)= \lim_{n\rightarrow +\infty} \frac{1}{n} \log||\bigwedge^i L_n||,$$
the last equality is an almost sure equality and is guaranteed by
the subadditive ergodic theorem of Kingman \cite{kingman}. 
In most of the paper, there will no be confusion about the 
probability measure and therefore we will omit specifying $\mu$ when writing the Lyapunov exponents. \\

For every finite dimensional representation   $(\rho,W)$ of
$G_{\mu}$, we denote by  $\lambda(\rho,W)$ the top Lyapunov
exponent relative to the pushforward probability measure
  $\rho(\mu)$ of $\mu$ by the map $\rho$, when the latter has a moment of order one.
  When there is no confusion on the action of $G_{\mu}$ on $W$, we will
  simply denote this exponent by  $\lambda(W)$. To simplify, we will refer to it as \emph{the Lyapunov exponent of $W$}.
   By convention, if $(\rho,W)$ is the null representation, then   $\lambda(\rho,W)=-\infty$.\\
   
   Finally recall that if $T$ is a topological semigroup acting continuously on a topological space $X$ and $\mu$ is a Borel probability measure on $T$, then a Borel probability measure $\nu$ on $X$ is said to be $\mu$-stationary, or $\mu$-invariant,  if for every continuous real  function $f$ defined on $X$, the following equality holds: 
   
   $$\iint_{G\times X}{f(g\cdot x) d\mu(g)d\nu(x)} = \int_X{f( x) d\nu(x)}.$$
 
 \vspace{1cm}
\begin{propdef}Let $\mathcal{W}$ be the set of all $G_{\mu}$-stable
vector subspaces of $V$ ordered by inclusion.
  Let
$$\mathcal{L}_{\mu}:=\sum_{\underset{\lambda(W)<\lambda_1}{W\in \mathcal{W}}}{W}.$$

\noindent Then
$\mathcal{L}_{\mu}$ is a proper $G_{\mu}$-stable subspace of $V$ whose Lyapunov exponent is less that $\lambda_1$, and is the  greatest element of   $\mathcal{W}$ with these properties. 
\label{def_Lmu}\end{propdef}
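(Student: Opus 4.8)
The plan is to first isolate the elementary stability properties of the top Lyapunov exponent and then feed them into a short formal argument. Concretely, I would record (or cite from Section~\ref{uniqueness1}'s preliminaries) the following three facts, each of which follows from comparing operator norms of $\bigwedge^1 L_n = L_n$ in a suitable norm and applying Kingman's theorem exactly as in the definition of the $\lambda_i$: (i) \emph{monotonicity under subrepresentations}: if $W'\subseteq W$ are $G_{\mu}$-stable subspaces of $V$ then $\lambda(W')\le\lambda(W)$, and in particular $\lambda(V)=\lambda_1$; (ii) \emph{monotonicity under quotients}: if $W''\subseteq W$ are $G_{\mu}$-stable then $\lambda(W/W'')\le\lambda(W)$, using the quotient norm and the identity $L_nW''=W''$ (which needs stability under the group, not just under $T_{\mu}$); (iii) the \emph{direct-sum formula} $\lambda(W_1\oplus W_2)=\max(\lambda(W_1),\lambda(W_2))$ for the external direct sum of two representations, with the sup norm. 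Combining (ii) and (iii) gives the key subadditivity: for any two $G_{\mu}$-stable subspaces $W_1,W_2$ of $V$, the subspace $W_1+W_2$ is the image of the $G_{\mu}$-equivariant surjection $W_1\oplus W_2\to V$, hence a quotient of $W_1\oplus W_2$, so $\lambda(W_1+W_2)\le\lambda(W_1\oplus W_2)=\max(\lambda(W_1),\lambda(W_2))$; by induction this extends to any finite sum of $G_{\mu}$-stable subspaces.

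Next I would set $\mathcal{W}':=\{W\in\mathcal{W}:\lambda(W)<\lambda_1\}$, which is nonempty since $\{0\}\in\mathcal{W}'$ (with the convention $\lambda(\{0\})=-\infty$), and observe that $\mathcal{W}'$ is stable under finite sums: a finite sum of $G_{\mu}$-stable subspaces is $G_{\mu}$-stable, and by the subadditivity above its exponent is the maximum of finitely many quantities each $<\lambda_1$, hence $<\lambda_1$. Since every element of $\mathcal{W}'$ is a subspace of the finite-dimensional space $V$, I can pick $L\in\mathcal{W}'$ of maximal dimension. For an arbitrary $W\in\mathcal{W}'$ we have $W+L\in\mathcal{W}'$ and $\dim(W+L)\ge\dim L$, so maximality forces $W+L=L$, i.e. $W\subseteq L$. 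Therefore $\mathcal{L}_{\mu}=\sum_{W\in\mathcal{W}'}W=L$, which shows at once that $\mathcal{L}_{\mu}\in\mathcal{W}'$ (it is $G_{\mu}$-stable with $\lambda(\mathcal{L}_{\mu})<\lambda_1$) and that it contains every element of $\mathcal{W}'$. Properness is then immediate: if $\mathcal{L}_{\mu}=V$ we would get $\lambda_1=\lambda(V)=\lambda(\mathcal{L}_{\mu})<\lambda_1$ by (i), a contradiction, so $\mathcal{L}_{\mu}\subsetneq V$. Finally, $\mathcal{L}_{\mu}$ is the greatest element of $\mathcal{W}$ with the stated properties because it has them and any $W\in\mathcal{W}$ with $\lambda(W)<\lambda_1$ lies in $\mathcal{W}'$, hence in $\mathcal{L}_{\mu}$.

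I do not expect a genuine obstacle: once the behaviour of $\lambda$ under sums, subs and quotients is in hand, the statement is purely formal. The two points that require a little care are the reduction from the possibly infinite defining sum to a finite subsum, which I would handle cleanly via maximal dimension (equivalently the ascending chain condition on subspaces) rather than by chasing a basis, and the systematic use of $G_{\mu}$-stability rather than mere $T_{\mu}$-stability wherever inverses intervene---namely in checking that $\rho(\mu)$ still has a moment of order one on each $\GL(W)$ (so that $\lambda(W)$ is defined), and in the quotient-norm estimate underlying (ii), both of which rely on $L_nW=W$. If the preliminary section already records (i)--(iii), the whole proof collapses to the two paragraphs above.
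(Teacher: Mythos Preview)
Your proof is correct and follows essentially the same approach as the paper's. The paper's argument (Lemma~\ref{proofpropdef}) is a one-line sketch: it asserts that the defining sum can be made finite and that $\lambda(W_1+W_2)=\max\{\lambda(W_1),\lambda(W_2)\}$ for any two $G_{\mu}$-stable subspaces, leaving the latter as an easy exercise; your version spells out this exercise via the external direct sum and the quotient map $W_1\oplus W_2\twoheadrightarrow W_1+W_2$, and replaces ``finite sum'' by the equivalent maximal-dimension argument. One minor remark: your care about $G_{\mu}$-stability versus $T_{\mu}$-stability is unnecessary here, since for invertible elements any $T_{\mu}$-stable subspace $W$ satisfies $gW=W$ by a dimension count and hence is automatically $G_{\mu}$-stable.
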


We will check this  Proposition/Definition in Section \ref{onL} (Lemma \ref{proofpropdef}) and give additional information of the subspace $\mathcal{L}_{\mu}$.

\vspace{0.5cm}

The motivation of this definition comes from the following  result of Furstenberg-Kifer. 
\begin{theo} \cite[Theorem 3.9]{furstenberg-kifer}
  Let $\mu$ be a probability on $\GL(V)$ that  has a moment of order one. 
  Then there exists $r\in \{1, \cdots, d\}$,  a sequence  of   $T_{\mu}$-invariant subspaces
   $(\mathcal{L}_i=\mathcal{L}_i(\mu))_{i=0}^r$ 
  $$ \{0\}=\mathcal{L}_{r} \subset     \mathcal{L}_{r-1}\subset  \cdots \subset 
   \mathcal{L}_1 \subset  \mathcal{L}_{0}=V$$
  and a sequence of real values $\lambda_1(\mu)=\beta^1(\mu) > \beta^2(\mu) > \cdots > \beta^r(\mu)$
  such that if $x\in  \mathcal{L}_{i-1}\setminus  \mathcal{L}_{i}$, then almost surely, 
     $$\lim_{n\rightarrow +\infty} \frac{1}{n} \log||L_n x|| =\beta^{i}(\mu).$$ \label{furstenberg-kifer}\end{theo}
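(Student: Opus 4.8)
The plan is to build the filtration recursively as a tower of ``largest invariant subspaces with strictly smaller Lyapunov exponent'' and then to recognise this tower as the level filtration of the almost sure exponential growth rate of vectors. First I would set $\mathcal{L}_0=V$ and $\beta^1:=\lambda(\mathcal{L}_0)=\lambda_1(\mu)$; given a nonzero $T_\mu$-invariant subspace $\mathcal{L}_{i-1}$ with $\lambda(\mathcal{L}_{i-1})=\beta^i$, I would apply Proposition/Definition~\ref{def_Lmu} (in its $T_\mu$-version, proved identically from the results on Lyapunov exponents of Section~3) to the restricted action on $\mathcal{L}_{i-1}$: the sum $\mathcal{L}_i$ of all $T_\mu$-invariant subspaces of $\mathcal{L}_{i-1}$ with Lyapunov exponent $<\beta^i$ is $T_\mu$-invariant, satisfies $\lambda(\mathcal{L}_i)<\beta^i$, and is the greatest such subspace, and I set $\beta^{i+1}:=\lambda(\mathcal{L}_i)$. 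Since $\lambda(\mathcal{L}_i)<\beta^i=\lambda(\mathcal{L}_{i-1})$ the inclusion $\mathcal{L}_i\subsetneq\mathcal{L}_{i-1}$ is strict, so the construction stops after $r\le\dim V$ steps with $\mathcal{L}_r=\{0\}$ and $\beta^1>\beta^2>\cdots>\beta^r$; this produces the asserted chain and the reals $\beta^i$.

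Next I would apply the forward Oseledets theorem to the linear cocycle $(L_n)_n=(X_n\cdots X_1)_n$ over the Bernoulli shift $\theta$ — only the order one moment of $\mu$ is used — to obtain deterministic numbers $\gamma_1>\cdots>\gamma_s$ together with, almost surely, a measurable flag $V=W^1_\omega\supsetneq\cdots\supsetneq W^{s+1}_\omega=\{0\}$ such that $\tfrac1n\log\|L_nv\|\to\gamma_j$ for $v\in W^j_\omega\setminus W^{j+1}_\omega$. Hence for each fixed $x\neq0$ the limit $\Lambda(x):=\lim_n\tfrac1n\log\|L_nx\|$ exists almost surely, with value in $\{\gamma_1,\dots,\gamma_s\}$; the central point, taken up in the last paragraph, is that for each fixed $x$ this limit is almost surely \emph{constant}. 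Granting this, $\Lambda$ becomes a function $V\setminus\{0\}\to\R$ that is invariant under scaling, satisfies $\Lambda(x+y)\le\max(\Lambda(x),\Lambda(y))$ (from $\|L_n(x+y)\|\le\|L_nx\|+\|L_ny\|$), and satisfies $\Lambda(gx)=\Lambda(x)$ for $\mu$-almost every $g$ (condition on $X_1=g$, write $L_n(\omega)x=L_{n-1}(\theta\omega)(gx)$, and use that $\Lambda$ is a deterministic constant). For a fixed $c$ the set $\mathcal{N}_c:=\{x:\Lambda(x)<c\}\cup\{0\}$ is then a subspace, and applying the last property to a basis of $\mathcal{N}_c$ shows that $\{g\in\GL(V):g\mathcal{N}_c=\mathcal{N}_c\}$ is closed and of full $\mu$-measure, hence contains $T_\mu$; so $\mathcal{N}_c$ is $T_\mu$-invariant. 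Consequently $\Lambda$ takes finitely many values, which I denote again $\beta^1>\cdots>\beta^r$, with $\mathcal{L}_i=\{x:\Lambda(x)<\beta^i\}\cup\{0\}=\{x:\Lambda(x)\le\beta^{i+1}\}\cup\{0\}$, so that $\Lambda\equiv\beta^i$ on $\mathcal{L}_{i-1}\setminus\mathcal{L}_i$.

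It remains to identify this filtration with the one of the first paragraph, and for this it is enough to show that $\max_{x\in U\setminus\{0\}}\Lambda(x)=\lambda(U)$ for every nonzero $T_\mu$-invariant subspace $U$. The inequality ``$\le$'' is immediate from $\|L_nx\|\le\|L_n|_U\|\,\|x\|$. For ``$\ge$'', by Furstenberg's maximality principle there is a $\mu$-stationary probability measure $\nu^\ast$ on $\mathbf{P}(U)$ with $\iint\log\tfrac{\|gz\|}{\|z\|}\,d\mu(g)\,d\nu^\ast([z])=\lambda(U)$; stationarity together with the cocycle identity gives $\iint\log\tfrac{\|L_nz\|}{\|z\|}\,d\p\,d\nu^\ast=n\,\lambda(U)$ for all $n$, while the variables $\tfrac1n\log\tfrac{\|L_nz\|}{\|z\|}$ are uniformly integrable (order one moment) and converge almost surely to $\Lambda(z)\le\lambda(U)$, whence $\iint\Lambda(z)\,d\p\,d\nu^\ast=\lambda(U)$ and therefore $\Lambda(z)=\lambda(U)$ for $\nu^\ast$-almost every $[z]$; in particular $\max_{U\setminus\{0\}}\Lambda\ge\lambda(U)$. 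Applying this with $U=\mathcal{L}_i$ (the subspace of the first paragraph) yields, inductively, $\beta^{i+1}=\max_{\mathcal{L}_i\setminus\{0\}}\Lambda=\lambda(\mathcal{L}_i)$ and that $\{x:\Lambda(x)<\beta^{i+1}\}\cup\{0\}$ is the greatest $T_\mu$-invariant subspace of $\mathcal{L}_i$ with Lyapunov exponent $<\beta^{i+1}$, that is, equals $\mathcal{L}_{i+1}$; this completes the proof.

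The one genuinely delicate step is the almost sure constancy of $\Lambda(x)$ for a fixed $x$. The natural approach is to observe, using the equivariance $W^j_\omega=X_1(\omega)^{-1}W^j_{\theta\omega}$ of the Oseledets flag, that the functions $q_j(x):=\p\big(\Lambda(x)<\gamma_j\big)=\p\big(x\in W^{j+1}_\omega\big)$ descend to bounded $\mu$-harmonic functions on $\mathbf{P}(V)$, and then to prove that such a function must be the indicator of a $T_\mu$-invariant subspace — the possible intermediate values being ruled out by the stationary-measure computation above. Without any irreducibility assumption this requires real care, and is where I expect the main effort to go. When $\gamma_1=\beta^1$ is a \emph{simple} Lyapunov exponent — the standing hypothesis of the present paper — this becomes transparent: then $L_n/\|L_n\|$ converges almost surely to a random rank-one operator whose kernel is a random hyperplane avoiding, almost surely, any prescribed vector, so $q_1$ is plainly an indicator, and one iterates inside $\mathcal{L}_1$.
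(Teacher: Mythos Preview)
The paper does not prove this theorem: it is quoted verbatim from \cite{furstenberg-kifer} and used throughout as a black box (see Remark~\ref{works_FK}, and its invocations in the proofs of Proposition~\ref{propstat}, Lemma~\ref{reunion}, and Theorem~\ref{limitset}). So there is no ``paper's own proof'' to compare against; you are effectively reconstructing the Furstenberg--Kifer argument.

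Your reduction to the single claim ``for each fixed $x\neq0$ the limit $\Lambda(x)=\lim_n\tfrac1n\log\|L_nx\|$ is almost surely constant'' is correct and well identified, but you do not close it. The observation that $q_j(x)=\p(x\in W^{j+1}_\omega)$ is $\mu$-harmonic on $\PV$ is right; however, ``the possible intermediate values being ruled out by the stationary-measure computation above'' is not an argument. Nothing you have proved forces a bounded $\mu$-harmonic function on $\PV$ to be the indicator of an invariant projective subspace, and this is precisely where the substance of \cite{furstenberg-kifer} lies. Their proof, incidentally, does \emph{not} pass through Oseledets: they establish directly, via an extremal/maximum-principle argument on $\mu$-stationary measures (the very circle of ideas you touch in your third paragraph), that the level sets of the growth rate are deterministic invariant subspaces.

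Your fallback to the simple-top-exponent case is also not right as stated. The sentence ``$L_n/\|L_n\|$ converges almost surely to a random rank-one operator whose kernel is a random hyperplane avoiding, almost surely, any prescribed vector'' is false: that kernel is the Oseledets hyperplane $W^2_\omega$, and it \emph{always} contains the deterministic subspace $\mathcal{L}_\mu$ (every $x\in\mathcal{L}_\mu$ has exponent $<\lambda_1$ by definition, hence lies in $W^2_\omega$ a.s.). What you actually need is the dichotomy ``$x\notin\mathcal{L}_\mu\Rightarrow x\notin W^2_\omega$ a.s.'', and that implication \emph{is} the theorem in the simple case, so invoking it here is circular. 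You also cannot borrow the non-degeneracy of the dual stationary measure from the paper to get this, since Proposition~\ref{propstat} itself relies on Theorem~\ref{furstenberg-kifer}. In short: the architecture of your proof is sound, but the one step you flag as delicate is genuinely missing, and your proposed shortcut does not work.
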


\begin{remarque}
\begin{enumerate}
\item 
It is immediate that the subspace
$\mathcal{L}_{\mu}$ defined in Proposition/Definition \ref{def_Lmu} coincides with the subspace 
$ \mathcal{L}_1(\mu)$ defined in the theorem above.
 Hence we will be using in the rest of article, the following useful equivalence: 
   $$x\not\in \mathcal{L}_{\mu}\,\, \Longleftrightarrow \,\, \textrm{a.s.}\,\lim_{n\rightarrow +\infty} \frac{1}{n} \log||L_n x|| = \lambda_1.$$
 \item 
 Furstenberg and Kifer gave actually an expression of $\lambda_1$ in terms of the ``cocycle average'' of stationary measures.
 More precisely, let $N$ be the set of all $\mu$-stationary measures on $\PV$. For every $\nu\in N$, let
 $$\alpha(\nu)=\iint_{\GL(V)\times \PV}{\log {\frac{||g v||}{||v||}}\, d\mu(g)\,d\nu([v]).}$$
 Then, they showed that 
\begin{enumerate}
\item $\lambda_1=\sup\{\alpha(\nu); \nu \in N\}$. 
\item   $\mathcal{L}_{\mu}=\{0\}$, if and only if,  $\alpha(\nu)$ is the same for all $\nu\in N$ (and hence equal to $\lambda_1$).
\end{enumerate}
\item Note that   the filtration given by Furstenberg and Kifer is deterministic, unlike the one given by Oseledets theorem.  The set $\{\beta_1(\mu), \cdots, \beta_r(\mu) \}$ is included in the Lyapunov spectrum $\{\lambda_1(\mu), \cdots, \lambda_d(\mu)\}$ but the inclusion may be strict. For $x\in V\setminus \{0\}$ fixed, the growth of $||L_n x||$ is almost surely as $\exp\left(n \beta_i(\mu)\right)$ for some $i=1, \cdots, r$.. Hence, the Lyapunov exponents that are distinct from the $\beta_i(\mu)$'s do not characterize the growth of the norm of $||L_n x||$ if we fix first $x$ and then perform a random walk. However they do characterize norm growth if we perform a random walk and choose $x$ in a random subspace of the filtration given by Oseledets theorem.  
 \end{enumerate}
  \label{works_FK}\end{remarque}
  \vspace{0.25cm}

\noindent For every non zero vector $x$ (resp.~ non zero subspace $W$) of $V$, we denote by $[x]$ (resp. $[W]$) its projection on $\PV$.  
 Our  first result describes the stationary measures on $\PV$.  \begin{theo}
  Let $\mu$ be a probability measure on $\GL(V)$ such that
  $\lambda_1>\lambda_2$. Then,
  \begin{enumerate}
 \item[a)]  There exists a  unique   $\mu$-stationary probability measure $\nu$ on $\PV$ which satisfies $\nu([\mathcal{L}_{\mu}])=0$.
  \item[b)]  The projective subspace of $\PV$ generated by the support of $\nu$ is $[\mathcal{U}_{\mu}]$, where  
  $$\mathcal{U}_{\mu}:=\bigcap_{\underset{\lambda(W)=\lambda_1}{W\in \mathcal{W}}}{W}.$$ 
 Moreover,   $\nu$ is non degenerate on $[\mathcal{U}_{\mu}]$
  (i.e.~$\nu$ gives zero mass to every proper projective subspace of
   $[\mathcal{U}_{\mu}]$).
\item[c)] $\left(\PV, \nu\right)$ is a  $\mu$-boundary in the sense of Furstenberg (\cite{Furst73}) , i.e.~there exists a random variable $\omega \mapsto [Z(\omega)]\in \PV$ such that, 
  for $\p:=\mu ^{\otimes \N}$-almost every $\omega:=(g_n)_{n\in \N}\in \GL(V)^{\N}$,   $g_1\cdots g_n \nu$ converges weakly to the Dirac probability measure $\delta_{[Z(\omega)]}$. \end{enumerate}\label{existence_uncite}\end{theo}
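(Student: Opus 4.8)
The plan is to build everything out of the behavior of the right random walk $R_n=X_1\cdots X_n$ under the hypothesis $\lambda_1>\lambda_2$, so the starting point is a careful analysis of the normalized matrices $R_n/\|R_n\|$. First I would establish the key rank-one statement (this is \textbf{Proposition \ref{propunicite}} as announced in the introduction): when $\lambda_1>\lambda_2$, almost surely every weak-$*$ limit point in $\mathrm{P}(\mathrm{End}(V))$ of the sequence $[R_n]$ is the class of a rank-one endomorphism $Z(\omega)$, and in fact $\frac{1}{n}\log\|R_n\|\to\lambda_1$ while $\frac{1}{n}\log(\|\wedge^2 R_n\|/\|R_n\|^2)\to\lambda_2-\lambda_1<0$ forces the ``gap'' so that contraction to rank one holds. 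The image line of $Z(\omega)$ defines a $\mathrm{P}(V)$-valued random variable $[Z(\omega)]$, and its law is a natural candidate for the stationary measure. The main technical input here is the Oseledets multiplicative ergodic theorem together with the subadditive/Furstenberg--Kifer machinery already quoted (Theorem \ref{furstenberg-kifer} and Remark \ref{works_FK}), applied to $\wedge^i$ of the cocycle; I expect this to be the main obstacle, because without irreducibility one must check that the rank-one limit's image does not get trapped inside $\mathcal{L}_\mu$ — this is where the equivalence $x\notin\mathcal{L}_\mu\Leftrightarrow \frac1n\log\|L_nx\|\to\lambda_1$ from Remark \ref{works_FK} is crucial: a generic starting line escapes $\mathcal{L}_\mu$ with positive (indeed full) probability, pinning $[Z(\omega)]\notin[\mathcal{L}_\mu]$ a.s.

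For part (a), existence: define $\nu$ to be the law of $[Z(\omega)]$ under $\p$. The cocycle identity $R_{n+1}=R_n X_{n+1}$ combined with $X_{n+1}$ being independent of the tail shows, via the standard martingale-convergence argument for $\mu$-boundaries, that $\nu$ is $\mu$-stationary; and since $[Z(\omega)]$ is a.s.\ off $[\mathcal{L}_\mu]$ by the previous paragraph, $\nu([\mathcal{L}_\mu])=0$. For uniqueness, suppose $\nu'$ is any $\mu$-stationary probability with $\nu'([\mathcal{L}_\mu])=0$. I would use the standard device: $M_n:=(R_n)_*\nu'$ is a measure-valued martingale on $\mathrm{P}(V)$, hence converges a.s.\ weakly; by the rank-one convergence of $[R_n]$ and the fact that $\nu'$ charges no subspace below $\mathcal{L}_\mu$, the limit must be $\delta_{[Z(\omega)]}$ (the image of any line not in $\ker Z(\omega)$, and $\ker Z(\omega)$ has $\nu'$-measure zero — this last point again needs the $\mathcal{L}_\mu$-avoidance, now applied in the dual/kernel direction). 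Integrating $M_n\to\delta_{[Z(\omega)]}$ recovers $\nu'$ as the law of $[Z(\omega)]$, so $\nu'=\nu$. This argument simultaneously proves part (c): the displayed convergence $g_1\cdots g_n\nu\to\delta_{[Z(\omega)]}$ is exactly the statement that $\mathrm{P}(V)$ with $\nu$ is a $\mu$-boundary, and it falls out of the same rank-one limit.

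For part (b), describing the projective span of $\mathrm{supp}\,\nu$: let $\mathcal{U}_\mu=\bigcap_{\lambda(W)=\lambda_1}W$. One inclusion is soft: for any $G_\mu$-stable $W$ with $\lambda(W)=\lambda_1$, the quotient action on $V/W$ has all Lyapunov exponents $<\lambda_1$ (otherwise $W$ would not be minimal with its property, or one contradicts the definition of $\mathcal{L}_\mu$ relativized), so the projection of $R_nx$ to $V/W$ is negligible compared to $\|R_n\|$, forcing $[Z(\omega)]\in[W]$ a.s., hence $\mathrm{supp}\,\nu\subseteq[W]$ for every such $W$ and therefore $\mathrm{supp}\,\nu\subseteq[\mathcal{U}_\mu]$. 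For the reverse, I must show the span is all of $[\mathcal{U}_\mu]$ and that $\nu$ is non-degenerate there: if the span were a proper $G_\mu$-invariant subspace $W_0\subsetneq\mathcal{U}_\mu$ (invariance because $\mathrm{supp}\,\nu$ is $T_\mu$-invariant up to the stationarity relation, and one passes to the smallest invariant subspace containing it), then by construction $\lambda(W_0)\le\lambda_1$; the case $\lambda(W_0)<\lambda_1$ is excluded since then $W_0\subseteq\mathcal{L}_\mu$ and $\nu([\mathcal{L}_\mu])=0$; and the case $\lambda(W_0)=\lambda_1$ contradicts minimality of $\mathcal{U}_\mu$ unless $W_0=\mathcal{U}_\mu$. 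The non-degeneracy statement (zero mass on every proper projective subspace of $[\mathcal{U}_\mu]$) follows by the same reasoning applied to the restriction: any proper subspace carrying positive $\nu$-mass would, after taking the $G_\mu$-invariant hull and intersecting, produce an invariant subspace strictly between (a lower level and) $\mathcal{U}_\mu$ with Lyapunov exponent $\lambda_1$, contradicting either the definition of $\mathcal{U}_\mu$ or that of $\mathcal{L}_\mu$. The delicate point throughout part (b) is keeping track of which invariant subspaces have Lyapunov exponent exactly $\lambda_1$ versus strictly less, and this is precisely what Proposition/Definition \ref{def_Lmu} and the lattice structure of $\mathcal{W}$ are set up to control.
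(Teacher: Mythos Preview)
Your overall strategy is the paper's: existence via Oseledets, uniqueness via rank-one limits and the martingale $R_n\nu'$, with part (c) falling out simultaneously. There are, however, two genuine issues.

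First, a circularity in the logical order. Your uniqueness argument requires $\nu'([\ker Z(\omega)])=0$ a.s.\ for an arbitrary stationary $\nu'$ with $\nu'([\mathcal{L}_\mu])=0$; this is precisely non-degeneracy of $\nu'$ on $[\mathcal{U}_\mu]$, which you only establish in part (b) \emph{after} uniqueness. The paper fixes this by proving Proposition~\ref{propstat} (i.e.\ part (b), for \emph{any} stationary measure off $[\mathcal{L}_\mu]$) first, then existence, then uniqueness. Relatedly, defining $[Z(\omega)]$ as ``the image line of a rank-one limit of $R_n/\|R_n\|$'' presupposes this image is independent of the subsequence chosen; the paper's proof of that fact (Proposition~\ref{propunicite}) actually \emph{uses} a pre-existing non-degenerate stationary measure, so you cannot take it as your starting definition. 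The paper avoids the trap by defining $[Z(\omega)]$ directly as the Oseledets line $E^1_\omega$ for the inverse cocycle $R_n^{-1}$ (Proposition~\ref{propexistence}): this line is manifestly well-defined, and the equivariance $E^1_\omega=g_1\cdot E^1_{\theta\omega}$ yields stationarity in one line.

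Second, and more seriously, your non-degeneracy argument has a real gap. You claim that a proper subspace $W_0\subsetneq\mathcal{U}_\mu$ with $\nu([W_0])>0$ would, ``after taking the $G_\mu$-invariant hull and intersecting,'' produce a proper invariant subspace with top exponent $\lambda_1$. But the invariant hull $\sum_g gW_0$ may well be all of $\mathcal{U}_\mu$, and the intersection $\bigcap_g gW_0$ may well be $\{0\}$; neither operation manufactures the proper invariant subspace you need. The paper supplies the two missing ingredients: Lemma~\ref{support2} (Furstenberg's argument, using only stationarity) shows that among projective subspaces of minimal dimension with positive $\nu$-mass there is one with \emph{finite} $G_\mu$-orbit; and Lemma~\ref{reunion} shows that under $\lambda_1>\lambda_2$ and $\mathcal{U}_\mu=V$, the group $G_\mu$ cannot preserve a finite union of proper subspaces unless they all lie in $\mathcal{L}_\mu$. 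The latter lemma is where the Lyapunov gap genuinely enters part (b), via the contraction $\delta(L_n[x],L_n[y])\to 0$ for $[x],[y]\notin[\mathcal{L}_\mu]$, and it is the correct substitute for strong irreducibility in this setting. Your sketch of part (b) does not contain this step.
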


   An immediate corollary is the following   
   \begin{corollaire} Keep the notation of Theorem \ref{furstenberg-kifer}. 
   Suppose that for every $i=1, \cdots, r$ 
   the exponent $\beta_i(\mu)$ is simple when seen as a top Lyapunov exponent 
   for the restriction of the random walk to $ \mathcal{L}_{i-1}(\mu)$. 
     Then there are exactly $r$ distinct ergodic $\mu$-stationary measures on $\PV$. 
  \label{spectrum-FK} \end{corollaire}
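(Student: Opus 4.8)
The plan is to produce, for each Furstenberg--Kifer stratum of $\mu$, one explicit ergodic stationary measure by applying Theorem~\ref{existence_uncite} to the walk restricted to the appropriate subspace $\mathcal{L}_{i-1}(\mu)$, and then to show that every ergodic $\mu$-stationary measure on $\PV$ arises this way. Fix $j\in\{1,\dots,r\}$ and let $\mu_j$ be the law of the walk restricted to $\mathcal{L}_{j-1}$, i.e.\ the image of $\mu$ under $g\mapsto g|_{\mathcal{L}_{j-1}}$ (well defined because $T_\mu$ preserves $\mathcal{L}_{j-1}$); the order-one moment condition passes to $\mu_j$ since $\|g|_{\mathcal{L}_{j-1}}\|\le\|g\|$ and $\|(g|_{\mathcal{L}_{j-1}})^{-1}\|\le\|g^{-1}\|$. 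The key observation is that the Furstenberg--Kifer data of $\mu_j$ is the truncation of that of $\mu$: the subspaces $\mathcal{L}_{j-1}\supset\mathcal{L}_j\supset\cdots\supset\mathcal{L}_r=\{0\}$ are $T_{\mu_j}$-invariant, and by Theorem~\ref{furstenberg-kifer} for $\mu$ a vector $x\in\mathcal{L}_{i-1}\setminus\mathcal{L}_i$ with $i\ge j$ satisfies $\frac1n\log\|L_nx\|\to\beta^i(\mu)$ almost surely; since these properties characterise the Furstenberg--Kifer filtration and its exponents, they must give exactly this truncation, with exponents $\beta^j(\mu)>\cdots>\beta^r(\mu)$. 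Hence $\lambda_1(\mu_j)=\beta^j(\mu)$, the hypothesis of the Corollary says precisely that $\lambda_1(\mu_j)>\lambda_2(\mu_j)$, and by the first item of Remark~\ref{works_FK} the subspace $\mathcal{L}_{\mu_j}$ attached to $\mu_j$ equals $\mathcal{L}_j$. Theorem~\ref{existence_uncite} applied to $\mu_j$ then yields a unique $\mu_j$-stationary probability measure $\nu_j$ on $[\mathcal{L}_{j-1}]$ with $\nu_j([\mathcal{L}_j])=0$; regarded as a measure on $\PV$ (legitimate since $[\mathcal{L}_{j-1}]$ is $T_\mu$-invariant) it is $\mu$-stationary, carried by $[\mathcal{L}_{j-1}]$, and kills $[\mathcal{L}_j]$.

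Next I would check that $\nu_1,\dots,\nu_r$ are pairwise distinct and ergodic. Distinctness is immediate: for $j<j'$ one has $\mathcal{L}_{j'-1}\subseteq\mathcal{L}_j$, so $\nu_{j'}([\mathcal{L}_j])=1\neq 0=\nu_j([\mathcal{L}_j])$. For ergodicity, if $\nu_j=t\eta_1+(1-t)\eta_2$ with $0<t<1$ and $\eta_1,\eta_2$ $\mu$-stationary, then $\eta_i\ll\nu_j$, hence $\eta_i([\mathcal{L}_{j-1}])=1$ and $\eta_i([\mathcal{L}_j])=0$, so each $\eta_i$ is $\mu_j$-stationary and kills $[\mathcal{L}_{\mu_j}]$; the uniqueness statement in Theorem~\ref{existence_uncite} then forces $\eta_1=\eta_2=\nu_j$, so $\nu_j$ is an extreme point of the simplex of $\mu$-stationary measures.

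Finally, let $\nu$ be an arbitrary ergodic $\mu$-stationary measure on $\PV$ and put $A_i:=[\mathcal{L}_{i-1}]\setminus[\mathcal{L}_i]$ for $i=1,\dots,r$. Every $g\in T_\mu$ is invertible and preserves the finite-dimensional subspaces $\mathcal{L}_i$, so $g\mathcal{L}_i=\mathcal{L}_i$; thus the $A_i$ are $T_\mu$-invariant Borel sets partitioning $\PV$, and a short computation from the stationarity of $\nu$ together with $g^{-1}A_i=A_i$ shows that $\nu(A_i)^{-1}\,\nu|_{A_i}$ is again $\mu$-stationary whenever $\nu(A_i)>0$. Writing $\nu=\sum_i\nu(A_i)\bigl(\nu(A_i)^{-1}\nu|_{A_i}\bigr)$ and using that $\nu$ is extreme forces $\nu(A_j)=1$ for exactly one $j$, i.e.\ $\nu([\mathcal{L}_{j-1}])=1$ and $\nu([\mathcal{L}_j])=0$; then $\nu$ is $\mu_j$-stationary on $[\mathcal{L}_{j-1}]$ and kills $[\mathcal{L}_{\mu_j}]=[\mathcal{L}_j]$, so Theorem~\ref{existence_uncite} (applicable because $\beta^j(\mu)$ is simple by hypothesis) gives $\nu=\nu_j$. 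Therefore the ergodic $\mu$-stationary measures on $\PV$ are exactly $\nu_1,\dots,\nu_r$, hence there are precisely $r$ of them. I expect the one genuinely delicate point to be the identification of the Furstenberg--Kifer data of the restricted walk $\mu_j$ with the truncation of that of $\mu$ (which rests on uniqueness of the Furstenberg--Kifer filtration), since it is exactly what guarantees $\mathcal{L}_{\mu_j}=\mathcal{L}_j$ and thus lets Theorem~\ref{existence_uncite} be invoked in the required form.
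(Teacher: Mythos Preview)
Your proof is correct and follows the same approach as the paper: apply Theorem~\ref{existence_uncite} to the restriction of the walk to each $\mathcal{L}_{i-1}$, using that $\mathcal{L}_{\mu_j}=\mathcal{L}_j$. The paper's own argument is a one-line sketch (``apply Theorem~\ref{existence_uncite} on each subspace $\mathcal{L}_i$'') that leaves ergodicity and exhaustiveness implicit; you have supplied exactly those details, and your partition argument via the invariant strata $A_i=[\mathcal{L}_{i-1}]\setminus[\mathcal{L}_i]$ is the natural way to show that every ergodic stationary measure is one of the $\nu_j$.
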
         
    
  \begin{remarque}
  The  assumption of Corollary \ref{spectrum-FK} is equivalent to saying that, for every $i=1, \cdots, r$, 
   $\beta_i(\mu)$ is simple as a top Lyapunov exponent of 
   $\LL_{i-1}(\mu)/\LL_{i}(\mu)$. Hence, by Guivarc'h-Raugi's
    theorem \cite{guivarch-raugi}, 
   a sufficient condition for the finiteness of ergodic $\mu$-stationary measures on $\PV$ 
    is that each quotient $\LL_{i-1}(\mu)/\LL_{i}(\mu)$ is
   strongly irreducible and proximal. Definitely, another sufficient condition is the simplicity of the  Lyapunov spectrum
   , i.e.~ $\lambda_1(\mu)>\cdots > \lambda_d(\mu)$. 
   \end{remarque}

         \begin{remarque}        
 After finishing this paper, it came to our knowledge that Benoist and Bru\`{e}re have   studied   recently and independently the existence and uniqueness of stationary measures on projective spaces over $\R$ in a non irreducible context, in order to study recurrence on affine grassmannians. We will state one of the  main results of the authors, namely \cite[Theorem 1.6]{benoist-bruere}, then discuss the similarities and differences with Theorem \ref{existence_uncite} stated above.   \\
 
\noindent In  \cite[Theorem 1.6 b) ]{benoist-bruere}, the authors consider a real vector space $V$, $G$ 
 Zariski connected algebraic group subgroup $G$ of $\GL(V)$, $W$ a $G$-invariant subspace of $V$ such that $W$ has no complementary $G$-stable subspace,  the action  of $G$ on $W$ and the quotient $V/W$ is i-p and such that the representations
  of $G$ in $W$ and $V/W$ are not equivalent. 
  Then for every probability measure $\mu$ such that $\lambda(V/W)>\lambda(W)$ and whose support is 
  compact and generates a Zariski dense subgroup of   $G$,  the authors show that there exists a unique 
  $\mu$-stationary probability measure $\nu$ on the open set $\PV\setminus [W]$ and that   the   Cesaro mean $\frac{1}{n}\sum_{j=1}^n{\mu^{*j} \star \delta_x}$ converges weakly to $\nu$.   \\

\noindent Theorem \ref{existence_uncite} recovers the aforementioned result.    Indeed, $\mu$ has  a moment of order one since its support is assumed to be compact. The conditions on the Lyapunov exponents imply that   $\mathcal{L}_{\mu}=W$ and    $\lambda_1>\lambda_2$. Moreover, since $W$ has no complementary $G$-stable subspace, then  $\mathcal{U}_{\mu}=V$.    \\

\noindent Theorem \ref{existence_uncite} permits actually  to   relax the i-p assumption on the action on $W$ in the previous statement; only the condition i-p on the quotient and $\lambda(V/W)>\lambda(W)$ is enough.
 Moreover, there is no need for the compactness of the support of $\mu$; a moment of order one is enough. 
 Furthermore, $\mu^{*j} \star \delta_x$ converges weakly to $\nu$ (see Remark  \ref{weakly}), not only in average.  In addition, the vector space $V$ can be defined on any local field $\mathrm{k}$.  \\

 \noindent We note that,  in the rest of the present paper, we will be interested in understanding further properties of this stationary measure. Namely in  Theorem \ref{limitset}  (Section \ref{section_limit_set}) below,  we  describe more precisely the support of $\nu$  in terms of the  the limit   set of $T_{\mu}$ and we  prove its H\"older regularity in Theorem \ref{th1}    (Section \ref{section_regularity}).  \\

\noindent It is worth-mentioning that in \cite[Theorem 1.6 a) ]{benoist-bruere}, the authors  show that when  $\lambda(W)\geq \lambda(V/W)$,  there is no $\mu$-stationary probability measure on $\PV\setminus [W]$ and that the above Cesaro mean converges weakly to zero. This says somehow that $G$-stable subspaces with top Lyapunov exponent guide the dynamics. This information is not disjoint   from the one given by Part b) of Theorem \ref{existence_uncite} saying that the projective subspace generated by the support of $\nu$ is $[\mathcal{U}_{\mu}]$.   \\

\noindent 
The techniques used in the two papers are highly different.  In the present paper we obtain the existence of such a  stationary measure via Oseledets theorem while Benoist and Bru\`{e}re use Banach-Alaoglu theorem and a method developed in \cite{eskin-margulis} for the situation of locally symmetric spaces. Concerning the uniqueness of the stationary measure,   Benoist and Bru\`{e}re's proof is by contradiction via a   beautiful argument of joining measure and previous results on stationary measures on the projective space by Benoist-Quint \cite{bq-pr}.    Here we use methods of \cite{Furst73} and \cite{guivarch-raugi} based on the $\mu$-boundary property. Our method is more explicit as it was described in the introduction (see Propositions \ref{propunicite} and Proposition \ref{corollaire-unicite}). 
  
  \label{remarque_bruere}\end{remarque}

  \subsection{The geometry behind Theorem \ref{existence_uncite} and guiding Examples} 
  \label{guiding}
  \subsubsection{ The geometry behind Theorem \ref{existence_uncite} }\label{geobehind}
 
By Theorem \ref{existence_uncite}, our dynamics takes place in the open dense subset $\PV\setminus [\LL_{\mu}]$ of $\PV$. Here we understand further this dynamics by considering  $\PV$ as a compactification of $\PV\setminus [\LL_{\mu}]$ 
and identifying  topologically $\PV\setminus [\LL_{\mu}]$   with a compact  quotient of the product space $\LL_{\mu}\times S(V/\LL_{\mu})$. We will  check that this product  space is dynamically    a skew product   space with base the unit sphere $S(V/\LL_{\mu})$ and fibers $\LL_{\mu}$ 
seen
 as an affine space. Hence, for each random walk, we are choosing   
 a ``model''  or a ``realization'' of   $\PV$ that depends on the   $G_{\mu}$-stable space $\LL_{\mu}$. 
 This point of view will be used in Section \ref{limit_proof}.  \\
 
Formally,  let $\mathcal{S}_{\kk}$ be the unit sphere of the local field $(\kk, |\cdot|)$,  
  $L$   a proper subspace of $V$ and $G$ a subgroup of $\GL(V)$ that stabilizes $L$.  
 Let $||\cdot||$ be a norm on the quotient $V/L$ such that $(V/L, ||\cdot||)$ is an inner product space (resp.~orthogonalizable) when $\kk$ is Archimedean (resp.~ when $\kk$ is non-Archimedean, see Section \ref{nonarchisection}).  
 Denote 
    by $S(V/L)$ the unit sphere of $V/L$. 
 Fix a supplementary   $\tilde{L}$ of $L$ in $V$. Identifying $V/L$  and $\tilde{L}$ in the usual way, the map $(t,\xi)\in L \times S(V/L)  \longmapsto  [t+\xi]\in \PV\setminus [L]$ yields a homeomorphism between $\PV\setminus [L]$ and the orbit space $X/\mathcal{S}_{\kk}$,  where 
 $X$ is  the product space $$X:= L \times S(V/L)$$
and  $\mathcal{S}_{\kk}$  acts on $X$ in the natural way. 
Using this bijection,  the space $X/\mathcal{S}_{\kk}$ is endowed with a natural structure of $G$-space such that the natural map
 $X/\mathcal{S}_{\kk}\simeq \PV \setminus [L] \overset{\psi}{\longrightarrow} \textrm{P}(V/L)$ is $G$-equivariant. The action of $G$ on $X/\mathcal{S}_{\kk}$ can be lifted to an action of $G$ on $X$ which commutes with the natural 
action of $\mathcal{S}_{\kk}$ on $X$, as we explain hereafter. \\

     Every element $g\in G$ can be written in a basis compatible with the decomposition $V= L \oplus \tilde{L}$ in the 
 form     
    $\begin{pmatrix} 
A & B \\
0 & C 
\end{pmatrix}$ with $A$ (resp. $C$) is a square matrix representing the action of $g$ on $L$ (resp.~ on the quotient vector space $V/L$) and 
$B$ is a rectangular matrix. 
To write down equations properly, one has to make a choice in   
 normalizing non zero vectors in 
$V/L$. We write a polar decomposition of $(V/L)\setminus \{0\}$:  $(V/L)\setminus \{0\}=\R^{*}_{+}\times S(V/L)$ when $\kk=\R$ or $\kk=\C$ and 
$(V/L)\setminus \{0\}= {\varpi}  ^{\Z} \times S(V/L)$ when $\kk$ is non-Archimedean  
 (for a fixed uniformizer   $ \varpi $ and a fixed discrete valuation on $\kk$).
 Let $N: (V/L)\setminus \{0\} \longrightarrow \kk\setminus \{0\}$ such that $N(x)$ is the unique $\R^{*}_{+}$ or $\varpi^{\Z}$-part  of 
    the non zero vector $x$ of $V/L$ in its polar decomposition. In the Archimedean case, one has simply that $N(x)=||x||$.      One can then check that 
 the following formula defines an  action of $G$ on $X=L\times S(V/L)$ that lifts the action of $G$ on $X/\mathcal{S}_{\kk}$ and  commutes with the action of $\mathcal{S}_{\kk}$ on $X$:
  \\
                                    
\begin{equation} \begin{pmatrix} 
A &  B \\
0 & C 
\end{pmatrix}       \cdot (t,\xi) =  \left(\frac{At+ B  \xi}{N(C\xi)}, \frac{C\xi}{N(C\xi)}\right)\label{easybutcrucial}.\end{equation}

 \noindent We observe that the $G$-space $X$ has a skew product structure given by the above formula with base the unit sphere of the vector space $V/L$. Considering $L$ as an affine space,     the fiberwise action is given by affine maps, as for  $g=\begin{pmatrix} 
A &  B \\
0 & C 
\end{pmatrix}  \in G$ and $\xi\in S(V/L)$ fixed,  the map 
 $$\sigma(g,\xi) : t \longrightarrow \frac{At+B\xi}{N(C\xi)}$$
 is an affine transformation of the affine space $L$. Moreover,  the 
  map $\sigma: G \times S(V/L) \longrightarrow \textrm{Aff}(L)$ is a cocycle, i.e.~
  $\sigma(g_1g_2, \xi)=\sigma(g_1,g_2 \cdot \xi) \circ \sigma(g_2, \xi)$  where 
  $g \cdot \xi = \frac{  C \xi }{N(C \xi)}$
  .\\

Let now $\mu$ be a probability measure on $\GL(V)$ whose top Lyapunov exponent is simple and such that  $G=G_{\mu}$ and $L=\mathcal{L}_{\mu}$. 
The $\mu$-random walk on $X$   is then given by the following recursive stochastic equation: 

\begin{equation}t_n= \frac{A_n t_{n-1} + B_n \xi_{n-1}}{N(C_n \xi_{n-1})} \,\,\,,\,\,\, \xi_n=\frac{C_n \xi_{n-1}}{N(C_n \xi_{n-1})} 
\,\label{stochastic}\end{equation}
where 
$\left\{\left( \begin{array}{ccc}
A_n& B_n\\
0&  C_n\  \end{array} \right); n\in \N\right\}$ is a sequence of independent random variables on $\GL(V)$ of same law $\mu$. The result of Theorem \ref{existence_uncite} translates in saying that there exists a unique $\mu$-stationary probability measure $\nu$ on $X/\mathcal{S}_{\kk}$. This measure can be lifted to a probability measure $\tilde{\nu}$   on $X$ which is $\mu$-stationary, $\mathcal{S}_{\kk}$-invariant and unique for these properties. 
Note that the pushforward measure $\psi \star \nu$ of $\nu$ (resp.~$\tilde{\psi} \star \tilde{\nu}$) 
by   the natural map   $\PV\setminus [L] \overset{\psi}{\longrightarrow} \textrm{P}(V/L)$
 (resp.~$X  \overset{\tilde{\psi}}{\longrightarrow}  S(V/L)$) is  also a $\mu$-stationary   probability measure on $\textrm{P}(V/L)$ (resp.~$S(V/L)$).  Since the top Lyapunov exponent of  $\pi(\mu)$, projection of $\mu$ on $\GL(V/L)$,   is also simple and satisfies $\mathcal{L}_{\pi(\mu)}=\{0\}$ (see item 3.~of Remark \ref{V/L}), Theorem \ref{existence_uncite} applies again on $V/L$ and implies that    $\psi \star \nu$  (resp.~$\tilde{\psi} \star \tilde{\nu}$) is the unique $\mu$-stationary probability measure on $\textrm{P}(V/L)$ (resp.~on $S(V/L)$ which is $\mathcal{S}_{\kk}$-invariant). 
We note that when $\mathcal{U}_{\mu}=V$,
 the condition $\lambda_1>\lambda_2$  forces the action on $V/L$ to be
   strongly irreducible and to contain  a proximal element (see Lemma \ref{degenere}). Hence,
    the uniqueness of the probability measure $\psi\star \nu$ on   $\textrm{P}(V/L)$  can be   seen in this case 
    as a corollary of   Guivarc'h-Raugi's work  \cite{guivarch-raugi}  based on techniques developed by Furstenberg \cite{Furst73}. 
  
  \noindent Finally, note that    stochastic recursions similar to \eqref{stochastic}  appeared recently in   \cite[Section 5]{guivarch-page-affine}, with $\dim(\mathcal{L}_{\mu})=1$,    as a crucial tool to   prove the homogeneity at infinity of the measure $\nu$, in the affine situation.

 \subsubsection{Guiding Examples\\}\label{example_guiding}
 
  The guiding examples through this article are the following. The first two (i-p setting and the affine one) are standard and we just check 
that our general framework include them. The third example is an interesting new one that mixes somehow the first two. Together with the simulations of Section \ref{example-simulation}, they   illustrate  our new  geometric setting and the dynamic on it.

\begin{enumerate}
\item \textbf{The irreducible linear groups.} \\
If $T$ is a sub-semigroup of $\GL(V)$ that acts irreducibly on $V$, then   for every probability measure $\mu$ such that $T_{\mu}=T$, we have by irreducibility $\mathcal{L}_{\mu}=\{0\}$ and $\mathcal{U}_{\mu}=V$. By    a theorem of Guivarc'h-Raugi  \cite{guivarch-raugi}, 
the condition $\lambda_1>\lambda_2$ is equivalent to saying that $T$ is i-p 
(strongly irreducible and contains a proximal element).  The results given by    Theorem \ref{existence_uncite}  are known in this case and are due also to Guivarc'h and Raugi in the same paper. With the notation of Section \ref{geobehind}, $X$ is just the unit sphere of $V$ (for a fixed norm).

\vspace{1cm}

\item \textbf{The affine group.} \\
Let $L$ be a hyperplane of $V$ and $T$ a sub-semigroup of $\GL(V)$ 
that stabilizes $L$. Assume for the simplicity that the action on $V/L$ is trivial. Hence, in a suitable basis of $V$, all the elements of  $T$ have a matrix of the form 
                            $\left(\begin{array}{cc}
                              A & \underline{b}  \\
                              0 & 1 \\
                            \end{array}\right)$ with $A$ representing the action on the vector space $L$.          
                                                   The projective space $\PV$ is seen as a compactification of the affine space $L$ with $L$ an affine chart (the action on the base of the product space $X$ is trivial). 
 It will be clear in the following discussion whether $L$ is seen as a subspace of $V$ or as an affine space. \\
    Let $\mu$ be a probability measure on $\GL(V)$ such that $T_{\mu}=T$.                 
                         We denote by  $a_1$ (resp.~$a_2$)   the  top (resp.~second) Lyapunov exponent of the probability measure $A(\mu)$, relative to the linear part of $\mu$.
  Then by Lemma \ref{triangulaire} and Corollary \ref{corollaire-triangulaire} below, the following equalities hold
    $$\lambda_1(\mu)=\max\{a_1,0\}\,\,, \,\,\lambda_2(\mu)= \min\{a_1,\max\{a_2,0\}\}.$$
  The subspaces    $\mathcal{L}_{\mu}$  and $\mathcal{U}_{\mu}$ of $V$ depend  on the measure $\mu$, 
                 unlike the previous example. More precisely,

\begin{enumerate}
\item \underline{Contracting case ($a_1<0$).} In this case, $0=\lambda_1>\lambda_2=a_1$ and 
$\mathcal{L}_{\mu}=L$.  If we assume moreover that $T$ does not fix any proper affine subspaces of $L$, then this translates to the linear action by saying that every $T$-stable vector space of $V$ is included in $L$.    In particular we have $\mathcal{U}_{\mu}=V$.  We can then apply Theorem \ref{existence_uncite}. Its content    
   translates   back to the affine action by saying that there is a unique $\mu$-stationary  probability measure on $L$ and that this measure gives zero mass to  any affine subspace.  This result is well known (see for instance \cite{kesten1}, \cite{bougerol-picard}).

\vspace{0.25cm} \item \underline{Expansive case ($a_1>0$)}: In this case,  $ a_1=\lambda_1$ and $\lambda_2=\max\{a_2, 0\}$. Assume for simplicity that the sub-semigroup $A_T$ generated by $A(\mu)$  acts irreducibly on the vector space $L$. 
Hence the condition $\lambda_1>\lambda_2$ is equivalent to $A_T$ being i-p, which we assume to hold in the sequel. 
Assume moreover that $T$ does not fix a point in $L$. With these assumptions, 
$\mathcal{U}_{\mu}=L$ and $\mathcal{L}_{\mu}=\{0\}$.  In this case,  Theorem \ref{existence_uncite} 
says that  there exists a unique $\mu$-stationary probability measure on the (compactified) affine space $L$ and that it is   concentrated on the hyperplane at infinity. This probability measure corresponds to the unique $A(\mu)$-stationary probability measure on the projective space $\textrm{P}(L)$ of $L$ (we are back to Example 1).

   \end{enumerate}
   
             We note that our results do not apply to the interesting case $a_1=0$, called the critical case. 
   
   \vspace{1cm}
   
   \item \textbf{The Automorphism group of the Heisenberg group.}
   Let $L$ be a one-dimensional subspace of $\R^3$ and   $G$ the group of automorphisms of $V$ that stabilizes $L$. In a suitable basis of $\R^3$, we can identify $G$ with the following matrix group:      
   $$G=\left\{g=\left(\begin{matrix} 
a_g& \underline{b_g}\\
0& C_g
\end{matrix} \right); a_g\in \R\setminus \{0\}; \underline{b}_g\in \R^2; C_g\in \textrm{GL}_2(\R) \right\}\subset \textrm{GL}_3(\R).$$

The group $G$ can be thought of a dual of the affine group on  $\R^2$. In this context, random walks on $G$ appeared naturally in  \cite[Section 5]{guivarch-page-affine} as we have mentioned in the previous section. Also, if one imposes the condition $|a|=\det(g)$  in the definition of $G$, then by letting   the continuous Heisenberg group $\mathcal{H}_3$ act on its Lie algebra,  it can be proved (see \cite{Folland}) that $G$ is isomorphic to the automorphism group of  $\mathcal{H}_3$; the one dimensional fixed subspace of $\R^3$ being the center of  $\mathcal{H}_3$. \\
 
With the notation of Section \ref{geobehind}, $X= \R\times S^1$ and the projective plane $\textrm{P}^2(\R)$ is  seen as a one-point compactification of $X/\{\pm 1\}$.   Recall that by formula \eqref{easybutcrucial}, $X$  has a structure of skew-product space   whose base is a circle 
 and fibers the affine line $L$.   Now let $\mu$ be a probability measure on $G$. Assume that: 
\begin{enumerate}
\item the action of $T_{\mu}$ on $\R^3/L$ is irreducible
\item $\int_{G}{\log{|a_g|}\,d\mu(g)} < \lambda_1(\R^3/L)$.
\end{enumerate}
In this case, $\lambda_1>\lambda_2$, if and only if, the action of $T_{\mu}$ on $\R^3/L$ is strongly irreducible and proximal (i-p) (see Lemma \ref{corollaire-triangulaire}). By the irreducibility of the action on  the quotient,  $\mathcal{L}_{\mu}=L$. Moreover,  
   $\mathcal{U}_{\mu}=\R^3$, if and only if, there does not exist a $G_{\mu}$-invariant decomposition $\R^3=L\oplus W$. With these conditions, the content of Theorem \ref{existence_uncite} is new.  
   The stationary measure given by the aforementioned theorem   projects onto the projective line to
    the  $\mu$-stationary probability measure relative to the i-p semigroup of $\GL_2(\R)$, projection of 
    $T_{\mu}$ on $\R^3/L$.
   We refer  to the simulations of Section \ref{example-simulation}. \\
   
   Note that when   $\int_{G}{\log{|a_g|}\,d\mu(g)} > \lambda_1(\R^3/L)$ and $L$ has no $G_{\mu}$-invariant supplementary in $\R^3$, we have also $\lambda_1>\lambda_2$ but $\LL_{\mu}=\{0\}$.    Theorem \ref{existence_uncite} applies and implies that the unique $\mu$-stationary probability measure on $\textrm{P}^2(\R)$ is $[L]$, i.e.~the point at infinity in $X/\{\pm 1\}$. This   case is similar to the expansive one in the affine situation.

     \end{enumerate}
   \subsection{The support of the stationary measure and Limit Sets }\label{section_limit_set}

 Our next goal will be to  relate the support of the stationary measure $\nu$ obtained above with the limit set of $T_{\mu}$.  We refer to  \cite{gold-guiv}  and \cite{Guivarch3} when such a study is conducted in the strong irreducible and proximal case.  
 \noindent We begin by some notations for a general semigroup $T\subset \GL(V)$ and two $T$-invariant subspaces $L$ and $U$ of $V$ such that $U\not\subset L$.  Denote by  $[g]\in \textrm{PGL}(V)$ the projective map 
   associated to a linear automorphism $g\in \GL(V)$ and by $\textrm{PT}:=\{[g]; g\in T\}\subset \textrm{PGL}(V)$ the projection of $T$ onto $\textrm{PGL}(V)$.   \\
    We will need the notion and some properties of   quasi-projective transformation  introduced by \cite{Furst73} and  developed in \cite{goldsheid-margulis}. Recall that a quasi-projective transformation is a  map from $\PV$ to itself obtained by a pointwise limit of a sequence
    of  projective transformations.  Denote by $\mathcal{Q}$ the set of quasi-projective maps.  

  \begin{itemize}
  \item 
 We denote by $\widehat{T}\subset  \mathcal{Q}$ the set of quasi-projective transformations  $\mathfrak{q}: \PV \longrightarrow \PV$, pointwise limits  of projective maps $[g_n]\in \textrm{PT}$   with the following property:   there exists a proper projective subspace $[W]$ of $\PV$ such that $[U]\not\subset [W]$ and for every $ y \not\in [W]$, $\mathfrak{q}(y)$ is point $p(\mathfrak{q})\in [U]$. Let~
  
 $$\Lambda(T)=\{p(\mathfrak{q}); \mathfrak{q}\in \widehat{T}\}\subset [U].$$
 
We will check in Lemma \ref{quasi} that this is a closed $T$-invariant subset of $\PV$. We will call it the limit set of $T$ (note that it depends on  the subspace $U$). 
     \item
  We consider the $T$-space $O=\PV\setminus [L]$ and we endow it with the   topology induced from that of $\PV$. If $X\subset O$, we denote by $\overline{X}$ its closure in $\PV$ and by $\overline{X}^O$ its closure in $O$. 
 
 Let 
  $\Lambda^a(T)=\Lambda(T)\cap O$ so that $\Lambda^a(T)$ is a closed $T$-invariant subset of $O$.

 \item  Let $T_0$ (resp.~${T^a_0}$) the subset of $T$ which consists of elements $g$ with a simple and unique dominant eigenvalue corresponding to a direction $p^{+}(g)\in [U]$ (resp.~$p^{+}(g)\in [U\setminus L]$).  
     \end{itemize} 
\vspace{0.1cm}

\begin{remarque}
  The choice of the superscript ``a'' in the definition above refers to ``affine'' in line with the description given 
  in Section \ref{geobehind}. 
  Indeed,   suppose that $U=V$, fix a norm on the quotient $V/L$  and let    $g\in T_0^a$. In a suitable basis of $V$,   $g$ can be represented as a matrix    
     $g =\left(\begin{matrix} 
A& B\\
0& C
\end{matrix} \right)\in T$  with $A$ the restriction of $g$ to $L$,  $C$ a  proximal element 
and $\lambda_{\textrm{top}}(g)=\lambda_{\textrm{top}}(C)$, where $\lambda_{\textrm{top}}(\cdot)$ denotes the top eigenvalue. 
Pick a normalized eigenvector $\xi_0$ of $C$. 
Then the point $p^+(g)\in \PV$ can be identified with   $\mathcal{S}_{\kk}(t_0, \xi_0)$ with $t_0\in L$ being the unique fixed point of the affine 
map $t\mapsto \frac{At +B\xi_0}{\lambda_{\textrm{top}}(C)}$ of $L$ (seen as an affine space). 
Note that this affine map is equal to 
the map $\sigma(g,\xi_0)$ introduced in Section \ref{geobehind}, up to an element in $\mathcal{S}_{\kk}$. 
  When $L$ is a hyperplane of $V$ and $C$ is trivial, then 
 $p^+(g)$ represents exactly the  fixed point of the affine map $t\mapsto At +B$ of $L$. \label{explique-notation}\end{remarque}
\vspace{0.1cm}

 \begin{theo} 
Let $T$ be a semigroup of $\GL(V)$, $L$ and $U$ be $T$-invariant subspaces such that $U\not\subset L$. Let  $\mu$ be a probability measure on $\GL(V)$ such that $\lambda_1>\lambda_2$, $T_{\mu}=T$, $\mathcal{L}_{\mu}=L$ and $\mathcal{U}_{\mu}=U$.  
  Let $\nu$ be the unique stationary measure on $\PV\setminus [L]$. Then,

  \begin{enumerate}

 \item  $T_0^{a} \neq \emptyset$ and $\textrm{Supp}(\nu)=\overline{p^{+} (T_0)} = \overline{p^+(T_0^a)}$.

 \item  

$\textrm{Supp}(\nu)=\Lambda(T)$.

 \item
 For    any $[x] \in \PV\setminus [L]$, we have $\Lambda(T) \subset \overline{T\cdot [x]}$. 
 In particular, $\Lambda^a(T)$ is the unique $T$-minimal subset  of $\PV\setminus [L]$.  
 \end{enumerate}
\label{limitset}\end{theo}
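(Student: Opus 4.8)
### Proof Proposal

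\textbf{Overall strategy.} The plan is to establish the three items in the order (1) $\to$ (2) $\to$ (3), using the dynamical description of $\nu$ from Theorem \ref{existence_uncite}(c) as the pivot. The key tool throughout is the $\mu$-boundary property: for $\p$-a.e.\ trajectory $\omega = (g_n)$, the measures $g_1\cdots g_n\,\nu$ converge to $\delta_{[Z(\omega)]}$, where $[Z(\omega)]$ is distributed according to $\nu$. Translating this to the deterministic language of quasi-projective maps: after passing to a subsequence, $[g_1\cdots g_n]$ converges (pointwise off a proper projective subspace avoiding $[U]$) to a rank-one quasi-projective map $\mathfrak q$ with image $p(\mathfrak q) = [Z(\omega)]$. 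Since $\nu([\mathcal L_\mu])=0$ and $\nu$ is non-degenerate on $[\mathcal U_\mu]$ (Theorem \ref{existence_uncite}(b)), the limit point $[Z(\omega)]$ lies in $[U]\setminus[L]$ almost surely, and its law being $\nu$ gives $\textrm{Supp}(\nu) = \overline{\{p(\mathfrak q) : \mathfrak q \in \widehat T\}} = \Lambda(T)$ once we know that \emph{every} element of $\widehat T$ arises as such a limit.

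\textbf{Item (1).} First I would show $T_0^a \neq \emptyset$. Since $\lambda_1 > \lambda_2$, the contraction property (Proposition \ref{propunicite}) shows that almost every normalized limit of the right random walk $R_n$ has rank one; by approximating such a limit by finitely many words one produces, for some $n$ and some realization, an element $g = X_1\cdots X_n \in T$ that is proximal with $p^+(g)$ close to a point of $\textrm{Supp}(\nu)$. A perturbation argument (composing with a generic further word, using that the image lands off $[\mathcal L_\mu]$) upgrades this to $p^+(g) \notin [L]$, so $g \in T_0^a$. For the equality $\textrm{Supp}(\nu) = \overline{p^+(T_0)} = \overline{p^+(T_0^a)}$: the inclusion $\overline{p^+(T_0^a)} \subset \overline{p^+(T_0)}$ is trivial; for $\overline{p^+(T_0)} \subset \textrm{Supp}(\nu)$, observe that if $g \in T_0$ is proximal with attracting point $p^+(g)$, then $g^n\nu \to \delta_{p^+(g)}$ (because $\nu$ gives no mass to the repelling hyperplane of $g$, by non-degeneracy on $[U]$ combined with $\nu([L])=0$), and stationarity forces $p^+(g) \in \textrm{Supp}(\nu)$. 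Conversely, $\textrm{Supp}(\nu) \subset \overline{p^+(T_0^a)}$: every point of $\textrm{Supp}(\nu)$ is a value $[Z(\omega)]$ for some $\omega$, hence a limit of the rank-one parts of $[R_n(\omega)]$; truncating the trajectory at large $n$ and slightly perturbing yields genuine proximal elements of $T_0^a$ with attracting points converging to $[Z(\omega)]$.

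\textbf{Item (2).} The inclusion $\textrm{Supp}(\nu) \subset \Lambda(T)$ follows from (1): each $p^+(g)$ with $g \in T_0$ is trivially $p(\mathfrak q)$ for $\mathfrak q = \lim [g^n] \in \widehat T$ (the repelling hyperplane of $g$ does not contain $[U]$ since the dominant eigendirection lies in $[U]$), and $\Lambda(T)$ is closed. For the reverse inclusion $\Lambda(T) \subset \textrm{Supp}(\nu)$: let $\mathfrak q \in \widehat T$ with $\mathfrak q = \lim [g_n]$, $g_n \in T$, collapsing everything off a proper subspace $[W]$ (with $[U] \not\subset [W]$) to the point $p(\mathfrak q) \in [U]$. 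Since $\nu$ is non-degenerate on $[U]$ and $\nu([L]) = 0$, we have $\nu([W]) = 0$ (as $[W] \cap [U]$ is a proper projective subspace of $[U]$, unless $[W] \supset [U]$, which is excluded), so $g_n \nu \to \delta_{p(\mathfrak q)}$ weakly; but $g_n\nu$ is $\mu^{*k_n}$-distributed-absolutely-continuous in the appropriate sense — more precisely, each $g_n\nu$, being a pushforward of the stationary measure by an element of $T$, has support in $\overline{T\cdot\textrm{Supp}(\nu)} \subset \textrm{Supp}(\nu)$ (using $\mu$-stationarity: $\textrm{Supp}(\nu)$ is $T$-invariant closed), hence $\delta_{p(\mathfrak q)}$ is supported in $\textrm{Supp}(\nu)$, giving $p(\mathfrak q) \in \textrm{Supp}(\nu)$.

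\textbf{Item (3) and the main obstacle.} For any $[x] \in \PV \setminus [L]$, I must show $\Lambda(T) \subset \overline{T \cdot [x]}$. Take $\mathfrak q \in \widehat T$, $\mathfrak q = \lim [g_n]$ collapsing off $[W]$ to $p(\mathfrak q)$; since $[x] \notin [W]$ is \emph{not} automatic, the crux is to first move $[x]$ off the bad subspace. Here I would use $\mathcal L_\mu = L$: because $[x] \notin [L] = [\mathcal L_\mu]$, Remark \ref{works_FK} gives $\frac1n\log\|L_n x\| \to \lambda_1$ a.s., and more is true — the direction $[R_n x]$ (or a suitable word applied to $x$) converges, and for a trajectory realizing a near-optimal expansion the image $h\cdot[x]$ for a long word $h \in T$ lands arbitrarily close to $\textrm{Supp}(\nu)$, in particular off $[W]$ eventually. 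Then $g_n h \cdot [x] \to p(\mathfrak q)$, so $p(\mathfrak q) \in \overline{T\cdot[x]}$. This shows $\Lambda(T) = \overline{p^+(T_0)} \subset \overline{T\cdot[x]}$ for every such $[x]$; intersecting with $O = \PV \setminus [L]$ and using that $\Lambda^a(T)$ is closed, $T$-invariant, and contained in every orbit closure inside $O$, it is the unique $T$-minimal subset of $O$ (any minimal set must contain some orbit closure, hence contains $\Lambda^a(T)$; minimality forces equality, and existence is guaranteed since $\Lambda^a(T)\neq\emptyset$ by item (1)). The main obstacle is precisely this step of guaranteeing that an arbitrary starting point $[x] \notin [L]$ can be pushed off the (a priori $\mathfrak q$-dependent) exceptional subspace $[W]$ by an element of $T$; this is where the hypothesis $\mathcal L_\mu = L$ — rather than merely $[x]\notin[L]$ as a set-theoretic condition — does real work, via the Furstenberg–Kifer growth dichotomy and the contraction of the random walk established in Section 4.
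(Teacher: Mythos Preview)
Your argument has a genuine gap in Item (1), at the step where you pass from ``$R_n/N(R_n)$ converges to a rank-one endomorphism'' to ``$R_n$ is proximal for large $n$ (after slight perturbation).'' Convergence of the normalized product to a rank-one map $A(\omega)$ tells you that the ratio of the two top singular values of $R_n$ tends to zero; it says nothing about the eigenvalues. A matrix close to a rank-one map is proximal only when that rank-one map has its image \emph{outside} its kernel (equivalently, is a nonzero multiple of a projection). For the right random walk $R_n = X_1\cdots X_n$, Proposition \ref{propunicite} pins down $\textrm{Im}(A(\omega)) = [Z(\omega)]$, but gives no control on $\textrm{Ker}(A(\omega))$ relative to it; your ``perturbation argument (composing with a generic further word)'' does not address this, and there is no evident reason it should succeed without further input. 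The same gap recurs in the converse inclusion $\textrm{Supp}(\nu) \subset \overline{p^+(T_0^a)}$.

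The paper closes this gap by a different mechanism. It passes to the bilateral shift $\Omega = \GL(V)^{\mathbb Z}$, applies the invertible form of Oseledets to obtain the equivariant splitting $V = \bigoplus_i F_\omega^i$, and then uses Poincar\'e recurrence to find random times $n_k$ at which the conjugating matrices $\phi(\theta^{n_k}\omega)\phi^{-1}(\omega)$ return to the identity. Along this subsequence one gets $\lambda_{n_k}^{-1} L_{n_k} \to \Pi$, where $\Pi$ is explicitly the rank-one \emph{projection} onto $F^1_\omega = [Z(\omega)]$ with kernel $\bigoplus_{i\geq 2} F^i_\omega$. Since a rank-one projection is proximal, the perturbation argument now goes through and yields $L_{n_k} \in T_0^a$ with $p^+(L_{n_k}) \to [Z(\omega)]$. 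This recurrence step is the missing idea in your proposal. The same machinery also streamlines Item (3): because $[x]\notin [L]$ forces (by Furstenberg--Kifer) $x \notin \textrm{Ker}(\Pi(\omega)) = \bigoplus_{i\geq 2} F^i_\omega$ almost surely, one gets $L_{n_k}[x] \to [Z(\omega)]$ directly, without any need to first push $[x]$ off the $\mathfrak q$-dependent exceptional subspace $[W]$ as you attempt. Your arguments for the inclusions $\overline{p^+(T_0)} \subset \textrm{Supp}(\nu)$ and $\Lambda(T) \subset \textrm{Supp}(\nu)$, on the other hand, are correct and essentially match the paper's.
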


We easily deduce the following characterization of the compactness of $\textrm{Supp}(\nu)$ when seen in   the open subset $O=\PV\setminus [L]$ of $\PV$.

  \begin{corollaire}  The following are equivalent: 
  
  \begin{enumerate}
 \item $\textrm{Supp}(\nu)\cap O$ is compact 
 \item $\textrm{Supp}(\nu)$ is a $T$-minimal subset of $\PV$. 
  \item There exists $[x]\in O$ such that   $\overline{T \cdot  [x]}^O$ is compact  
 \item (assume in this part   that $U=V$ and use the notation of Remark \ref{explique-notation})\\
 There exists $c>0$ such that for every 
    $g =\left(\begin{matrix} 
A& B\\
0& C
\end{matrix} \right)\in T_0^a$, one has 
$$||\left(A-\lambda_{\textrm{top}}(C) I\right)^{-1} (B \xi_C)||<c,$$ where  $||\cdot||$ is a fixed norm on $L$, $I$ is the identity matrix, 
$\lambda_{\textrm{top}}(C)\in \kk$ is the top eigenvalue of $C$ and
 $\xi_C$ is any   eigenvector of $C$ corresponding to $\lambda_{\textrm{top}}(C)$ of norm one.

  \end{enumerate} \label{cor-limit}\end{corollaire}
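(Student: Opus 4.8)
The plan is to derive everything from Theorem \ref{limitset}, which gives $\textrm{Supp}(\nu)=\Lambda(T)=\overline{p^{+}(T_0^a)}$ (closure in $\PV$), shows $\Lambda^a(T)=\Lambda(T)\cap O$ is nonempty (since $T_0^a\neq\emptyset$ and $p^{+}(T_0^a)\subset[U\setminus L]\subset O$), and identifies $\Lambda^a(T)$ as the unique $T$-minimal subset of $O$. First I would record two elementary topological facts, both using that $\PV$ is compact (a local field is locally compact, so the unit sphere of $V$ is compact) and that $O=\PV\setminus[L]$ is open in it: for $Y\subset O$ one has $\overline{Y}^{O}=\overline{Y}\cap O$; and, via the homeomorphism $O\simeq X/\mathcal{S}_{\kk}$ of Section \ref{geobehind} with $X=L\times S(V/L)$, the continuous $\mathcal{S}_{\kk}$-invariant function ``norm of the $L$-coordinate'' $p\mapsto\|t(p)\|$ is proper, i.e.\ $\overline{Y}^{O}$ is compact if and only if $\sup_{p\in Y}\|t(p)\|<\infty$ (here one uses compactness of closed balls of $L$ and of the sphere $S(V/L)$). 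In particular $\textrm{Supp}(\nu)\cap O=\Lambda^a(T)$, and since $p^{+}(T_0^a)\subset\Lambda^a(T)\subset\Lambda(T)=\overline{p^{+}(T_0^a)}$ we get $\Lambda(T)=\overline{\Lambda^a(T)}$.

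Next I would prove the cycle $(1)\Rightarrow(2)\Rightarrow(3)\Rightarrow(1)$. For $(1)\Rightarrow(2)$: if $\Lambda^a(T)$ is compact it is closed in $\PV$, so $\Lambda(T)=\overline{\Lambda^a(T)}=\Lambda^a(T)\subset O$; any nonempty closed $T$-invariant $F\subset\textrm{Supp}(\nu)$ then lies in $O$, is closed in $O$, hence equals $\Lambda^a(T)$ by $T$-minimality in $O$, so $\textrm{Supp}(\nu)$ is $T$-minimal in $\PV$. For $(2)\Rightarrow(3)$: $\Lambda(T)\cap[L]$ is a closed $T$-invariant subset of $\Lambda(T)=\textrm{Supp}(\nu)$, proper because $\Lambda^a(T)\neq\emptyset$; $T$-minimality forces it empty, so $\Lambda(T)\subset O$ and is compact; then for any $[x]\in\Lambda(T)=\Lambda^a(T)$ the set $\overline{T\cdot[x]}^{O}$ is a nonempty closed $T$-invariant subset of $\Lambda^a(T)$, hence equals it and is compact. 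For $(3)\Rightarrow(1)$: if $\overline{T\cdot[x]}^{O}$ is compact it is closed in $\PV$, so $\overline{T\cdot[x]}=\overline{T\cdot[x]}^{O}\subset O$ is compact; by the third item of Theorem \ref{limitset}, $\textrm{Supp}(\nu)=\Lambda(T)\subset\overline{T\cdot[x]}\subset O$, so $\textrm{Supp}(\nu)\cap O=\Lambda(T)$ is a closed subset of the compact space $\PV$, hence compact.

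It remains to prove $(1)\Leftrightarrow(4)$ when $U=V$. Given $g=\begin{pmatrix}A&B\\0&C\end{pmatrix}\in T_0^{a}$, the dominant eigenvalue of $g$ equals $\lambda_{\textrm{top}}(C)$ and is simple, so $A-\lambda_{\textrm{top}}(C)I$ is invertible on $L$ and the corresponding eigenvector of $g$ is $t_0+\xi_C$ with $t_0=-(A-\lambda_{\textrm{top}}(C)I)^{-1}(B\xi_C)\in L$; as explained in Remark \ref{explique-notation}, in the coordinates of Section \ref{geobehind} the point $p^{+}(g)$ is $\mathcal{S}_{\kk}(t_0,\xi_C)$, so the norm of its $L$-coordinate is exactly $\|(A-\lambda_{\textrm{top}}(C)I)^{-1}(B\xi_C)\|$ (independent of the chosen unit eigenvector $\xi_C$, since $\xi_C$ is determined up to $\mathcal{S}_{\kk}$). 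Hence condition $(4)$ says precisely that $\sup\{\|t(p)\|:\ p\in p^{+}(T_0^a)\}<\infty$, which by the properness fact above is equivalent to $\overline{p^{+}(T_0^a)}^{O}$ being compact; since $\overline{p^{+}(T_0^a)}^{O}=\overline{p^{+}(T_0^a)}\cap O=\textrm{Supp}(\nu)\cap O$, this is exactly $(1)$.

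The main obstacle I anticipate is the bookkeeping separating closures taken in $O$ from closures taken in $\PV$, and in particular making rigorous the equivalence between boundedness of the $L$-coordinate and relative compactness in $O$: this is the one place where one genuinely uses the skew-product description of Section \ref{geobehind} together with local compactness of $\kk$ (compactness of closed balls in $L$ and of $S(V/L)$). Once these ingredients are set up, each implication is a short formal consequence of Theorem \ref{limitset}.
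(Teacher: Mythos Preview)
Your proof is correct and follows essentially the same approach as the paper: both derive everything from Theorem \ref{limitset} (items 1 and 3), use the $T$-invariance of $[L]$ to pass between minimality in $\PV$ and compactness in $O$, and handle $(1)\Leftrightarrow(4)$ via the skew-product identification $O\simeq X/\mathcal{S}_{\kk}$ from Section \ref{geobehind} together with compactness of $S(V/L)$. Your version is somewhat more explicit in separating closures in $O$ from closures in $\PV$ and in organizing $(1),(2),(3)$ as a cycle rather than pairwise equivalences, but the underlying arguments coincide.
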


    \begin{remarque} 
\begin{enumerate}

\item  When $\LL_{\mu}=\{0\}$ (as in the i-p case or in the expansive cases of Examples 2 and 3 
in Section \ref{example_guiding}),  it follows from Corollary \ref{cor-limit} that  
$\textrm{Supp}(\nu)$ is the unique $T$-minimal subset of $\PV$. 
When $\LL_{\mu}\neq \{0\}$ (as the contracting cases of Examples 2 and 3) and 
$\textrm{Supp}(\nu)\cap O$ is compact, then  
the latter is a $T$-minimal subset of $\PV$ but never the unique such one as $[L]$ is a compact $T$-invariant subset of $\PV$
 that does not intersect 
$\textrm{Supp}(\nu)$. In particular, $\textrm{Supp}(\nu)$ is the unique $T$-minimal subset of $\PV$, if and only if, 
  $\LL_{\mu}=\{0\}$. 
\item It follows from Theorem \ref{limitset} that the support of $\nu$  depends only on $T$ and not on $\mu$ (provided  $T_{\mu}=T$, $\lambda_1(\mu)>\lambda_2(\mu)$ and $\mathcal{U}_{\mu}=U$, in which case $L=\mathcal{L}_{\mu}$ is uniquely determined as $U\not\subset L$).

\item We assume $U=V$ and adopt the notation of Remark \ref{explique-notation}. 
It follows from item 3 of Theorem \ref{limitset} that a sufficient condition for the non compactness of 
 $\textrm{Supp}(\nu)$ in $O$ is the existence of at least one proximal element $g\in T$ with an attracting direction $p^+(g)\in [L]$. We will check in Lemma \ref{additional} that proximality is not needed, i.e.~ if there exists  $g=   \left(\begin{matrix} 
 A& B\\
 0& C
 \end{matrix} \right) \in T$ with $\rho_{\textrm{spec}}(A)>\rho_{\textrm{spec}}(C)$, then $\textrm{Supp}(\nu)\cap O$ is not compact. 
 For the situation where $T$ is  a non degenerate  semigroup of the affine transformations of the real line
  in the contracting case, this boils down to the well-known fact 
    that
  the support of the unique stationary measure $\nu$ on the affine line is non compact when there exists at least one transformation $x\mapsto ax +b$ with $|a|>1$.

\item We continue the previous remark. 
 It is easy to see 
   that if $\textrm{Supp}(\mu)$ is a bounded subset of affinities of the real line such that $|a|<1$ for every $x\mapsto ax+b$ in 
   $\textrm{Supp}(\mu)$, then the support of the unique stationary measure on the real line is compact
    (the well-known example of Bernoulli convolutions fits in this category, see Remark \ref{convolution} ). 
In our situation,  having   $\rho_{\textrm{spec}}(A)<\rho_{\textrm{spec}}(C)$ for every  $g=   \left(\begin{matrix} 
 A& B\\
 0& C
 \end{matrix} \right)$ in $T$ is not sufficient to insure the compactness of the support of $\nu$ in $O$. We refer to Example 3.~ of Section \ref{example-simulation}. 
 
 \item We give in Section \ref{compactness-criterion} a sufficient condition 
 for the compactness of the support of $\nu$ in 
 $O$ in the case   $\textrm{dim}(\LL_{\mu})=1$ (see Example 3 of Section \ref{example_guiding}) 
 using the notion of joint spectral radius and the geometric setting of Section \ref{geobehind}.  Note that the 
 joint spectral radius is known to play a role in the existence of an 
  attractor to   affine iterated functions systems (IFS) and that projective IFS are gaining  a lot of importance recently 
 (see for instance \cite[Section 5]{barnsley1}). 
  
 \item  
It is definitely interesting to conduct a study concerning the tail of $\nu$ when the latter 
is not compact in $O$. 
We refer to   \cite{guivarch-page-affine1} for the case of the affine line.    
 
\end{enumerate}
 \label{remark_support}\end{remarque} 
  
  \subsection{Regularity of the stationary measure}\label{section_regularity}
The following result shows that the unique stationary measure  $\nu$ given
  by Theorem \ref{existence_uncite} has H\"older regularity when $\mu$ has an exponential moment, i.e.~when  $\int_{\GL(V)}{||g^{\pm 1}||^{\tau}\,d\mu(g)}<+\infty$ for some
   $\tau>0$.  We denote by $\delta$ the Fubini-Study metric on the projective space $\PV$ (see Definition \ref{fubini-study}).  We recall that  
 the projective subspace of $\PV$ generated by $\nu$ is $[\mathcal{U}_{\mu}]$ and that $\nu$ is non degenerate on it. 
 Hence, the following result gives a precision of that fact. 

 \begin{theo}
 Let $\mu$ be a probability measure on $\GL(V)$ such that
  $\lambda_1>\lambda_2$.  
  If $\mu$ has an exponential moment, then
  there exists $\alpha>0$ such that
$$\sup_{\textrm{$H$ hyperplane of $\mathcal{U}_{\mu}$}} \int{\delta^{-\alpha}([x],[H])\,d\nu([x])}<+\infty.$$

          \label{th1}\end{theo}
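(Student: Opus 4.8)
The plan is to reduce the Hölder regularity of $\nu$ on $[\mathcal{U}_\mu]$ to an exponential-decay estimate for the cocycle of log-contractions, exploiting that every limit point of the normalized right random walk $R_n/\|R_n\|$ is a.s.\ rank one with image direction $[Z(\omega)]$ distributed as $\nu$ (Proposition \ref{propunicite}). Concretely, for a hyperplane $H$ of $\mathcal{U}_\mu$, write $H=\ker\varphi$ for a linear form $\varphi$ on $\mathcal{U}_\mu$, so that $\delta([x],[H])\asymp |\varphi(x)|/(\|\varphi\|\,\|x\|)$; the claim is equivalent to a uniform (in $\varphi$) bound $\int \big(\|\varphi\|\|x\|/|\varphi(x)|\big)^{\alpha}\,d\nu([x])<\infty$. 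Since $\nu$ lives on $\PV\setminus[\mathcal{L}_\mu]$ and is non-degenerate on $[\mathcal{U}_\mu]$, no hyperplane of $\mathcal{U}_\mu$ contains $\mathrm{Supp}(\nu)$, so the integrand is a.e.\ finite; the issue is integrability with a \emph{uniform} exponent.

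**Main steps.** First I would set up a self-improving (renewal/Furstenberg-type) inequality. Using stationarity, $\int \psi_H\,d\nu = \iint \psi_{H}(g\cdot x)\,d\mu(g)\,d\nu([x])$ where $\psi_H([x])=\delta^{-\alpha}([x],[H])$; and $\delta(g\cdot x, g\cdot H)\geq \gamma(g)^{-2}\,\delta([x],[H])$ for a suitable conorm-type factor, while $g\cdot H$ ranges over hyperplanes again. The standard trick (as in \cite{guivarch-raugi}, \cite{BQbook}) is to show there is a constant $C$ and, for small $\alpha$, a quantity $\rho(\alpha)<1$ with $\sup_H \int\psi_H\,d\nu \le \rho(\alpha)\sup_H\int\psi_{H}\,d\nu + C(\alpha)$, forcing the supremum to be finite. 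The gain factor $\rho(\alpha)$ comes from an exponential moment estimate: there is $n_0$ and $\alpha_0>0$ such that $\sup_{[x],H}\E\big[(\text{contraction of }R_{n_0}\text{ at }[x]\text{ toward }H)^{\alpha}\big]<1$ for $\alpha<\alpha_0$. This is where the hypothesis $\lambda_1>\lambda_2$ enters: $\tfrac1n\log\|\wedge^2 L_n\|\to\lambda_1+\lambda_2<2\lambda_1$, which by the submultiplicativity/large-deviation bounds available under an exponential moment (Section 3, and the large deviations for the norm cocycle underlying Proposition \ref{hitting}) gives exponential decay of $\E\big[(\|\wedge^2 R_n\|/\|R_n\|^2)^{\alpha}\big]$; one then relates $\delta(g\cdot x,g\cdot H)/\delta([x],[H])$ to $\|\wedge^2 g\|/\|g\|^2$ up to the distance of $[x]$ to the "most contracted" direction and of $H$ to the image hyperplane.

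**Handling the non-irreducible geometry.** The subtlety absent from the i-p case is that $\nu$ may charge directions near $[\mathcal{L}_\mu]$, where the action of $R_n$ need not contract uniformly. Here I would use the skew-product description of Section \ref{geobehind}: project to $V/\mathcal{L}_\mu$, where the induced measure $\psi\star\nu$ is the i-p stationary measure (the action on $V/\mathcal{L}_\mu$ is strongly irreducible and proximal, Lemma \ref{degenere}), apply the classical Hölder regularity there, and control the fiber direction via the affine cocycle $\sigma(g,\xi)$ of \eqref{easybutcrucial}, whose linear part has spectral radius governed by $\lambda(\mathcal{L}_\mu)<\lambda_1=\lambda(V/\mathcal{L}_\mu)$, giving a contraction in the fiber. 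Combining the base Hölder bound with a fiberwise exponential-moment estimate (again from $\lambda(\mathcal{L}_\mu)<\lambda_1$ plus the exponential moment of $\mu$) yields the uniform bound on $\mathcal{U}_\mu$ after noting $[\mathcal{U}_\mu]$ is exactly the projective span of $\mathrm{Supp}(\nu)$, so hyperplanes of $\mathcal{U}_\mu$ are the "worst case".

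**Expected main obstacle.** The hard part will be the uniformity over all hyperplanes $H$ of $\mathcal{U}_\mu$ \emph{together with} the non-uniform contraction near $[\mathcal{L}_\mu]$: one must show the gain factor $\rho(\alpha)$ can be taken $<1$ uniformly, which requires that $\nu$ does not concentrate too fast near any hyperplane — precisely the content one is trying to prove, so the self-improvement argument must be set up carefully (e.g.\ first prove a weak a priori bound $\int\psi_H\,d\nu<\infty$ for \emph{each} fixed $H$ from non-degeneracy of $\nu$, then bootstrap to uniformity via the contraction inequality and a compactness/continuity argument on the hyperplane $H\mapsto\int\psi_{g\cdot H}\,d\nu$). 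Matching the exponents coming from the base i-p regularity and from the fiber affine contraction, so that a single $\alpha>0$ works, is the technical crux.
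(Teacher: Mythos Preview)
Your plan is a genuinely different route from the paper's. You propose a functional bootstrap: iterate the stationary equation to obtain a self-improving inequality $\sup_H\int\psi_H\,d\nu\le\rho(\alpha)\sup_H\int\psi_H\,d\nu+C(\alpha)$ with $\rho(\alpha)<1$ coming from $\E[(\|\wedge^2 R_n\|/\|R_n\|^2)^\alpha]\to 0$, and treat the non-irreducible geometry by splitting into the i-p quotient $V/\mathcal{L}_\mu$ plus the contracting affine fiber. The paper instead realizes $\nu$ as the law of the random direction $[Z]$ and estimates the tail $\nu\{\delta(\cdot,[H])\le e^{-\epsilon n}\}=\p(\delta([Z],[H])\le e^{-\epsilon n})$ directly: it first proves exponential convergence $\E\,\delta(R_n[x],[Z])\le e^{-n\beta}/\delta([x],[\mathcal{L}_\mu])$ (Theorem \ref{direction1}), then uses a deterministic lemma (Lemma \ref{distance_hyperplan}) to lower bound $\delta(R_n[x],[\ker f])$ by the dual norm ratio $\|R_n^t f\|/(\|R_n^t\|\,\|f\|)$, which is controlled by the cocycle large-deviation estimate (Corollary \ref{rapport}) applied to $\check\mu$ on $V^*$. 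Summing over dyadic shells gives the integral bound with an explicit $\delta([f],[\mathcal{L}_{\check\mu}])^{-1}$ in place of your supremum over $H$; Theorem \ref{th1} follows since every hyperplane of $\mathcal{U}_\mu$ is $\mathcal{U}_\mu\cap\ker f$ for some $f$ with $\delta([f],[\mathcal{L}_{\check\mu}])$ bounded away from $0$ (take $f\perp\mathcal{L}_{\check\mu}=\mathcal{U}_\mu^0$).

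The practical difference is the circularity you yourself flag. Your bootstrap inequality is only informative once you already know $\sup_H\int\psi_H\,d\nu<\infty$; non-degeneracy of $\nu$ gives per-$H$ finiteness but no quantitative bound, and $H\mapsto\int\psi_H\,d\nu$ has no a priori continuity, so the compactness argument you sketch is not obviously closable. Moreover, when you unfold $\psi_H(R_n x)$ you get a factor $\big(\|f\|\,\|R_n x\|/(\|R_n^t f\|\,\|x\|)\big)^\alpha$ multiplying $\psi_{[\ker R_n^t f]}(x)$; for $[f]$ near $[\mathcal{L}_{\check\mu}]$ this factor is not uniformly small, so the contraction constant $\rho(\alpha)$ degenerates exactly where you need it most. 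The paper's tail-probability method bypasses both issues: the estimate $\p(\delta([Z],[H])\le e^{-\epsilon n})\le e^{-\beta n}/\delta([f],[\mathcal{L}_{\check\mu}])$ is proved from scratch with no a priori input, and the non-uniformity in $f$ is isolated in the explicit prefactor rather than hidden in a spectral gap. The duality $(\mathcal{L}_\mu,\mathcal{U}_\mu)\leftrightarrow(\mathcal{U}_{\check\mu},\mathcal{L}_{\check\mu})$ of Lemma \ref{duality} replaces your base/fiber splitting and is what makes the hyperplane-uniformity transparent.
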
 
    
 \begin{remarque}
 \begin{enumerate}
 \item 
    We note that we will give a slightly more general statement in Theorem \ref{regularite}
     involving the distance to any projective hyperplane of $\PV$.  
\item  Assume that  $\mathcal{U}_{\mu}$ is not a one dimensional subspace of $V$ (otherwise $\nu$ is a Dirac probability measure). 
Theorem \ref{th1} implies then, through Markov's inequality,  that $\nu$ is  $\alpha$-H\"older, i.e.~ there exists $D>0$ such that for every $\epsilon>0$, and for $\nu$-almost every $[x]\in [\mathcal{U}_{\mu}]$, 
     $\nu\left(B([x],\epsilon)\right)<  D \epsilon^{\alpha}$, where $B(\cdot, \cdot)$ denotes the open ball in the metric space $\left([\mathcal{U}_{\mu}], \delta\right)$.    In particular, the Hausdorff dimension of $\nu$ is greater or equal to $\alpha$. 
     \end{enumerate}
     \end{remarque}    
\noindent In the i-p case, Theorem \ref{th1} is known and is due to  Guivarc'h \cite{Guivarch3}. When applied to the affine group it is new. More precisely, 

\begin{corollaire}
Let $\mu$ be a probability measure on the group of affinities of
an affine space $L$ whose support does not fix any proper affine
subspace. Assume that the Lyapunov exponent of the linear part of
$\mu$ is negative (contracting case). Then the unique
$\mu$-stationary probability measure $\nu$ on $L$   has   a positive Hausdorff dimension. 
 \label{dimaffine}\end{corollaire}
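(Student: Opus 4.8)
The plan is to deduce this directly from Theorem \ref{th1} by setting up the affine situation as an instance of the general framework. First I would realize the affine space $L$ of dimension $m$ as the affine chart $\PV \setminus [L_0]$ inside $\PV$, where $V = L \oplus \kk$ has dimension $m+1$ and $L_0 := L \times \{0\}$ is the hyperplane at infinity; the group of affinities of $L$ embeds in $\GL(V)$ as block matrices $\left(\begin{smallmatrix} A & \underline{b} \\ 0 & 1 \end{smallmatrix}\right)$, with $A(\mu)$ the linear part. Let $a_1$ be the top Lyapunov exponent of the linear part. The contracting hypothesis $a_1 < 0$ gives, via Lemma \ref{triangulaire} and Corollary \ref{corollaire-triangulaire} (as recalled in Example 2(a) of Section \ref{example_guiding}), that $\lambda_1(\mu) = \max\{a_1,0\} = 0 > a_1 \geq \lambda_2(\mu)$, so the hypothesis $\lambda_1 > \lambda_2$ of Theorem \ref{th1} is satisfied, and moreover $\mathcal{L}_\mu = L_0$. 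The assumption that $\textrm{Supp}(\mu)$ fixes no proper affine subspace of $L$ translates (again as in Example 2(a)) into the statement that every $G_\mu$-stable subspace of $V$ is contained in $L_0$; hence $\mathcal{U}_\mu = V$. By Theorem \ref{existence_uncite} there is a unique $\mu$-stationary probability measure $\nu$ on $\PV \setminus [L_0]$, which is exactly the unique $\mu$-stationary measure on the affine space $L$ whose existence and uniqueness is classical in the contracting case.

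The second step is to upgrade the moment hypothesis so that Theorem \ref{th1} applies. Theorem \ref{th1} requires $\mu$ to have an exponential moment, which the statement of Corollary \ref{dimaffine} does not assume; I would therefore first reduce to the case of a compactly supported $\mu$ by a standard truncation. Concretely, choose $R$ large enough that $\mu_R := \mu(\cdot \mid \|g^{\pm 1}\| \leq R)$ still has negative linear Lyapunov exponent and still fails to fix a proper affine subspace (both are open conditions once $R$ is large, since the restricted support generates the same semigroup up to finite index behavior; more carefully, one can pass to $\mu' = $ the normalized restriction to a large ball and check that $T_{\mu'}$ is still non-degenerate by taking $R$ past the finitely many generators needed). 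Since $\textrm{Supp}(\nu')$ for the truncated measure is contained in the closure of the affine orbit and $\nu'$ is non-degenerate on $\PV$, a positive lower bound on the Hausdorff dimension of $\nu'$ already yields the qualitative conclusion we want once we argue that $\nu$ dominates $\nu'$ in a suitable sense — alternatively, and more cleanly, I would simply invoke Theorem \ref{th1} for $\mu' $, obtain $\alpha > 0$ and $\sup_H \int \delta^{-\alpha}([x],[H])\, d\nu'([x]) < +\infty$, and conclude $\dim_H(\nu') \geq \alpha$ via Markov's inequality as in the Remark following Theorem \ref{th1}.

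The third step is the passage from the Hölder/moment estimate to the Hausdorff dimension bound for $\nu$ itself. The Fubini-Study distance $\delta$ on $\PV$ restricts, on the affine chart $L$, to a metric bi-Lipschitz equivalent to $\min(1, \|\cdot\|)$ away from infinity; projective hyperplanes of $\PV$ other than $[L_0]$ restrict to affine hyperplanes of $L$. Hence the estimate $\int \delta^{-\alpha}([x],[H])\, d\nu([x]) < +\infty$, uniformly over projective hyperplanes $H$, gives in particular that for every affine hyperplane $H \subset L$ one has $\int_{L} \textrm{dist}(x,H)^{-\alpha}\, d\nu(x) < +\infty$; by Markov's inequality $\nu(\{x : \textrm{dist}(x,H) < \epsilon\}) = O(\epsilon^\alpha)$ uniformly in $H$, and covering a ball $B(x,\epsilon)$ by its containment between parallel hyperplanes (or directly, covering by cubes of side $\epsilon$ and using that a ball of radius $\epsilon$ lies within distance $\epsilon$ of some hyperplane) yields $\nu(B(x,\epsilon)) \leq D\epsilon^\alpha$. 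The standard mass distribution principle (Frostman) then gives $\dim_H(\nu) \geq \alpha > 0$, which is the assertion. The main obstacle here is the first step's truncation argument: one must ensure that restricting $\mu$ to a large ball preserves both the contracting condition and the non-degeneracy (no fixed affine subspace), and reconcile $\nu$ with $\nu'$ — if a clean comparison is awkward, the fallback is to observe that the conclusion "$\dim_H(\nu) > 0$" is really a statement about $T_\mu$ and the contracting dynamics, so one may as well replace $\mu$ from the outset by any compactly supported measure with the same semigroup (e.g. supported on a finite generating set), for which Theorem \ref{th1} applies verbatim; I expect that to be the cleanest route and would write the proof that way.
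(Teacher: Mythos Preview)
Your first step is exactly what the paper does: embed the affine group in $\GL(V)$ with $V=L\oplus\kk$, identify $\mathcal{L}_\mu$ with the hyperplane at infinity and $\mathcal{U}_\mu=V$ via Example 2(a) of Section \ref{example_guiding}, and check $\lambda_1>\lambda_2$. The paper's entire proof of the corollary is then a one-line citation of Theorem \ref{regularite} (the more general form of Theorem \ref{th1}) together with the Remark following Theorem \ref{th1}, which already explains how Markov's inequality converts the integral bound into $\nu(B([x],\epsilon))\le D\epsilon^\alpha$ and hence $\dim_H(\nu)\ge\alpha$. Your third step reproves that remark in the affine chart, which is fine though unnecessary.

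The genuine problem is your second step. The paper does not attempt any truncation; it tacitly carries over the exponential-moment hypothesis from Theorem \ref{th1} (the corollary is stated in Section \ref{section_regularity} immediately after that theorem, as an application of it). Your proposed reduction to compact support is not valid as written: the stationary measure $\nu$ depends on $\mu$ itself, not merely on the semigroup $T_\mu$. Theorem \ref{limitset} only says that $\textrm{Supp}(\nu)$ depends on $T_\mu$; the measure, and in particular its Hausdorff dimension, can change when $\mu$ is replaced by a truncation $\mu_R$ or by another measure supported on a finite generating set. So neither the comparison of $\nu$ with $\nu'$ nor the ``fallback'' of swapping $\mu$ for a compactly supported measure with the same semigroup yields a statement about $\dim_H(\nu)$. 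You should simply add the exponential-moment assumption (as the paper implicitly does) and drop the truncation paragraph entirely.
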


\begin{remarque} We note that the problem of 
estimation of the Hausdorff dimension of $\nu$ was initially considered by Erd\"{o}s (see for instance \cite{sixty}) if $T\subset \textrm{Aff}(\R)$ preserves an interval of the line. It  led recently to deep results in similar situations (see \cite{hochman}, \cite{breuillard-varju} for example).  In   the more general situation of this paper, we get only qualitative results on the dimension of $\nu$. \label{convolution}\end{remarque}
 
One of the important estimates in random matrix products theory is the probability of return of the random walk to hyperplanes. It is well studied in the i-p case and leads to fundamental spectral gap results \cite{bourgain_gamburd}, \cite{breuillard_gamburd}, \cite{saxce}, \cite{breuillard_note}.... The general setting studied in this paper leads to new estimates in this direction.  \begin{prop}
Let $V$ a finite dimensional vector space and  $\mu$ be a
probability measure on $\GL(V)$ with an exponential moment such
that $\lambda_1>\lambda_2$.
  Then, for every  $\epsilon>0$, there exist $\beta=\beta(\epsilon)>0$, $n_0=n_0(\epsilon)\in \N$ such that for every $n\geq n_0$,  $x\in V\setminus \mathcal{L}_{\mu}$ and every $f\in V^*\setminus \LL_{\check{\mu}}$, 
      
\begin{equation}   \p \left[ \delta\left(L_n[x], [\textrm{Ker}(f)]\right)\leq \exp(-\epsilon n)
 \right]\leq   \frac{\exp\left(-n \beta \right)}{\delta([x], [\LL_{\mu}])\, \delta([f], [\LL_{\check{\mu}}])}.\nonumber\end{equation}
 In this statement $V^*$ denotes the dual space of $V$ and  $\check{\mu}$ is  the pushforward probability measure of $\mu$ by the map $g \in \GL(V)\longmapsto g^t\in \GL(V^*)$. 
 \label{hitting}\end{prop}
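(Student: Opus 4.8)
The plan is to reduce the statement to a product of two one-dimensional large-deviation estimates — one controlling the distance from $L_n[x]$ to the relevant ``expanding'' direction and one controlling the action on the dual — and then to invoke the quantitative regularity of $\nu$ from Theorem \ref{th1} together with the $\mu$-boundary property from Theorem \ref{existence_uncite}. The first observation is that for $x\notin\mathcal{L}_\mu$ and $f\notin\mathcal{L}_{\check\mu}$ one has, by the remark following Theorem \ref{furstenberg-kifer}, that $\frac1n\log\|L_nx\|\to\lambda_1$ and $\frac1n\log\|L_n^tf\|\to\lambda_1(\check\mu)=\lambda_1$ almost surely; moreover $\delta(L_n[x],[\mathrm{Ker}(f)])$ is comparable to $\frac{|\langle f, L_nx\rangle|}{\|f\|\,\|L_nx\|}$, which equals $\frac{|\langle L_n^tf, x\rangle|}{\|L_n^tf\|\,\|L_nx\|}\cdot\frac{\|L_n^tf\|}{\|f\|}$. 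So the event in question forces $|\langle L_n^tf,x\rangle|$ to be exponentially smaller than $\|L_n^tf\|\,\|x\|$, i.e.\ the direction $[L_n^tf]\in\mathrm{P}(V^*)$ comes exponentially close to the hyperplane $[\mathrm{Ker}(x)]$ (here $x$ is viewed as a linear form on $V^*$), together with the a priori estimate $\|L_nx\|\geq \|x\|\cdot e^{n\lambda_1 - o(n)}$ which is already a large-deviation input.

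The core step I would carry out is the following \emph{reversal} argument, standard in the i-p theory but to be adapted here. Writing $L_n = X_n\cdots X_1$, the law of $L_n$ equals the law of $R_n = X_1\cdots X_n$; and for the reversed walk $R_n^t = X_1^t\cdots X_n^t$ acting on $V^*$, the direction $[R_n^t f]$ converges almost surely (by the $\mu$-boundary statement of Theorem \ref{existence_uncite} applied to $\check\mu$ on $\mathrm{P}(V^*)$, using $\lambda_1(\check\mu)>\lambda_2(\check\mu)$ — which holds since the Lyapunov exponents of $\mu$ and $\check\mu$ coincide) to a random limit $[Z^*(\omega)]$ whose law is the stationary measure $\check\nu$ on $\mathrm{P}(V^*\setminus\mathcal{L}_{\check\mu})$. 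Conditioning on the tail, the probability that $[L_n^tf]$ is within $e^{-\epsilon n}$ of the fixed hyperplane $[\mathrm{Ker}(x)]$ is controlled, up to the exponential error coming from the speed of convergence to $[Z^*]$, by $\check\nu$ of an $e^{-c\epsilon n}$-neighborhood of that hyperplane, and Theorem \ref{th1} (its dual version, for $\check\nu$) bounds this by $C\,e^{-\alpha c\epsilon n}\,\delta([x],[\mathcal{L}_{\check\mu}])^{-\alpha}$ after a Markov inequality — note $[\mathrm{Ker}(x)]$ is a hyperplane of $\mathrm{P}(\mathcal{U}_{\check\mu})$ or can be intersected with it, and its distance to $[\mathcal{L}_{\check\mu}]$ is what produces the denominator $\delta([x],[\mathcal{L}_\mu])$ in the final bound after translating back between $V$ and $V^*$. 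The symmetric factor $\delta([f],[\mathcal{L}_{\check\mu}])^{-1}$ comes from the same estimate applied in the other order, or more precisely from bounding $\frac{\|L_n^tf\|}{\|f\|}$ from below by $e^{n\lambda_1-o(n)}\delta([f],[\mathcal{L}_{\check\mu}])$ via a quantitative form of Furstenberg--Kifer, since $f\notin\mathcal{L}_{\check\mu}$.

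Concretely, the steps in order: (1) record that $\lambda_1(\mu)=\lambda_1(\check\mu)$, $\mathcal{L}_{\check\mu}$ is the Furstenberg--Kifer subspace for $\check\mu$, and $\lambda_1(\check\mu)>\lambda_2(\check\mu)$, so all of Theorems \ref{existence_uncite} and \ref{th1} apply to $\check\mu$; (2) establish the comparison $\delta(L_n[x],[\mathrm{Ker}(f)]) \asymp \frac{|\langle L_n^t f, x\rangle|}{\|L_n^t f\|\,\|L_n x\|}$ and a two-sided quantitative Furstenberg--Kifer bound $\|L_n x\| \geq \|x\|\,\delta([x],[\mathcal{L}_\mu])\,e^{n\lambda_1}\,e^{-o(n)}$ with large-deviation control on the $o(n)$ (this is where an exponential moment and a concentration argument enter, and is the step I expect to need the most care); (3) split the event $\{\delta(L_n[x],[\mathrm{Ker}(f)])\leq e^{-\epsilon n}\}$ into a ``bad speed'' part (where one of the norm estimates fails) handled by (2), and a ``good speed'' part where the event reduces to $[L_n^t f]$ being $e^{-c\epsilon n}$-close to $[\mathrm{Ker}(x)]\cap[\mathcal{U}_{\check\mu}]$; (4) on the good part use stationarity of $\check\nu$ (the identity $\check\nu = \int \delta_{[L_n^t f]}\,d(\text{something})$ is not literally true, so instead use that $\int \phi\,d\check\nu = \mathbb{E}[\phi(R_n^t \cdot)]$-type invariance together with the boundary convergence, plus an exponential speed-of-convergence estimate of the form $\mathbb{P}[\delta([R_n^t f],[Z^*])>e^{-c n}]\leq e^{-c' n}$ which itself follows from the i-p-type arguments of \cite{guivarch-raugi} applied on $\mathrm{P}(\mathcal{U}_{\check\mu})$ where the action is genuinely proximal by Lemma \ref{degenere}) to transfer the estimate to $\check\nu$ of a small neighborhood of a hyperplane; (5) apply the dual Theorem \ref{th1} and Markov's inequality to bound that by $C\,e^{-\alpha c\epsilon n}\,\delta([\mathrm{Ker}(x)],[\mathcal{L}_{\check\mu}])^{-\alpha}$; (6) assemble the pieces, absorb all polynomial-in-$\delta$ factors and the finitely many exponents into a single $\beta(\epsilon)>0$ and $n_0(\epsilon)$, using $\delta([\mathrm{Ker}(x)],[\mathcal{L}_{\check\mu}])\gtrsim \delta([x],[\mathcal{L}_\mu])$ by a duality/linear-algebra lemma. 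The main obstacle is step (4): making the passage from ``the random direction $[L_n^t f]$ hits a neighborhood of a hyperplane'' to ``$\check\nu$-mass of that neighborhood'' quantitative with an exponential error term, without circularity — this requires either the regularity of $\check\nu$ bootstrapped with a Furstenberg-type contraction estimate, or a direct martingale/large-deviation argument on $\log\delta([R_n^tf],[\mathrm{Ker}(x)])$ showing it drifts to $-\infty$ linearly; the simplicity $\lambda_1>\lambda_2$ is exactly what makes this drift strictly negative, via the gap $\lambda_2-\lambda_1<0$ after restriction to $\mathcal{U}_\mu$.
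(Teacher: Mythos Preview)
Your strategy is valid and would go through, but it is the dual (and heavier) twin of the paper's argument, and contains one genuine slip in the bookkeeping of left versus right walks.

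The paper's proof is a two-line triangle-inequality argument entirely in $\PV$. Since $L_n$ and $R_n$ have the same law, it suffices to bound $\p[\delta(R_n[x],[\mathrm{Ker}(f)])\le e^{-\epsilon n}]$. If $\delta(R_n[x],[Z])\le e^{-\epsilon n}$ as well, then $\delta([Z],[\mathrm{Ker}(f)])\le 2e^{-\epsilon n}$; so the event is contained in $\{\delta(R_n[x],[Z])>e^{-\epsilon n}\}\cup\{\delta([Z],[\mathrm{Ker}(f)])\le 2e^{-\epsilon n}\}$. The first piece is handled by Markov's inequality and Theorem~\ref{direction1}, giving the factor $\delta([x],[\mathcal L_\mu])^{-1}$; the second piece, since $[Z]$ has law $\nu$, is exactly estimate~\eqref{energy} from the proof of Theorem~\ref{regularite}, giving the factor $\delta([f],[\mathcal L_{\check\mu}])^{-1}$. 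No dualisation, no norm-ratio estimates.

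Your route rewrites $\delta(L_n[x],[\mathrm{Ker}(f)])=\delta([L_n^t f],[\mathrm{Ker}(x)])\cdot\frac{\|L_n^t f\|\,\|x\|}{\|f\|\,\|L_nx\|}$, controls the ratio via Corollary~\ref{rapport}, and then runs the same pair of ingredients (exponential convergence in direction and H\"older regularity) on the $\check\mu$ side. That works, but the norm-ratio step is exactly the price you pay for not trading $L_n$ for $R_n$ at the outset. Two corrections: first, $R_n^t=(X_1\cdots X_n)^t=X_n^t\cdots X_1^t$, not $X_1^t\cdots X_n^t$; it is $L_n^t=X_1^t\cdots X_n^t$ that is the \emph{right} walk for $\check\mu$, and this is precisely what makes Theorem~\ref{direction1} (applied to $\check\mu$) available. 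Second, what you identify as ``the main obstacle'' in step~(4) is not an obstacle: the exponential speed-of-convergence estimate you need is exactly Theorem~\ref{direction1}, proved in the paper via the cocycle lemma (Lemma~\ref{cocycle}) without any reduction to an i-p subrepresentation. Once you have that, step~(4) is just the same triangle inequality as above, transported to $\PS$.
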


\section{Preliminaries}\label{preliminaries}
\subsection{Linear algebra preliminaries}
Our proofs rely on  suitable choice of norms on our vector spaces and on the 
expression of the distance between a point   and a projective subspace of $\PV$ (Lemma \ref{dist_droite_sev} below). 
  For the convenience of the reader, we recall in Section \ref{nonarchisection} basic facts about orthogonality in non-Archimedean vector spaces
  (c.f.~ \cite{nonarchi} for instance). 
The reader interested only in vector spaces over Archimedean fields can check directly Section \ref{fubinisection}.

\subsubsection{Non-Archimedean orthogonality}\label{nonarchisection}

Let $(\mathrm{k},|\cdot|)$ be a non-Archimedean local field. 
We denote by $\mathcal{O}_\mathrm{k}=\{x\in \mathrm{k}; |x|\leq 1\}$ its ring of integers and $\mathcal{O}_{\kk}^{\times}=\{x\in \kk; |x|=1\}$ the group of 
 units of $\mathcal{O}_{\kk}$. Let 
$V$ be  a   vector space over $\kk$ of dimension $d\in \N^*$ and $B_0=(e_1, \cdots, e_d)$   a fixed 
 basis of $V$.   We consider the following norm on $V$:
  $$||x||:=\max\{|x_i|; i=1, \cdots, d\}$$
 where  the $x_i$'s are the coordinates of the vector $x$ in the basis $B_0$.  Every such  finite 
 dimensional normed vector space over a non-Archimedean local field will said to be orthogonalizable.  \\
 
 We   say that two subspaces 
$E$ and $F$ of $V$ are  {orthogonal} when $||v+w||=\max\{||v||, ||w||\}$ for every 
$v\in E$ and $w\in F$.  A family of vectors $(v_1,\cdots, v_r)$ in $V$ is said to be orthogonal if for every $\alpha_1,\cdots, \alpha_r\in \kk$,
 $||\alpha_1 v_1+\cdots +\alpha_r v_r||=\max \{|\alpha_1 |||v_1||, \cdots, |\alpha_r| ||v_r||\}$. \\

  We recall that   $\GL_d(\mathcal{O}_\mathrm{k})$ is 
 the subgroup of the general linear group $\GL_d(\mathrm{k})$ formed by the  matrices  $g$ such that       $g$ and $g^{-1}$  have coefficients in $\mathcal{O}_\mathrm{k}$; which is equivalent to impose that $g$ has coefficients in $\mathcal{O}_\mathrm{k}$ and that 
  $\det(g)\in \mathcal{O}_\mathrm{k}^{\times}$. One can show that 
  $\GL_d(\mathcal{O}_\mathrm{k})$ is a maximal compact subgroup of $\GL_d(\mathrm{k})$.
  The following lemma gives crucial results of orthogonality in non-Archimedean vector spaces   
  similar to the classical  ones  in the Archimedean setting.
 
 \begin{lemme}
 
\begin{enumerate}
\item  For every basis $B=(v_1,\cdots, v_d)$ of $V$, the following statements are equivalent:

 \begin{itemize}
 \item[i.] $B$ is orthonormal
   \item[ii.]  The  transition matrix from $B_0$ to $B$ belongs to $\GL_d(\mathcal{O}_\mathrm{k})$
   \item[iii.] $B$ is a basis of the   $\mathcal{O}_\mathrm{k}$-module  $\mathcal{O}_\mathrm{k}e_1\oplus \cdots \oplus \mathcal{O}_\mathrm{k} e_d\simeq \mathcal{O}_\mathrm{k}^d$.\end{itemize}
\item  Every subspace $E$ of $V$ has an orthonormal basis and admits an  orthogonal complement $E^{\perp}$.
 \end{enumerate}\label{Gram-Schmidt}\end{lemme}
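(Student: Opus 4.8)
The plan is to prove the two parts of Lemma~\ref{Gram-Schmidt} in turn, starting with the equivalences in Part~1 and then deducing the existence statements in Part~2 from the chain (ii)$\Rightarrow$(iii)$\Rightarrow$(i) together with an inductive construction. For Part~1, I would run the cycle (i)$\Rightarrow$(ii)$\Rightarrow$(iii)$\Rightarrow$(i). The implication (i)$\Rightarrow$(ii): if $B=(v_1,\dots,v_d)$ is orthonormal, then for any scalars $\alpha_i$ we have $\|\sum\alpha_iv_i\|=\max|\alpha_i|$, which is exactly the sup-norm expressed in the basis $B$; so the transition matrix $P$ from $B_0$ to $B$ is an isometry of $(\kk^d,\|\cdot\|_{\infty})$. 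An isometry of the sup-norm must send the unit ball $\mathcal O_\kk^d$ onto itself, hence has entries in $\mathcal O_\kk$, and the same applies to $P^{-1}$; therefore $P\in\GL_d(\mathcal O_\kk)$. The implication (ii)$\Rightarrow$(iii) is immediate: an element of $\GL_d(\mathcal O_\kk)$ is precisely an invertible change of basis of the free $\mathcal O_\kk$-module $\mathcal O_\kk e_1\oplus\cdots\oplus\mathcal O_\kk e_d$, so $B$ is again an $\mathcal O_\kk$-basis of it. For (iii)$\Rightarrow$(i): if $(v_1,\dots,v_d)$ generates the module $\mathcal O_\kk^d$ (under the identification with $B_0$), then each $v_i$ has coordinates in $\mathcal O_\kk$, so $\|v_i\|\le 1$; conversely writing $e_j=\sum_i c_{ij}v_i$ with $c_{ij}\in\mathcal O_\kk$ gives $\|e_j\|\le\max_i\|v_i\|$, and since $\|e_j\|=1$ this forces $\max_i\|v_i\|\ge 1$, and in fact (looking at each $v_i$ individually, using that the determinant of the transition matrix is a unit) $\|v_i\|=1$ for all $i$. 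Finally, for an arbitrary vector $x=\sum\alpha_iv_i$, the inequality $\|x\|\le\max|\alpha_i|$ is the ultrametric triangle inequality; the reverse inequality follows because the transition matrix lies in $\GL_d(\mathcal O_\kk)$, so the coordinates of $x$ in $B_0$ and in $B$ have the same maximum absolute value. This gives orthonormality.

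For Part~2, I would argue by induction on $\dim V$ that every subspace $E$ has an orthonormal basis and an orthogonal complement. The base case is trivial. For the inductive step, pick a nonzero $v\in E$; after scaling by a suitable element of $\kk^{\times}$ (using the polar decomposition $\kk^{\times}=\varpi^{\Z}\times\mathcal O_\kk^{\times}$) we may assume $\|v\|=1$ and moreover that $v$ has at least one coordinate in $\mathcal O_\kk^{\times}$; say the $j$-th coordinate $v_j$ is a unit. Then $(e_1,\dots,e_{j-1},v,e_{j+1},\dots,e_d)$ is again an $\mathcal O_\kk$-basis of $\mathcal O_\kk^d$ (the transition matrix differs from the identity only in column $j$, and its determinant is $v_j\in\mathcal O_\kk^{\times}$), hence by Part~1 it is an orthonormal basis of $V$. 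Let $F=\mathrm{span}(e_1,\dots,\widehat{e_j},\dots,e_d)$ in this new basis; then $V=\kk v\oplus F$ orthogonally. Now $E=\kk v\oplus (E\cap F)$: indeed $E\cap F$ is a subspace of the lower-dimensional orthogonalizable space $F$, so by induction it has an orthonormal basis $(w_2,\dots,w_r)$ and an orthogonal complement $G$ inside $F$; then $(v,w_2,\dots,w_r)$ is orthonormal (orthogonality of $v$ to each $w_i$ follows from orthogonality of $\kk v$ to $F$) and spans $E$, and $G$ is an orthogonal complement of $E$ in $V$ since $V=\kk v\oplus (E\cap F)\oplus G = E\oplus G$ with all summands mutually orthogonal.

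\textbf{Main obstacle.} I expect the delicate point to be the careful bookkeeping in the inductive step of Part~2 — specifically, arranging the chosen vector $v\in E$ to be extendable to an orthonormal basis of the ambient $V$ (the unit-coordinate trick), and then checking that the orthogonal decomposition of $V$ restricts correctly to $E$, i.e.\ that $E=\kk v\oplus(E\cap F)$ with the pieces still mutually orthogonal. The purely norm-theoretic facts (an isometry of the sup-norm preserves $\mathcal O_\kk^d$; the ultrametric inequality gives one direction of orthonormality for free) are routine once stated; the subtlety is entirely in the module-theoretic manipulations and making sure "orthogonal complement inside $F$" patches up to "orthogonal complement inside $V$". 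None of this is deep, but it needs to be written with care.
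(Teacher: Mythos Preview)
Your proof is correct. For Part~1, your argument is essentially the paper's: the authors simply record that the equivalences follow from the fact that $\GL_d(\mathcal{O}_\kk)$ is the isometry group of $(V,\|\cdot\|)$, which is exactly the content of your cycle (i)$\Rightarrow$(ii)$\Rightarrow$(iii)$\Rightarrow$(i) spelled out in detail.

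For Part~2 you take a genuinely different route. The paper appeals to the structure theorem for finitely generated modules over a PID: since $\mathcal{O}_\kk$ is a PID, the submodule $E\cap\mathcal{O}_\kk^d$ of $\mathcal{O}_\kk^d$ admits an adapted basis, i.e.\ there is an $\mathcal{O}_\kk$-basis $(v_1,\dots,v_d)$ of $\mathcal{O}_\kk^d$ and scalars $d_1,\dots,d_r\in\mathcal{O}_\kk$ such that $(d_1v_1,\dots,d_rv_r)$ is a basis of $E\cap\mathcal{O}_\kk^d$. Then $(v_1,\dots,v_d)$ is orthonormal by Part~1, $(v_1,\dots,v_r)$ spans $E$, and $(v_{r+1},\dots,v_d)$ spans an orthogonal complement --- all in one stroke. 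Your approach is an explicit non-Archimedean Gram--Schmidt by induction on $\dim V$: normalize a vector of $E$ so that it has a unit coordinate, extend to an orthonormal basis of $V$ by replacing one standard vector, and recurse on the hyperplane complement. Your method is more elementary (it avoids invoking the elementary divisor theorem) and arguably closer in spirit to the name ``Gram--Schmidt'' the paper gives the lemma; the paper's method is shorter and yields the orthonormal basis of $E$ and its complement simultaneously without any induction. Both are standard and either would be acceptable here.
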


\begin{proof}
Without loss of generality,  $V=\kk^d$ and $B_0$ the canonical basis.
  One can easily show that $\GL_d(\mathcal{O}_\mathrm{k})$  is  the isometry group of  $(V,||\cdot||)$.

\begin{enumerate}

\item  The equivalence between items i., ii.~and iii.~is an easy consequence of the fact that $\GL_d(\mathcal{O}_\mathrm{k})$ acts by isometries on $V$.
\item[2.,3.]  Let $r$ be the dimension of $E$ as a $\mathrm{k}$-vector space,  $M=\mathcal{O}_\mathrm{k}^d$ and $E'=E\cap M$. Then $M$ is a free  $\mathcal{O}_\mathrm{k}$-module of rank $d$ and $E'$ is a submodule.  Since $\mathrm{k}$ is a local field, then $\mathcal{O}_\mathrm{k}$ is a Principal Ideal Domain (PID). Then the structure theorem of modules over PID's gives a basis $B=(v_1,\cdots, v_n)$ of $M$, $r\in \N^*$ and scalars $d_1,\cdots, d_k\in \mathcal{O}_\mathrm{k}$ such that $(d_1v_1,\cdots, d_r v_r)$ is a basis of $E'$ as a $\mathcal{O}_\mathrm{k}$-module.  The set $B$ is clearly also a basis of the $\mathrm{k}$-vector space $V$, $r$ the dimension of $E$ as $\mathrm{k}$-vector space  and $(v_1,\cdots, v_r)$ a basis of the subspace $E$ of $V$.  By the equivalence between 1.i.~ and 1.iii.,  $B$ is orthonormal. Hence items 2 and 3 follow immediately.  \end{enumerate}
\end{proof}

\begin{remarque}
Unlike the Archimedean case, a subspace   may 
have more than one orthogonal complement in $V$. Indeed, consider $\kk=\Q_2$, $V=\kk^2$ and   the one dimensional subspaces 
 $E$, $E_1$ and $E_2$ of $V$ generated respectively by $(1,0)$, $(0,1)$ 
and $(1,1)$. Then  $E_1$ and $E_2$ are two distinct orthogonal complements of $E$ because the identity matrix and the matrix  $\left(
     \begin{array}{cc}
                                                                       1& 1 \\
                                                                       0 & 1 \\
                                                                     \end{array}
                                                                   \right)$ belong to $\textrm{SL}_2(\Z_2)$.                                                         
                                                                               \label{nonunicity}\end{remarque}

                                                                               \begin{remarque} Let $E$ be a subspace of $V$ and $||\cdot||$ the quotient norm in $V/E$, i.e.~ for every
                                                                               $\overline{x}\in V/E$, 
                                                                               $$||\overline{x}||:=\inf\{||x+y||; y\in E\}.$$
                                                                             One can   easily show  that for any orthogonal supplementary $E^{\perp}$ of $E$ in $V$,
                                                                              and for every $\overline{x}\in V/E$, the following holds:
                                                                              $$||\overline{x}||=||\pi_{E^{\perp}}(x)||.$$
                                                                              Here $\pi_{E^{\perp}}$ denotes the projection onto $E^{\perp}$ with kernel $E$. 
                                                                              \label{quotient_norm}\end{remarque}

\subsubsection{The Fubini-Study metric} \label{fubinisection}
Now $(\mathrm{k},|\cdot|)$ is a local field and $V$ a   vector space over $\mathrm{k}$ of 
dimension $d\geq 2$ and $B_0$ a fixed basis of $V$. 
When $\kk$ is Archimedean, we endow $V$ with the canonical norm $||\cdot||$ for which $(V,||\cdot||)$ is an inner product space and 
$B_0$ is an  orthonormal basis. When $\kk$ is non-Archimedean,  we endow $V$ with the norm described in the previous section. \\

\noindent We consider the norm on $\bigwedge^2 V$, which will be denoted  also by 
$||\cdot||$,  such that $(e_i\wedge e_j)_{1\leq i<j \leq d}$ is an orthonormal basis of $\bigwedge^2 V$.

\begin{propdef}(Fubini-Study metric) \\
Let $(V,||\cdot||)$ as above and $\PV$ the projective space of $V$. 
\begin{enumerate}
\item For every $[x],[y]\in \PV$, we set:
$$\delta([x],[y]):=\frac{||x \wedge y||}{||x|| ||y||}.$$
Then $\delta$ defines a metric on $\PV$, called the Fubini-Study metric (see for instance \cite[Prop. 2.8.18]{bombieri}).\\
\item For every subset $Y$ of $\PV$ and $[x]\in \PV$, let
$$\delta([x],Y)=\inf_{[y]\in Y}{\delta([x],[y])}.$$\end{enumerate}\label{fubini-study}\end{propdef}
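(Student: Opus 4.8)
The plan is to check the four metric axioms for $\delta$. Well-definedness on $\PV\times\PV$ (independence of the homogeneous representatives) is immediate from the multiplicativity of $|\cdot|$: replacing $x,y$ by $\lambda x,\mu y$ multiplies numerator and denominator both by $|\lambda||\mu|$. Positivity is clear, and separation reduces to the elementary fact that, for nonzero $x,y\in V$, one has $x\wedge y=0$ in $\bigwedge^2 V$ if and only if $x$ and $y$ are colinear, i.e.\ $[x]=[y]$. Symmetry follows from $y\wedge x=-(x\wedge y)$ together with $\|-v\|=\|v\|$, valid over any local field. The only substantial point is the triangle inequality, which I would treat separately in the non-Archimedean and the Archimedean cases.

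In the non-Archimedean case I would in fact establish the stronger \emph{ultrametric} inequality $\delta([x],[z])\le\max\{\delta([x],[y]),\delta([y],[z])\}$. Since $\delta$ is scale-invariant I may take representatives with $\|x\|=\|y\|=\|z\|=1$; for the sup-norm attached to the fixed orthonormal basis this means all coordinates lie in $\mathcal{O}_\kk$ and that each of $x,y,z$ has at least one coordinate that is a unit. Writing $r_{ij}=x_iz_j-x_jz_i$, $p_{ij}=x_iy_j-x_jy_i$, $q_{ij}=y_iz_j-y_jz_i$ for the Plücker coordinates of $x\wedge z$, $x\wedge y$, $y\wedge z$, a direct expansion yields the key identity
$$ y_k\,(x_iz_j-x_jz_i)\;=\;(x_iy_k-x_ky_i)\,z_j\;-\;(x_jy_k-x_ky_j)\,z_i\;+\;x_k\,(y_iz_j-y_jz_i), $$
valid for all indices $i,j,k$. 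Choosing $k$ with $|y_k|=1$ and using $|z_i|,|z_j|,|x_k|\le 1$, $|x_iy_k-x_ky_i|,|x_jy_k-x_ky_j|\le\|x\wedge y\|$ and $|y_iz_j-y_jz_i|\le\|y\wedge z\|$, the ultrametric inequality of $|\cdot|$ gives $|r_{ij}|\le\max\{\|x\wedge y\|,\|y\wedge z\|\}$; taking the maximum over $i<j$ gives $\|x\wedge z\|\le\max\{\|x\wedge y\|,\|y\wedge z\|\}$, which is the claim. (In particular $(\PV,\delta)$ is an ultrametric space in this case.)

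In the Archimedean case ($\kk=\R$ or $\C$) I would identify $\delta$ with a classical metric. By Lagrange's identity (resp.\ its Hermitian analogue), for unit vectors $\delta([x],[y])^2=1-|\langle x,y\rangle|^2$, so $\delta([x],[y])$ is the sine of the (acute) angle between the lines $[x]$ and $[y]$; equivalently, $\delta([x],[y])=\|\pi_{[x]}-\pi_{[y]}\|_{\mathrm{op}}$, where $\pi_{[v]}$ is the orthogonal projection of $V$ onto $\kk v$ — an identity one checks by a unitary change of basis reducing to the span of $x$ and $y$, which is at most two-dimensional. The triangle inequality for $\delta$ is then exactly the triangle inequality for the operator norm applied to $\pi_{[x]}-\pi_{[z]}=(\pi_{[x]}-\pi_{[y]})+(\pi_{[y]}-\pi_{[z]})$; alternatively one simply invokes \cite[Prop.~2.8.18]{bombieri}.

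The main obstacle is, unsurprisingly, the triangle inequality, and more precisely the fact that no single computation covers both cases: the Plücker identity above gives the non-Archimedean estimate almost for free once one works with norm-one representatives, but over $\R$ or $\C$ summing that identity over $k$ only reproduces a tautology, so there one must pass through the Euclidean/Hermitian structure (the projection picture, equivalently the sine of the angle). Care is also needed in the non-Archimedean reduction: one should note that for the sup-norm attached to an orthonormal basis, $\|v\|=1$ already forces some coordinate of $v$ to be a unit, which is what makes the choice of the index $k$ legitimate.
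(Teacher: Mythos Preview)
Your argument is correct. Note, however, that the paper does not actually supply a proof of this Proposition/Definition: it simply records the formula and refers to \cite[Prop.~2.8.18]{bombieri} for the fact that $\delta$ is a metric, and in the remark immediately following it lists the ultrametric property in the non-Archimedean case among the ``easy facts'' without justification. So there is no paper proof to compare against; what you have written is a self-contained verification that fills in what the paper leaves to the reference.

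A couple of minor comments. Your Pl\"ucker identity
\[
y_k\,(x_iz_j-x_jz_i)\;=\;(x_iy_k-x_ky_i)\,z_j\;-\;(x_jy_k-x_ky_j)\,z_i\;+\;x_k\,(y_iz_j-y_jz_i)
\]
is exactly the computation behind the ultrametric claim the paper states without proof, and your normalization $\|x\|=\|y\|=\|z\|=1$ (hence all coordinates in $\mathcal{O}_\kk$ with at least one unit coordinate for each vector) is the clean way to make the estimate go through. In the Archimedean case your identification $\delta([x],[y])=\|\pi_{[x]}-\pi_{[y]}\|_{\mathrm{op}}$ is correct and gives the triangle inequality for free; this is a slightly different packaging from what one finds in \cite{bombieri}, but entirely equivalent.
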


\begin{remarque} The following are easy facts. 
\begin{enumerate}
\item 
When $\kk$ is non-Archimedean, $\delta$ is actually ultrametric. 
\item The metric $\delta$ is bounded by one. 
\item If $x$ and $y$ are orthogonal, then $\delta([x],[y])=1$. \end{enumerate}
\end{remarque}

The following lemma will be fundamental for us. 
 
\begin{lemme}
Let $\mathrm{k}$ be a local  field and $(V,||\cdot||)$ as above. 
Let $E$ be a subspace of $V$ 
and $E^{\perp}$ an orthogonal complement (see Lemma \ref{Gram-Schmidt} when $\mathrm{k}$ is non-Archimedean). 
We denote by $\pi_{E^{\perp}}$ the  
 projection onto $E^{\perp}$ with kernel $E$.  
By abuse of notation, we denote also by $||\cdot||$
  the quotient norm on $V/E$ (see Remark \ref{quotient_norm}). 
  Then, for every non zero vector $x$ of $V$,

\begin{equation}\delta([x],[E])=\frac{||\pi_{E^{\perp}}(x)||}{||x||}=\frac{||\overline{x}||}{||x||}.\label{normes}
\end{equation}
 
 \label{dist_droite_sev}\end{lemme}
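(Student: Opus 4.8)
\textbf{Proof plan for Lemma \ref{dist_droite_sev}.}

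The plan is to reduce the distance $\delta([x],[E])$ to a minimization that can be solved explicitly using the orthogonality of the decomposition $V = E \oplus E^{\perp}$. First I would write $x = e + f$ with $e \in E$ and $f = \pi_{E^{\perp}}(x) \in E^{\perp}$, so that $f$ is fixed once $x$ is given. For an arbitrary non-zero $y \in E$, the wedge $x \wedge y = (e+f)\wedge y = f \wedge y$ since $e \wedge y = 0$ (both lie in $E$). Hence
$$\delta([x],[y]) = \frac{\|x\wedge y\|}{\|x\|\,\|y\|} = \frac{\|f \wedge y\|}{\|x\|\,\|y\|},$$
and the task becomes to show $\inf_{y \in E\setminus\{0\}} \frac{\|f\wedge y\|}{\|y\|} = \|f\|$, after which dividing by $\|x\|$ and invoking Remark \ref{quotient_norm} (which identifies $\|f\| = \|\pi_{E^\perp}(x)\| = \|\overline{x}\|$) finishes the proof.

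The two inequalities are handled separately. For the lower bound, fix $y \in E$ and complete $y/\|y\|$ to an orthonormal basis of $E$; then together with an orthonormal basis of $E^{\perp}$ we obtain an orthonormal basis of $V$, with respect to which the induced basis of $\bigwedge^2 V$ is orthonormal (by our normalization of the norm on $\bigwedge^2 V$ and Lemma \ref{Gram-Schmidt} in the non-Archimedean case). Writing $f$ in the $E^{\perp}$-part of this basis, the element $\frac{y}{\|y\|}\wedge f$ has all its coordinates among the chosen basis vectors of $\bigwedge^2 V$ that involve the $y$-direction, and one reads off $\|\frac{y}{\|y\|}\wedge f\| = \|f\|$ in the non-Archimedean case and $\ge \|f\|$ (in fact with equality, by Pythagoras restricted to the relevant coordinates) in the Archimedean case; either way $\frac{\|f\wedge y\|}{\|y\|} \ge \|f\|$. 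For the upper bound (achieving the infimum), note that if $f \neq 0$ one can take any $y \in E$ orthogonal to... — more simply, take $y$ to be a unit vector of $E$ appearing in an orthonormal basis adapted as above: then $\frac{\|f\wedge y\|}{\|y\|} = \|f\|$ exactly, so the infimum is attained. If $f = 0$, i.e.\ $x \in E$, then $\delta([x],[E]) = 0 = \|\overline{x}\|/\|x\|$ trivially.

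The main obstacle is the non-Archimedean bookkeeping: one must be careful that ``orthonormal basis of $E$ extended by orthonormal basis of $E^{\perp}$ is an orthonormal basis of $V$'' genuinely holds, and that the wedge of two orthonormal vectors has norm exactly $1$ in the chosen norm on $\bigwedge^2 V$ — this is where Lemma \ref{Gram-Schmidt}, part 1 (transition matrices in $\GL_d(\mathcal{O}_{\mathrm k})$) and the definition of $\|\cdot\|$ on $\bigwedge^2 V$ via $(e_i\wedge e_j)$ are used, since a change of orthonormal basis of $V$ induces, on $\bigwedge^2 V$, a matrix again in $\GL_{\binom d2}(\mathcal{O}_{\mathrm k})$ and hence an isometry. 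In the Archimedean case everything is the standard computation that the distance from a point to a subspace in projective space equals the sine of the angle, i.e.\ the norm of the orthogonal component; the only point to check is compatibility of the $\bigwedge^2$ norm with the inner product structure, which is immediate from orthonormality of $(e_i\wedge e_j)$.
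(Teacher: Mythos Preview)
There is a genuine gap at the very first reduction step. You write $x = e + f$ with $e\in E$, $f\in E^\perp$, and then claim that for any nonzero $y\in E$ one has $x\wedge y = f\wedge y$ ``since $e\wedge y = 0$ (both lie in $E$)''. This is false as soon as $\dim E\geq 2$: two vectors in $E$ have zero wedge only when they are linearly dependent. So in general $x\wedge y = e\wedge y + f\wedge y$ with $e\wedge y\neq 0$, and the minimization problem you set up---$\inf_{y}\|f\wedge y\|/\|y\|$---is \emph{not} the quantity $\delta([x],[E])$ you are trying to compute.

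Both of your inequalities are affected. For the lower bound, your computation that $\|y\wedge f\|/\|y\|=\|f\|$ is correct, but what you actually need is $\|x\wedge y\|/\|y\|\geq \|f\|$; the missing ingredient is that $e\wedge y\in\bigwedge^2 E$ and $f\wedge y\in E^\perp\wedge E$ lie in \emph{orthogonal} subspaces of $\bigwedge^2 V$ (with respect to the induced norm), whence $\|x\wedge y\|\geq \|f\wedge y\|=\|f\|\,\|y\|$. This is exactly how the paper argues. For the upper bound, picking an arbitrary unit vector $y$ of $E$ gives $\|x\wedge y\|=\max\{\|e\wedge y\|,\|f\wedge y\|\}$ in the non-Archimedean case (and the Pythagorean sum in the Archimedean case), which can exceed $\|f\|$ if $\|e\wedge y\|>\|f\|$. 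The fix is to choose $y=e$ itself (when $e\neq 0$), so that $e\wedge y$ genuinely vanishes and $\|x\wedge y\|=\|f\wedge e\|\leq \|f\|\,\|e\|$; when $e=0$ any $y\in E$ works since then $x\wedge y=f\wedge y$ really does hold. Once you make these two corrections your argument becomes essentially the paper's proof.
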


\begin{proof}By Remark \ref{quotient_norm}, it is remaining to prove   the left equality only.    Let $[x]\in \PV$. WLOG $x\not\in E$. 
We write $x=x_1+x_2$, with $0\neq x_1\in E$ and $\pi_{E^{\perp}}(x)= x_2\in  E^{\perp}$.
On the one hand, 
$$\delta([x],[E])\leq \delta([x],[x_1])= \frac{||x \wedge x_1||}{||x||\,||x_1||} =  \frac{||x_2 \wedge x_1||}{||x||\,||x_1||}\leq \frac{||x_2||}{||x||}.$$
This proves that $\delta([x],[E]) \leq  \frac{||\pi_{E^{\perp}}(x)||}{||x||}$. 
 
\noindent On the other hand, let $B$ be an orthonormal basis of $V$ obtained by concatenating a orthonormal basis, 
say $(v_1,\cdots, v_r)$ of $E$  and  an orthonormal basis, say $(v_{r+1},\cdots, v_d)$, of $E^{\perp}$
 (see Lemma \ref{Gram-Schmidt} when $\mathrm{k}$ is non-Archimedean). Let   $y\in E\setminus\{0\}$. 
  By writing $x_1,x_2$ and $y$ in the basis $B$, we see  that
$x_1\wedge y$ belongs to subspace of $\bigwedge^2 V$ generated by
 $(v_i\wedge v_j)_{1\leq i< j \leq r}$ and $x_2\wedge y$ to the one generated by $(v_i\wedge v_j)_{(i,j)\in  \{1,\cdots, r\}\times \{r+1,\cdots, d\}}$. 
The basis $(v_i\wedge v_j)_{1\leq i<j \leq d}$ is also orthogonal in $\bigwedge^2 V$. Hence  $||x \wedge y||= ||x_1\wedge y + x_2\wedge y||\geq 
||x_2\wedge y||$. Since $x_2$ and $y$ are orthogonal in $V$, we have 
   that $||x_2\wedge y||=||x_2||\,||y||$. Hence, for every $y\in E\setminus\{0\}$, $||x \wedge y||\geq ||x_2||\,||y||$. Hence,
$$\delta([x],[E])=\inf_{[y]\in [E]}{\frac{||x \wedge y||}{||x||\,||y||}}\geq \frac{||x_2||}{||x||}=\frac{||\pi_{E^{\perp}}(x)||}{||x||}.$$
The left equality of \eqref{normes} is proved.

\end{proof}

 \subsection{Preliminaries on Lyapunov exponents}

In Lemma \ref{corollaire-triangulaire}, we recall a   crucial result due to Furstenberg-Kifer  that  reduces the computation of the top Lyapunov exponent
 of a random walk on a group of upper triangular block matrices to the top Lyapunov exponents of
  the random walks induced on the diagonal parts. For the reader's convenience, we include a proof.
The, we  deduce     Corollary \ref{corollaire-triangulaire} which shows  that all the other Lyapunov exponents of
$\mu$ can be also read on the diagonal part with the right multiplicity.
\begin{lemme}\cite[Lemma 3.6]{furstenberg-kifer}, \cite{bougerol} \\
Let $\mathrm{k}$ be a local field, $V$ a finite dimensional vector space
defined over $\mathrm{k}$,    $\mu$ be a probability on $\GL(V)$ having a
moment of order one. Consider a $G_{\mu}$-invariant subspace $W$
of $V$. Then the first Lyapunov exponent $\lambda_1$ of $\mu$ is
given by:
 $$\lambda_1= \max \{\lambda_1(W), \lambda_1(V/W)\}.$$\label{triangulaire}\end{lemme}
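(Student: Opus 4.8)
The plan is to prove the two inequalities $\lambda_1 \geq \max\{\lambda_1(W), \lambda_1(V/W)\}$ and $\lambda_1 \leq \max\{\lambda_1(W), \lambda_1(V/W)\}$ separately. The first inequality is the easy direction: since $W$ is $G_\mu$-invariant, for any nonzero $x \in W$ we have $L_n x \in W$, so the growth rate of $\|L_n x\|$ inside $V$ equals the growth rate computed in $W$; taking $x$ generic (outside $\mathcal{L}_\mu(\mu|_W)$, using Theorem \ref{furstenberg-kifer} applied to the restricted walk) gives $\lambda_1 \geq \lambda_1(W)$. Similarly, passing to the quotient $\pi\colon V \to V/W$, which is $G_\mu$-equivariant and norm-nonincreasing for the quotient norm, we get $\|\overline{L_n x}\| \leq C\|L_n x\|$ for a suitable constant, hence $\lambda_1 \geq \lambda_1(V/W)$ by choosing $x$ whose image in $V/W$ is generic. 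Both use the almost-sure characterization of the top exponent from Theorem \ref{furstenberg-kifer}.

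For the reverse inequality, I would fix a norm adapted to a splitting $V = W \oplus \tilde W$ (an orthogonal complement in the sense of Lemma \ref{Gram-Schmidt} when $\kk$ is non-Archimedean, an orthogonal complement in the usual sense otherwise), and write each $g \in \GL(V)$ in block-upper-triangular form $\begin{pmatrix} A(g) & B(g) \\ 0 & C(g)\end{pmatrix}$ relative to this decomposition, where $A(g)$ acts on $W$ and $C(g)$ on $V/W$. Then $L_n = \begin{pmatrix} A_n & B_n \\ 0 & C_n \end{pmatrix}$ with $A_n = A(X_n)\cdots A(X_1)$, $C_n = C(X_n)\cdots C(X_1)$, and $B_n$ given by the standard recursive formula $B_n = A(X_n) B_{n-1} + B(X_n) C_{n-1}$, whence $B_n = \sum_{k=1}^n A(X_n)\cdots A(X_{k+1}) B(X_k) C(X_{k-1})\cdots C(X_1)$. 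Since $\|L_n\| \leq \|A_n\| + \|B_n\| + \|C_n\|$ up to a fixed constant, it suffices to control $\limsup \frac1n \log\|B_n\|$. The blocks $\|A(X_k)\|$, $\|B(X_k)\|$, $\|C(X_k)\|$ all have moment of order one (being dominated by $\|X_k\|$), and $\frac1n\log\|A_n\| \to \lambda_1(W)$, $\frac1n\log\|C_n\| \to \lambda_1(V/W)$ almost surely.

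The main obstacle — and the crux of the lemma — is showing $\limsup_n \frac1n \log\|B_n\| \leq \max\{\lambda_1(W),\lambda_1(V/W)\}$ from the sum formula, because the summand $A(X_n)\cdots A(X_{k+1})\, B(X_k)\, C(X_{k-1})\cdots C(X_1)$ involves a \emph{backward} product on the $A$-side and a forward product on the $C$-side, and one must beat the linear number of terms. The standard way, which I would follow (this is essentially the Furstenberg–Kifer argument, cf. also \cite{bougerol}), is: fix $\epsilon > 0$; by the subadditive ergodic theorem there is an almost-sure constant $M_\epsilon$ with $\|A(X_j)\cdots A(X_i)\| \leq M_\epsilon\, e^{(\lambda_1(W)+\epsilon)(j-i+1)}$ uniformly and similarly for $C$ — more precisely one uses that $\E \log\|A(X_1)\cdots A(X_m)\| \leq m(\lambda_1(W)+\epsilon)$ for $m$ large, combined with a Borel–Cantelli / maximal inequality argument to handle all the blocks $(i,j)$ simultaneously while absorbing the factor $B(X_k)$ (which has subexponential growth by the moment assumption and Borel–Cantelli). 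Then each of the $n$ summands is bounded by $M_\epsilon^2\, e^{(\max\{\lambda_1(W),\lambda_1(V/W)\}+2\epsilon)n}$ times a subexponential factor, and summing the $n$ terms only costs a polynomial factor, giving $\frac1n\log\|B_n\| \leq \max\{\lambda_1(W),\lambda_1(V/W)\} + 2\epsilon + o(1)$. Letting $\epsilon \to 0$ finishes the proof. I would present the forward direction in a few lines and spend the bulk of the argument on this uniform-bound estimate for $B_n$, taking care that the constants are genuinely uniform over the index ranges appearing in the sum.
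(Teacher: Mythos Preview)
Your approach is correct in outline but takes a genuinely different route from the paper. You attack the sum formula $B_n = \sum_{k=1}^n A(X_n)\cdots A(X_{k+1})\,B(X_k)\,C_{k-1}$ directly and try to bound each term. The paper instead uses a doubling trick: writing $L_{2n} = L'_n L_n$ with $L'_n = X_{2n}\cdots X_{n+1}$ independent of $L_n$ and with the same law, one gets $B_{2n} = A'_n B_n + B'_n C_n$. Since $\|B_n\|\le \|L_n\|$ already grows like $e^{n\lambda_1}$, and $\|A'_n\|,\|C_n\|$ grow like $e^{n\lambda_1(W)},e^{n\lambda_1(V/W)}$, one obtains (in probability) $\|L_{2n}\|\lesssim e^{n(\lambda_1+\tilde\lambda)}$ with $\tilde\lambda=\max\{\lambda_1(W),\lambda_1(V/W)\}$; comparing with $\|L_{2n}\|\gtrsim e^{2n\lambda_1}$ forces $\lambda_1\le\tilde\lambda$. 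This bootstrapping completely sidesteps the uniform control of backward products that you identify as the crux.

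One caution on your version: the claim that there is an almost-sure constant $M_\epsilon$ with $\|A(X_j)\cdots A(X_{i+1})\|\le M_\epsilon\,e^{(\lambda_1(W)+\epsilon)(j-i)}$ uniformly over all $i<j$ is false as stated, since already $\sup_i\log\|A(X_{i+1})\|=+\infty$ a.s.\ whenever $\log\|A(X_1)\|$ is unbounded. What the blocking/Borel--Cantelli argument you sketch actually delivers is the weaker bound $\|A(X_j)\cdots A(X_{i+1})\|\le e^{(\lambda_1(W)+\epsilon)(j-i)+\epsilon n}$ for all $1\le i<j\le n$ (the extra $e^{\epsilon n}$ absorbing short blocks and edge effects), and this is exactly what you need. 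So your argument goes through once you state the intermediate estimate correctly. Also, the easy direction does not require Theorem~\ref{furstenberg-kifer}: $\lambda_1\ge\lambda_1(W)$ follows from $\|L_n|_W\|\le\|L_n\|$ and $\lambda_1\ge\lambda_1(V/W)$ from $\|\pi(L_n)\|\le C\|L_n\|$, both directly from the definition via $\frac1n\log\|L_n\|$.
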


 \begin{proof} Since we deal here only  with only top Lyapunov exponents, we will omit the subscript $1$ in the notation. 
  
 Without loss of generality, all the elements of $G_{\mu}$ are represented by  $d\times d$ invertible  matrices of the form $\left(\begin{array}{cc}
                A & B \\
                0 & C
              \end{array}\right)$ where $A$ represents the action of $G_{\mu}$ on $W$ and  $C$  the action on the quotient $V/W$.   We use the canonical norm on $V$ and the associated operator norm on $\textrm{End}(V)$. Only     the inequality  $\lambda  \leq  \tilde{\lambda}:= \max \{\lambda(W), \lambda (V/W)\}$ requires a proof.  
   For every $n\in \N^*$, write       $L_n=\left(\begin{array}{cc}
                                                                                                                           A_n & B_n \\
                                                                                                                           0 & C_n \\
                                                                                                                         \end{array} \right)$ for the left random walk at time $n$. Denote by $(L'_n)_{n\in \N^*}$   the sequence of random variables defined by $L'_n:=X_{2n}\cdots X_{n+1}$ so that $L_{2n}=L'_n L_n$.  Writing $L'_n=\left(\begin{array}{cc}
                                                                                                                           A'_n & B'_n \\
                                                                                                                           0 & C'_n \\
                                                                                                                         \end{array}\right)$, we have that:                                                                                                                         
                                                                                                                                                          \begin{equation}B_{2n}=A'_nB_n+B'_nC_n.\label{eq11}\end{equation}                                                               
 Fix for now 
     $\epsilon>0$ and $\eta \in (0,1)$. 
We know that the sequences of random variables  $\left(\frac{1}{n} \log ||L_n||\right)_{n\in \N^*}$ $\left(\frac{1}{n} \log ||A_n||\right)_{n\in \N^*}$ and $\left(\frac{1}{n} \log ||C_n||\right)_{n\in \N^*}$ converge in probability respectively to $\lambda$, $\lambda(W)$ and $\lambda(V/W)$.
Moreover, $A'_n$ (resp. $B'_n$) has the same law as $A_n$ (resp. $B_n$) for every $n\in \N^*$. We deduce that there exists   $n_0=n_0(\epsilon, \eta)$, such that for every $n\geq n_0$, all the following four real number are greater than $1-\eta$: 
 $\p\left( ||B_n||\leq e^{n \lambda  +n\epsilon}\right)$,  $\p\left( ||C_n||\leq e^{n \lambda(V/W)+n\epsilon}\right)$,  $\p\left( ||A'_n||\leq e^{n \lambda(W)+n\epsilon}\right)$ and  $ \p\left( ||B'_n||\leq e^{n \lambda  +n\epsilon}\right)$.       Using now identity \eqref{eq11} and the inequality $\tilde{\lambda}\leq \lambda$, we deduce that for $n\geq n_0$, 
\begin{equation}\p \left( ||B_{2n}||\leq 2 e^{n (\lambda + \tilde{\lambda})+ 2 n \epsilon}\right)\geq 1-4\eta.\label{opo1}\end{equation}
  But since $2\lambda(W)\leq \lambda+\widetilde{\lambda}$ and $2\lambda(V/W)\leq \lambda +\widetilde{\lambda}$, we obtain two other   estimates similar to  \eqref{opo1} by replacing $B_{2n}$ with $A_{2n}$ and $C_{2n}$ respectively (and  taking again $n_0$ bigger if necessary). Hence,  for every $n\geq n_0$,  
 $$\p\left( ||L_{2n}||\leq  4 e^{n (\lambda +\tilde{\lambda} )+ 2  n \epsilon}\right)\geq 1-6\eta.$$
 But  by the convergence of  $\left(\frac{1}{n} \log ||L_n||\right)_{n\in \N^*}$ in probability to $\lambda$, we can impose that for $n\geq n_0$,  
  $\p\left( ||L_{2n}||\geq  e^{2 n\lambda - n  \epsilon}\right)\geq 1-\eta$, so that for $n\geq n_0$, 
   $$\p \left(e^{2n\lambda - n\epsilon} \leq ||L_{2n}|| \leq 4 e^{n (\lambda + \widetilde{\lambda}) + 2n \epsilon} \right)\geq 1- 7 \eta.$$
  Choosing any $\eta\in (0,\frac{1}{7})$, and letting $n\rightarrow +\infty$ and then $\epsilon \rightarrow 0$, we get that   $\lambda \leq \tilde{\lambda}$. 
      \end{proof}

\noindent
 
\begin{corollaire}
Consider the same situation as in the previous lemma.  Denote by
$S_1$ (resp.~$S_2$) the set of Lyapunov exponents  associated to
the probability measure induced on $W$ (resp.~$V/W$). Then the set
of Lyapunov exponents associated to $\mu$  is $S_1\cup S_2$. Also the multiplicity of an exponent for the random walk in $\GL(V)$ is the sum
 of its multiplicity as an exponent  for the restricted random walk in 
  $\GL(W)$ (if any) and as an exponent for the random walk in  $\GL(V/W)$  (if any).  
\label{corollaire-triangulaire}\end{corollaire}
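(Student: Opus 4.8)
The plan is to deduce the corollary from Lemma \ref{triangulaire} by applying it not to $V$ itself but to all the exterior powers $\bigwedge^i V$, since the partial sums $\lambda_1 + \cdots + \lambda_i$ are exactly the top Lyapunov exponents of the action on $\bigwedge^i V$. First I would fix notation: write $p = \dim W$, $q = \dim V/W$, so $d = p+q$, and let $a_1 \ge \cdots \ge a_p$ (resp.\ $c_1 \ge \cdots \ge c_q$) be the Lyapunov exponents of the random walk on $W$ (resp.\ on $V/W$), each listed with multiplicity, so that $S_1 = \{a_j\}$ and $S_2 = \{c_j\}$ as sets. The claim is then precisely that the non-increasing rearrangement $\lambda_1 \ge \cdots \ge \lambda_d$ of the multiset $\{a_1,\dots,a_p\} \uplus \{c_1,\dots,c_q\}$ gives the Lyapunov spectrum of $\mu$ on $V$.

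The key observation is that $\bigwedge^i V$ carries a $G_\mu$-invariant filtration whose successive quotients are (isomorphic to) the spaces $\bigwedge^j W \otimes \bigwedge^{i-j}(V/W)$ for $\max(0,i-q) \le j \le \min(i,p)$; this comes from the short exact sequence $0 \to W \to V \to V/W \to 0$ and functoriality of exterior powers. Applying Lemma \ref{triangulaire} repeatedly (an immediate induction on the length of the filtration, using that the top exponent of a filtered module is the max of the top exponents of the graded pieces), we get
$$\lambda_1(\mu) + \cdots + \lambda_i(\mu) \;=\; \lambda\!\left(\textstyle\bigwedge^i V\right) \;=\; \max_{\max(0,i-q)\le j\le \min(i,p)} \left[\lambda\!\left(\textstyle\bigwedge^j W\right) + \lambda\!\left(\textstyle\bigwedge^{i-j}(V/W)\right)\right].$$
Now $\lambda(\bigwedge^j W) = a_1 + \cdots + a_j$ and $\lambda(\bigwedge^{i-j}(V/W)) = c_1 + \cdots + c_{i-j}$, so the right-hand side is
$$\max_{j} \big[(a_1+\cdots+a_j) + (c_1+\cdots+c_{i-j})\big],$$
which is the largest possible sum of $i$ elements drawn from the combined multiset — i.e.\ the sum of the $i$ largest elements of $\{a_j\} \uplus \{c_j\}$. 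Since this holds for every $i = 1,\dots,d$, taking successive differences shows that $\lambda_i(\mu)$ equals the $i$-th largest element of the combined multiset, which gives both assertions (the equality of spectra as sets, and the additivity of multiplicities).

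For this argument to run I should record two routine facts as sub-lemmas or parenthetical remarks. First, that $\lambda(\bigwedge^i V) = \lambda_1 + \cdots + \lambda_i$: this is just the defining formula for the Lyapunov exponents given in Section \ref{uniqueness1}, applied to the representation $\bigwedge^i$. Second, that all the relevant pushforward measures have a moment of order one, so that Lemma \ref{triangulaire} applies — this follows from $\E(\log^+\|X_1^{\pm 1}\|) < \infty$ together with the submultiplicativity of operator norms under tensor and exterior powers, giving $\|\bigwedge^i g\| \le \binom{d}{i}\|g\|^i$ and similar bounds on the graded pieces. The main (very mild) obstacle is purely bookkeeping: being careful with the index ranges $\max(0,i-q) \le j \le \min(i,p)$ and checking that the combinatorial identity ``max over admissible $j$ of the sum of the top $j$ of the $a$'s plus the top $i-j$ of the $c$'s equals the top $i$ of the merged list'' is correct — this is an elementary rearrangement fact (the merge step in mergesort) but worth stating cleanly. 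Everything else is a direct consequence of Lemma \ref{triangulaire}.
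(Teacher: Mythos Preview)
Your proposal is correct and follows essentially the same route as the paper: both pass to the exterior powers $\bigwedge^i V$, use the $G_\mu$-invariant filtration with graded pieces $\bigwedge^j W \otimes \bigwedge^{i-j}(V/W)$, apply Lemma~\ref{triangulaire} inductively to obtain $\lambda_1+\cdots+\lambda_i = \max_j \big[(a_1+\cdots+a_j)+(c_1+\cdots+c_{i-j})\big]$, and then read off the spectrum by the merge-sort rearrangement argument. The only cosmetic difference is that the paper constructs the filtration via a chosen complement $\widetilde{W}$ of $W$, whereas you invoke it directly from the short exact sequence; the content is identical.
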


 \begin{proof}
  First note that if $E$ and $F$ are two $G_{\mu}$-invariant finite dimensional vector spaces, then $\lambda_1(\bigwedge^2 E)=\lambda_1(E)+\lambda_2(E)$ and $\lambda_1(E\otimes F)=\lambda_1(E)+\lambda_1(F)$. Let now $\widetilde{W}$ be a supplementary of $W$ in $V$.  
  Let $k\in \{2,\cdots, d\}$.  The following decomposition holds   
    $$\bigwedge^k V = \underset{\underset{i+j=k}{0\leq i,j\leq k}}{\bigoplus}{\left(\bigwedge^i W \otimes \bigwedge^{j}  \widetilde{W}\right)}.$$
    For every $p\in \{0,\cdots k\}$, let 
    $$F_p:=  \underset{\underset{i+j=k}{0\leq j\leq p}}{\bigoplus}{\left(\bigwedge^i W \otimes \bigwedge^{j}  \widetilde{W}\right)}.$$  This is  a $G_{\mu}$-invariant subspace of $\bigwedge^k V$ 
    and   the quotient 
    $F_{p}/F_{p-1}$ is isomorphic as $G_{\mu}$-representation to  $\bigwedge^{k-p}W\,\otimes\, \bigwedge^{p}(V/W)$ (with the convention $F_{-1}=\{0\}$). Since 
    $\{0\}=F_{-1}\subseteq F_0\subseteq \cdots \subseteq F_{k-1} \subseteq F_k=\bigwedge^k V$ is a filtration of $\bigwedge^k  V$, 
we apply    Lemma \ref{triangulaire}  at most $k+1$ times and use the observations at the beginning of the proof in order to get the following identity:

 \begin{equation}\lambda_1+\cdots + \lambda_k = \max \{\lambda_1(W)+\cdots + \lambda_{k-p}(W) + \lambda_1(V/W)+\cdots + \lambda_p(V/W);\, p=0, \cdots, k\}.\label{wallayalla}
    \end{equation}
 In the previous equation, we used      the convention $\lambda_i(W)=-\infty$ (resp.~$\lambda_i(V/W)=-\infty$) if $i$ exceeds the dimension of $W$ (resp.~$V/W$). Note that for $k=1$,  \eqref{wallayalla} boils  down to Theorem \ref{triangulaire}. Let $m_1$ be    the multiplicity of the top Lyapunov exponent $\lambda_1$ (as an exponent in $\GL(V)$).  Applying  \eqref{wallayalla} for $k=1, \cdots, 1+m_1$   gives two informations: 
 first that $\lambda_2$ is the second largest number in the set $S_1\cup S_2$ and second that the multiplicity of $\lambda_1$ in $\GL(V)$ is the sum of its multiplicity as an exponent in $\GL(W)$ and in $\GL(V/W)$. Recursively, one shows the desired property for the all the other Lyapunov exponents. 
       \end{proof}

\subsection{On the subspaces $\mathcal{L}_{\mu}$ and $\mathcal{U}_{\mu}$}\label{onL}
Let  $\mu$ be a probability measure on $\GL(V)$. In Definition \ref{def_Lmu}, we introduced the following subspace of $V$:  
  
$$\mathcal{L}_{\mu}:=\sum_{\underset{\lambda(W)<\lambda_1}{W\in \mathcal{W}}}{W}. $$
In the statement of Theorem  \ref{existence_uncite}, we introduced the following subspace of $V$: 
$$\mathcal{U}_{\mu}:=\bigcap_{\underset{\lambda(W)=\lambda_1}{W\in \mathcal{W}}}{W}.$$
In this section, we state some useful properties of these subspaces that follow immediately from their definition. 
 
  \begin{lemme}
$\mathcal{L}_{\mu}$ is a proper $G_{\mu}$-stable subspace of $V$ whose Lyapunov exponent is less that $\lambda_1$, and is the  greatest element of   $\mathcal{W}$ with these properties. \\
 When $\lambda_1>\lambda_2$, $\mathcal{U}_{\mu}\not\subset \mathcal{L}_{\mu}$. In
 particular, $\mathcal{U}_{\mu}$ is non zero in this case and is    the
 smallest $G_{\mu}$-subspace whose Lyapunov exponent is
 $\lambda_1$.\label{proofpropdef}\end{lemme}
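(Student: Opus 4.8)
The plan is to verify the three assertions in turn, using only Lemma \ref{triangulaire} (the Furstenberg–Kifer formula $\lambda_1 = \max\{\lambda_1(W),\lambda_1(V/W)\}$) and the definitions of $\mathcal{L}_\mu$ and $\mathcal{U}_\mu$. First I would establish that $\mathcal{L}_\mu$ is a proper $G_\mu$-stable subspace with $\lambda(\mathcal{L}_\mu) < \lambda_1$. Stability is immediate since a sum of $G_\mu$-stable subspaces is $G_\mu$-stable. For the Lyapunov bound, the key point is the following sub-claim: if $W_1, W_2 \in \mathcal{W}$ both satisfy $\lambda(W_i) < \lambda_1$, then $\lambda(W_1 + W_2) < \lambda_1$. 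To see this, apply Lemma \ref{triangulaire} to the $G_\mu$-stable subspace $W_1$ inside $W_1 + W_2$: the quotient $(W_1+W_2)/W_1$ is a $G_\mu$-equivariant quotient of $W_2$, hence $\lambda\big((W_1+W_2)/W_1\big) \le \lambda(W_2) < \lambda_1$ (monotonicity of the top exponent under equivariant quotients, which follows by comparing operator norms), so $\lambda(W_1+W_2) = \max\{\lambda(W_1), \lambda((W_1+W_2)/W_1)\} < \lambda_1$. By induction this extends to any finite sum, and since $V$ is finite-dimensional the (a priori infinite) sum defining $\mathcal{L}_\mu$ is actually a finite one, so $\lambda(\mathcal{L}_\mu) < \lambda_1$. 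In particular $\mathcal{L}_\mu \ne V$ since $\lambda(V) = \lambda_1$. That $\mathcal{L}_\mu$ is the greatest element of $\mathcal{W}$ with these two properties is then tautological: any $W \in \mathcal{W}$ with $\lambda(W) < \lambda_1$ is one of the summands, hence contained in $\mathcal{L}_\mu$.

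Next I would handle the claim that $\mathcal{U}_\mu \not\subset \mathcal{L}_\mu$ when $\lambda_1 > \lambda_2$. Suppose for contradiction that $\mathcal{U}_\mu \subset \mathcal{L}_\mu$. The dual sub-claim to the one above is needed: if $W_1, W_2 \in \mathcal{W}$ both satisfy $\lambda(W_i) = \lambda_1$, then $\lambda(W_1 \cap W_2) = \lambda_1$ — equivalently, the family of $G_\mu$-stable subspaces with top exponent $\lambda_1$ is closed under intersection, so that $\mathcal{U}_\mu$ itself has $\lambda(\mathcal{U}_\mu) = \lambda_1$ and is the smallest such subspace. Granting this momentarily, $\mathcal{U}_\mu \subset \mathcal{L}_\mu$ would give a $G_\mu$-stable subspace, namely $\mathcal{U}_\mu$, with $\lambda(\mathcal{U}_\mu) = \lambda_1$ sitting inside $\mathcal{L}_\mu$, whose own top exponent is $< \lambda_1$; but $\lambda(\mathcal{U}_\mu) \le \lambda(\mathcal{L}_\mu)$ by monotonicity (an equivariant subspace has top exponent at most that of the ambient space, again by the norm comparison, or directly from Corollary \ref{corollaire-triangulaire}), a contradiction. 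Hence $\mathcal{U}_\mu \not\subset \mathcal{L}_\mu$; in particular $\mathcal{U}_\mu \ne \{0\}$.

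It remains to prove the intersection sub-claim and to conclude that $\mathcal{U}_\mu$ is the smallest $G_\mu$-subspace with Lyapunov exponent $\lambda_1$. For the intersection, suppose $\lambda(W_1) = \lambda(W_2) = \lambda_1$ but $\lambda(W_1 \cap W_2) < \lambda_1$; I would derive a contradiction with $\lambda_1 > \lambda_2$. The point is to look at $W_1 + W_2$ with the filtration $W_1 \cap W_2 \subset W_1 \subset W_1 + W_2$: the successive quotients are $W_1/(W_1\cap W_2)$ and $(W_1+W_2)/W_1 \simeq W_2/(W_1\cap W_2)$, and applying Corollary \ref{corollaire-triangulaire} twice shows the Lyapunov spectrum (with multiplicity) of $W_1 + W_2$ is the union of those of $W_1\cap W_2$, of $W_1/(W_1\cap W_2)$, and of $W_2/(W_1\cap W_2)$. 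Now $\lambda(W_i) = \lambda_1$ forces each of the two latter quotients to have top exponent exactly $\lambda_1$ (since $\lambda(W_1\cap W_2) < \lambda_1$, the value $\lambda_1$ in the spectrum of $W_i$ must come from the quotient $W_i/(W_1\cap W_2)$). Hence $\lambda_1$ appears with multiplicity at least two in the spectrum of $W_1+W_2 \subset V$, so by Corollary \ref{corollaire-triangulaire} it appears with multiplicity at least two in the spectrum of $V$, i.e. $\lambda_1 = \lambda_2$, contradiction. This proves $\lambda(\mathcal{U}_\mu) = \lambda_1$ (the full intersection being reachable by a finite sub-intersection since $V$ is finite-dimensional), and by construction every $W \in \mathcal{W}$ with $\lambda(W) = \lambda_1$ contains $\mathcal{U}_\mu$, so $\mathcal{U}_\mu$ is the smallest such subspace. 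The main obstacle is getting the multiplicity bookkeeping in this last step exactly right — in particular invoking Corollary \ref{corollaire-triangulaire} rather than just Lemma \ref{triangulaire}, so that the value $\lambda_1$ is tracked \emph{with multiplicity} through the filtration and one genuinely concludes $\lambda_2 = \lambda_1$; everything else is a routine consequence of monotonicity of top exponents and finite-dimensionality.
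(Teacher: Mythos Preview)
Your proof is correct and follows essentially the same approach as the paper's. The only cosmetic differences are that for $\mathcal{L}_\mu$ the paper states the slightly stronger identity $\lambda(W_1+W_2)=\max\{\lambda(W_1),\lambda(W_2)\}$ (which your argument in fact proves as well, since Lemma~\ref{triangulaire} gives equality rather than just an inequality), and for $\mathcal{U}_\mu$ the paper passes to the quotient $V/(W_1\cap W_2)$ and observes that the two disjoint images $W_i/(W_1\cap W_2)$ each carry top exponent $\lambda_1$, whereas you run the same multiplicity count inside the subspace $W_1+W_2$; both routes invoke Corollary~\ref{corollaire-triangulaire} in the same way to produce the contradiction with $\lambda_1>\lambda_2$.
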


\begin{proof}  
  The subspace $\mathcal{L}_{\mu}$ has   the claimed property because on the one hand the sum that defines it can be made a finite one and on the other hand if $W_1$ and $W_2$ are two $G_{\mu}$-stable subspaces of $V$, then one can easily prove   that $\lambda(W_1+W_2)=\max\{\lambda(W_1),\lambda(W_2)\}$.  Assume now that $\lambda_1>\lambda_2$ and consider two $G_{\mu}$-stable subspaces $W_1$ and $W_2$  of $V$ such that $\lambda(W_1)=\lambda(W_2)=\lambda_1$. We will prove that $\lambda(W_1\cap W_2)=\lambda_1$; and the claim concerning $\mathcal{U}_{\mu}$ will immediately follow. Indeed, assume that $\lambda(W_1\cap W_2)< \lambda_1$. Then by    Lemma \ref{triangulaire} and Corollary   \ref{corollaire-triangulaire}, we deduce  that     the top Lyapunov exponent of $E:=V/W_1\cap W_2$ is simple and is equal to $\lambda_1$.  The same holds for the subspaces $W_1/W_1\cap W_2$ and $W_2/W_1\cap W_2$ of $E$. By simplicity of $\lambda_1$ in $E$, we deduce that  $(W_1/W_1\cap W_2) \cap  (W_2/W_1\cap W_2)\neq \{0\}$, contradiction.  \end{proof}
 
   The following  easy lemma will be crucial for us. 
   For every $g\in \GL(V)$, we denote by $g^t\in \GL(V^*)$ the transpose linear map on the dual $V^*$ of $V$, i.e.~ $(g^t f) (x)=f(g x)$ for every $g\in \GL(V)$, $f\in V^*$ and $x\in V$. For every subspace $W$ of $V$, we denote by $W^0\subseteq V^*$ its annihilator, i.e.~ $W^0=\{f\in V^*; f_{|_{W}}=0\}$.  
      
      \begin{lemme}(Duality between $\mathcal{L}_{\mu}$ and $\mathcal{U}_{\mu}$)\\
   Let $\mu$ be a probability
 measure on $\GL(V)$ such that  $\lambda_1>\lambda_2$. Denote by
 $\check{\mu}$ the probability measure on $\GL(V^*)$ defined as the law of $X_1^t $, where $X_1$ has law $\mu$.
  Then,
 $$\mathcal{L}_{\mu}^{0} = \mathcal{U}_{\check{\mu}}\,\,,\,\,\mathcal{U}_{\mu}^{0} =
 \mathcal{L}_{\check{\mu}}.$$
\label{duality}\end{lemme}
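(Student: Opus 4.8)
The plan is to transport the problem across the annihilator correspondence $W\mapsto W^0$ and to read off everything from the Furstenberg--Kifer block formulas (Lemma~\ref{triangulaire} and Corollary~\ref{corollaire-triangulaire}).

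First I would set up the dictionary between $\mu$ and $\check\mu$. Since $g\mapsto g^t$ is an anti-automorphism of $\GL(V)$ onto $\GL(V^*)$ (with $V^{**}=V$), one has $G_{\check\mu}=\{g^t:g\in G_{\mu}\}$ and $\check{\check\mu}=\mu$, so the two claimed identities are exchanged under $\mu\leftrightarrow\check\mu$. The annihilator map $W\mapsto W^0$ is an inclusion-reversing bijection from the poset $\mathcal{W}$ of $G_{\mu}$-stable subspaces of $V$ onto the poset $\check{\mathcal{W}}$ of $G_{\check\mu}$-stable subspaces of $V^*$, with inverse $Z\mapsto Z^0\subseteq V$, and it turns sums into intersections: $\bigcap_i W_i^0=\bigl(\sum_i W_i\bigr)^0$. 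Next, for every $i$ one has $\bigwedge^i(X_1^t\cdots X_n^t)=\bigl(\bigwedge^i(X_1\cdots X_n)\bigr)^t$, whose operator norm is comparable to that of $\bigwedge^i(X_1\cdots X_n)$, which in turn has the same law as $\bigwedge^i(X_n\cdots X_1)=\bigwedge^iL_n$; hence the partial sums of the Lyapunov spectrum of $\check\mu$ coincide with those of $\mu$, so $\lambda_j(\check\mu)=\lambda_j(\mu)$ for all $j$. In particular $\lambda_1(\check\mu)=\lambda_1>\lambda_2=\lambda_2(\check\mu)$, so $\mathcal{L}_{\check\mu}$ and $\mathcal{U}_{\check\mu}$ are legitimately defined and enjoy the properties of Lemma~\ref{proofpropdef}.

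The key computation will be the Lyapunov exponent of $\check\mu$ on a subspace of the form $W^0$. For $W\in\mathcal{W}$, the map $f\mapsto\bar f$, $\bar f(v+W):=f(v)$, is a $G_{\check\mu}$-equivariant isomorphism from $(W^0,g^t|_{W^0})$ onto $((V/W)^*,(\bar g)^t)$, where $\bar g\in\GL(V/W)$ denotes the action of $g\in G_{\mu}$ on $V/W$; applying the norm/law argument of the previous paragraph to the induced $\GL(V/W)$-walk then gives $\lambda(\check\mu,W^0)=\lambda(\mu,V/W)$. The hard part of the proof is the following equivalence, which is exactly where simplicity of $\lambda_1$ is used: for every $W\in\mathcal{W}$,
$$\lambda(\mu,W)<\lambda_1\ \Longleftrightarrow\ \lambda(\mu,V/W)=\lambda_1.$$
The implication ``$\Rightarrow$'' is immediate from Lemma~\ref{triangulaire}, since $\lambda_1=\max\{\lambda(\mu,W),\lambda(\mu,V/W)\}$. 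For ``$\Leftarrow$'', Corollary~\ref{corollaire-triangulaire} asserts that the multiplicity of $\lambda_1$ as an exponent of $\mu$ on $V$ equals the sum of its multiplicities on $W$ and on $V/W$; as $\lambda_1>\lambda_2$, this total multiplicity is $1$, so $\lambda_1$ cannot occur among the exponents of the $W$-action, i.e.\ $\lambda(\mu,W)<\lambda_1$.

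Assembling the pieces, I would use the bijection $W\mapsto W^0$, the equalities $\lambda_1(\check\mu)=\lambda_1$ and $\lambda(\check\mu,W^0)=\lambda(\mu,V/W)$, and then the equivalence above, to get
\begin{align*}
\mathcal{U}_{\check\mu}
=\bigcap_{\substack{Z\in\check{\mathcal{W}}\\ \lambda(\check\mu,Z)=\lambda_1}}Z
&=\bigcap_{\substack{W\in\mathcal{W}\\ \lambda(\mu,V/W)=\lambda_1}}W^0
=\Bigl(\sum_{\substack{W\in\mathcal{W}\\ \lambda(\mu,V/W)=\lambda_1}}W\Bigr)^{0}\\
&=\Bigl(\sum_{\substack{W\in\mathcal{W}\\ \lambda(\mu,W)<\lambda_1}}W\Bigr)^{0}
=\mathcal{L}_{\mu}^{0},
\end{align*}
which is the first identity. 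For the second, apply the first one to $\check\mu$ in place of $\mu$ (legitimate since $\lambda_1(\check\mu)>\lambda_2(\check\mu)$): this yields $\mathcal{U}_{\mu}=\mathcal{U}_{\check{\check\mu}}=\mathcal{L}_{\check\mu}^{0}$, and taking annihilators in $V^*$ gives $\mathcal{U}_{\mu}^{0}=\mathcal{L}_{\check\mu}$. Apart from the simplicity-driven equivalence, every step is routine bookkeeping with annihilators and with operator norms under transposition, so I expect the multiplicity argument to be the only genuine obstacle.
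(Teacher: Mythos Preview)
Your proof is correct and follows essentially the same route as the paper: both rest on the identification $W^0\simeq (V/W)^*$ giving $\lambda(\check\mu,W^0)=\lambda(\mu,V/W)$, on Lemma~\ref{triangulaire} for the max formula, and on the multiplicity argument from Corollary~\ref{corollaire-triangulaire} to force the exclusive alternative between $\lambda(\mu,W)=\lambda_1$ and $\lambda(\mu,V/W)=\lambda_1$. The only cosmetic difference is that the paper concludes via the characterization of $\mathcal{U}_{\check\mu}$ as the smallest $G_{\check\mu}$-stable subspace of top exponent (Lemma~\ref{proofpropdef}), whereas you compute the defining intersection directly through the annihilator bijection; this is a matter of presentation, not of strategy.
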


\begin{proof}If $W\subseteq V$ is a $G_{\mu}$-stable subspace of $V$, then $W^{0}$ is a $G_{\check{\mu}}$-stable subspace of $V^*$ which is  isomorphic as $G_{\check{\mu}}$-space to $(V/W)^*$.  Hence 
$$\lambda_1(W^{0}, \check{\mu}) = \lambda_1\left((V/W)^*, \check{\mu}\right)=\lambda_1(V/W, \mu).$$
Hence, 
 by Lemma \ref{triangulaire}, 
 $\lambda_1=\max \{\lambda_1(W, \mu), \lambda_1(W^{0}, \check{\mu})\}$. 
 Using Corollary \ref{corollaire-triangulaire}, we deduce that when $\lambda_1>\lambda_2$,   one and only one of  the numbers $\lambda_1(W, \mu)$ and $\lambda_1(W^{0}, \check{\mu})$ is equal to $\lambda_1$. Also, we deduce that $W^{0}$ contains a $G_{\check{\mu}}$-stable subspace of $V^*$ of $\check{\mu}$-Lyapunov exponent is equal to $\lambda_1$, if and only, $W$ is included in a $G_{\mu}$-stable subspace whose $\mu$-Lyapunov exponent is less than
  $\lambda_1$. Applying the previous remarks for $W:=\mathcal{L}_{\mu}$, we get that  $\mathcal{L}_{\mu}^{0}$ is a $G_{\check{\mu}}$-stable subspace of $V^*$ whose Lyapunov exponent for $\check{\mu}$ is equal to $\lambda_1$ and is the smallest such subspace.   Since $\mu$ and $\check{\mu}$ have the same Lyapunov exponents,  Lemma \ref{proofpropdef} yields  the identity  $\mathcal{L}_{\mu}^{0} = \mathcal{U}_{\check{\mu}}$. The equality $\mathcal{U}_{\mu}^{0} =
 \mathcal{L}_{\check{\mu}}$ follows also.   \end{proof}
 
 During the proofs, we will frequently go back to the case where $\mathcal{U}_{\mu}$ is the whole space $V$. We refer  to three   guiding examples of Section \ref{guiding} where this condition was always satisfied, thanks to a ``natural'' geometric condition  imposed at each time. 
The following lemma reformulates this condition in different ways. \begin{lemme}
Assume that $\lambda_1>\lambda_2$. The following properties
 are equivalent:

 \begin{enumerate}
 \item $\mathcal{U}_{\mu}=V$. 
   \item For every $G_{\mu}$-stable
 proper subspace $W$ of $V$,  $\lambda(W)<\lambda_1$
 \item $\mathcal{L}_{\mu}$ is the  greatest element of $\mathcal{W}\setminus \{V\}$, i.e.~every   $G_{\mu}$-stable subspace of $V$ is
 either $V$ or is included in $\mathcal{L}_{\mu}$. 
 \item
 $\mathcal{L}_{\check{\mu}}=\{0\}$.
  \end{enumerate}
 
\noindent  Moreover, when one of these conditions is fulfilled, the action of $T_{\mu}$ on the quotient $V/{\mathcal{L}_{\mu}}$ is strongly irreducible and proximal. \label{degenere}\end{lemme}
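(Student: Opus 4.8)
\textbf{Proof plan for Lemma \ref{degenere}.}

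The plan is to prove the chain of equivalences $1 \Leftrightarrow 2 \Leftrightarrow 3$ by unwinding the definitions of $\mathcal{L}_\mu$ and $\mathcal{U}_\mu$, and to get $1 \Leftrightarrow 4$ from the duality of Lemma \ref{duality}. For $2 \Rightarrow 1$: if every proper $G_\mu$-stable subspace $W$ has $\lambda(W) < \lambda_1$, then the only $W \in \mathcal{W}$ with $\lambda(W) = \lambda_1$ is $V$ itself, so $\mathcal{U}_\mu = \bigcap \{W : \lambda(W) = \lambda_1\} = V$. For $1 \Rightarrow 2$: conversely, if some proper $G_\mu$-stable $W$ had $\lambda(W) = \lambda_1$, then $\mathcal{U}_\mu \subseteq W \subsetneq V$, contradicting $\mathcal{U}_\mu = V$. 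For $2 \Leftrightarrow 3$: by Lemma \ref{proofpropdef}, $\mathcal{L}_\mu$ is the greatest $G_\mu$-stable subspace with $\lambda < \lambda_1$; condition 2 says precisely that every proper $G_\mu$-stable subspace has $\lambda < \lambda_1$, i.e.\ is contained in $\mathcal{L}_\mu$, which is exactly condition 3 (noting that $\mathcal{L}_\mu$ itself is proper, again by Lemma \ref{proofpropdef}, so $\mathcal{L}_\mu \in \mathcal{W} \setminus \{V\}$). Finally, $1 \Leftrightarrow 4$ is immediate from Lemma \ref{duality}: $\mathcal{U}_\mu^0 = \mathcal{L}_{\check\mu}$, and $\mathcal{U}_\mu = V$ iff its annihilator $\mathcal{U}_\mu^0$ is $\{0\}$ iff $\mathcal{L}_{\check\mu} = \{0\}$.

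For the ``moreover'' part, assume $\mathcal{U}_\mu = V$ and set $\overline{V} := V / \mathcal{L}_\mu$. First I would record that $\lambda_1(\overline{V}) = \lambda_1$ and that this top exponent is simple for the induced walk on $\overline{V}$: by Lemma \ref{triangulaire}, $\lambda_1 = \max\{\lambda_1(\mathcal{L}_\mu), \lambda_1(\overline{V})\}$, and since $\lambda_1(\mathcal{L}_\mu) < \lambda_1$ we get $\lambda_1(\overline{V}) = \lambda_1$; simplicity follows from Corollary \ref{corollaire-triangulaire}, which splits the multiplicity of $\lambda_1$ for $V$ (equal to $1$) between $\mathcal{L}_\mu$ (where $\lambda_1$ does not occur) and $\overline{V}$, forcing multiplicity $1$ on $\overline{V}$, i.e.\ $\lambda_1(\overline{V}) > \lambda_2(\overline{V})$.

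Next, to see that the $T_\mu$-action on $\overline{V}$ is strongly irreducible: suppose not. Then there is a finite family of proper nonzero subspaces of $\overline{V}$ permuted by $G_\mu$ (more precisely, $G_\mu$ stabilizes a finite union $\overline{W}_1 \cup \dots \cup \overline{W}_k$ of proper subspaces, with the $\overline{W}_i$ permuted transitively); passing to a finite-index subgroup $G'$ of $G_\mu$ stabilizing each $\overline{W}_i$, and using that pulling back $\overline{W}_1$ to $V$ gives a proper $G'$-stable subspace $W' \supsetneq \mathcal{L}_\mu$, I would derive a contradiction with condition 3 applied after noting that passing to a finite-index subgroup does not change Lyapunov exponents (or alternatively invoke that $\mathcal{L}_\mu$ is the greatest proper $G_\mu$-stable subspace and that a $G_\mu$-invariant proper subspace of $\overline{V}$ would lift to one strictly between $\mathcal{L}_\mu$ and $V$). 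Once strong irreducibility on $\overline{V}$ is established together with $\lambda_1(\overline{V}) > \lambda_2(\overline{V})$, proximality follows from the Guivarc'h--Raugi theorem \cite{guivarch-raugi}, which states exactly that for a strongly irreducible action the strict gap $\lambda_1 > \lambda_2$ is equivalent to the semigroup being proximal. I expect the main obstacle to be the bookkeeping in the strong irreducibility step --- in particular handling the finite-index subgroup carefully and making sure that a $G_\mu$-invariant finite union of proper subspaces of the quotient really produces a proper $G_\mu$-stable subspace of $V$ properly containing $\mathcal{L}_\mu$, which contradicts the maximality of $\mathcal{L}_\mu$ among proper $G_\mu$-stable subspaces that condition 3 provides.
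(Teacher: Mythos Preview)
Your treatment of the equivalences $1\Leftrightarrow 2\Leftrightarrow 3\Leftrightarrow 4$ is correct and matches the paper's approach (which just says ``easy by definition and Lemmas \ref{proofpropdef}, \ref{duality}''). Your argument that $\lambda_1(\overline{V})=\lambda_1$ is simple on $\overline{V}=V/\mathcal{L}_\mu$ via Lemma \ref{triangulaire} and Corollary \ref{corollaire-triangulaire} is also the same as the paper's.

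The difference is in how you handle strong irreducibility on $\overline{V}$, and there you have a genuine gap. Your plan is to assume a finite $G_\mu$-invariant family $\{\overline{W}_1,\dots,\overline{W}_k\}$ of proper subspaces, pass to a finite-index subgroup $G'$ fixing each $\overline{W}_i$, pull back to a proper $G'$-stable $W'\supsetneq \mathcal{L}_\mu$, and contradict condition 3. But condition 3 concerns $G_\mu$-stable subspaces, not $G'$-stable ones; the remark that ``passing to a finite-index subgroup does not change Lyapunov exponents'' does not bridge this, since you would need to rerun the construction of $\mathcal{L}_{\mu'}$ for a suitable measure $\mu'$ supported on $G'$ and argue that \emph{its} maximal proper stable subspace is still $\mathcal{L}_\mu$, which is exactly the kind of bookkeeping you flag as the obstacle. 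Your parenthetical alternative only yields ordinary irreducibility of the $G_\mu$-action on $\overline{V}$, not strong irreducibility.

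The paper avoids this entirely: from condition 3 one gets immediately that the $T_\mu$-action on $\overline{V}$ is \emph{irreducible} (any $G_\mu$-stable subspace of $\overline{V}$ pulls back to a $G_\mu$-stable subspace of $V$ strictly containing $\mathcal{L}_\mu$, hence equal to $V$). Then one invokes the Guivarc'h--Raugi theorem in its full form (see \cite{guivarch-raugi} or \cite[Theorem 6.1]{bougerol}): for an \emph{irreducible} $T_\eta$, simplicity of the top Lyapunov exponent is equivalent to $T_\eta$ being i-p, i.e.\ \emph{strongly} irreducible and proximal. So the theorem upgrades irreducibility to strong irreducibility for free, and gives proximality at the same time. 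You should cite Guivarc'h--Raugi in this stronger form rather than only for proximality given strong irreducibility; that closes the gap with no extra work.
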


 \begin{proof} The equivalence between (1), (2), (3) and (4) is easy to prove by definition of $\mathcal{L}_{\mu}$ and $\mathcal{U}_{\mu}$, and by Lemmas \ref{proofpropdef} and \ref{duality}. We prove now the last statement. Assume  that (3) holds. It follows that the action of $T_{\mu}$ on the quotient $V/\mathcal{L}_{\mu}$ is irreducible. But by  Lemma \ref{triangulaire} and Corollary   \ref{corollaire-triangulaire},  the top Lyapunov exponent of $V/\mathcal{L}_{\mu}$ is simple. It is enough now to recall the following known result   from \cite{guivarch-raugi} (see also \cite[Theorem 6.1]{bougerol}):  if  $E$ is a vector space defined over a local field and  $\eta$ is a probability measure on $\GL(E)$  such that $T_{\eta}$ is irreducible, then $T_{\eta}$ is i-p if and only if the top Lyapunov exponent relative to  $\eta$ is simple. This ends the proof.  \end{proof}

   \begin{remarque} If $\rho: G_{\mu}\longrightarrow \GL(\mathcal{U}_{\mu})$ is the restriction map to $\mathcal{U}_{\mu}$, then it is easy to see that $\mathcal{U}_{\rho(\mu)}=\mathcal{U}_{\mu}$ and that $\mathcal{L}_{\rho(\mu)}=\mathcal{L}_{\mu} \cap \mathcal{U}_{\mu}$. Observe also that it follows from Lemma \ref{degenere} that the action of $T_{\mu}$ on $\mathcal{U}_{\mu}/\mathcal{L}_{\mu}\cap \mathcal{U}_{\mu}$ is strongly irreducible and proximal. We will frequently use  the representation  $\rho$ to go back to the case $\mathcal{U}_{\mu}=V$. \label{restU}\end{remarque}

  \vspace{0.2cm}
 \begin{remarque}
 \begin{enumerate}
 \item 
Another case for which estimates are easier to   handle is the case  $\mathcal{L}_{\mu}=\{0\}$ (i.e.~$\mathcal{U}_{\check{\mu}}=V^*$). This condition  appeared in \cite[Proposition 4.1, Theorem B]{furstenberg-kifer} (see also \cite{hennion}) as   a sufficient condition  to ensure the continuity of the function $\mu \mapsto \lambda(\mu)$.  Moreover, it corresponds to a unique cocycle average (see Remark \ref{works_FK}). Recall that by Section \ref{guiding} this condition is satisfied for random walks in irreducible groups and in the affine group in the expansive case.  However we  insist
 on the fact that one of the novelty of the present paper is to give limit theorems,  when $\lambda_1>\lambda_2$, in  the case $\mathcal{L}_{\mu}\neq \{0\}$ (as for instance random walks on the affine group in the contracting case, see Section \ref{guiding}).  We refer also to \cite{bqtcl} where limit theorems   for cocycles are given depending on their cocycle average(s).

 \item Note that if $\lambda_1>\lambda_2$, then it follows from Lemmas \ref{degenere} and \ref{duality} that the following statements are equivalent: 
 
 \begin{enumerate}
 \item $\mathcal{L}_{\mu}=\{0\}$. 
   \item For every $G_{\mu}$-stable
 proper subspace $W$ of $V$,  $\lambda(W)=\lambda_1$
 \item Every $G_{\mu}$-stable proper subspace  of $V$ contains $\mathcal{U}_{\mu}$.  \item
 $\mathcal{U}_{\check{\mu}}=V^*$.
  \end{enumerate}

\item If $\pi: G_{\mu}\longrightarrow \GL(V/\mathcal{L}_{\mu})$ is the morphism of the projection onto 
  $V/\mathcal{L}_{\mu}$, then $\mathcal{L}_{\pi(\mu)}=\{0\}$ and $\mathcal{U}_{\pi(\mu)}=\pi(\mathcal{U}_{\mu})$. Observe also that if $\lambda_1>\lambda_2$, then by  Corollary \ref{corollaire-triangulaire} the top Lyapunov exponent of $V/\mathcal{L}_{\mu}$ is equal to $\lambda_1$ and is also simple. 
 \end{enumerate}
 
  \label{V/L}\end{remarque}

\section{Stationary probability measures on the projective space}

 In this section, we prove Theorem \ref{existence_uncite}.  This will be done  through different steps. 
 In Section \ref{stationnaire1} below, we show that if a stationary measure $\nu$ on $\PV$ such that $\nu([\mathcal{L}_{\mu}])=0$ exists,  then this determines the projective subspace generated by its support. In Section \ref{stationnaire2}, we show the existence of such a measure via Oseledets theorem. In  Section \ref{stationnaire3} we prove that it is unique in a constructive way. More precisely, we show in Proposition \ref{propunicite}  that $\nu$ is the law of a random variable $[Z(\omega)]\in \PV$ characterized in the following way: every limit point of the right    random walk  $(R_n)_{n\in \N^*}$ suitably normalized   is almost surely of rank one with image that projects to $[Z(\omega)]$ in $\PV$.
\\
 
 We recall that $\mathrm{k}$ is a local field, $V$ is a vector space over $\mathrm{k}$ of dimension $d\geq 2$ and $\PV$ denotes the projective subspace of $V$.
 We endow $V$ with the norm $||\cdot||$ described in Section \ref{fubinisection}.  If $\mu$ is a probability measure on $\GL(V)$, then $T_{\mu}$ (resp.~$G_{\mu}$) denotes the sub-semigroup (resp.~subgroup) of $\GL(V)$ generated by the support of $\mu$. We denote by  $\mathcal{W}$  
  the set of all $G_{\mu}$-stable subspaces of $V$ and   for every $W\in \mathcal{W}$, $\lambda(W)$ denotes the Lyapunov exponent relative to $W$. \\
   For every $g\in \GL(V)$, we denote by $g^t\in \GL(V^*)$ its transpose map. We denote by $W^0\subseteq V^*$ the annihilator of a subspace $W$ of $V$.

\subsection{On the support   of stationary probability measures}\label{stationnaire1}

 \begin{prop}
Let $\mu$ be a probability measure on  $\GL(V)$ such that
$\lambda_1>\lambda_2$ and $\nu$ a stationary probability measure
of the projective space $\PV$ such that  $\nu([\mathcal{L}_{\mu}])=0$. Let 
$$\mathcal{U}_{\mu}:=\bigcap_{\underset{\lambda(W)=\lambda_1}{W\in \mathcal{W}}}{W}.$$
Then,

\begin{enumerate}
\item 
The projective subspace generated by the support of $\nu$ is $[\mathcal{U}_{\mu}]$.

\item The
 probability measure $\nu$ is non degenerate in  $[\mathcal{U}_{\mu}]$
 i.e.~its gives zero mass to every proper projective subspace of
 $[\mathcal{U}_{\mu}]$.

\end{enumerate} \label{propstat}\end{prop}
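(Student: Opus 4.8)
The plan is to work in the dual picture and exploit the characterization of $\mathcal{L}_{\mu}$ from Remark \ref{works_FK}, namely that $x \notin \mathcal{L}_{\mu}$ if and only if $\tfrac1n \log\|L_n x\| \to \lambda_1$ almost surely, together with the duality $\mathcal{U}_{\mu}^{0} = \mathcal{L}_{\check\mu}$ of Lemma \ref{duality}. For part (2), suppose toward a contradiction that $\nu(\textrm{P}(H)) > 0$ for some proper projective subspace $[H]$ of $[\mathcal{U}_{\mu}]$; we may take $H$ of minimal dimension among $T_{\mu}$-translates carrying positive mass, or rather consider the function $[x] \mapsto \nu$-essential structure and use the stationarity of $\nu$ to produce, via a maximality/martingale argument à la Furstenberg, a $G_{\mu}$-invariant subspace of $\mathcal{U}_{\mu}$ strictly smaller than $\mathcal{U}_{\mu}$ that still supports $\nu$ — but since $\nu([\mathcal{L}_{\mu}])=0$ such a subspace would have to have Lyapunov exponent $\lambda_1$ (it cannot be inside $\mathcal{L}_{\mu}$, and any $G_\mu$-stable subspace has Lyapunov exponent either $<\lambda_1$ hence is contained in $\mathcal{L}_\mu$, or $=\lambda_1$ hence contains $\mathcal{U}_\mu$ by Lemma \ref{proofpropdef}), contradicting minimality of $\mathcal{U}_{\mu}$.

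More concretely, here is the mechanism I would use for both parts at once. Let $W$ be the smallest subspace of $V$ with $\nu([W]) = 1$ (the intersection of all such, which again has full measure since $\nu$ is a measure and the lattice of subspaces is Noetherian). Stationarity gives $\nu = \int g\nu \, d\mu(g)$, so $\nu([g^{-1}W]) = 1$ for $\mu$-a.e. $g$, whence $\nu([\,\bigcap_{g\in \textrm{supp}\,\mu} g^{-1}W\,]) = 1$, and by minimality $g^{-1}W \supseteq W$ for all $g$ in the support, i.e. $W$ is $T_{\mu}$-invariant, hence (since $G_\mu$ is generated by the support) $G_{\mu}$-invariant. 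Because $\nu([\mathcal{L}_{\mu}]) = 0$ we have $W \not\subseteq \mathcal{L}_{\mu}$, so by Proposition/Definition \ref{def_Lmu} the Lyapunov exponent $\lambda(W)$ cannot be $<\lambda_1$; thus $\lambda(W) = \lambda_1$, and by Lemma \ref{proofpropdef} (the "smallest $G_\mu$-subspace with Lyapunov exponent $\lambda_1$" statement), $W \supseteq \mathcal{U}_{\mu}$. Conversely one must show $W \subseteq \mathcal{U}_{\mu}$, equivalently that $\nu$ is supported on $[\mathcal{U}_\mu]$: pass to the dual. A proper projective subspace $[H]$ with $\nu([H])>0$ corresponds to a nonzero $f \in V^*$ with $\nu([\textrm{Ker}\,f]) > 0$; I claim any such $f$ must lie in $\mathcal{U}_{\mu}^{0} = \mathcal{L}_{\check\mu}$. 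To see this, consider the $\check\mu$-random walk on $[V^*]$ and the cocycle $(g, [x]) \mapsto \log \tfrac{|g^t f(x)|}{\|x\|}$ against the pushforward of $\nu$; if $f \notin \mathcal{L}_{\check\mu}$ then $\tfrac1n\log\|L_n^t f\| \to \lambda_1$ a.s., and the Furstenberg-type contraction on $[\mathcal{U}_\mu]$ coming from $\lambda_1 > \lambda_2$ forces $\nu(\{[x] : f(x) = 0\}) $ to be $T_\mu$-quasi-invariant in a way incompatible with positivity unless $\textrm{Ker}\,f \supseteq W$; a maximality argument on the set of such $f$ then exhibits a $G_\mu$-invariant subspace of $V^*$ of Lyapunov exponent $\lambda_1$ lying inside the span of these $f$'s, which by minimality of $\mathcal{U}_{\check\mu} = \mathcal{L}_\mu^{0}$... — and here the cleanest route is in fact to run the whole argument of part (2) on the dual side, using that $\mathcal{L}_{\check\mu} = \mathcal{U}_\mu^0$ and that $\check\mu$ also satisfies $\lambda_1 > \lambda_2$.

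The main obstacle I anticipate is the non-degeneracy statement (2), specifically ruling out a positive-mass proper subspace of $[\mathcal{U}_{\mu}]$ without yet having in hand the uniqueness of $\nu$ or the $\mu$-boundary property (those are proved only later, in Sections \ref{stationnaire2}–\ref{stationnaire3}). The resolution should be to avoid any appeal to uniqueness and instead argue purely by the maximality/minimality of $G_\mu$-invariant subspaces carrying full $\nu$-mass, combined with the dichotomy for Lyapunov exponents of $G_\mu$-stable subspaces (Lemma \ref{proofpropdef}) and the Furstenberg–Kifer characterization of $\mathcal{L}_\mu$ (Remark \ref{works_FK}); the hypothesis $\lambda_1 > \lambda_2$ enters precisely to guarantee, via Lemma \ref{proofpropdef}, that $\mathcal{U}_\mu \not\subseteq \mathcal{L}_\mu$ and that the intersection of two $G_\mu$-stable subspaces of exponent $\lambda_1$ again has exponent $\lambda_1$, so that "smallest" is well-defined. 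Once $W = \mathcal{U}_\mu$ is established, part (1) is immediate, and part (2) follows because a proper projective subspace of positive mass would, by the same minimality argument applied to $\mathcal{U}_\mu$ in place of $V$ (restricting $\mu$ to $\mathcal{U}_\mu$, using Remark \ref{restU}), force a proper $G_\mu$-stable subspace of $\mathcal{U}_\mu$ of exponent $\lambda_1$, contradicting that $\mathcal{U}_\mu$ is the smallest such.
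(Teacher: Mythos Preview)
Your argument that the linear span $W$ of $\textrm{Supp}(\nu)$ is $G_\mu$-invariant with $\lambda(W)=\lambda_1$, hence $W\supseteq\mathcal{U}_\mu$, is correct and is exactly one half of the paper's proof of part~(1). But there are two genuine gaps.

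\textbf{The inclusion $\nu([\mathcal{U}_\mu])=1$.} Your dual argument never closes: you set up a cocycle, assert it is ``incompatible with positivity'', then trail off into ``the cleanest route is to run part~(2) on the dual side'' --- which is circular, since part~(2) is what you are trying to prove. The paper's argument here is direct and dynamical: for any $G_\mu$-stable $E$ with $\lambda(E)=\lambda_1$, one has $\lambda(V/E)<\lambda_1$ by Corollary~\ref{corollaire-triangulaire}, so for every $[x]\notin[\mathcal{L}_\mu]$ the distance $\delta(L_n[x],[E])=\|\overline{L_n x}\|/\|L_n x\|$ tends to $0$ almost surely (Furstenberg--Kifer for the denominator, $\lambda(V/E)<\lambda_1$ for the numerator). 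Stationarity of $\nu$ and $\nu([\mathcal{L}_\mu])=0$ then force $\nu([E])=1$. Intersecting over finitely many such $E$ gives $\nu([\mathcal{U}_\mu])=1$. No duality is needed.

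\textbf{Non-degeneracy.} Your final paragraph says a proper projective subspace of $[\mathcal{U}_\mu]$ with positive $\nu$-mass would ``by the same minimality argument'' yield a proper $G_\mu$-stable subspace of exponent $\lambda_1$. This is the real gap: your minimality argument was for the smallest subspace of \emph{full} measure, and it does not transfer to subspaces of merely \emph{positive} measure --- there is no reason a positive-mass hyperplane section should be $G_\mu$-invariant. What is missing is the classical Furstenberg step (Lemma~\ref{support2} in the paper): among projective subspaces of minimal dimension carrying positive mass, one of maximal mass has \emph{finite} $G_\mu$-orbit. One must then rule out such a finite orbit of proper subspaces of $\mathcal{U}_\mu$ not contained in $\mathcal{L}_\mu$; the paper does this (Lemma~\ref{reunion}) by a contraction argument: pick $x,y$ in two distinct members of the orbit, observe their $L_n$-images stay at positive distance (the orbit is finite and the members pairwise disjoint by minimality of dimension), yet $\delta(L_n[x],L_n[y])\le \|\bigwedge^2 L_n\|\,\|x\wedge y\|/(\|L_n x\|\,\|L_n y\|)\to 0$ since $\lambda_1>\lambda_2$ and $x,y\notin\mathcal{L}_\mu$. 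Your proposal gestures at ``a maximality/martingale argument \`a la Furstenberg'' in the first paragraph but never supplies it, and the substitute you offer at the end does not work.
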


The proof of this proposition will be done through different
intermediate steps. First, we give below  a criterion insuring
that a stationary measure on the projective space is non degenerate. When
$G_{\mu}$ is strongly irreducible, Furstenberg   has shown that
every $\mu$-stationary probability measure   on the projective
space is non degenerate. The proof of Furstenberg yields in fact the
following general result. It will be used in  Lemma \ref{reunion}
in order to identify non degenerate stationary measures outside the
strongly irreducible case.

 \begin{lemme}
Let $E$ be a finite dimension vector space, $\mu$   a probability
measure on $\GL(E)$ and $\nu$ a $\mu$-stationary probability
measure on the projective space $\textrm{P}(E)$ of $E$.  Then there  
exists a projective subspace of $\textrm{P}(E)$ whose $\nu$-measure is non zero,
of minimal dimension and whose $G_{\mu}$-orbit is finite.
Equivalently, there exists a finite index subgroup $G_0$ of
$G_{\mu}$ such that at least one of the projective subspaces of $\textrm{P}(E)$ charged
by $\nu$ is stable under $G_0$.
 \label{support2} \end{lemme}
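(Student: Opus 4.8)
The statement is the classical Furstenberg lemma on stationary measures (sometimes phrased as: a $\mu$-stationary measure on projective space is supported, up to a finite orbit, on a proper subspace, unless it is non-degenerate). The plan is to run an induction on $\dim E$, or equivalently a minimal-dimension argument directly. Among all projective subspaces $[W]$ of $\textrm{P}(E)$ with $\nu([W])>0$, choose one of minimal dimension, say $[W_0]$; such a subspace exists since $[E]$ itself has full $\nu$-mass. Minimality forces $\nu$ to be non-degenerate on $[W_0]$ in the sense that every proper projective subspace of $[W_0]$ has $\nu$-measure zero. The first key step is to show that $c:=\max\{\nu([W]) : \dim[W]=\dim[W_0]\}$ is attained on only finitely many subspaces of that dimension: if $[W_1],[W_2]$ are distinct of this dimension with $\nu([W_1])=\nu([W_2])=c$, then $[W_1\cap W_2]$ is a strictly smaller projective subspace, hence $\nu$-null, so by inclusion-exclusion $\nu([W_1]\cup[W_2]) = 2c>c$; since the $\nu$-measures of such subspaces are bounded by $1$, only finitely many can carry mass $c$. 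Call this finite nonempty set $\mathcal{F}$.

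The second key step is to exploit stationarity to see that $G_\mu$ permutes $\mathcal F$. For a stationary measure one has $\nu = \int g\nu\, d\mu(g)$, hence for any Borel set $A$, $\nu(A) = \int \nu(g^{-1}A)\,d\mu(g)$. Apply this with $A=[W]\in\mathcal F$. Since $g^{-1}[W]$ is again a projective subspace of the same dimension, $\nu(g^{-1}[W])\le c$ with equality iff $g^{-1}[W]\in\mathcal F$. As the average of the $\nu(g^{-1}[W])$ over $\mu$ equals $\nu([W]) = c$, we must have $\nu(g^{-1}[W]) = c$ for $\mu$-a.e.\ $g$, i.e.\ $g^{-1}[W]\in\mathcal F$ for $\mu$-a.e.\ $g$, hence for every $g$ in the support of $\mu$, and then for every $g\in T_\mu$ and, taking inverses, for every $g\in G_\mu$. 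Thus $G_\mu$ acts on the finite set $\mathcal F$ by permutations; the stabilizer $G_0$ of any fixed element $[W_1]\in\mathcal F$ is a finite-index subgroup of $G_\mu$, and $[W_1]$ is $G_0$-stable. Choosing $[W_1]$ to be (one of) the minimal-dimensional subspaces that attains the maximal mass gives simultaneously a subspace of minimal dimension among those with nonzero $\nu$-mass and a finite $G_\mu$-orbit, establishing both formulations.

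The one point that needs a little care — and is the main (mild) obstacle — is the passage from ``$\mu$-a.e.\ $g$'' to ``every $g\in G_\mu$''. Here one uses that $g\mapsto \nu(g^{-1}[W])$ depends on $g$ only through its image in the (discrete, since $\mathcal F$ is finite) set of permutations of $\mathcal F$, so the ``$\mu$-a.e.'' condition $g^{-1}[W]\in\mathcal F$ actually holds on the closed sub-semigroup generated by $\mathrm{supp}(\mu)$, and since $G_\mu$ permutes a finite set, invariance under $T_\mu$ upgrades to invariance under $G_\mu$ (the image of $T_\mu$ in the finite permutation group is already a group). Everything else is elementary inclusion-exclusion and the defining integral identity for stationarity; no topology on $\textrm{P}(E)$ beyond Borel measurability is needed, so the argument is insensitive to the local field $\kk$.
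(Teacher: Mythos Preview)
Your proof is correct and follows essentially the same route as the paper: take the set $\Gamma$ of minimal-dimension charged subspaces carrying the maximal mass $r$, use pairwise $\nu$-null intersections to see $\Gamma$ is finite, and use stationarity plus maximality of $r$ to see $\Gamma$ is $G_\mu$-invariant. The only cosmetic differences are that the paper first checks the supremum $r$ is attained (your inclusion--exclusion gives this too, applied at threshold $r/2$), and that for the passage from ``$\mu$-a.e.\ $g$'' to ``every $g\in G_\mu$'' the paper replaces $\mu$ by $\sum_i b(i)\mu^{*i}$ so that $\textrm{supp}(\mu)=T_\mu$, whereas you invoke closedness of the condition $g^{-1}[W]\in\mathcal F$ --- note that your first sentence there (``depends on $g$ only through its image in $\textrm{Sym}(\mathcal F)$'') is circular, but the subsequent closedness/finite-semigroup argument is the right one.
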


\begin{proof}   Let   $\Lambda$ be the set of projective subspaces of $\textrm{P}(E)$ charged by $\nu$ and of minimal dimension, say $l$.
Let  $r=\sup\{\nu([W]); [W]\in \Lambda\}$.      By
minimality of $l$, two distinct subspaces  $[W_1]$ and $[W_2]$ of
$\Lambda$ satisfy $\nu([W_1\cap W_2])=0$. Since $\nu$ is of total mass $1$, we deduce that there are only   finitely many subspaces $[W]\in \Lambda$ such that $\nu([W])\geq \frac{r}{2}$. In particular, $r=\max \{\nu([W]); [W]\in \Lambda\}$. Consider then the following non-empty finite set: 
  $\Gamma:=\{[W]\in \Lambda; \nu([W])=r\}$.    We claim that $\Gamma$ is  stable under
$G_{\mu}$, which is sufficient to show the desired lemma. 
Indeed, since $\nu$ is a $\mu$-stationary probability measure, then for every
 $[W]\in \Gamma$  and $n\in
\N$:
\begin{equation} r=\nu([W])= \iint {\mathds{1}_{[W]} (g\cdot [x]) d\mu(g)\;d\nu([x])} = \int {\nu (g^{-1}\cdot [W])
\;d\mu^n(g)}.\label{classique}
\end{equation}
Let $b$ be any probability measure on $\N$ with full support.  By replacing if
necessary $\mu$ by  $\sum_{i=1}^{+\infty} {b(i)\mu^i}$  in the
equality above, we can assume  without loss of generality  that the
support of $\mu$ is  the semigroup  $T_{\mu}:=\cup_{n\in
\N}{\textrm{Supp}(\mu^n)}$.  By combining
this remark, together with equality  \eqref{classique}  and the
maximality of $r$, we obtain that
$$\forall g\in T_{\mu},\, \nu(g^{-1}\cdot
[W])=r\,\,\,\,\,\,\textrm{i.e.}\,\,\,\,\,\, g^{-1}\cdot[W]\in
\Gamma.$$
Hence for every $g\in T_{\mu}$, $g^{-1} \Gamma \subset \Gamma$.  Since $\Gamma$ is finite, we deduce that for every $g\in T_{\mu}$, $g \Gamma =  \Gamma$.  It follows that    $\Gamma$ is
$T_{\mu}$-stable (or equivalently $G_{\mu}$-stable).\end{proof}

 \vspace{0.5cm}

We know that when  $\lambda_1>\lambda_2$,   $G_{\mu}$ is
irreducible if and only if $G_{\mu}$ is strongly irreducible (see  \cite[Theorem 6.1]{bougerol}).
Here's below a generalization.
 
 \begin{lemme} 
 Let $E$ be a finite dimension vector space and $\mu$ a probability measure on $\GL(E)$ such that
 $\lambda_1>\lambda_2$.
 \begin{enumerate}
 \item  If $\mathcal{L}_{\mu}=\{0\}$, then $G_{\mu}$ cannot fix any finite union of
 non zero   subspaces of $E$ unless   they all contain   $\mathcal{U}_{\mu}$. 
 \item Dually, if $\mathcal{U}_{\mu}=E$, then $G_{\mu}$ cannot fix a finite union of proper subspaces of  $E$ unless they are all contained in $\mathcal{L}_{\mu}$. In particular, if $\mathcal{U}_{\mu}=E$,  then a  $\mu$-stationary probability measure $\nu$ on
 $\textrm{P}(E)$ is non degenerate if and only if $\nu([\mathcal{L}_{\mu}])=0$.
\end{enumerate}
  \label{reunion}\end{lemme}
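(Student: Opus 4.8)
The plan is to deduce both parts from Lemma \ref{support2} together with the structural properties of $\mathcal{L}_\mu$ and $\mathcal{U}_\mu$ established in Lemmas \ref{proofpropdef} and \ref{duality}, and the self-duality of the hypothesis $\lambda_1>\lambda_2$. First I would prove part (1). Suppose $G_\mu$ fixes a finite union $\bigcup_{i=1}^k W_i$ of nonzero subspaces of $E$, none of which is assumed a priori to be $G_\mu$-stable; $G_\mu$ merely permutes the $W_i$. Replacing each $W_i$ by the sum over its $G_\mu$-orbit does not help directly (it could produce all of $E$), so instead I would pass to the finite-index subgroup $G_0\le G_\mu$ stabilizing each individual $W_i$; then $G_0$-invariant but not necessarily $G_\mu$-invariant. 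The key point is that a finite-index subgroup has the same Lyapunov exponents (as one sees by considering the return word measure, or by the subadditive theorem applied along the subgroup), and also $\mathcal{L}_{\mu}$ and $\mathcal{U}_\mu$ are insensitive to passing to a finite-index subgroup when $\lambda_1>\lambda_2$ — this is because the filtration of Furstenberg--Kifer and the simplicity of $\lambda_1$ are preserved. Hence we may as well assume each $W_i$ is genuinely $G_\mu$-stable. Now if some $W_i\ne E$ with $\lambda(W_i)=\lambda_1$, then by minimality $\mathcal{U}_\mu\subseteq W_i$ and we are done for that index; if $\lambda(W_i)<\lambda_1$ then $W_i\subseteq \mathcal{L}_\mu=\{0\}$, contradicting $W_i\ne\{0\}$. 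Since by Lemma \ref{triangulaire} every $G_\mu$-stable $W$ has $\lambda(W)\le\lambda_1$, these are the only cases, so every $W_i$ contains $\mathcal{U}_\mu$.

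Part (2) I would obtain by duality. If $\mathcal{U}_\mu=E$ and $G_\mu$ fixes a finite union of proper subspaces $\bigcup W_i$, pass to annihilators: $G_{\check\mu}$ fixes the finite union $\bigcup W_i^0$ of nonzero subspaces of $E^*$. By Lemma \ref{duality}, $\mathcal{L}_{\check\mu}=\mathcal{U}_\mu^0=\{0\}$, so part (1) applied to $\check\mu$ forces each $W_i^0\supseteq\mathcal{U}_{\check\mu}=\mathcal{L}_\mu^0$, i.e. $W_i\subseteq\mathcal{L}_\mu$. This proves the first assertion of (2). For the ``in particular'' clause: a $\mu$-stationary $\nu$ on $\textrm{P}(E)$ is degenerate iff some proper projective subspace $[W]$ is charged, iff by Lemma \ref{support2} some proper $[W]$ charged by $\nu$ has finite $G_\mu$-orbit, i.e. $G_\mu$ fixes the finite union of the subspaces in that orbit; by what was just shown, that forces $W\subseteq\mathcal{L}_\mu$, hence $[W]\subseteq[\mathcal{L}_\mu]$ and $\nu([\mathcal{L}_\mu])>0$. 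Conversely if $\nu([\mathcal{L}_\mu])>0$ then $\nu$ charges the proper projective subspace $[\mathcal{L}_\mu]$ ($\mathcal{L}_\mu\ne E$ by Lemma \ref{proofpropdef}), so it is degenerate.

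The main obstacle I anticipate is the reduction from a finite union permuted by $G_\mu$ to a genuinely $G_\mu$-stable subspace, i.e. checking carefully that $\mathcal{L}$ and $\mathcal{U}$ (and the gap $\lambda_1>\lambda_2$) are stable under passing to a finite-index subgroup. The cleanest route is probably to observe that if $G_0$ has index $m$ in $G_\mu$, the Lyapunov exponents relative to the hitting measure of $G_0$ (or relative to $\mu^{*m'}$ for a suitable bookkeeping) are $m'$ times the original ones, so the order structure among the $\lambda(W)$'s — hence the definitions of $\mathcal{L}$ and $\mathcal{U}$, and the simplicity of $\lambda_1$ — is unchanged; a $G_0$-stable subspace $W$ then satisfies $\lambda_{G_0}(W)<\lambda_1^{G_0}$ iff its ``$\mu$-avatar'' does, and the maximality/minimality characterizations of $\mathcal{L}_\mu,\mathcal{U}_\mu$ transfer. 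Once that is in place, everything else is a direct application of the already-proved lemmas.
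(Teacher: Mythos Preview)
Your reduction of part (2) to part (1) by duality and your derivation of the ``in particular'' clause from Lemma \ref{support2} are correct and match the paper exactly. The substantive difference is in part (1), and there your finite-index strategy has a genuine gap.

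The problem is the step ``hence we may as well assume each $W_i$ is genuinely $G_\mu$-stable.'' After passing to the finite-index subgroup $G_0$, the $W_i$ are $G_0$-stable, not $G_\mu$-stable, so what your argument actually yields is $W_i\supseteq \mathcal{U}_{\mu_0}$ for a suitable measure $\mu_0$ on $G_0$, not $W_i\supseteq \mathcal{U}_\mu$. Since there are more $G_0$-stable subspaces than $G_\mu$-stable ones, one has a priori only $\mathcal{U}_{\mu_0}\subseteq \mathcal{U}_\mu$, and promoting this to equality is essentially an instance of the very statement you are proving. (There is a fix: once each $W_i\supseteq\mathcal{U}_{\mu_0}\neq\{0\}$, the $G_\mu$-stable intersection $\bigcap_i W_i$ is nonzero, hence has top exponent since $\mathcal{L}_\mu=\{0\}$, hence contains $\mathcal{U}_\mu$; but you do not supply this step.) Separately, your concrete suggestions for $\mu_0$ are problematic: $\mu^{*m'}$ is not supported on $G_0$ in general, and for the return-time (induced) measure you must still verify that it has a first moment and that $\lambda_1(\mu_0)>\lambda_2(\mu_0)$, which requires some care.

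The paper avoids all of this with a direct dynamical argument: assuming a finite $G_\mu$-invariant family not all containing $\mathcal{U}_\mu$, take one of minimal dimension; its $G_\mu$-orbit has at least two elements (a single $G_\mu$-stable nonzero subspace would contain $\mathcal{U}_\mu$ since $\mathcal{L}_\mu=\{0\}$), and by minimality the pairwise intersections are $\{0\}$, so the corresponding projective subspaces are at positive mutual distance $\alpha>0$. Picking $x\in V_1$, $y\in V_2$ one gets $\delta(g[x],g[y])\geq\alpha$ for every $g\in G_\mu$, which contradicts the contraction $\delta(L_n[x],L_n[y])\to 0$ coming from $\lambda_1>\lambda_2$ and $\mathcal{L}_\mu=\{0\}$ via Theorem \ref{furstenberg-kifer}. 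This route needs no auxiliary measure and no finite-index bookkeeping.
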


 \begin{proof}
 It is enough to show  statement 1. Indeed, the first part of   statement 2 is actually equivalent to the first one  by passing to the dual $E^*$ of $E$, thanks to Lemma \ref{duality}, the fact that $\mu$ and $\check{\mu}$ have the same Lyapunov exponents and finally to the   fact that $G_{\mu}$ stabilizes a finite union $\{V_1, \cdots, V_r\}$ of subspaces of $E$ if and only if $G_{\check{\mu}}$ stabilizes  $\{V_1^0, \cdots, V_r^0\}$ in $E^*$. The last part of the second statement is a consequence of the first part of the same statement and   of    Lemma  \ref{support2}. Now we prove the first statement. Arguing by contradiction, we let $r$ to be the integer  in $\{1,\cdots, d-1\}$ defined as the minimal dimension of a non zero subspace $V$ of $E$ such that $\mathcal{U}_{\mu}\not\subset V$ and such that $V$ belongs to some finite $G_{\mu}$-invariant set of subspaces of $E$. Let $V$ be   such a subspace of $E$ with dimension $r$ and $L:=\{g V; g\in G_{\mu}\}$ be the orbit of $V$ under $G_{\mu}$. This is a finite $G_{\mu}$-invariant set of subspaces of $E$ all having the same dimension $r$ and all not containing   $\mathcal{U}_{\mu}$ (as the latter   is a $G_{\mu}$-invariant subspace of $E$). Moreover, the     cardinality $s$ of $L$ is greater or equal to $2$ because the assumption $\mathcal{L}_{\mu}=\{0\}$ implies that any   $G_{\mu}$-invariant subspace of $V$ contains $\mathcal{U}_{\mu}$ (see item 2.~ of Remark \ref{V/L}, dual of Lemma \ref{degenere}).   Let then $L:=\{V_1, \cdots, V_s\}$ with the $V_i$'s
  pairwise distinct and consider the following 
 non empty set below:
$$\Gamma:=\{V_i\cap V_j; 1\leq i <j \leq s\}.$$
It is immediate that $\Gamma$ is a finite $G_{\mu}$-invariant set of subspaces of
 $E$, all of them not containing $\mathcal{U}_{\mu}$ and of dimension $<r$. By
minimality of  $r$, we deduce that
   \begin{equation}\forall i\neq j, \,V_i \cap V_j =\{0\}\label{1}\end{equation}
   In particular, the projective subspaces $[V_i]$'s of $\textrm{P}(E)$ are disjoint, 
   so that we can define the following positive real number: 
$$\alpha:=\inf\{\delta([{V_i}], [{V_j}]);
1\leq i<j\leq s\}>0.$$
 Let  $x\in V_1\setminus \{0\}$ and $y\in V_2\setminus \{0\}$. For every $g\in G_{\mu}$, there exist $ i=i(g) ,  j= j(g) \in \{1,\cdots, s\}$ such that  $g x\in
V_i$ and $g y\in V_j$. We claim that       $i\neq j$ for every $g\in G_{\mu}$. Indeed, if
$i=j$, then by denoting by  $k$ the unique integer such that
$g^{-1}V_i=V_k$, we would have  $x\in V_1\cap V_k$ and $y\in V_2\cap
V_k$. This contradicts \eqref{1}.  We deduce that 
\begin{equation}\forall g\in G_{\mu},\, \delta \left(g [x], g[y]\right)\geq \alpha.\label{min1}\end{equation}
But since $\mathcal{L}_{\mu}=\{0\}$  and since $\lambda_1>\lambda_2$, we have by  \cite[Theorem 3.9]{furstenberg-kifer} (see Theorem \ref{furstenberg-kifer})  
 that: 
 $$\delta\left(L_n [x], L_n [y]\right)\leq \frac{||\bigwedge ^2  L_n||\, ||x \wedge y||}{||L_n x ||\, ||L_n y||} \underset{n\rightarrow + \infty}{\overset{\textrm{a.s.}}{\longrightarrow} }0, $$
which contradicts  \eqref{min1}.
   \end{proof}
 
 \vspace{0.5cm}

\begin{proof}[Proof of  Proposition \ref{propstat}]

First, we prove that $[\mathcal{U}_{\mu}]$ contains the projective subspace $S$ of $\PV$ generated by the support of $\nu$. Let  $E$ be a $G_{\mu}$-stable subspace of   $V$ such that $\lambda(E)=\lambda_1$. We want to show that $\nu([E])=1$. First, we check that   for every $[x]\in \PV\setminus [\LL_{\mu}]$, 
the following almost sure convergence holds:

\begin{equation}\delta(L_n[x], [E])\underset{n\rightarrow +\infty}{\overset{\textrm{a.s.}}{\longrightarrow}} 0.\label{eq-1}\end{equation}
Indeed, consider the quotient norm on $V/E$. 
By Lemma \ref{dist_droite_sev},    the following holds for every $[x]\in \PV$:  
\begin{equation}\delta([x], [E])= \frac{||\overline{x}||}{||x||}.\label{eq0}\end{equation}
But since $\lambda(E)=\lambda_1>\lambda_2$, Lemma \ref{corollaire-triangulaire} implies that $\lambda(V/E)<\lambda_1$. 
Hence, 
  \begin{equation}\forall x \in V, \,\, \textrm{a.s.}, \,\,\limsup\frac{1}{n} \log || \overline{L_n x}|| < \lambda_1.\label{eq1}\end{equation}
Combining  \eqref{eq0},  \eqref{eq1} and     Theorem \ref{furstenberg-kifer} gives, for any $[x]\in \PV\setminus [\LL_{\mu}]$, the almost sure convergence 
  \eqref{eq-1}.\\
  Let now $\epsilon>0$.  Since $\nu$ is $\mu$-stationary, we have for every $n\in \N^*$, 
    \begin{equation}\nu\{[x]\in \PV; \delta([x],[E])>\epsilon\} = \int_{\PV}{\p\left(\delta(L_n[x], [E])>\epsilon \right)\,d\nu([x])}.\label{eq-2}\end{equation}
Since $\nu([\LL_{\mu}])=0$,  \eqref{eq-1} holds for $\nu$-almost every $[x]\in \PV$. In particular, for $\nu$-almost every $[x]\in \PV$, the following holds $\p(\delta(L_n[x], [E])>\epsilon)\underset{n\rightarrow +\infty}{\longrightarrow} 0$. By Fubini's theorem and  \eqref{eq-2}, we deduce that $\nu([x]\in \PV; \delta([x], [E])>\epsilon )=0$. This being true for every $\epsilon>0$, we deduce that $\nu([E])=1$. 
  This being true for every such stable subspace $E$, and since the intersection defining $\mathcal{U}_{\mu}$ can be made a finite one (the dimension of $V$ is finite), we deduce that $\nu([\mathcal{U}_{\mu}])=1$. Since $[\mathcal{U}_{\mu}]$ is closed in $\PV$, we deduce that   $S\subset [\mathcal{U}_{\mu}]$.\\
  In order to prove the other inclusion,  write $S=[E]$ for some subspace $E$ of $V$.  Recall that $\textrm{Supp}(\nu)$ is $T_{\mu}$-invariant, i.e.~  
\begin{equation}\forall g\in T_{\mu}, g \cdot \textrm{Supp}(\nu) \subset \textrm{Supp}(\nu).\label{walla}\end{equation}
It follows from  \eqref{walla} that $E$ is a $G_{\mu}$-invariant subspace of $V$. Moreover,  since $\nu([\LL_{\mu}])=0$, Theorem \ref{furstenberg-kifer} implies that the Lyapunov exponent relative to $E$ is $\lambda_1$.  By definition of $\mathcal{U}_{\mu}$, we deduce that $\mathcal{U}_{\mu}\subset E$ and then that $[\mathcal{U}_{\mu}]\subset S$.    Item (1) of the proposition is then proved. \\

\noindent In order to prove point (2) of the proposition, we set for simplicity of notation   $E=\mathcal{U}_{\mu}$ and
denote by $\rho$ the restricted representation $G_{\mu}
\longrightarrow \GL(E)$. It follows from above  that   $\nu$ is a
$\rho(\mu)$-stationary probability measure on $\textrm{P}(E)$. By
definition of $E$, we have the following equalities:
 $$\lambda(E)=\lambda_1\,\,\,,\,\,\,\mathcal{L}_{\rho(\mu)}=\mathcal{L}_{\mu} \cap E\,\,\,,\,\,\, \mathcal{U}_{\rho(\mu)} = E.$$
By Lemma  \ref{degenere}, the first and the third equalities above show
that the probability measure  $\rho(\mu)$ on $\GL(E)$ satisfies
the assumptions of Lemma \ref{reunion}. Since $\nu([\mathcal{L}_{\mu}])=0$,
the second equality above gives $\nu([\mathcal{L}_{\rho(\mu)}])=0$. By Lemma
\ref{reunion} again, $\nu$ is non degenerate on $\textrm{P}(E)$.
\end{proof}

\subsection{Oseledets theorem and stationary measures}\label{stationnaire2}

In this section, we prove that given a probability measure $\mu$
 on $\GL(V)$ such that
$\lambda_1>\lambda_2$, there exists  a $\mu$-stationary
probability measure $\nu$ on the projective space $\PV$ that
satisfies the equality $\nu([\mathcal{L}_{\mu}])=0$ and the conclusions of
Proposition \ref{propstat}. Our proof is constructive: we use
Oseledets theorem to derive  a random variable $[Z]\in \PV$ of law $\nu$
from the random walk associated to
$\mu$.  Since $\lambda_1>\lambda_2$, such a stationary measure will
immediately be a $\mu$-boundary. \\

\noindent We note that the existence of such a probability measure holds even if $\lambda_1=\lambda_2$. This can be proved using the  methods developed in  \cite{furstenberg-kifer}. Since the framework of the latter article is very general, the method is not constructive.\\

\begin{prop}
Let  $\mu$ be a probability measure on $\GL(V)$ such that
$\lambda_1>\lambda_2$. Then, there exists a  $\mu$-stationary
probability measure $\nu$ on $\PV$ such that $\nu([\mathcal{L}_{\mu}])=0$. By
Proposition  \ref{propstat}, $\nu$ is non degenerate on  $[\mathcal{U}_{\mu}]$.
Moreover, $\left( \PV\setminus [\mathcal{L}_{\mu}], \nu \right)$ is a
$\mu$-boundary. \label{propexistence}\end{prop}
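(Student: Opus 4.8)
The plan is to construct the stationary measure explicitly from Oseledets' multiplicative ergodic theorem applied to the left random walk, then verify its properties. First I would recall the setup: since $\lambda_1 > \lambda_2$, Oseledets' theorem (valid over any local field) provides, for $\p$-almost every trajectory $\omega = (X_1, X_2, \ldots)$, a one-dimensional ``fastest'' Oseledets subspace $F^1(\omega) \subset V$ such that for every $v \in V$ lying outside the complementary flag $F^{\leq 2}(\omega)$ of codimension one, $\frac{1}{n}\log\|L_n v\| \to \lambda_1$, and moreover $L_n$ suitably normalized ``aligns'' vectors with direction $F^1(\omega)$. The natural candidate is $\nu := $ the law of the random variable $[Z(\omega)] := [F^1(\omega)] \in \PV$. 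Because the $X_i$ are i.i.d., $F^1$ depends only on the ``past'' of the sequence in a way that makes $\nu$ automatically $\mu$-stationary: writing $L_{n+1} = L_n' X_1$ where $L_n' = X_{n+1}\cdots X_2$ has the same law as $L_n$, one gets $F^1(X_1, X_2, \ldots) = X_1^{-1} \cdot F^1(X_2, X_3, \ldots)$ a.s. (the fastest Oseledets direction transforms contravariantly under prepending a matrix), which upon taking laws yields $\nu = \int \check{g}^{-1}_*\nu \, $ --- more carefully, $\nu = \mu * \nu$ in the projective action. I would state this transformation law as a short lemma and note it is the standard ``Oseledets direction is a measurable $\mu$-equivariant section'' computation.

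Next I would check $\nu([\mathcal{L}_\mu]) = 0$. By Remark \ref{works_FK}, $x \notin \mathcal{L}_\mu$ is equivalent to $\frac{1}{n}\log\|L_n x\| \to \lambda_1$ a.s. Conversely, if $x \in \mathcal{L}_\mu \setminus\{0\}$, then a.s. $\limsup \frac{1}{n}\log\|L_n x\| = \beta^i(\mu) < \lambda_1$ for some $i \geq 2$ (Theorem \ref{furstenberg-kifer}). But for $\nu$-a.e.\ direction $[F^1(\omega)]$, by the defining property of the fastest Oseledets subspace, $\frac{1}{n}\log\|L_n v\| \to \lambda_1$ for $v \in F^1(\omega)$ --- wait, that is for the same trajectory, so I need the complementary argument: I would instead show directly that $\p[F^1(\omega) \subset \mathcal{L}_\mu] = 0$. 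Since $\mathcal{L}_\mu$ is $G_\mu$-invariant with $\lambda(\mathcal{L}_\mu) < \lambda_1$, the restriction of the cocycle to $\mathcal{L}_\mu$ has top exponent $< \lambda_1$; but the Oseledets direction $F^1(\omega)$ is by construction the direction realizing the exponent $\lambda_1$, which no vector of $\mathcal{L}_\mu$ can realize. Making this rigorous: if $F^1(\omega) \subset \mathcal{L}_\mu$ on a positive-probability set, then on that set $\|L_n\|$ restricted to $\mathcal{L}_\mu$ grows like $e^{n\lambda_1}$, but $\limsup \frac{1}{n}\log\|\bigwedge^{\dim\mathcal{L}_\mu} L_n|_{\mathcal{L}_\mu}\|$, hence the top exponent on $\mathcal{L}_\mu$, is $< \lambda_1$ by Corollary \ref{corollaire-triangulaire} --- a deterministic statement --- contradiction. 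So $\nu([\mathcal{L}_\mu]) = 0$. Then Proposition \ref{propstat} immediately gives that $\nu$ is non-degenerate on $[\mathcal{U}_\mu]$.

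Finally, for the $\mu$-boundary property of $(\PV \setminus [\mathcal{L}_\mu], \nu)$: I need to show that for $\p$-a.e.\ $\omega = (g_1, g_2, \ldots)$, the measures $g_1 \cdots g_n \nu$ converge weakly to $\delta_{[Z(\omega)]}$ for some random $[Z(\omega)]$. Here the relevant random walk driving the convergence is the \emph{right} walk $R_n = g_1\cdots g_n$ acting on $\PV$, and the contraction comes from the gap $\lambda_1 > \lambda_2$ applied to $R_n$ (equivalently to the left walk $g_n \cdots g_1$ read backwards). Concretely: $\|R_n\|^{-1} R_n$ has a subsequential limit which, by the gap, is a rank-one operator (this is exactly the content promised in Proposition \ref{propunicite}), say $\pi_\omega = z(\omega) \otimes \phi_n$; the image direction $[z(\omega)]$ is independent of the subsequence, call it $[Z(\omega)]$, and for any $[x]$ with $\phi(x) \neq 0$ we have $R_n[x] \to [Z(\omega)]$. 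Since $\nu$ is non-degenerate on $[\mathcal{U}_\mu]$ it gives zero mass to the hyperplane $\{\phi = 0\} \cap [\mathcal{U}_\mu]$ (using $[\mathcal{U}_\mu]$ is where $\nu$ lives and the kernel is proper there --- one must check $\mathcal{U}_\mu \not\subset \ker\phi$, which holds because $\lambda(\mathcal{U}_\mu) = \lambda_1$ forces the limiting functional to not annihilate $\mathcal{U}_\mu$), so by dominated convergence $R_n \nu \to \delta_{[Z(\omega)]}$ weakly.

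The main obstacle I anticipate is the rank-one limit statement for $\|R_n\|^{-1}R_n$: establishing that the full sequence (not just subsequences) of normalized right walks accumulates only on rank-one operators whose image direction is a.s.\ well-defined, and controlling that the limiting kernel hyperplane meets $[\mathcal{U}_\mu]$ properly so that $\nu$ of it is zero. This is genuinely where the gap $\lambda_1 > \lambda_2$ must be used carefully --- via $\bigwedge^2$, the exponent of $\bigwedge^2 R_n$ is $\lambda_1 + \lambda_2 < 2\lambda_1$, which forces $\|\bigwedge^2 R_n\|/\|R_n\|^2 \to 0$ and hence any limit of $\|R_n\|^{-1}R_n$ kills all $2$-vectors, i.e.\ has rank $\leq 1$; it is nonzero since $\|R_n\|^{-1}R_n$ has norm $1$. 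The uniqueness of the image direction across subsequences, and the measurability giving the genuine random variable $[Z(\omega)]$, is then a standard but slightly delicate argument that I expect the paper defers to Proposition \ref{propunicite}; here I would simply invoke that the image is a.s.\ of the form $[F^1(\omega)]$ by matching with the Oseledets decomposition of the reversed sequence, closing the loop with the identification $\nu = \mathrm{law}([Z])$.
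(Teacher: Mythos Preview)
Your construction of $\nu$ has a genuine gap. You want to define $\nu$ as the law of a one-dimensional ``fastest Oseledets subspace'' $F^1(\omega)$ for the left walk $L_n = X_n\cdots X_1$, but the one-sided Oseledets theorem does not supply such an object: it yields a filtration $V = E^1_\omega \supset E^2_\omega \supset \cdots$ in which the vectors achieving the top exponent $\lambda_1$ form the \emph{complement} of the codimension-one space $E^2_\omega$, not a canonical line. Even if one granted a line $F^1(\omega)$ satisfying your equivariance $F^1(\omega) = X_1^{-1} F^1(\theta\omega)$, taking laws gives $\nu = \int (g^{-1})_*\nu\, d\mu(g)$, i.e.\ $\nu$ is stationary for the image of $\mu$ under $g\mapsto g^{-1}$, not for $\mu$; your sentence ``more carefully, $\nu = \mu*\nu$'' does not follow from the relation you wrote.

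The paper repairs both issues at once by applying the one-sided Oseledets theorem to the \emph{inverse} cocycle $A^{-1}(\omega)=g_1^{-1}$, whose $n$-th product is $R_n^{-1}=g_n^{-1}\cdots g_1^{-1}$. Simplicity of $\lambda_1$ makes $-\lambda_1$ the simple \emph{bottom} exponent of $R_n^{-1}$, so the bottom Oseledets space $E^1_\omega$ is a line $\mathrm{k}Z(\omega)$. The equivariance now reads $g_1^{-1}E^1_\omega = E^1_{\theta\omega}$, i.e.\ $E^1_\omega = g_1\,E^1_{\theta\omega}$, which upon taking laws gives $\nu=\mu*\nu$ with the correct sign. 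The argument that $\nu([\mathcal{L}_\mu])=0$ is then the one you sketched, but run for $R_n^{-1}$: for $v\in\mathcal{L}_\mu$, $\lim\frac{1}{n}\log\|R_n^{-1}v\| \ge -\lambda(\mathcal{L}_\mu) > -\lambda_1$, hence $v\notin E^1_\omega$. Your discussion of the $\mu$-boundary property via rank-one limits of $R_n/\|R_n\|$ is essentially correct and is what the paper carries out in Proposition~\ref{propunicite}; but it is logically downstream of having a $\mu$-stationary $\nu$, and for that the choice of cocycle in the Oseledets step matters.
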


\noindent 
Such a measure will be obtained thanks to Oseledets theorem, and
more precisely the equivariance equality we recall below.

    \begin{theo}\cite{oseledets}
Let $(\Omega,\theta,\p)$ be an ergodic dynamical system. Let  $A:
\Omega \longrightarrow \GL(V)$ be a measurable application such
that   $\log||A||$ and $\log||A^{-1}||$ are integrable. Then there
exist $l\in \N^*$, $m_1, \cdots, m_l\in \N^*$ and real numbers
$\lambda_1=\cdots = \lambda_{m_1} > \cdots > \lambda_{m_{l-1}+1}=
\cdots =\lambda_{m_l}$
 such that for $\p$-almost  every $\omega\in \Omega$, there exist
 subspaces
  $E=E^1_{\omega} \supset \cdots \supset E^l_{\omega} \supset E^{l+1}_{\omega}=\{0\}$
  such that:

\begin{enumerate}

\item Equivariance equality: for  every  $1\leq i \leq l$,
$A(\omega)\cdot E^i_{\omega} =E^i_{\theta(\omega)}$ \item  for
every  $i=1,\cdots, l$ and every non zero vector $v$ of $E$, $v\in
E^i_{\omega}\setminus E^{i+1}_{\omega}$ if and only if  $\lim
\frac{1}{n}\log||A(\theta^{n-1} (\omega))\cdots A(\theta
(\omega))A(\omega) v|| = \lambda_{m_i}$.
    \item  $m_i=dim(E^i_{\omega}) - dim(E^{i+1}_{\omega})$, for every  $i=1, \cdots, l$. \end{enumerate}
    If, moreover,  $\theta$ is invertible  then there exists a splitting $V={F}_{\omega}^1  \oplus \cdots \oplus  {F}_{\omega}^l$ such that 

    \begin{enumerate}
 \item[4.] Equivariance equality: for  every  $1\leq i \leq l$,
 \begin{equation}A(\omega)\cdot F^i_{\omega} =F^i_{\theta(\omega)}.\label{equi2}
 \end{equation}
  \item[5.]  for
 every  $i=1,\cdots, l$ and every non zero vector  $v\in
 F^i_{\omega}$, 
\begin{equation}\lim_{n \rightarrow +\infty}
 \frac{1}{n}\log||A(\theta^{n-1} (\omega))\cdots A(\theta
 (\omega))A(\omega) v|| = \lambda_{m_i}\label{future}
 \end{equation}
 and 
 \begin{equation}  \lim_{n\rightarrow +\infty}
 \frac{1}{n}\log||A^{-1}(\theta^{-n} (\omega))\cdots A^{-1}(\theta^{-1}
 (\omega))  v|| = -\lambda_{m_i}.\label{past}  \end{equation} 
  \item[6.] $E_{\omega}^i=\oplus_{j=i}^l{F_{\omega}^j}$, for every  $i=1, \cdots, l$. 
    \end{enumerate}
 Moreover, the subspaces $E_{\omega}^i$ and $F_{\omega}^i$ are unique $\p$-almost everywhere, and
 they depend measurably on $\omega$. 
    
   \label{theoreme_oseledets}\end{theo}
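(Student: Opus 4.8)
Write $A_n(\omega):=A(\theta^{n-1}\omega)\cdots A(\theta\omega)A(\omega)$ for the cocycle generated by $A$, and $d=\dim V$. The plan is to extract the exponents together with their multiplicities from Kingman's subadditive ergodic theorem applied to exterior powers, to build the forward flag $(E^i_\omega)$ from the almost sure convergence of the polar part of $A_n(\omega)$, and---when $\theta$ is invertible---to build the splitting $(F^i_\omega)$ by running the same argument for $\theta^{-1}$ and intersecting the two flags. For the first point: for each $k\in\{1,\dots,d\}$ the map $\omega\mapsto\log\|\bigwedge^k A_n(\omega)\|$ is subadditive along $\theta$ and integrable (dominated by $k\log^+\|A^{\pm1}\|$ summed along the orbit), so by Kingman and ergodicity $\tfrac1n\log\|\bigwedge^k A_n(\omega)\|$ converges $\p$-a.s.\ and in $L^1$ to a constant $\gamma_k$; since $\|\bigwedge^k g\|$ is the product of the $k$ largest singular values of $g$, the increments $\lambda_k:=\gamma_k-\gamma_{k-1}$ (with $\gamma_0:=0$) are non-increasing, and grouping the equal values, with $m_i$ the cumulative multiplicities, yields the reals $\lambda_1=\cdots=\lambda_{m_1}>\cdots>\lambda_{m_{l-1}+1}=\cdots=\lambda_{m_l}=\lambda_d$ of the statement.

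Next I would construct the forward flag. Assuming first that $\kk$ is Archimedean, set $\Lambda_n(\omega):=\big(A_n(\omega)^{*}A_n(\omega)\big)^{1/(2n)}$, a positive definite symmetric operator whose eigenvalues are the $1/n$-th powers of the singular values of $A_n(\omega)$. The central analytic input---Oseledets' original argument, in the streamlined form due to Raghunathan---is that $\Lambda_n(\omega)$ converges $\p$-a.s.\ to a positive definite operator $\Lambda(\omega)$ with eigenvalues $e^{\lambda_1}\ge\cdots\ge e^{\lambda_d}$ and that the associated spectral projections converge as well. One then lets $E^i_\omega$ be the span of the eigenspaces of $\Lambda(\omega)$ for eigenvalues $\le e^{\lambda_{m_i}}$, so that $V=E^1_\omega\supsetneq\cdots\supsetneq E^{l+1}_\omega=\{0\}$ with $\dim E^i_\omega-\dim E^{i+1}_\omega$ the multiplicity of the $i$-th exponent. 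Expanding $\|A_n(\omega)v\|^2=\langle\Lambda_n(\omega)^{2n}v,v\rangle$ through the converging spectral decomposition yields exactly $v\in E^i_\omega\setminus E^{i+1}_\omega\iff\tfrac1n\log\|A_n(\omega)v\|\to\lambda_{m_i}$, which is item~2; equivariance $A(\omega)E^i_\omega=E^i_{\theta\omega}$ (item~1) then drops out of this growth-rate characterisation together with the cocycle identity $A_n(\theta\omega)A(\omega)=A_{n+1}(\omega)$, measurability of $\omega\mapsto E^i_\omega$ from that of $\Lambda(\cdot)$, and a.e.\ uniqueness from the growth characterisation. When $\kk$ is non-Archimedean the same scheme goes through with the Cartan decomposition $A_n(\omega)=k_n\,\mathrm{diag}(\varpi^{a^{(n)}_1},\dots,\varpi^{a^{(n)}_d})\,k_n'$, $k_n,k_n'\in\GL_d(\mathcal{O}_{\kk})$, replacing the polar one: the $\tfrac1n a^{(n)}_j$ replace the logarithmic singular values, the coordinate subspaces transported by $(k_n')^{-1}$ replace the eigenspaces, and the convergence of the latter in the relevant Grassmannians is again the key input.

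For the invertible case I would apply the part just proved to the cocycle generated by $\omega\mapsto A(\theta^{-1}\omega)^{-1}$ over the ergodic invertible system $(\Omega,\theta^{-1},\p)$; since $\log\|A^{-1}\|$ is integrable its exponents are $-\lambda_d\ge\cdots\ge-\lambda_1$, which furnishes an increasing flag $\{0\}=\widehat{E}^{\,0}_\omega\subsetneq\widehat{E}^{\,1}_\omega\subsetneq\cdots\subsetneq\widehat{E}^{\,l}_\omega=V$ with $v\in\widehat{E}^{\,i}_\omega\setminus\widehat{E}^{\,i-1}_\omega$ iff the backward exponent of $v$ equals $-\lambda_{m_i}$. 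Put $F^i_\omega:=E^i_\omega\cap\widehat{E}^{\,i}_\omega$. The crucial transversality lemma is that $E^{i+1}_\omega\cap\widehat{E}^{\,i}_\omega=\{0\}$; granted this, a dimension count gives $\dim F^i_\omega$ equal to the $i$-th multiplicity, $E^i_\omega=F^i_\omega\oplus E^{i+1}_\omega$, and hence $V=\bigoplus_i F^i_\omega$ with $E^i_\omega=\bigoplus_{j\ge i}F^j_\omega$ (item~6). Parts~(4) and~(5) are then formal: equivariance of $F^i_\omega$ from that of $E^i_\omega$ and $\widehat{E}^{\,i}_\omega$; and the two-sided limits because $F^i_\omega\subset E^i_\omega$ forces the forward rate $\le\lambda_{m_i}$ while $F^i_\omega\cap E^{i+1}_\omega=\{0\}$ forces it $\ge\lambda_{m_i}$, and symmetrically for the backward rate using $\widehat{E}^{\,i}_\omega$.

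The step I expect to be the real obstacle is the convergence of the polar (resp.\ Cartan) part of $A_n(\omega)$: this does not follow from Kingman, and making it work means controlling how the eigenspaces attached to nearly-equal singular values are allowed to rotate. The standard remedy---Raghunathan's proof---is an induction on $d$: peel off the top-exponent block, pass to the quotient cocycle, and use the elementary fact that a sequence of orthogonal projections whose consecutive differences are summable converges. A secondary, lighter obstacle is the transversality lemma in the invertible case, which would be settled by the subexponential-growth (Tempelman) lemma applied to $\omega\mapsto\log^+\|A^{\pm1}(\omega)\|$.
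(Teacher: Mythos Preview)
The paper does not give a proof of this theorem: it is stated with the citation \cite{oseledets} and then invoked as a black box in the proof of Proposition~\ref{propexistence} and in Section~\ref{limit_proof}. So there is no ``paper's own proof'' to compare against; the authors treat Oseledets' theorem as a classical input.

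That said, your sketch is a faithful outline of the standard modern proof, essentially Raghunathan's argument layered on top of Kingman's subadditive theorem, and you have correctly identified the genuine analytic core (convergence of the polar/Cartan part, handled by the induction-on-dimension peeling argument) and the combinatorial step in the invertible case (transversality of the forward and backward flags). One small caution: the non-Archimedean adaptation you propose---tracking the coordinate subspaces transported by the right $K$-factor in the Cartan decomposition---is morally right but needs more care than the Archimedean polar case, because the $K$-factors are not unique when singular values coincide, so one has to work with the corresponding flags in Grassmannians rather than with the $k_n'$ themselves; this is exactly the same issue that forces Raghunathan's quotient induction in the Archimedean setting, and the same cure works.
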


\begin{proof}[Proof of Proposition \ref{propexistence}]
Let $d=dim(V)$,  $\Omega=\textrm{GL}(V)^{\N^*}$, $\p=\mu^{\otimes \N^*}$, $\theta$ the shift operator and $A: \Omega\longrightarrow G, \omega=(g_i)_{i\in \N^*}\longmapsto g_1$.  The distinct Lyapunov exponents relative to the measure $\mu$ will be denoted by $\lambda_1=\cdots = \lambda_{m_1}>\cdots \lambda_{m_{l-1}+1}=\cdots = \lambda_{m_l}=\lambda_d$. The ones relative to the reflected measure $\check{\mu}$, law of $g_1^{-1}$, are $-\lambda_{m_l}>\cdots >-\lambda_{m_1}$. We will construct $\nu$ as the law of the least expanding vector $R_n^{-1}$ given by Oseledets theorem. More precisely, applying Oseledets theorem for the dynamical system $(\Omega, \p, \theta)$ and the transformation $A^{-1}$ (and not $A$),  we obtain for the same  integers $l, 
 m_1, \cdots, m_l$ above and for the same exponents $\lambda_{m_i}$'s,  a  
    random filtration
  $E^{0}_{\omega}=\{0\}\subset E^1_{\omega} \subset \cdots \subset E^l_{\omega}=V$
  such that for $\p$-almost every $\omega=(g_i)_{i\in \N^*}\in \Omega$:

  \begin{enumerate}
  \item  \begin{equation}E^i_{\omega}= g_1 \cdot E^i _{\theta(\omega)},\label{equi}\end{equation}
  for every  $1\leq i \leq l$.
  \item  For every non zero vector  $v$ of $V$ and every $i=1, \cdots, l$:
   \begin{equation}v\in E^i_{\omega}\setminus E^{i-1}_{\omega} \Longleftrightarrow \lim_{n\rightarrow +\infty} \frac{1}{n}\log||R_n^{-1} v || = -\lambda_{m_i},\label{liapou}
   \end{equation}
      where  $R_n(\omega)=g_1\cdots g_n$ is the right random walk.
      \item For every $i=1, \cdots, l$,
      \begin{equation}\label{dim} m_i=dim(E^i_{\omega}) - dim(E^{i-1}_{\omega}).\end{equation}
  \end{enumerate}
 Under the assumption $\lambda_1>\lambda_2$, we have  $m_1=1$ so that by  \eqref{dim} 
 $\mathrm{k}Z(\omega):=E^1_{\omega}$
is a line for   $\p$-almost every $\omega\in \Omega$. Let  $\nu$
be the law of the random variable  $Z: \Omega \longrightarrow
\PV, \omega\longmapsto [Z(\omega)]$ on the projective space. The
probability $\nu$ is $\mu$-stationary. Indeed, for every real
valued  measurable function  $f$ on  $\PV$,

 \begin{eqnarray}
 \int_{\PV}{f([x]) d\nu([x])}&=& \int_{\Omega} {f\left(E^1_{\omega}\right) d\p(\omega)}\nonumber\\
 &=& \int_{\Omega} {f \left(\,g_1\cdot E^1_{\theta(\omega)}\,\right) d\p(\omega)}\label{a}\\
 &=&\int_{G}{\left[ \int_{\Omega}{f\left(\,\gamma \cdot  E^1_{\theta(\omega)}\,\right) d\p(\omega)}\right]\,d\mu(\gamma)}\label{b}\\
 &=& \int_{G\times \PV}{f(\gamma \cdot x)\, d\mu(\gamma)\, d\nu([x])} .\label{c}\end{eqnarray}
Equality  \eqref{a}  is straightforward consequence of the equivariance equality \eqref{equi};  \eqref{b} is due to the independence of $g_1$ and $\theta(\omega)=(g_2, g_2, \cdots, )$ while  \eqref{c} is true because   $\theta$ preserves the measure $\p$.\\
Finally, we show that $\nu([\mathcal{L}_{\mu}])=0$. Let $E$ be a proper $G_{\mu}$-stable 
subspace such that  $\lambda(E)<\lambda_1$. Fix  $\omega\in
\Omega$.  Then, since the least Lyapunov exponent of $\check{mu}$ restricted to $E$ is equal to $-\lambda(E)$, 
$$\forall v \in E, \,\,\lim\frac{1}{n} {\log||R_n^{-1}(\omega) v
||}  >-
\lambda_1.$$ Taking if necessary  $\omega$ in a measurable subset
of $\Omega$ of $\p$-probability $1$, assertion  \eqref{liapou} 
gives
$$\forall v\in E,  v\not\in \mathrm{k}Z(\omega)\,\,\,,\textrm{i.e.}\,\,\,[Z(\omega)]\neq [v].$$
Hence $\nu([E])=0$.\\
The fact that $\nu$ is a $\mu$-boundary is also a consequence of
the equivariance equality  (see for example \cite{kaimanovich},
\cite{ledrappier}, \cite{brofferio-schapira}).
\end{proof}
 
\subsection{Uniqueness of the stationary measure}\label{stationnaire3}
In this section, we prove that the stationary measure given by
Proposition  \ref{propexistence} is the unique $\mu$-stationary probability measure on $\PV\setminus [\LL_{\mu}]$. We fix   an orthonormal basis $(e_1, \cdots, e_d)$ of
$V$ (see Section \ref{nonarchisection} for the non-Archimedean case).  The dual vector space $V^*$ of $V$ will be equipped with the dual norm and with the dual  basis $(e_1^*. \cdots, e_d^*)$. We keep the same notation as Lemma \ref{duality} concerning other duality notation.

Recall that if $K$ denotes the isometry group of $(V,||\cdot||)$ and $A$ the subgroup of
 $\GL(V)$ consisting of diagonal matrices in the chosen basis, then the following decomposition holds $G=KAK$. 
   For $g\in \GL_d(\mathrm{k})$, we write $g=k(g) a(g) u(g)$ a KAK decomposition of 
$g$. We note $a(g):=\left(a_1(g), \cdots, a_d(g)\right)$.  Note   that $g^t=u(g)^t a(g)^t k(g)^t$ is a $KAK$ decomposition of $g^t$ in $\GL(V^*)$. 
When $\kk=\R$ or $\kk=\C$,  one can impose that $a_1(g)\geq \cdots\geq  a_d(g)>0$. When $\kk$
 is non-Archimedean, one can choose   
$a_1(g), \cdots, a_d(g)\in  \varpi ^{\Z}$ (with $\varpi$ a fixed uniformizer of $\kk$) and sort them     in   ascending order of their valuation. 
With this choice,  $a(g)$ is unique   and we can define
   the map  $N: \GL(V) \longrightarrow \kk\setminus \{0\}, g \mapsto a_1(g)$. Note that 
   a similar map $N: V\setminus \{0\}\longrightarrow \kk\setminus \{0\}$ was defined 
in Section 
\ref{geobehind}.  It will be clear from the context whether $N$ is applied to a non zero element of $V$ or  
 to an automorphism of $V$. Recall that in the Archimedean case, one has simply $N(x)=||x||$, $N(g)=||g||$ for $x\in V\setminus \{0\}$ and $g\in \GL(V)$.

\begin{prop}
Let $\mu$ be a probability measure on  $\PV$ such that
$\lambda_1>\lambda_2$ and $\nu$ a $\mu$-stationary probability on
$\PV$ such that $\nu([\mathcal{L}_{\mu}])=0$. Then there exists a random variable $\omega \mapsto [Z(\omega)]\in \PV$ of law $\nu$ such that:  

\begin{enumerate}
\item  almost surely, 
 every limit point of  $\frac{R_n}{N(R_n)}$  in $\textrm{End}(V)$ is a  
matrix of rank one $1$ whose image in $\PV$ is equal to $[Z]$.  \item $k(R_n)[e_1]$ converges almost surely to $[Z]$.  \end{enumerate}

In particular, $\nu$ is the unique such probability measure. 
 \label{propunicite}\end{prop}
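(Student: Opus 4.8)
The plan is to realize $[Z(\omega)]$ as the (almost surely unique) atom of the weak limit of the random measures $R_n\nu$, and simultaneously as the common image of every rank-one cluster value of $R_n/N(R_n)$; the point of the last sentence of the statement is that this $[Z(\omega)]$ is manufactured out of the random walk alone, so its law $\nu$ is forced. First I would set up the backward martingale: for $f\in C(\PV)$ put $M_n(f):=\int_{\PV} f\,d(R_n\nu)$; since $\nu$ is $\mu$-stationary and $X_{n+1}$ is independent of $\mathcal{F}_n=\sigma(X_1,\dots,X_n)$, the process $(M_n(f))_n$ is a bounded martingale, hence converges a.s. Running this over a countable dense subset of $C(\PV)$ and using compactness of $\PV$, for $\p$-a.e.\ $\omega$ the measures $R_n\nu$ converge weakly to a random probability measure $\nu_\infty^\omega$. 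Stationarity gives $\E(M_n(f))=\int f\,d\nu$ for every $n$, so by dominated convergence $\E[\nu_\infty^\omega]=\nu$.

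\textbf{Rank-one cluster values.} Next I would exploit $\lambda_1>\lambda_2$ through the Cartan decomposition $R_n=k(R_n)a(R_n)u(R_n)$ with $k(R_n),u(R_n)$ isometries and $a(R_n)=\mathrm{diag}(a_1(R_n),\dots,a_d(R_n))$, $|a_1(R_n)|\ge\cdots\ge|a_d(R_n)|$. Applying Kingman's subadditive theorem \cite{kingman} to $g\mapsto\log\|g\|$ and to $g\mapsto\log\|\bigwedge^2 g\|$ (the moment hypothesis passes to $\bigwedge^2$) yields $\frac{1}{n}\log|a_1(R_n)|\to\lambda_1$ and $\frac{1}{n}\log|a_2(R_n)|\to\lambda_2$ a.s., whence $a_i(R_n)/a_1(R_n)\to 0$ for all $i\ge 2$, i.e.\ $a(R_n)/a_1(R_n)\to p:=\mathrm{diag}(1,0,\dots,0)$. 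Since $N(R_n)=a_1(R_n)$ and $\|R_n/N(R_n)\|=1$, any cluster value $M$ of $R_n/N(R_n)$ in $\mathrm{End}(V)$ has operator norm $1$; extracting along a subsequence with $k(R_{n_k})\to k_\infty$, $u(R_{n_k})\to u_\infty$ in the compact isometry group gives $M=k_\infty\,p\,u_\infty$, of rank exactly one, with image $[k_\infty e_1]$ and kernel the hyperplane $H_M:=u_\infty^{-1}(\mathrm{span}(e_2,\dots,e_d))$.

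\textbf{The limit is a Dirac mass; conclusion.} To show $\nu_\infty^\omega$ is a.s.\ a Dirac mass I would first reduce to $\mathcal{U}_\mu=V$: by Proposition \ref{propstat}, $\nu([\mathcal{U}_\mu])=1$, so $\nu$ is a $\rho(\mu)$-stationary measure on $\textrm{P}(\mathcal{U}_\mu)$ with $\nu([\mathcal{L}_{\rho(\mu)}])=0$, where $\rho$ is the restriction to $\mathcal{U}_\mu$ (Remark \ref{restU}), the hypothesis $\lambda_1>\lambda_2$ persists, and $\mathcal{U}_{\rho(\mu)}=\mathcal{U}_\mu$. Assuming then $\mathcal{U}_\mu=V$, Proposition \ref{propstat} says $\nu$ is non-degenerate on $\PV$, so $\nu(H_M)=0$ for every hyperplane $H_M$ as above, and hence $R_{n_k}\nu\to\delta_{[k_\infty e_1]}$ weakly. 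Comparing with the martingale step forces $\nu_\infty^\omega=\delta_{[k_\infty e_1]}$; in particular $[k_\infty e_1]$ does not depend on the extraction, and I would \emph{define} $[Z(\omega)]$ to be this common point. This gives item 1; moreover $\E[\delta_{[Z(\omega)]}]=\E[\nu_\infty^\omega]=\nu$, so $[Z(\cdot)]$ has law $\nu$. For item 2, every cluster value of the $\PV$-valued sequence $k(R_n)[e_1]$ is of the form $[k_\infty e_1]$ for such an extraction, hence equals $[Z(\omega)]$, so by compactness of $\PV$, $k(R_n)[e_1]\to[Z(\omega)]$ a.s. One also checks $[Z(\omega)]=[E^1_\omega]$, the top Oseledets line of $R_n^{-1}$ from the proof of Proposition \ref{propexistence}, which makes transparent that the construction uses only the random walk; uniqueness of $\nu$ then follows, since any $\mu$-stationary $\nu'$ with $\nu'([\mathcal{L}_\mu])=0$ must by the same argument be the law of $[Z(\omega)]$.

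\textbf{Main obstacle.} The delicate point is that $\nu_\infty^\omega$ is $\p$-a.s.\ a Dirac mass, i.e.\ that $\nu(\ker M)=0$ for every rank-one cluster value $M$. This is what forces the reduction to $\mathcal{U}_\mu=V$: only there does $\|M\|=1$ pin down $\ker M$ as a \emph{proper} hyperplane, after which the non-degeneracy of $\nu$ on $[\mathcal{U}_\mu]$ (Proposition \ref{propstat}) does the job. Carrying the reduction back to the statements about $R_n/N(R_n)$ and $k(R_n)[e_1]$ in $\mathrm{End}(V)$ (rather than in $\mathrm{End}(\mathcal{U}_\mu)$) needs a short comparison of $N(R_n)$ with $\|R_n|_{\mathcal{U}_\mu}\|$, using that $\mathcal{U}_\mu$ is $G_\mu$-invariant and $\lambda_1(V/\mathcal{U}_\mu)<\lambda_1$, to see that every rank-one cluster value has image inside $[\mathcal{U}_\mu]$ and equal to the $[Z(\omega)]$ produced on $\mathcal{U}_\mu$.
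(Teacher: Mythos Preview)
Your proposal is correct and follows essentially the same route as the paper: martingale convergence of $R_n\nu$ (Furstenberg), rank-one cluster values via the $KAK$ decomposition and $\lambda_1>\lambda_2$, non-degeneracy of $\nu$ on $[\mathcal{U}_\mu]$ from Proposition~\ref{propstat} to kill the kernels, and the reduction to $\mathcal{U}_\mu=V$ followed by a lift back. The paper organises this as (i) the case $\mathcal{U}_\mu=V$, (ii) uniqueness by restriction to $\mathcal{U}_\mu$, (iii) the limit statements for $R_n/N(R_n)$ and $k(R_n)[e_1]$ in $\mathrm{End}(V)$; your ``Main obstacle'' is exactly step (iii), and the paper handles it not by bounding $N(R_n)/N(\rho(R_n))$ directly but via the explicit identity
\[
\frac{R_n x}{N(\rho(R_n))}=e_1^*(u_n x)\,\frac{N(R_n)}{N(\rho(R_n))}\,k_n e_1+O\!\left(\frac{a_2(R_n)}{\|\rho(R_n)\|}\right),
\]
evaluated at some $x\in\mathcal{U}_\mu\setminus\ker A(\omega)$, which forces $k_\infty[e_1]=[Z(\omega)]$ for every cluster value $k_\infty$ of $k(R_n)$.
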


\begin{proof} In item i.~below we prove  the proposition in the particular case $\mathcal{U}_{\mu}=V$. In item ii.~we check that this is enough to deduce the  uniqueness of the stationary measure on $\PV\setminus [\mathcal{L}_{\mu}]$. Finally, in item iii.~we prove the limit theorems claimed in the proposition in the general case.  \begin{enumerate}
\item[i.]  Assume first that  $\mathcal{U}_{\mu}=V$. \\
By Proposition \ref{reunion}, $\nu$ is non degenerate on $\PV$.  
Let
$\omega\in \Omega$ and $A(\omega)$ a limit point of
$\frac{R_n(\omega)}{N\left(R_n(\omega)\right)}$. We write
 $\frac{R_{n_k}(\omega)}{ N \left(R_{n_k}(\omega)\right)} \underset{k\rightarrow +\infty}{\longrightarrow} A(\omega)$. 
 Since $\nu$ is  non degenerate, the pushforward measure
$A(\omega)\nu$ on $\PV$ is well defined and we have the
following vague convergence:
\begin{equation}\nonumber R_{n_k}(\omega)   \nu \underset{k \rightarrow \infty} {\overset{\textrm{vague}}{\longrightarrow}} A(\omega)  \nu.\end{equation}
Since $\lambda_1>\lambda_2$, the KAK decomposition of
$R_n(\omega)$ shows that, taking if necessary $\omega$ in a
 measurable subset of $\Omega$ of $\p$-probability $1$, the
matrix $A(\omega)$ has rank $1$. Hence, if we denote by $\mathrm{k}Z(\omega)$ its image, then $A(\omega)\nu=\delta_{[Z(\omega)]}$, so
that

\begin{equation}\label{conv1}R_{n_k}(\omega)   \nu \underset{n \rightarrow \infty} {\overset{\textrm{vague}}{\longrightarrow}}\delta_{[Z(\omega)]}. \end{equation}
 \noindent But using Doob's theorem on convergence of bounded martingales,     Furstenberg  showed in \cite{furstenberg}  that there exists for $\p$-almost
every   $\omega$, a probability measure $\nu(\omega)$ on  $\PV$
such that
 \begin{equation}R_n(\omega)   \nu  \underset{n \rightarrow \infty}{\longrightarrow} \nu_{\omega}\label{conv2}\end{equation}  and
  \begin{equation}\label{conv3}\forall f\in \mathcal{C}\left(\PV\right), \, \E\left(\int {f\,d\nu_{\omega}}\right) = \int{f\,d\nu}.\end{equation}
By  \eqref{conv1} and \eqref{conv2}, we obtain the following
relation:
\begin{equation}   \nu_{\omega}=\delta_{[Z(\omega)]}.\label{dirac}\end{equation}
 In particular $[Z(\omega)]$  does not depend on the subsequence $(n_k)_{k\in \N^*}$. 
 By \eqref{conv3}, $\nu$ is the law of the random variable $\omega \longrightarrow [Z(\omega)]$ on $\PV$.   This proves the uniqueness of $\nu$, together with  item 1 in the case $\mathcal{U}_{\mu}=V$.  Item  2 is an immediate consequence of the KAK decomposition.

 \item[ii.]  Now if $\mathcal{U}_{\mu}\neq V$, we apply the previous part   for the restriction $\rho: T_{\mu} \longrightarrow \GL(\mathcal{U}_{\mu})$ on $\mathcal{U}_{\mu}$. Since $\mathcal{U}_{\rho(\mu)}=\mathcal{U}_{\mu}$, $\mathcal{L}_{\rho(\mu)}=\mathcal{U}_{\mu} \cap \mathcal{L}_{\mu}$ (see Remark \ref{restU}) and since the top Lyapunov exponent of $\rho(\mu)$ is simple, we obtain using item i.~a unique $\mu$-stationary   probability measure on $[\mathcal{U}_{\mu}] \setminus [\mathcal{U}_{\mu}\cap  \mathcal{L}_{\mu}]$. But by Proposition \ref{propstat}, any $\mu$-stationary probability measure on $\PV\setminus [\mathcal{L}_{\mu}]$ gives total mass to $[\mathcal{U}_{\mu}]$, then such a probability measure is unique.

 \item[iii.] It is left to prove the limit theorems  in the first and second claims of Proposition \ref{propunicite} even if $\mathcal{U}_{\mu}\neq V$.    
 For every $n\in \N$, let $k_n$ (resp. $u_n$) be the left (resp. right) $K$ part of $R_n$ in the $KAK$ decomposition.  
The following holds almost surely:
 
 $$\forall x\in V, \frac{R_n x}{N\left(\rho(R_n)\right)} =e_1^* (u_n x)  \frac{N(R_n)}{N \left(\rho(R_n)\right)} k_n e_1 + O\left(\frac{a_2(n)}{||\rho(R_n)||}\right).$$
 But  the Lyapunov exponent of $\rho$ is $\lambda_1$ and   $\frac{\log{a_2(n)}}{n}$ converges almost surely to  
    $\lambda_2<\lambda_1$. Hence $ \frac{a_2(n)}{||\rho(R_n)||}$ converges (exponentially fast)  to zero,  so that almost surely, 
    
\begin{equation}\forall x\in V, \frac{R_n x}{N\left(\rho(R_n)\right)} =e_1^* (u_n x)  \frac{N(R_n)}{N \left(\rho(R_n)\right)} k_n e_1 + o(1).\label{firstpart}\end{equation}

 Let now $\omega\in \Omega$ and  $k_{\infty}$ be a limit point of $(k_n)_{n\in \N^*}$. 
 We write $k_{\infty} = \underset{l\rightarrow +\infty}{\lim} {k_{n_l}} $. Passing to a subsequence if necessary, we may assume that 
 $\frac{\rho(R_{n_l})}{N \left(\rho(R_{n_l})\right)}$ converges to the non zero endomorphism  $A(\omega)$ of $\mathcal{U}_{\mu}$. 
    Choose any $x\in \mathcal{U}_{\mu}\setminus \textrm{Ker}(A(\omega))$. 
  In particular,  $\frac{R_{n_l} x}{N \left(\rho\left(R_{n_l}\right)\right)}  =  \frac{\rho(R_{n_l} x)}{N \left(\rho\left(R_{n_l}\right)\right)}  \underset{l \rightarrow +\infty}{\longrightarrow} A(\omega) x \in V\setminus \{ 0\}$.  
Since $K$ acts by isometry on $V$,     \eqref{firstpart} gives  then that   
 $$  \Big|  e_1^*(u_{n_l} x)  \frac{N(R_{n_l})}{N \left(\rho(R_{n_l})\right)} \Big|    \underset{l\rightarrow +\infty}{\longrightarrow}
||A(\omega) x||>0.$$
In particular, passing if necessary to a subsequence, we may assume that the sequence 
$\left( e_1^* (u_{n_l} x)  \frac{N(R_{n_l})}{N \left(\rho(R_{n_l})\right)}\right)_{l\in \N^*}$ converges in $\kk$ to some $\alpha(\omega)\in \kk\setminus \{0\}$. 
 By \eqref{firstpart} again, we deduce that 
 
   $$\frac{R_{n_l} x}{N\left(\rho(R_{n_l})\right)}     \underset{l\rightarrow +\infty}{\longrightarrow} \alpha(\omega) 
 k_{\infty} e_1 \in V\setminus \{0\} .$$
  In particular, $R_{n_l}[x] \longrightarrow k_{\infty} [e_1]$ in $\PV$. 
 But since $x\in \mathcal{U}_{\mu}\setminus  \textrm{Ker}(A(\omega))$, item i.~shows that $R_{n_l}[x]  \longrightarrow [Z(\omega)]$. 
 Hence $k_{\infty} [e_1]=[Z(\omega)]$.  This being true for all limit points of $(k_n[e_1])_{n\in \N^*}$, we deduce that 
  $k_n [e_1]$ converges almost surely to $[Z]$. This proves part 2 of the proposition in the general case. Since $\lambda_1>\lambda_2$, part 1 is an easy consequence of the KAK decomposition.

 \end{enumerate}
\end{proof}

\vspace{0.2cm}

\begin{corollaire}
Let $\mu$ be a probability measure on $\GL(V)$ such that $\lambda_1>\lambda_2$. Then,
\begin{enumerate}
   \item  For every   sequence $([x_n])_n$ in $\PV$ that converges to some $[x]\in \PV\setminus[\mathcal{L}_{\mu}]$, we have almost 
   surely,
$$  \inf_{n\in \N^*}  \frac{||L_n x_n||}{||L_n|| \,||x_n||} >0.$$
\item      $\frac{1}{n}\E(\log\frac{||L_n x||}{||x||})$ converges to $\lambda_1$ uniformly on compact subsets of $\PV\setminus[\mathcal{L}_{\mu}]$.  
\item There exists a random variable $[Z]\in \PV$ of law $\nu$ such that 
for every $[x]\in \PV\setminus [\mathcal{L}_{\mu}]$, the sequence of random variables $(R_n [x])_{n\in \N^*}$ converges in probability to $[Z]$.    
\end{enumerate}
\label{corollaire-unicite}\end{corollaire}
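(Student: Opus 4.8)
We establish the three assertions in the order $(1)\Rightarrow(3)\Rightarrow(2)$; the clause about sequences $[x_n]\to[x]$ in $(1)$ follows from exactly the same estimate as for fixed $[x]$.

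\textbf{Part $(1)$, via duality.} The transposed walk $L_n^{t}=X_1^{t}\cdots X_n^{t}$ is the \emph{right} random walk attached to $\check\mu$, which has the same Lyapunov exponents as $\mu$, so $\lambda_1(\check\mu)>\lambda_2(\check\mu)$. From a $KAK$ decomposition $L_n=k(L_n)a(L_n)u(L_n)$ one reads off $k(L_n^{t})=u(L_n)^{t}$, so Proposition~\ref{propunicite}(2) applied to $\check\mu$ gives that $u(L_n)^{t}[e_1^{*}]$ converges almost surely to a random point $[\check Z]\in\PS$ whose law $\check\nu$ is the unique $\check\mu$-stationary measure with $\check\nu([\mathcal{L}_{\check\mu}])=0$. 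On the other hand, since the norm dominates its first coordinate, for every nonzero $x$ and every $n$,
$$\frac{\|L_n x\|}{\|L_n\|\,\|x\|}\ \ge\ \Big|e_1^{*}\big(u(L_n)\,x/\|x\|\big)\Big|\ =\ \delta\big([x],[\ker(u(L_n)^{t}e_1^{*})]\big),$$
the equality because $u(L_n)^{t}e_1^{*}$ has norm $1$. The right-hand side tends a.s.\ to $\delta([x],[\ker\check Z])$ (and to the same limit if $x$ is replaced by a sequence $x_n$ with $[x_n]\to[x]$, by continuity of $\delta$). It remains to check $\p(x\in\ker\check Z)=0$ when $x\notin\mathcal{L}_\mu$: by Lemma~\ref{duality}, $\mathcal{U}_{\check\mu}=\mathcal{L}_\mu^{0}$, and by Theorem~\ref{existence_uncite}(b) applied to $\check\mu$, $\check\nu$ gives full mass to $[\mathcal{L}_\mu^{0}]$ and is non-degenerate there; since $x\notin\mathcal{L}_\mu$, the subspace $(\mathrm{k}x)^{0}\cap\mathcal{L}_\mu^{0}=(\mathrm{k}x+\mathcal{L}_\mu)^{0}$ is a \emph{proper} subspace of $\mathcal{L}_\mu^{0}$, hence $\check\nu$-null. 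Therefore $\delta([x],[\ker\check Z])>0$ a.s.; as each individual term $\|L_nx_n\|/(\|L_n\|\,\|x_n\|)$ is $>0$ by invertibility, the infimum over $n$ is a.s.\ $>0$.

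\textbf{Part $(3)$.} Let $[Z]$ be the a.s.\ limit of $k(R_n)[e_1]$ furnished by Proposition~\ref{propunicite}(2); its law is $\nu$. Fix $[x]\notin[\mathcal{L}_\mu]$. By \eqref{firstpart}, a.s.\ $R_n x/N(\rho(R_n))=c_n\,k(R_n)e_1+r_n$ with $\|r_n\|\to0$ exponentially and $c_n=e_1^{*}(u(R_n)x)\,N(R_n)/N(\rho(R_n))$, so $|c_n|\ge|e_1^{*}(u(R_n)x)|$ since $N(R_n)\ge N(\rho(R_n))$. On $\{|c_n|\ge\|r_n\|^{1/2}\}$ one gets $\delta\big(R_n[x],k(R_n)[e_1]\big)\le 2\|r_n\|^{1/2}$, so, combining with $k(R_n)[e_1]\to[Z]$, the whole matter reduces to the non-concentration bound
$$\limsup_{n\to\infty}\p\big(|e_1^{*}(u(R_n)x)|<\eta\big)\longrightarrow 0\quad\text{as }\eta\to 0^{+}.$$
This I would deduce from Part $(1)$: reversing the order of $X_1,\dots,X_n$ shows $\|R_nx\|/(\|R_n\|\,\|x\|)$ has the same law as $\|L_nx\|/(\|L_n\|\,\|x\|)$, whose infimum over $n$ is a.s.\ positive by $(1)$, so this family of laws is tight away from $0$ uniformly in $n$; moreover $a_2(R_n)/a_1(R_n)\to0$ a.s.\ because $\lambda_1>\lambda_2$; and from the $KAK$ decomposition there is an absolute $c>0$ with $|e_1^{*}(u(R_n)\,x/\|x\|)|\ge c\,\|R_nx\|/(\|R_n\|\,\|x\|)$ whenever $\|R_nx\|/(\|R_n\|\,\|x\|)\ge 2\,a_2(R_n)/a_1(R_n)$. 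Combining these yields the bound, and hence $R_n[x]\to[Z]$ in probability.

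\textbf{Part $(2)$.} The upper bound is free and uniform: $\tfrac1n\E\log(\|L_nx\|/\|x\|)\le\tfrac1n\E\log\|L_n\|\to\lambda_1$. For the lower bound, fix $[x_0]\notin[\mathcal{L}_\mu]$: by Remark~\ref{works_FK}, $\tfrac1n\log(\|L_nx_0\|/\|x_0\|)\to\lambda_1$ a.s., and this sequence is dominated by $\tfrac1n\sum_{i\le n}(\log\|X_i\|+\log\|X_i^{-1}\|)$, which is uniformly integrable (an average of i.i.d.\ integrable variables), whence $\tfrac1n\E\log(\|L_nx_0\|/\|x_0\|)\to\lambda_1$. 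To make this uniform near $[x_0]$, pick for $[x]$ close to $[x_0]$ unit representatives with $\|\widehat x-\widehat x_0\|=\delta([x],[x_0])=:\theta$; then $\|L_n\widehat x\|\le\|L_n\widehat x_0\|+\theta\|L_n\|$, so
$$\tfrac1n\E\log\frac{\|L_n\widehat x\|}{\|L_n\widehat x_0\|}\le\tfrac1n\E\log\!\Big(1+\theta\,\frac{\|L_n\|}{\|L_n\widehat x_0\|}\Big).$$
Splitting this expectation over $\{\|L_n\widehat x_0\|/\|L_n\|\ge\eta_0\}$ — with $\eta_0$ chosen, using Part $(1)$ at $[x_0]$, so that $\sup_n\p(\|L_n\widehat x_0\|/\|L_n\|<\eta_0)<\epsilon$ — and over its complement, where the integrand is $\le\log 2+\log\|L_n\|+\log\|L_n^{-1}\|$ and is controlled by the uniform integrability above, one obtains a constant $C$ and a neighbourhood $U_{x_0}$ of $[x_0]$ with $\limsup_n\sup_{[x]\in U_{x_0}}\tfrac1n\E\log(\|L_n\widehat x\|/\|L_n\widehat x_0\|)\le C\epsilon$; hence $\liminf_n\inf_{[x]\in U_{x_0}}\tfrac1n\E\log(\|L_nx\|/\|x\|)\ge\lambda_1-C\epsilon$. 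Covering a compact $K_0\subset\PV\setminus[\mathcal{L}_\mu]$ by finitely many such $U_{x_0}$ and letting $\epsilon\to0$ gives the uniform convergence. The delicate point — the expected main obstacle — is precisely this uniformity, together with the non-concentration estimate it shares with Part $(3)$: Part $(1)$ provides only a \emph{pointwise-in-}$[x]$ almost sure lower bound for $\|L_nx\|/(\|L_n\|\,\|x\|)$, so the tightness one extracts is pointwise, and upgrading it to a bound uniform over a compact set of directions forces the comparison-with-a-nearby-point device, the uniform integrability of the norm cocycle, and a finite covering, with all error terms kept $O(1/n)$ uniformly in $[x]$.
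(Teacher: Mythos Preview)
Your Part~(1) is essentially the paper's argument: duality, $KAK$, Proposition~\ref{propunicite}(2) applied to $\check\mu$, and non-degeneracy of $\check\nu$ on $[\mathcal{U}_{\check\mu}]=[\mathcal{L}_\mu^{0}]$. The paper in fact obtains the sharper statement that $\frac{\|L_nx_n\|}{\|L_n\|\,\|x_n\|}$ \emph{converges} a.s.\ to $|\check Z(x)|$ (not merely is bounded below by something tending to it), but your lower bound suffices for the stated conclusion.

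Your Part~(3) follows a genuinely different route. The paper argues in two steps: first, from Part~(1) and $\lambda_1>\lambda_2$ one gets the pair-contraction estimate $\sup_{[x],[y]\in K}\E\big(\delta(L_n[x],L_n[y])\big)\to 0$ for every compact $K\subset\PV\setminus[\mathcal{L}_\mu]$, via $\delta(L_n[x],L_n[y])\le\frac{\|\bigwedge^2 L_n\|}{\|L_n\|^{2}}\cdot\frac{\|L_n\|\,\|x\|}{\|L_nx\|}\cdot\frac{\|L_n\|\,\|y\|}{\|L_ny\|}$; second, one combines this (transferred to $R_n$ by equidistribution) with the a.s.\ weak convergence $R_n\nu\to\delta_{[Z]}$ from Proposition~\ref{propunicite} and a tightness cut-off on $\nu$ to bound $\E\big(\delta(R_n[x],[Z])\big)$ by $\epsilon+\sup_{K_0\cup\{[x]\}}\E\big(\delta(R_n[x],R_n[y])\big)+\E\int\delta(R_n[y],[Z])\,d\nu([y])$. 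This is shorter and bypasses the non-concentration analysis of $|e_1^{*}(u(R_n)x)|$ that you carry out; your argument is correct but more laborious.

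Your Part~(2) has a gap. The displayed inequality $\tfrac1n\E\log\frac{\|L_n\widehat x\|}{\|L_n\widehat x_0\|}\le\tfrac1n\E\log\!\big(1+\theta\,\|L_n\|/\|L_n\widehat x_0\|\big)$ bounds the ratio from \emph{above}, hence gives $\tfrac1n\E\log\|L_n\widehat x\|\le\tfrac1n\E\log\|L_n\widehat x_0\|+\text{(small)}$; this is the (already free) uniform upper bound, not the lower bound $\liminf\ge\lambda_1-C\epsilon$ you then claim. For the lower bound you need the reverse comparison, which requires controlling $\|L_n\|/\|L_n\widehat x\|$ for \emph{all} $[x]$ near $[x_0]$, not just at $[x_0]$. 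This can be repaired (Part~(1) for sequences does yield uniform-in-$K$ tightness of $\|L_nx\|/(\|L_n\|\,\|x\|)$ via a subsequence argument), but the paper's route is much shorter: by compactness of $K$ it suffices to show $\tfrac1n\E\log\frac{\|L_nx_n\|}{\|x_n\|}\to\lambda_1$ for every sequence $[x_n]\to[x]$ in $K$; Part~(1) gives $\tfrac1n\log\frac{\|L_nx_n\|}{\|L_n\|\,\|x_n\|}\to 0$ a.s., whence $\tfrac1n\log\frac{\|L_nx_n\|}{\|x_n\|}\to\lambda_1$ a.s., and the uniform integrability you already invoke finishes.
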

 
\begin{proof}
 \begin{enumerate}
 \item 
 Let  $([x_n])_{n\in \N^*}$ be 
a sequence in $\PV$ that converges to $[x]\in \PV \setminus [\mathcal{L}_{\mu}]$. Write $L_n=K_nA_nU_n$   the KAK decomposition of $L_n$.  
Since $\lambda_1>\lambda_2$, 

$$\frac{||L_n x_n||}{||L_n|| \,||x_n||}=  \frac{||A_n U_n x_n||}{|a_1(n)| \,||x_n||}=  \Big|  U_n^t e_1^*
\left( \frac{x_n}{N(x_n)}\right)\Big| +o(1).$$
 
\noindent Since $U_n^t=k(L_n^{t})=k(X_1^t \cdots X_n^t)$, and since
 the Lyapunov exponents of $\check{\mu}$ coincide with those of $\mu$, 
item 2 of Proposition \ref{propunicite} applied to $\check{\mu}$ shows then that
  $$\frac{||L_n x_n||}{||L_n|| \,||x_n||} \underset{n\rightarrow +\infty}{\longrightarrow}  {| \check{Z} (x)|}, $$
 with $||\check{Z}||=1$  and $[\check{Z}]$ being a random variable on $\PS$ with law 
   the unique $\check{\mu}$-stationary probability
measure   $\check{\nu}$ 
on  $\textrm{P}\left( V^*\right)\setminus [\mathcal{L}_{\check{\mu}}]$. 
  Let  $H= (\kk x)^0\subset \PS$ be the hyperplane
orthogonal to   $x$. Since $x\not\in \mathcal{L}_{\mu}$, ${\mathcal{L}_{\mu}}^{0}
\not\subset H$, i.e.~by Lemma \ref{duality} $\mathcal{U}_{\check{\mu}} \not\subset H$.
 Since, by proposition \ref{propstat} $\check{\nu}([\mathcal{U}_{\check{\mu}}])=1$ and 
$\check{\nu}$  is non degenerate on $[\mathcal{U}_{\check{\mu}}]$,  we deduce that 

$$\check{\nu}([H])=\check{\nu}([H \cap
 \mathcal{U}_{\check{\mu}}])=0.$$
Hence, almost surely, $| \check{Z} (x)
| \neq 0$. Item 1.  is then proved. 
\item To prove item 2, take a compact subset $K$ of $\PV\setminus [\mathcal{L}_{\mu}]$.  
By compactness of $K$, it is enough to show that for any sequence $([x_n])_n$ in $K$ that converges to some $[x]\in K$, one has that 
$\frac{1}{n}\E(\log\frac{||L_n x_n||}{||x_n||}) \longrightarrow \lambda_1$. By the previous item 1.,
 we deduce that $\frac{1}{n} \log\frac{||L_n x_n||}{||x_n||}$ converges to $\lambda_1$. But by the law of large numbers, it is easy to see that the sequence $\{\frac{1}{n}  \log \frac{||L_n x_n||}{||x_n||},
n\geq 1\}$ is uniformly integrable. This is enough to conclude.

\item Now we prove item 3. We claim that for every compact subset $K$ of $\PV\setminus [\mathcal{L}_{\mu}]$, 
\begin{equation}\sup_{[x],[y]\in K} \, \E\left(\delta(L_n[x],L_n[y])\right) \underset{n\rightarrow +\infty}{\longrightarrow} 0.\label{while}\end{equation}
Admit for a while  \eqref{while} and let us   indicate how to conclude.   By the proof of item 1 of Proposition \ref{propunicite}, there exists a random variable $[Z]\in \PV$ of law $\nu$ such that, almost surely, $R_n \nu \overset{\textrm{vague}}{\underset{n\rightarrow +\infty}{\longrightarrow}} \delta_{[Z]}$. Let $\epsilon>0$. Since $\nu$ is a probability 
measure on the Polish space $\PV\setminus [\LL_{\mu}]$, one can find a compact   subset $K_0=K_0(\epsilon)$   of  $\PV\setminus   [\mathcal{L}_{\mu}]$ such that $\nu(K_0)>1-\epsilon$. 
Now we write for every $n\in \N^*$: 
\begin{eqnarray}
\E \left( \delta(R_n[x], [Z])\right) & = & \int_{\PV}{\E \left( \delta(R_n[x], [Z])\right)\,d\nu([y])}\nonumber\\
& \leq & \int_{\PV}{\E \left( \delta(R_n[x], R_n[y])\right)\,d\nu([y])} +    \int_{\PV}{\E \left( \delta(R_n[y],[Z])\right)\,d\nu([y])}    \nonumber\\
& \leq & \epsilon  + \int_{K_0}{\E \left( \delta(R_n[x], R_n[y])\right)\,d\nu([y])} + 
\int_{\PV}{\E \left( \delta(R_n[y],[Z])\right)\,d\nu([y])} 
\nonumber\\
& \leq &\epsilon + \sup_{[x],[y]\in K_0\cup \{x\}}\E \left(\delta(R_n[x], R_n[y])\right)  +  \E\left( \int_{\PV}{\delta(R_n[y], [Z])\,d\nu([y])}\right) \nonumber
\end{eqnarray}
 In the third line we used $\delta\leq 1$ and,    in the last line, we used Fubini's theorem. 
  The second term of the right hand side converges to zero as $n$ tends to infinity by  \eqref{while}  and the fact that $R_n$ and $L_n$ have the same law for every $n$. The last term converges to zero  by the dominated convergence theorem and the fact that, almost surely,   $R_n \nu \overset{\textrm{vague}}{\longrightarrow} \delta_{[Z]}$. Since $\epsilon>0$ was arbitrary, we deduce that  $\E \left( \delta(R_n[x], [Z])\right)  \underset{n\rightarrow +\infty}{\longrightarrow} 0$ and a fortiori that  $R_n[x]$ converges in probability to $[Z]$ as desired. \\
Finally, we prove  \eqref{while}.  Fix   a compact
subset $K$ of $\PV\setminus [\mathcal{L}_{\mu}]$. For every sequences  $([x_n])_{n\in \N^*},([y_n])_{n\in \N^*}$ of elements in $K$ that converge in $K$, item 1.~  shows that  almost surely:

\begin{equation}
\delta(L_n[x_n], L_n[y_n])\leq \frac{||\bigwedge^2 L_n||}{||L_n^2||} \, \frac{||L_n||\, ||x_n||}{||L_n x_n||} \frac{||L_n|| \,||y_n||}{||L_n y_n||}\underset{n\rightarrow +\infty}{\longrightarrow} 0.\label{while1}\end{equation}
By the dominated convergence theorem and the compactness of $K$, we deduce that for every sequence of elements $[x_n],[y_n]$ in $K$, $\E\left(\delta(L_n[x_n], L_n[y_n])\right) \underset{n\rightarrow +\infty}{\longrightarrow}  0$. This implies  \eqref{while}.

\end{enumerate}
\end{proof}

\begin{remarque} We deduce from the previous corollary that: if $\lambda_1>\lambda_2$, then 
\begin{enumerate}
\item $\underset{[x]\in \PV}{\sup} \frac{1}{n}\E(\log\frac{||L_n x||}{||x||}) \underset{n\rightarrow +\infty}{\longrightarrow}  \lambda_1$. 
 \item if $\mathcal{L}_{\mu}=\{0\}$, then $ \underset{[x]\in \PV}{\inf} \frac{1}{n}\E(\log\frac{||L_n x||}{||x||})\underset{n\rightarrow +\infty}{\longrightarrow}\lambda_1$. This is coherent with the result of Furstenberg-Kifer   saying that $\mathcal{L}_{\mu}=\{0\}$ if and only if there exists a unique cocycle average (see Remark \ref{works_FK}). \end{enumerate}
 \label{liapou_remarque}\end{remarque}

 We end this section by noting that Corollary \ref{spectrum-FK} is obtained    by applying  Theorem \ref{existence_uncite} on each subspace $\mathcal{L}_i$ given by Theorem \ref{furstenberg-kifer}. Indeed,  $\mathcal{L}_i = \mathcal{L}_{\rho_i(\mu)}$   where $\rho_i$ is the restriction to $\mathcal{L}_i$.   
 \section{The limit set and the support of the stationary measure}\label{limit_proof}
 
   In this section, we understand further the support of the unique 
 $\mu$-stationary measure given by Theorem \ref{existence_uncite},   
 by relating it to the limit set of $T=T_{\mu}$. 
 We will adapt the proof of \cite{gold-guiv} to our setting. 
 Finally we give two concrete examples by simulating the limit set of two non irreducible subgroups of 
 $\GL_3(\R)$.   
 \subsection{Proof of Theorem \ref{limitset}}
 We keep the same notation as in Section   \ref{section_limit_set} concerning the set $\mathcal{Q}$ of quasi-projective maps of $\PV$, the limit set $\Lambda(T) \subset[U]$ of $T$ relative to the subspaces $L$ and $U$, and the subsets $p^+(T_0)$ (resp.~$p^+(T^a_0)$ ) of $\PV$ of attractive points of proximal elements of $T$ in $[U]$ (resp.~in $[U\setminus L]$).     First we check that following property that we claimed to hold: 
 
 \begin{lemme}  $\Lambda(T)$ is a closed $T$-invariant subset of $[U]$. \label{quasi}\end{lemme}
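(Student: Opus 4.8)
The plan is to check the three assertions in the statement separately: $\Lambda(T)\subseteq[U]$, $T$-invariance, and closedness, the last being the only one that requires work. The inclusion $\Lambda(T)\subseteq[U]$ is immediate from the definition, since $p(\mathfrak q)\in[U]$ for every $\mathfrak q\in\widehat T$.

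For $T$-invariance, fix $g\in T$ and $\mathfrak q\in\widehat T$, written as a pointwise limit $\mathfrak q=\lim_n[g_n]$ with $[g_n]\in\textrm{PT}$, together with a proper projective subspace $[W]$ such that $[U]\not\subset[W]$ and $\mathfrak q(y)=p(\mathfrak q)$ for all $y\notin[W]$. Since $T$ is a semigroup, $gg_n\in T$, and since $[g]$ is a homeomorphism of $\PV$ the pointwise limit $[g]\circ\mathfrak q=\lim_n[gg_n]$ exists, is quasi-projective, and is a pointwise limit of elements of $\textrm{PT}$. For $y\notin[W]$ one has $([g]\circ\mathfrak q)(y)=[g]\cdot p(\mathfrak q)$, a single point; as $g$ is invertible and stabilises the finite-dimensional subspace $U$ we have $gU=U$, so $[g]\cdot p(\mathfrak q)\in[gU]=[U]$. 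Hence $[g]\circ\mathfrak q\in\widehat T$ with marked point $[g]\cdot p(\mathfrak q)$, and therefore $[g]\cdot\Lambda(T)\subseteq\Lambda(T)$ for every $g\in T$.

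For closedness I would use the standard description of quasi-projective limits (see \cite{goldsheid-margulis}, \cite{Furst73}): a pointwise limit of $[g_n]$ is obtained, after normalising $g_n/\|g_n\|\to A\in\textrm{End}(V)\setminus\{0\}$ along a subsequence, by the formula $[x]\mapsto[Ax]$ on $\PV\setminus[\ker A]$, and the set of such endomorphisms $A$ taken up to scalars is compact. The condition defining $\widehat T$ then translates into: $A$ has rank one, $\textrm{Im}(A)\subseteq U$, $A|_U\neq0$, and $p(\mathfrak q)=[\textrm{Im}(A)]$. Given $\ell_k=p(\mathfrak q_k)\in\Lambda(T)$ with $\ell_k\to\ell$, I would lift each $\mathfrak q_k$ to a rank-one endomorphism $A_k=v_k\otimes\phi_k$ with $\|A_k\|=1$, realised as a normalised limit of elements of $T$, with $v_k\in U$ and $[v_k]=\ell_k$, and then pass to a subsequence along which $A_k\to A$, $v_k\to v$, $\phi_k\to\phi$. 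Then $\|A\|=1$, so $A\neq0$; lower semicontinuity of the rank together with the fact that $1$ is the least nonzero value forces $\textrm{rank}(A)=1$; a diagonal extraction shows $A$ is again a normalised limit of elements of $T$; and $A=v\otimes\phi$ gives $\textrm{Im}(A)=\kk v\subseteq\overline U=U$ as well as $[\textrm{Im}(A)]=[v]=\lim\ell_k=\ell$. The quasi-projective map attached to $A$ then lies in $\widehat T$ with marked point $\ell$, so $\ell\in\Lambda(T)$, proving that $\Lambda(T)$ is closed.

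The main obstacle is the non-degeneracy clause $A|_U\neq0$ in the translation above: it is an open condition and could a priori be destroyed when passing to the limit, so that the limiting quasi-projective map fails to land in $\widehat T$. I expect to handle this exactly as the paper does elsewhere — by restricting to the subspace $U$ via $\rho\colon T\to\GL(U)$ (cf.\ Remark \ref{restU}): every marked point already lies in $[U]$, so $\Lambda(T)=\Lambda(\rho(T))$, and over $U$ the condition $U'=U$ makes $\widehat{\rho(T)}$ consist of all rank-one quasi-projective limits of $\textrm{P}(\rho(T))$, for which the compactness argument above runs with no non-degeneracy to be lost.
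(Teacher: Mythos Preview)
Your treatment of $\Lambda(T)\subseteq[U]$ and of $T$-invariance is fine. The closedness argument, however, has a genuine gap, and your proposed workaround does not close it.

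The problem starts with the ``translation'': membership in $\widehat T$ is \emph{not} equivalent to the first-level endomorphism $A$ being rank one with $\textrm{Im}(A)\subseteq U$ and $A|_U\neq0$. The latter is sufficient, but not necessary, because the witness subspace $[W]$ in the definition of $\widehat T$ may be strictly smaller than $[\ker A]$ --- membership in $\widehat T$ can be decided at deeper levels of the quasi-projective map. For instance, with $U=\textrm{span}(e_1,e_2)\subset\kk^3$ and $g_n=\left(\begin{smallmatrix}1&0&n\\0&1/n&0\\0&0&1\end{smallmatrix}\right)$, the pointwise limit $\mathfrak q$ sends everything outside $[\kk e_2]$ to $[e_1]\in[U]$, so $\mathfrak q\in\widehat T$ with $W=\kk e_2$; yet $g_n/\|g_n\|\to e_1\otimes e_3^*$, which annihilates $U$. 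Thus even for a \emph{single} $\mathfrak q_k\in\widehat T$ you cannot guarantee $A_k|_U\neq0$, and a fortiori not for the limit $A$.

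Your workaround asserts $\Lambda(T)=\Lambda(\rho(T))$, but the justification (``every marked point lies in $[U]$'') only gives that both sets sit inside $[U]$; it does not give equality, and the equality is in fact false at the level of generality of the lemma. Take $T=\{g^n:n\geq1\}$ with $g=\textrm{diag}(2,1,2)$ and $U=\textrm{span}(e_1,e_2)$: then $\rho(g)^n/2^n\to\textrm{diag}(1,0)$ gives $[e_1]\in\Lambda(\rho(T))$, whereas $g^n/2^n\to\textrm{diag}(1,0,1)$ has rank $2$, so no pointwise limit of $[g^n]$ is constant off a proper projective subspace and $\Lambda(T)=\emptyset$. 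Only the inclusion $\Lambda(T)\subseteq\Lambda(\rho(T))$ holds in general, which is not enough to transport closedness back to $\Lambda(T)$. (Under the probabilistic hypotheses of Theorem~\ref{limitset} one does eventually get $\Lambda(T)=\textrm{Supp}(\nu)=\Lambda(\rho(T))$, but that relies on the present lemma and would be circular.)

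The paper avoids the endomorphism layer entirely. It passes to a pointwise limit $\mathfrak q$ of the $\mathfrak q_k$ using sequential compactness of $\mathcal Q$ (Goldsheid--Margulis), and takes as witness the subspace $W:=\liminf_k W_k=\bigcup_n\bigcap_{k\ge n}W_k$. Finite-dimensionality forces $W=\bigcap_{k\ge n_0}W_k$ for some $n_0$, hence $U\not\subset W$ directly from $U\not\subset W_{n_0}$; and for $[x]\notin[W]$ one finds infinitely many $k$ with $x\notin W_k$, so $\mathfrak q[x]=\lim\mathfrak q_k[x]=\lim y_k=y$. This argument tracks the witness subspaces rather than the first-level endomorphisms, which is exactly what is needed to keep the condition $U\not\subset W$ through the limit.
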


\begin{proof} Only the closed part needs a proof. Let $y_i\in \Lambda(T)$ be a sequence in $\Lambda(T)$ that converges in $\PV$ to some $y$. Clearly $y\in [U]$. For each $y_i=p(\mathfrak{q}_i)$, find a projective subspace $[W_i]$ of $\PV$, a sequence of projective maps $\{[g_{i,n}]\}_{n\in \N}$ such that  $[g_{i,n}]$ converges pointwise, when $n$ tends to infinity, to $\mathfrak{q}_i$ with $\mathfrak{q}_i$ that maps $\PV \setminus [W_i]$ to $y_i$. Since by \cite[Lemma 2.10, 1.]{goldsheid-margulis},  $\mathcal{Q}$ is sequentially compact for the topology of pointwise convergence, there exists a  subsequence of the $\mathfrak{q}_i$'s that converges to some quasi-projective map $\mathfrak{q}$. To simplify notations, we will  write $\mathfrak{q}=\underset{i \rightarrow +\infty}{\lim}\mathfrak{q}_i$. 
Let $$W:=\liminf_{i\rightarrow +\infty} {W_i}=\{x\in V; \exists i(x); \forall i\geq i(x), x\in W_i\}=\bigcup_n\bigcap_{k\geq n}{W_k}.$$
It is clear that $W$ is a  subspace of $V$ and hence  that the   union above   is a finite one.  Taking the latter fact into account and the fact that   $U \not\subset W_i$ for every $i$, we deduce that  $U\not\subset W$. Let now $[x]\not\in [W]$. By definition of $W$, one can find  a subsequence $(W_{i_j})_{j\in \N^*}$ such that   $x\not\in W_{i_j}$ for every $j$. Hence
   $$\mathfrak{q} [x]=\underset{i \rightarrow +\infty}{\lim} {\mathfrak{q}_i [x]}= \underset{j \rightarrow +\infty}{\lim} {\mathfrak{q}_{i_j}  [x]}=\underset{j \rightarrow +\infty}{\lim}{y_j}=y.$$ 
   It is left to show that $\mathfrak{q}$ is a pointwise limit of projective transformations that belong to $\textrm{PT}\subset \textrm{PGL}(V)$. Since each $\mathfrak{q}_i$ is such a map and since $\mathfrak{q}$ is the limit of the $\mathfrak{q}_i$'s, this follows from \cite[Lemma 2.10, 2.]{goldsheid-margulis}.  \end{proof}

We are now  able to prove Theorem \ref{limitset}. In item 1. of the following proof,
 we use the same notation as the invertible version of Oseledets theorem  (Theorem \ref{theoreme_oseledets}).  Also, any linear transformation of $V=\kk^d$ will be identified with its matrix in the canonical basis $B_0=(e_1, \cdots, e_d)$. The set of linear maps between two vector spaces $V_1$ and $V_2$
   will be denoted by   $\mathcal{L}(V_1, V_2)$.  
 \begin{proof}[Proof of Theorem \ref{limitset}]
 \begin{enumerate}
 \item  Consider the  dynamical system $\left( \Omega =\textrm{GL}(V)^{\Z}, \p=\mu^{\otimes \Z}, \theta \right)$ with $\theta$ the shift $\theta\left((g_{i})_{i\in \Z}\right):=(g_{i+1})_{i\in \Z}$.   Applying  Oseledets theorem in the invertible case, and using equivariance property \eqref{equi2}, we get that  the cocycle $\Z\times \Omega \longrightarrow \textrm{GL}(V), L_n(\omega)=A\left(\theta^{n-1}(\omega)\right) \cdots A(\omega)=g_{n-1} \cdots g_0$ is    cohomologous to a block diagonal   one. More precisely, denote by      $\omega \mapsto \phi_{\omega} \in \textrm{End}(V)$  the random transition matrix from $B_0$   to a   measurable adapted basis of the splitting $V=\oplus_{i=1}^l{F_{\omega}^i}$.   Then there exists a random block diagonal matrix $\Delta_n$ such that the following identity holds for $\p$-almost every $\omega\in \Omega$:   
 
\begin{equation}
L_n =(\phi \circ \theta^n) \,  \Delta_n  \,\phi^{-1} 
   \label{mark}\end{equation}
 For every $\omega\in \Omega$, let   $v_n(\omega):=\phi(\theta^n \omega) \phi^{-1}(\omega)$. \\
    As in    \cite[Lemma 2]{Guivarch3}, using Poincaré recurrence theorem,     
  we can find  almost surely  a  random subsequence  $n_k(\omega)=n_k$ such that $\underset{{k\rightarrow +\infty}}{\lim} {v_{n_k}(\omega)}=I$. \\
But  since $\lambda_2<\lambda_1$,   \eqref{future} gives that  $\Delta_n e_1= {\lambda_n^1} e_1$ with $\lambda_n^1$ being a random non zero scalar such that  
 $\lim \frac{1}{n}\log{|\lambda_n^1|}=\lambda_1$ almost surely.
 Also, since   $\frac{1}{n} \log ||\Delta_n x|| \leq \lambda_2<\lambda_1$ for every $x\in V\setminus \kk e_1$, we deduce that
  $(\phi  \Delta_n \phi^{-1})/{\lambda_n^1}$  converges almost surely to  the random projection endomorphism $\Pi$ on the line $[Z]:=[\phi (e_1)]$ parallel to $\textrm{Ker}(\Pi)=\oplus_{i=2}^l{F_{\omega}^i}$.

Combining the previous fact with  \eqref{mark}, we get     that almost surely 
   \begin{equation}\label{mark4} \lambda_{n_k}^{-1}L_{n_k} \underset{k\rightarrow +\infty}{\longrightarrow} 
  \Pi. \end{equation}
 
Let $\omega\in \Omega$ where the previous convergence holds. Since  
$\Pi(\omega)$ is a  rank one projection, it is proximal. 
 By  
a perturbation argument,     $L_{n_k}(\omega)$ is also proximal for all large $k$ with a dominant eigenvector $p^+(L_{n_k})$ close to $[Z(\omega)]$. 
By \eqref{future}, $[Z(\omega)] \not\in [L]$. Hence for all large $k$, $p^+(L_{n_k})\not\in [L]$.  
Let us check  that $p^+(L_{n_k})\in [U]$. Indeed, the largest eigenvalue of $L_{n_k}$ is either an eigenvalue of its   restriction to $U$ with  its corresponding eigenvector being that of the restriction operator,  or is an eigenvalue of its projection  on $V/U$. But the latter eigenvalue grows at most as 
  $\exp(n_k \lambda_2)$, while it follows from  \eqref{mark4} that the spectral radius of $L_{n_k}$ growth as the norm 
 of $||L_{n_k}||$, i.e.~as $\exp(n_k \lambda_1)$. Since $\lambda_2<\lambda_1$, we deduce that $p^+(L_{n_k})\in [U]$, for all large $k$. 
 Hence $L_{n_k}\in T_0^a$ so that  $[Z (\omega)]\in \overline{p^+(T^a_0)}$. In particular, ${p^+(T^a_0)} \neq \emptyset$. \\
 
 Now the law of the random variable $[Z]$ on $\textrm{P}(V)$ is the stationary measure $\nu$.
 Indeed, by \eqref{past} and the uniqueness part of Oseledets theorem, we deduce that 
  the filtration $\{0\}\subset {F_\omega^1}\subset {F_\omega^1} \oplus {F_\omega^2} \subset \cdots  \oplus_{i=1}^{l-1}{F_{\omega}^i}\subset V$ depends only on the past of $\omega=(g_i)_{i\in \Z}$ i.e.~ on $(\cdots, g_{-2}, g_{-1})$.  Hence $[Z]$ is an independent copy of    the least expanding vector 
   of $R_n^{-1}(\omega)=g_n^{-1}\cdots g_1^{-1}$ given by Oseledets theorem.   The proof of Proposition \ref{propexistence} shows then that $[Z]$ has law $\nu$. 
Hence    $$\nu\left(\overline{p^+(T^a_0)}\right)=\p\left( [Z] \in  \overline{p^+(T^a_0)}\right)=1,$$
  so that \begin{equation}\textrm{Supp}(\nu) \subset \overline{p^+(T^a_0)}\subset  \overline{p^+(T_0)}.\label{mark5}\end{equation}

  Conversely, let $h\in T_0$. Then $h^n/||h^n||$ converges to the projection $\eta$ on the line generated by $p^+(h)\in [U]$ and parallel to some $h$-invariant subspace of $V$. In particular, $U\not\subset Ker(\eta)$. Since by Theorem \ref{existence_uncite} $\nu$ is non
   degenerate in $[U]$, we have 
  that $\nu\left([U] \cap Ker(\eta)\right)=0$ so that  ${{h^n  \nu}} \underset{n\rightarrow +\infty}{\overset{\textrm{weakly}}{\longrightarrow}} \delta_{p^+(h)}$. Since 
  $\textrm{Supp}(\nu)$ is $T$-invariant,  we get $p^{+}(h)\in \textrm{Supp}(\nu)$.  Hence 
  $p^{+}(T_0)\subset \textrm{Supp}(\nu)$ and 
  
\begin{equation}\textrm{Supp}(\nu)\supset \overline{p^{+}(T_0)}\supset \overline{p^{+}(T^a_0)}.\label{mark6}\end{equation}
  Inclusions  \eqref{mark5}  and  \eqref{mark6} show item 1. 
 
 \item Let $\omega\in \Omega$ and   $W:= \textrm{Ker}\left(\Pi(\omega)\right)$.  
  By the previous item,  we deduce that 
       the sequence  of projective maps   $\left([L_{n_k}]\right)_{k\in \N}$ converges pointwise on
        $\PV\setminus [W]$ to the constant map $[x]\mapsto [Z(\omega)]$. 
        Since $\Pi(\omega)$ is  a rank  one  proximal endomorphism of $V$,  
          $\textrm{Im}(\Pi(\omega))\not\subset  \textrm{Ker}\left(\Pi(\omega)\right)$.
            But $\textrm{Im}(\Pi(\omega))=[Z(\omega)]\in [U]$, so    that $[U]\not\subset [W]$.  
     Considering the     sequence $r_{n_k}:=\sup \{N(L_{n_k} x);x\in W, ||x||=1\}$ in $\kk$, 
     we see that    
     the sequence  $(r_{n_k} {L_{n_k}}_{|_{W}})$ of linear maps  from $W$ to $V$
     admits a subsequence that converges in $\mathcal{L}(W,V)$ to   a linear map $\Pi'(\omega)$
      such that $W':=\textrm{Ker}\left(\Pi'(\omega)\right)\subsetneqq W$. In particular, $[L_{n_k}]$
       admits a subsequence  that converges pointwise on 
      $[W]\setminus [W']$ and hence on $\PV\setminus [W']\supsetneqq \PV\setminus [W]$.
       Repeating this procedure at most $\textrm{dim}(W)$ times,  
       we obtain    a subsequence  of    $[L_{n_k}]$   that  converges pointwise on $\PV$ to 
      a quasi projective map $\mathfrak{q}$  such that $\mathfrak{q}$ maps $\PV \setminus [W]$ to $p(\mathfrak{q}):=[Z(\omega)]\in [U]$.    
         Hence $\mathfrak{q} \in \widehat{T}$ and $[Z(\omega)]\in \Lambda(T)$. Since by Lemma \ref{quasi} $\Lambda(T)$ is closed in $\PV$ and since $[Z]$ has law $\nu$, we deduce that  
         
    $$\textrm{Supp}(\nu)\subset \Lambda(T).$$
    
  Conversely,  let   $y=p(\mathfrak{q})\in \Lambda(T)$ and $([g_n])_{n\in \N}$ a sequence of projective maps converging 
  pointwise to $\mathfrak{q}$, together with a projective subspace $[W]$ of $\PV$ that does not contain $[U]$ and such that with 
  $\mathfrak{q}$ maps $\PV\setminus [W]$ to the point $y$ of $\PV$. Since $\nu$ is non degenerate on $[U]$ and since $[W]$ does not contain $[U]$, 
  we deduce that $\nu([U] \cap [W])=0$  so that $\mathfrak{q} \nu$ is the Dirac measure on $y$. We conclude  that  $g_n \nu \underset{n\rightarrow +\infty}{\overset{\textrm{weakly}}{\longrightarrow}}\delta_{y}$. Since $\textrm{Supp}(\nu)$ is $T$-invariant, 
   we deduce that $y\in \textrm{Supp}(\nu)$. Consequently, $$ \textrm{Supp}(\nu)\supset \Lambda(T).$$
  Item 2 is then proved.

  \item Let $[x]\in \PV\setminus [L]$. By Theorem \ref{furstenberg-kifer},  there exists $\Omega_x\subset  \Omega$ such that for every $\omega\in \Omega_x$, $\lim_{n\rightarrow +\infty}{\frac{1}{n} \log {||L_n(\omega) x ||}}=\lambda_1$. Reducing if necessary $\Omega_x$ to a subset of $\p$-probability one, we deduce from \eqref{future} that  
  $x\not\in \oplus_{i=2}^l{F_{\omega}^i}=\textrm{Ker}(\Pi(\omega))$ for every $\omega\in \Omega_x$. By \eqref{mark4}, we deduce that for every $\omega\in \Omega_x$ there exists a random subsequence $(n_k)_k$ such that $L_{n_k}[x] \underset{k\rightarrow +\infty}{\longrightarrow} [Z(\omega)]$; so that $[Z(\omega)]\in \overline{T \cdot[x]}$. Since $\p(\Omega_x)=1$ and $[Z]$ has law $\nu$, we deduce that $\textrm{Supp}(\nu)\subset \overline{T\cdot[x]}$. 
  \end{enumerate}
  \end{proof}
  
  \begin{remarque} One can  also use  Corollary \ref{corollaire-unicite} to prove item 3.~ 
  above with the right random walk. Indeed it follows from item 3.~of Corollary \ref{corollaire-unicite} that there exists  a non random subsequence $(n_k)$ such that    $R_{n_k}[x]$ converges almost surely to a random variable of law $\nu$.   \end{remarque}

 \vspace{1cm}
We deduce easily the proof of Corollary \ref{cor-limit} stated in Section \ref{section_limit_set}. 
\begin{proof}[Proof of Corollary \ref{cor-limit}:]  
The implication $1\Longrightarrow  2$ follows immediately from the last part of item 3.~of Theorem \ref{limitset}. The implication $2 \Longrightarrow  1$ 
is an easy consequence of the fact that $[L]$ is $T$-invariant. 
  The equivalence between $1$ and $3$ follows directly from the first part of  item 3.~of Theorem \ref{limitset}. \\
Finally, we check the equivalence $1 \Longleftrightarrow 4$. 
 By item 1.~of Theorem \ref{limitset}, $\textrm{Supp}(\nu)$ 
 is compact if and only $p^+(T_0^a)$ is precompact in $O$. As in Section \ref{geobehind}, identify   $O$ with the 
 quotient of  $X=L\times S(V/L)$ by the action of the unit sphere $\mathcal{S}_{\kk}$ of $\kk$.
  A straightforward computation for any eigenvector of an upper triangular bloc matrix shows that an element of  $p^+(T_0^a)$ is identified with 
 $\mathcal{S}_{\kk}(t_0, \xi_0)$, where 
    $\begin{pmatrix} 
A & B \\
0 & C 
\end{pmatrix}  \in T$, $C$ is proximal, $\rho_{\textrm{spec}}(C)>\rho_{\textrm{spec}}(A)$,  $\xi_0$    a chosen normalized eigenvector of $C$, and 
$t_0$ a fixed point of the affine map $t\mapsto \frac{At +B\xi_0}{\lambda_{\textrm{top}}(C)}$ of $L$.
 Since $\rho_{\textrm{spec}}(A)<\rho_{\textrm{spec}}(C)$, this affine map has a unique fixed point in $L$, 
 namely $-\left(A-\lambda_{\textrm{top}}(C) I\right)^{-1} (B \xi_0)$. 
 The desired result follows then from the compactness  of the unit sphere of $V/L$.
 \end{proof}

  The   last part in the proof of item 1.~ of Theorem \ref{limitset}  
   gives actually a stronger result and a sufficient non-compactness criterion for the support of $\nu$, seen in 
   $O=\PV\setminus [L]$.  
   Elements of $T$ are represented by matrices in a suitable basis of $V$ where the first diagonal bloc is the restriction to $L$.

\begin{lemme}
  Assume $U=V$. If there exists $g=\left( \begin{matrix} 
 A & B\\
0 & C
\end{matrix}\right)\in T$ such that $\rho_{\textrm{spec}}(A)>\rho_{\textrm{spec}}(C)$,  then $\textrm{Supp}(\nu)\subseteq O$ is not compact. 
  Here $\rho_{\textrm{spec}}(\cdot)$ denotes the spectral radius evaluated in some finite extension of the local field $\mathrm{k}$. \label{additional}\end{lemme}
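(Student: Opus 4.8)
The plan is to exhibit, starting from the single element $g = \begin{pmatrix} A & B \\ 0 & C \end{pmatrix} \in T$ with $\rho_{\textrm{spec}}(A) > \rho_{\textrm{spec}}(C)$, a sequence of points in $\mathrm{Supp}(\nu)$ that escapes every compact subset of $O = \PV \setminus [L]$. By item 3 of Theorem \ref{limitset}, $\Lambda^a(T)$ is the unique $T$-minimal subset of $O$ and $\mathrm{Supp}(\nu) = \Lambda(T)$ is $T$-invariant; moreover, since $U = V$ here, $\nu$ is non-degenerate on all of $\PV$. So it is enough to produce one point $[x] \in \mathrm{Supp}(\nu)$ and check that the $T$-orbit (in fact the forward orbit under powers of $g$ applied to it after a suitable twist) leaves every compact subset of $O$; equivalently, that its closure in $\PV$ meets $[L]$. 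Recall from Remark \ref{remark_support} that the support of $\nu$ is non-compact in $O$ as soon as its closure in $\PV$ intersects $[L]$, because $[L]$ is a compact $T$-invariant set disjoint from $\mathrm{Supp}(\nu)$.

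First I would analyze the dynamics of $[g^n]$ on $\PV$. Write $\rho_A := \rho_{\textrm{spec}}(A)$ and $\rho_C := \rho_{\textrm{spec}}(C)$, with $\rho_A > \rho_C$ by hypothesis; these are computed in a finite extension of $\kk$, so passing to such an extension (which does not change $[L]$, compactness, or the invariance statements) I may assume $A$ has an eigenvalue of modulus $\rho_A$. The block-triangular form gives $g^n = \begin{pmatrix} A^n & B_n \\ 0 & C^n \end{pmatrix}$ where $B_n = \sum_{k=0}^{n-1} A^k B C^{n-1-k}$; since $\|A^k\| \le c (\rho_A + \epsilon)^k$ and $\|C^{n-1-k}\| \le c(\rho_C+\epsilon)^{n-1-k}$, and $\rho_A > \rho_C$, the term $\|B_n\|$ grows at rate $\rho_A$, dominated by the $k$ close to $n-1$. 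More precisely $\|A^n\|$ and $\|B_n\|$ both grow like $\rho_A^n$ (up to subexponential factors), while $\|C^n\|$ grows like $\rho_C^n$. Hence, normalizing $g^n$ by $\rho_A^{-n}$ (or by $\|g^n\|$), any limit point of $g^n / \|g^n\|$ in $\mathrm{End}(V)$ is an endomorphism whose image is contained in $L$ (because the bottom-right block, which carries $C$, is subdominant and disappears in the limit, and the nonzero columns all live in the $L$-block rows). Passing to a subsequence $n_k$ along which $g^{n_k}/\|g^{n_k}\|$ converges to some nonzero $\Pi$, we get $\mathrm{Im}(\Pi) \subset L$, so $[\Pi(V)] \subset [L]$, and $\Pi$ defines a quasi-projective map sending $\PV \setminus [\mathrm{Ker}\,\Pi]$ into $[L]$.

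Now I would fix any $[x] \in \mathrm{Supp}(\nu)$ with $[x] \notin [\mathrm{Ker}\,\Pi]$ — this is possible since $\nu$ is non-degenerate on $\PV$ and $[\mathrm{Ker}\,\Pi]$ is a proper projective subspace, so $\nu([\mathrm{Ker}\,\Pi]) = 0$ and hence $\mathrm{Supp}(\nu) \not\subset [\mathrm{Ker}\,\Pi]$. Then $g^{n_k} \cdot [x] \to [\Pi(x)] \in [L]$ in $\PV$. Since $\mathrm{Supp}(\nu)$ is $T$-invariant and closed in $\PV$, each $g^{n_k} \cdot [x] \in \mathrm{Supp}(\nu)$ and therefore the limit $[\Pi(x)]$ lies in $\overline{\mathrm{Supp}(\nu)} = \mathrm{Supp}(\nu)$ — wait: $\mathrm{Supp}(\nu)$ as the support of a Radon measure on $\PV$ is closed in $\PV$, so indeed $[\Pi(x)] \in \mathrm{Supp}(\nu)$, but also $[\Pi(x)] \in [L]$. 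That would contradict $\mathrm{Supp}(\nu) \cap [L] = \emptyset$ (which holds because $\nu([L]) \le \nu([\LL_\mu]) = 0$ and $\LL_\mu \supseteq L$... actually here $L = \LL_\mu$ by hypothesis, so $\nu([L]) = 0$, but a priori $[\Pi(x)]$ could still be a single point of measure zero in the closed support — no: the support is where $\nu$ is concentrated, and $[L]$ being disjoint from it in the open set $O$ does not by itself forbid a boundary point). The clean conclusion instead: the sequence $g^{n_k}\cdot[x]$ lies in $\mathrm{Supp}(\nu) \cap O$ and converges in $\PV$ to a point of $[L]$, hence leaves every compact subset of $O$; therefore $\mathrm{Supp}(\nu) \cap O$ is not compact, which is exactly the claim.

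The main obstacle I anticipate is the asymptotic estimate on the off-diagonal block: I must be careful that $\|B_n\|$ does in fact grow at the rate $\rho_A$ and not faster, and that after normalization the limit $\Pi$ is genuinely nonzero with image in $L$ (not accidentally zero). This is a standard but slightly delicate Jordan-form / geometric-series estimate; I would handle it by choosing, in a finite extension of $\kk$, a basis adapted to the Jordan blocks of $A$ and of $C$, bounding $\|B_n\|$ between two multiples of $\rho_A^n$ times polynomial-in-$n$ factors, and then normalizing by the exact quantity $\max(\|A^n\|, \|B_n\|)$ so that the limiting endomorphism is automatically nonzero and supported in the $L$-rows. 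Everything else — $T$-invariance of $\mathrm{Supp}(\nu)$, non-degeneracy, and the characterization of non-compactness via the closure meeting $[L]$ — is already available from Theorem \ref{limitset} and Remark \ref{remark_support}.
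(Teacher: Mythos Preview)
Your approach is correct and essentially the same as the paper's: both show that normalized iterates of $g$ converge (along subsequences) to an endomorphism with image in $L$, then use the non-degeneracy of $\nu$ on $\PV$ (from $U=V$) to find a support point whose $g$-orbit accumulates on $[L]$, forcing non-compactness in $O$. Your concern about the off-diagonal block $B_n$ is unnecessary: since $\rho_{\textrm{spec}}(g)=\rho_A$ (Gelfand) and $\|C^n\|^{1/n}\to\rho_C<\rho_A$, any limit of $g^n/\|g^n\|$ automatically has vanishing bottom row-block and hence image in $L$ --- the paper reaches the same conclusion via the Jordan form (identifying the $g$-invariant complement $E$ of the top generalized eigenspace $W\subset L$) rather than norm estimates.
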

 \begin{proof}
Indeed, let $g\in T$ be such an automorphism. 
It is enough to check  that there exists a projective subspace $[E]$ of $\PV$ such that for every $[x]\in \PV\setminus [E]$, every limit point of $g^n[x]$ belongs to $[L]$. Indeed,  since   $U=V$, the probability measure $\nu$ on $\PV$ is non-degenerate (Theorem \ref{existence_uncite}) so that $\nu([E])=0$. In particular, every limit point of the sequence of probability measures $(g^n \nu)_{n\in \N^*}$ on $\PV$ gives total mass to $[L]$. Since
  $\textrm{Supp}(\nu)$ is $T$-invariant, we deduce that $\textrm{Supp}(\nu)\cap [L]\neq \emptyset$ and then that $\textrm{Supp}(\nu)\cap O$ is not compact. \\
Now we check our claim.  Assume first that the characteristic polynomial of $g$ splits over $\kk$.  Since  $\rho_{\textrm{spec}}(A)>\rho_{\textrm{spec}}(C)$, the  generalized highest eigenspace $W$ for $g$  corresponding to the top eigenvalue  coincides with the one for  $A$ corresponding to the same eigenvalue. 
In particular, $W\subseteq L$. Writing  now  $g$ in its Jordan canonical form in $\GL(V)$, we deduce from the inequality $\rho_{\textrm{spec}}(A)>\rho_{\textrm{spec}}(C)$,   the existence of  a $g$-invariant supplementary $E$ of $W$ in $V$, such for every $[x]\in \textrm{P}(V)\setminus [E]$, every limit point of $g^n[x]$  in $\textrm{P}(V)$ lies in $[W]\subseteq [L]$. This is what we wanted to prove.   It is left to check that the same holds when $\kk$ does not contain all the eigenvalues of $\kk$. This can be done by applying the previous reasoning to a finite extension $\kk'$ of $\kk$, and then use the natural embedding $ \textrm{P}(V)\xhookrightarrow{} \textrm{P}(V\otimes_{\kk} \kk ') $, where $\textrm{P}(V\otimes \kk')$ is the $\kk'$-projective space of the $\kk'$-vector space $V\otimes_{\kk} \kk ' $. 
    \end{proof}

\subsection{Compactness criterion, examples and    simulations}
   In this section, we illustrate our results for semigroups of linear transformations of $\R^3$ for which $\LL_{\mu}$ is a line, i.e.~ those  relative to  Example 3.~ of Section \ref{example_guiding}. Note that when $\lambda_1>\lambda_2$, 
   this is essentially the  only case to illustrate    since if $\LL_{\mu}$ is a plane, we are essentially in the contracting case of the affine situation and, if $\LL_{\mu}=\{0\}$, we are either in the i-p case (if the action is irreducible) or in the expansive case of the affine case   or in the degenerate case explained at the end of Example 3.~ of Section \ref{example_guiding}. \\
   
 In section \ref{compactness_criterion_section} below, 
   we give a sufficient compactness criterion for the support of the stationary measure in the open dense subset 
    $O=\textrm{P}^2(\R)\setminus [\LL_{\mu}]$ of the projective plane using the notion of joint spectral radius and Section \ref{geobehind}. In Section \ref{example-simulation}, we give three examples,   simulate the limit sets of two sub-semigroups of $\GL_3(\R)$ and justify our observations using the techniques we have developed.

        \subsubsection{Compactness criterion}\label{compactness_criterion_section}
     We recall the classical notion of joint spectral radius of a bounded set of square matrices introduced by Rota and Strang 
     in  \cite{rota-strang}.  
     Let $d\in \N^*$, $\mathcal{M}_d(\R)$ the set $d\times d$ matrices  and   denote by $\rho_{\textrm{spec}}(A)$ the spectral radius of $A\in \mathcal{M}_d(\R)$. 
     A subset of $\mathcal{M}_d(\R)$  is said to be bounded if it is bounded when $\mathcal{M}_d(\R)$  is endowed with one, or equivalently any, norm on $\mathcal{M}_d(\R)$.

         \begin{propdef}
    Let $d\geq 2$ and $\Sigma\subset \mathcal{M}_d(\R)$ be a bounded set.  Let $||\cdot||$ be any norm on $\R^d$ 
    and denote by the same symbol the operator norm it induces on $\mathcal{M}_d(\R)$. The joint spectral radius $\mathfrak{R}(\Sigma)$ of $\Sigma$ 
     is the following non negative real number, independent of the chosen norm:
     
\begin{equation}\mathfrak{R}(\Sigma):=\inf_{n\in \N^*} {\max \{ ||A_1\cdots A_n ||^{\frac{1}{n}}; A_i\in \Sigma\} }= \sup_{n\in \N^*} {\max\{\rho_{\textrm{spec}}(A_1\cdots A_n)^{\frac{1}{n}}; A_i\in \Sigma \}}.\label{joint1}\end{equation}
    
  \end{propdef}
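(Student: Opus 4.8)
The statement packages three facts: that for a \emph{fixed} norm the infimum in \eqref{joint1} is in fact the corresponding limit, that this common value is independent of the norm, and that it coincides with the ``generalized spectral radius'' on the right of \eqref{joint1} (the Berger--Wang theorem). I would establish them in that order. Fix a norm $||\cdot||$ on $\R^d$ together with the operator norm it induces, and put $\phi(n):=\log\sup\{||A_1\cdots A_n||;\ A_i\in\Sigma\}$, with the convention $\log 0=-\infty$; this is $<+\infty$ because $\Sigma$ is bounded. Submultiplicativity of the operator norm, together with the fact that a product of length $n+m$ splits as one of length $n$ times one of length $m$, gives $\phi(n+m)\le\phi(n)+\phi(m)$, so Fekete's subadditive lemma yields $\lim_n\phi(n)/n=\inf_n\phi(n)/n$; this is the first claim. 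If $||\cdot||'$ is another norm, equivalence of norms furnishes $0<\alpha\le\beta$ with $\alpha||A||\le||A||'\le\beta||A||$ for every $A\in\mathcal{M}_d(\R)$, whence $|\phi(n)-\phi'(n)|$ is bounded in $n$ and $\phi(n)/n,\ \phi'(n)/n$ have the same limit: norm-independence. Finally, $(A_1,\dots,A_n)\mapsto||A_1\cdots A_n||$ and $(A_1,\dots,A_n)\mapsto\rho_{\textrm{spec}}(A_1\cdots A_n)$ are continuous and $\Sigma^n$ is dense in $\overline{\Sigma}^{\,n}$, so both suprema over $\Sigma$ appearing in \eqref{joint1} are maxima over $\overline\Sigma$ and neither quantity changes if $\Sigma$ is replaced by $\overline\Sigma$; I may therefore assume $\Sigma$ compact.

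For the inequality $\sup_{n}\max\{\rho_{\textrm{spec}}(A_1\cdots A_n)^{1/n};\ A_i\in\Sigma\}\le\mathfrak{R}(\Sigma)$, fix $n\ge1$ and a product $P=A_1\cdots A_n$ with $A_i\in\Sigma$. For every $k\ge1$ the power $P^{k}$ is a product of $nk$ elements of $\Sigma$, so $\rho_{\textrm{spec}}(P)^{k}=\rho_{\textrm{spec}}(P^{k})\le||P^{k}||\le e^{\phi(nk)}$; extracting $(nk)$-th roots and letting $k\to\infty$ gives $\rho_{\textrm{spec}}(P)^{1/n}\le\lim_{m}e^{\phi(m)/m}=\mathfrak{R}(\Sigma)$. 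Taking the supremum over $n$ and over all such products yields this direction.

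The reverse inequality $\mathfrak{R}(\Sigma)\le r:=\sup_{n}\max\{\rho_{\textrm{spec}}(A_1\cdots A_n)^{1/n};\ A_i\in\Sigma\}$ is the heart of the matter and the step I expect to be the main obstacle. After the reduction above, and rescaling $\Sigma$ by $1/r$ when $r>0$ (if $r=0$ every product of elements of $\Sigma$ is nilpotent, so the semigroup generated by $\Sigma$ is nilpotent, hence simultaneously strictly triangularizable by Levitzki's theorem, and then $\mathfrak{R}(\Sigma)=0$ as well), we may assume $r=1$ and must show $\phi(n)=o(n)$. I would proceed by induction on $d=\dim V$. If the semigroup generated by $\Sigma$ admits a proper non-zero invariant subspace $W$, then in a compatible basis every product of elements of $\Sigma$ is block upper triangular, and---exactly as in the norm estimates in the proof of Lemma~\ref{triangulaire}---$\mathfrak{R}(\Sigma)$ equals the larger of the joint spectral radii of the two diagonal blocks; each block is a bounded family of matrices on a space of dimension $<d$ whose generalized spectral radius is still $\le1$, so the inductive hypothesis gives $\mathfrak{R}(\Sigma)\le1$. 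The genuinely difficult case is the irreducible one: there one must exclude that all products $A_1\cdots A_n$ have spectral radius bounded away from $1$, the mechanism being that, were this so, normalizing the products by their operator norms and passing to a cluster point of $\{A_1\cdots A_n/||A_1\cdots A_n||;\ n\ge1,\ A_i\in\Sigma\}$ would produce a non-zero non-invertible limit endomorphism from which an elementary argument extracts a proper non-zero invariant subspace, contradicting irreducibility. This last step is precisely the Berger--Wang theorem (see \cite{rota-strang} and the literature it spawned); in the write-up I would either invoke it as a black box or reproduce the short compactness argument that proves it. Assembling the three steps establishes \eqref{joint1}.
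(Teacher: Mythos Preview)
The paper does not prove this Proposition/Definition at all: it merely states the formula and attributes the nontrivial equality to Berger--Wang \cite{berger-wang} (after Daubechies--Lagarias \cite{lagarias}), with no argument given. Your proposal therefore goes well beyond the paper. Your treatment of the elementary parts---Fekete's lemma for $\inf=\lim$, norm-independence via equivalence of norms, the reduction to compact $\Sigma$, and the easy inequality $\rho_{\textrm{spec}}(P)^{1/n}\le\mathfrak{R}(\Sigma)$ via Gelfand's formula---is correct and standard, and already more than the paper offers.

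For the Berger--Wang direction, your induction on $d$ with the reducible case handled by block-triangularization is the right structure (and the analogy with Lemma~\ref{triangulaire} is apt, though the deterministic version needs its own short argument for the off-diagonal block). However, the irreducible case sketch has a genuine gap: from a non-zero non-invertible cluster point $B$ of normalized products you cannot directly ``extract a proper non-zero invariant subspace,'' since neither $\ker B$ nor $\mathrm{im}\,B$ has any a priori reason to be $\Sigma$-invariant. The actual arguments (Berger--Wang's original, or Elsner's simplification) use more---typically a Burnside-type spanning property of irreducible semigroups, or the existence of idempotents in compact semigroups combined with further manipulation. Since you explicitly allow yourself to invoke \cite{berger-wang} as a black box, this is acceptable and in fact matches exactly what the paper does; but if you intend to reproduce the proof in full, that step requires substantially more than what you have written.
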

\noindent The last equality was   proved by 
Berger-Wang \cite{berger-wang} after a question of Daubechies-Lagarias \cite{lagarias}.\\
Now we state our compactness criterion when $\LL_{\mu}$ is a line.

\begin{prop}
Let $\mu$ be a probability measure on $\GL_3(\R)$ with compact support. Assume that   $\lambda_1>\lambda_2$ and that $\LL_{\mu}$ is a line $L$. In a suitable basis of $\R^3=L\oplus \tilde{L}$, $\mu$ is seen as a probability measure on the  group 
 $$G:=\left\{
g=\left( \begin{matrix} 
 a_g & \underline{b}_g \\
0 & C _g
\end{matrix}\right); a_g\in \R^*, \underline{b}_g\in \R^2, C_g\in \GL_2(\R) \right\}\subset \GL_3(\R).$$
Let 
   $$r:=\mathfrak{R}(\{|a_g| C_g^{-1}; g\in \textrm{Supp}(\mu)\}).$$
  If $r<1$, then the support of the unique $\mu$-stationary probability measure $\nu$ on $O=\textrm{P}^2(\R)\setminus [\LL_{\mu}]$ is compact. 
 \label{compactness-criterion}\end{prop}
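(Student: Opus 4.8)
The plan is to verify condition $(4)$ of Corollary~\ref{cor-limit}: I would produce a constant $c>0$ such that $\|(A-\lambda_{\textrm{top}}(C)I)^{-1}(B\xi_C)\|<c$ for every $g=\begin{pmatrix}A&B\\0&C\end{pmatrix}\in T_0^a$, after which the equivalence $(1)\Leftrightarrow(4)$ there gives that $\textrm{Supp}(\nu)\cap O$ is compact. First I would reduce to $\mathcal{U}_\mu=V$: in general one restricts the action to $\mathcal{U}_\mu$ as in Remark~\ref{restU} (the matrices entering the joint spectral radius then become blocks of the original ones, so $\mathfrak{R}$ does not increase), using $\textrm{Supp}(\nu)\subset[\mathcal{U}_\mu]$; and in the remaining case $\mathcal{L}_\mu\cap\mathcal{U}_\mu=\{0\}$ the set $\textrm{Supp}(\nu)$ is already a compact subset of $\PV$ disjoint from $[\mathcal{L}_\mu]$, hence compact in $O$. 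With $U=V$ and $L$ a line, each $g\in G$ has scalar $A=a_g$, row vector $B=\underline b_g$ and $C=C_g\in\GL_2(\R)$; and for $g\in T_0^a$ the dominant eigenvalue of $g$ is necessarily one of $C_g$ (otherwise $p^+(g)\in[L]$), so $C_g$ is proximal, $\lambda_{\textrm{top}}(C_g)\in\R$, $\rho_{\textrm{spec}}(C_g)=|\lambda_{\textrm{top}}(C_g)|>|a_g|$, and $\xi_C$ may be taken in $\R^2$.

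Next I would fix $r<r'<1$. Compactness of $\textrm{Supp}(\mu)$ gives $M\ge1$ with $\|\underline b_h\|\le M$ and $|a_h|^{-1}\le M$ for $h\in\textrm{Supp}(\mu)$, and compactness of $\Sigma=\{|a_g|C_g^{-1}:g\in\textrm{Supp}(\mu)\}$ together with $\mathfrak{R}(\Sigma)=r<r'$ gives $C_0\ge1$ with $\|A_1\cdots A_m\|\le C_0(r')^m$ for all $m$ and $A_i\in\Sigma$ (a standard consequence of \eqref{joint1}). For $g\in T_0^a$ I write $g=h_1\cdots h_n$ with $h_i\in\textrm{Supp}(\mu)$ and set $D_i:=|a_{h_i}|C_{h_i}^{-1}\in\Sigma$; block multiplication gives $a_g=\prod_i a_{h_i}$, $C_g=\prod_i C_{h_i}$, the identity $|a_g|C_g^{-1}=D_n\cdots D_1$, and $\underline b_g=\sum_{j=1}^n(a_{h_1}\cdots a_{h_{j-1}})\,\underline b_{h_j}\,(C_{h_{j+1}}\cdots C_{h_n})$.

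The two estimates to establish are then the following. First, using the Berger--Wang side of \eqref{joint1}, $\rho_{\textrm{spec}}(|a_g|C_g^{-1})=\rho_{\textrm{spec}}(D_n\cdots D_1)\le\mathfrak{R}(\Sigma)^n=r^n\le r$, and since $\rho_{\textrm{spec}}(|a_g|C_g^{-1})=|a_g|\,\rho_{\textrm{spec}}(C_g^{-1})\ge|a_g|/\rho_{\textrm{spec}}(C_g)$, this yields the uniform lower bound $|\lambda_{\textrm{top}}(C_g)-a_g|\ge(1-r)\,\rho_{\textrm{spec}}(C_g)$ on the denominator. Second, putting $w_j:=C_{h_{j+1}}\cdots C_{h_n}\xi_C$ and $w_0:=C_g\xi_C=\lambda_{\textrm{top}}(C_g)\xi_C$ (so $\|w_0\|=\rho_{\textrm{spec}}(C_g)$ and $w_j=(C_{h_1}\cdots C_{h_j})^{-1}w_0$), I get $|a_{h_1}\cdots a_{h_{j-1}}|\,\|w_j\|\le|a_{h_j}|^{-1}\,\|D_j\cdots D_1\|\,\rho_{\textrm{spec}}(C_g)\le MC_0(r')^j\rho_{\textrm{spec}}(C_g)$; applying the expansion of $\underline b_g$ to $\xi_C$ and summing the geometric series gives $|\underline b_g\xi_C|\le\frac{M^2C_0\,r'}{1-r'}\,\rho_{\textrm{spec}}(C_g)$. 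Combining the two,
$$\|(A-\lambda_{\textrm{top}}(C)I)^{-1}(B\xi_C)\|=\frac{|\underline b_g\xi_C|}{|\lambda_{\textrm{top}}(C_g)-a_g|}\le\frac{M^2C_0\,r'}{(1-r)(1-r')}=:c,$$
uniformly in $g\in T_0^a$, and Corollary~\ref{cor-limit} concludes. I expect the first estimate to be the only delicate point: it is exactly here that one needs the \emph{joint spectral radius} of $\Sigma$ (not merely a joint bound on operator norms), through the identity $|a_g|C_g^{-1}=D_n\cdots D_1$ and the Berger--Wang formula, so as to keep the spectral radii of $C_g$ and of its $L$-part uniformly separated over all of $T_0^a$; everything else is bookkeeping with block-triangular products and the compactness of $\textrm{Supp}(\mu)$.
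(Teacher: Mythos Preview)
Your argument is correct and takes a genuinely different route from the paper. The paper works on the skew-product model $X=L\times S^1$: from $r<1$ it extracts an $n_0$ with $|a_g|\,\|C_g^{-1}\|<1$ for all $g\in\Sigma^{n_0}$, so every fiberwise affine map $\sigma(g,\xi):t\mapsto (a_gt+\underline b_g\xi)/\|C_g\xi\|$ is a strict contraction; a standard IFS argument then produces a compact interval $K_1\subset\R$ stable under all these maps, hence a compact $T$-invariant subset of $O$, and uniqueness of $\nu$ forces $\textrm{Supp}(\nu)$ inside it. You instead verify condition~(4) of Corollary~\ref{cor-limit} directly: you expand $\underline b_g\xi_C$ along a word $g=h_1\cdots h_n$, control the numerator by the norm side of~\eqref{joint1} (the $C_0(r')^j$ bound and a geometric series), and control the denominator $|\lambda_{\mathrm{top}}(C_g)-a_g|$ via the Berger--Wang side, which gives $\rho_{\mathrm{spec}}(|a_g|C_g^{-1})\le r^n\le r$ uniformly over all word lengths. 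The paper's approach is more geometric and yields an explicit $T$-invariant compact (and, as its subsequent remark notes, generalizes cleanly when $\dim\mathcal L_\mu>1$); your approach is a clean application of the paper's own criterion and additionally produces an explicit quantitative bound $c=\dfrac{M^2C_0\,r'}{(1-r)(1-r')}$ on the fiber coordinate of $\textrm{Supp}(\nu)$ in terms of $r$ and the compactness constants of $\textrm{Supp}(\mu)$. Your observation that Berger--Wang is the essential input for the denominator bound is accurate: the norm side alone would only give $\rho_{\mathrm{spec}}\le C_0(r')^n$, which need not be $<1$ for small $n$.
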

\vspace{0.5cm}

 In the following proof, $\Sigma$ denotes the support of $\mu$,  $T=T_{\mu}$   the semigroup generated by the support of $\mu$ and, for every $n\in \N^*$, $\Sigma^n$ denotes the subset of $T$ that consists of all the  elements that can be written $g_1\cdots g_n$ with $g_i\in \Sigma^n$ (so $\Sigma^n=\textrm{Supp}(\mu^n)$). The set $\textrm{Aff}(\R)$ of all affine maps of the real line is identified topologically with the product space $\R^*\times \R$. \\
We adopt also the notation and setting of Section \ref{geobehind} namely: 
 $X:=L\times S(\R^3/L)\simeq \R \times S^1$ (with $\R^3/L \simeq \tilde{L}$ and $\R^3/L$ endowed with its Euclidean structure), $T$ is considered acting on $X$ by the formula \eqref{easybutcrucial}  and the open   subset $O$ of $\textrm{P}^2(\R)$ is identified with $X/\{\pm 1\}$. 
 
\begin{proof} 
 Suppose $r<1$. Let $||\cdot||$ be the canonical norm on $\R^2$. By the definition of the joint spectral radius, there exists $n_0\in \N$ such that for every
 $g\in \Sigma^{n_0}$, $|a_g| ||C_g^{-1}|| < 1$.  
 This implies that for every $\xi\in S^1$ and   every $g\in \Sigma^{n_0}$,  
 the affine map   $$\sigma(g,\xi): t \mapsto \frac{a_g t+b_g \xi}{||C_g \xi||}$$
 of the real line   has linear part strictly less than $1$ in absolute value.  We will check hereafter that this implies that there exists a compact subset $ K_1\subseteq \R$ stabilized by the family $\{\sigma(g,\xi) ;g\in \Sigma^{n_0}; \xi \in S^1\}$. Let us indicate first how to conclude.  Let $K_2:=K_1 \times S^1$. This is a compact subset of $X=\R\times S^1$ stabilized by all elements of $\Sigma^{n_0}$. Hence, by setting 
   $$K_3:=K_1\,\cup\,  \bigcup_{g\in \underset{{n\leq  n_0}}{\cup}{ \Sigma^n}}{g\cdot K_1},$$
   we obtain another 
  compact subset of $X$ which is stabilized by $T$.  
 In particular, $K:=K_3/\{\pm 1\}$ is a compact subset of $X/\{\pm 1\}\simeq O$ stabilized by $T$. This implies that there exists a  $\mu$-stationary probability measure on $K$. By the uniqueness part of Theorem \ref{existence_uncite}, the aforementioned probability measure on $K$   coincides with $\nu$. Hence  
 $\textrm{Supp}(\nu)\cap O$ is compact.  \\
   Now we check the missing part of our proof.  We are 
  in the following general situation: $Y$ is a compact topological space  (here $Y= 
 {\Sigma^{n_0}} \times S^1 $), $\sigma$ is a continuous map
    $\sigma : y\in Y \mapsto 
 \sigma_y( t)=a_y t+ b_y$ from $Y$ to $\textrm{Aff}(\R)$ such that $|a_y|<1$ for every $y\in Y$. It is standard that   $\sigma(Y)$ stabilizes a compact subset of $\R$.  Let us check it for the completeness of the proof.  For every point $p\in \R$ and every $R>0$, let    $B(p,R):=[p-R, p+R]$ be the closed interval of center $p$ and radius $R$.    
Fix $y_0\in Y$. For every $y\in Y$,  let $f_y\in \R$ be the unique fixed point of the affine map $\sigma_y$ and   $R_y:=\frac{1+|a_y|}{1-|a_y|} |f_y-f_{y_0}|$.     For 
every $y\in Y$ and every $R>R_y$, 
             \begin{eqnarray}
\sigma_y
\left(B(f_{y_0},R)\right)&\subseteq&  \sigma_y \left(   B(f_y, R+|f_y-f_{y_0}|) \right)\nonumber\\
    & \subseteq& B\left(f_y, |a_y|(R+|f_y-f_{y_0})\right) \label{r3} \\
          &\subseteq&     B\left(f_y, R - |f_y-f_{y_0}|\right)     \nonumber\\
    & \subseteq& B(f_{y_0},R).
    \nonumber \end{eqnarray}
 Estimate  \eqref{r3}  follows from our choice of $R_y$. 
     Hence  for every $R>R_y$, $B(f_{y_0},R)$ is stable under $f_y$. Since $y\mapsto R_y$ is a continuous map on the compact space $Y$, then $R:=\sup\{R_y; y\in Y\}$ is finite so that 
   $\sigma(Y)$ stabilizes the compact subset $B(f_{y_0},R)$ of $\R$.    This is what we wanted to prove.    
\label{compactness-criterion}\end{proof}

  \begin{remarque}
The condition $r<1$ is equivalent to  the existence of some norm $||\cdot||_1$ on $\R^2$ such that $|a_g| ||C_g^{-1}||_1<1$ for all $g$ in the support of $\mu$. This is a consequence of a theorem of Rota-Strang \cite{rota-strang} which asserts that, for a bounded subset $\Sigma \subset \mathcal{M}_d(\R)$, 
 $$\mathcal{R}(\Sigma)=\inf_{||\cdot||\in \mathcal{N}}{\max \{||A||; A\in \Sigma\}},$$
where $\mathcal{N}$ is the set of norms on $\R^d$.  Hence, although the condition $r<1$ involves the eigenvalues/norms of the elements in the semigroup generated by the support of $\mu$, it can be read also on the support of $\mu$ solely. 
  \end{remarque}

    \begin{remarque}(Generalizations)
    \begin{itemize}
    \item 
Let $K\subseteq S^1$ be any lift of the limit set in $\textrm{P}^1(\R)$ 
of the strongly irreducible and proximal semigroup $T_{\pi(\mu)}$, where  $\pi: G\longrightarrow \GL(\R^3/L)\simeq \GL_2(\R)$.  
The proof of Proposition \ref{compactness-criterion} yields the following stronger statement.  \\
Let $||\cdot||$ be the canonical norm on $\R^2$. If  $|a_g| < ||C_g t||$ for every every $t\in K$ and every $g\in \Sigma^{n_0}$ (for some $n_0$ large enough) then the support of $\nu$ is compact.   \\
In view of Rota-Strang's theorem, we can formulate the criterion above in the following way: 
if one can find a norm $||\cdot||_1$ on $\R^2$ such that  
$|a_g| <||C_g t||_1$ for every $g\in \Sigma$  and every $t\in K$, then the support of $\nu$ is compact in $O$. 
\item 
When $\textrm{dim}(\LL_{\mu})>1$, the  proof of Proposition \ref{compactness-criterion} generalizes easily 
in view of Rota-Strang's theorem and reads as follows: 
if one can find norms $||\cdot||_1$ and $||\cdot||_2$   such that  for every $g\in \Sigma$,  $||A_g||_1\, ||C_g^{-1}||_2< 1$, 
 then $\textrm{Supp}(\nu)\cap O$ is   compact. However, formulating such a criterion in a more intrinsic way (i.e.~using eigenvalues of elements of $T$), requires taking into account  the data given by all the eigenvalues of elements in the semigroup living in the product group $\GL(\LL_{\mu})\times \GL(\R^d/\LL_{\mu})$, and not considering  the joint spectral radii of the diagonal parts $A$ and $C$ separately  (otherwise a criterion is valid but is  too restrictive). 
This can be done using the very recent notion of 
     \emph{joint spectrum of a bounded set of matrices}   introduced recently by Breuillard   and Sert \cite{Cagri1},  \cite{Cagri-Breuillard}. 
       We refrain from doing it here as it goes beyond the scope of the paper. 
       \end{itemize}
\end{remarque}

    \subsubsection{Examples and Simulations}\label{example-simulation}
     All our vectors will be written using the  cartesian coordinates in $\R^3$. Also, automorphisms of $\R^3$ will be identified with their matrices in the canonical basis of $\R^3$. We endow $\R^3$ with the canonical norm. 
   
 \paragraph{Example 1: non compact support in $O$}
 Consider the following matrices of $\GL_3(\R)$: 
    
 $$g_1=\left(\begin{matrix} 
 1 & 2 & 3 \\
 0 & 1 &  1\\
 0 & 0 & 1
 \end{matrix}\right)\,\,,\,\, 
 g_2=\left(\begin{matrix} 
 -1 & 1 & 2 \\
 0 & 0& -1\\
 0 &   1 & 0\end{matrix}\right).$$
 Let $L=\R(1,0,0)$ and 
   $\mu$ be the uniform probability measure on the set $\{g_1, g_2\}$. The projection of $T_{\mu}$ on the quotient $\R^3/L$ is a non-compact  strongly irreducible 
   sub-semigroup of $\SL_2(\R)$. Hence by Furstenberg theorem \cite{furstenberg},  
       $\lambda_1(\R^3/L)>0$ (this can be also deduced from Guivarc'h-Raugi theorem \cite{guivarch-raugi}  as the action on the quotient is i-p). 
   In particular, $\int_{G}{\log |a_g|\,d\mu(g)}<\lambda_1(\R^3/L)$  and the top Lyapunov exponent on the quotient is simple.   By Lemma \ref{corollaire-triangulaire},   $\lambda_1(\mu)>\lambda_2(\mu)$.  By the irreducibility of the action on the quotient,     $\LL_{\mu}=L$. Let us check that  $\mathcal{U}_{\mu}=\R^3$. All we need to prove is that there does not exist a $G_{\mu}$-invariant plane $W$ in $\R^3$ such that $\R^3=L\oplus W$.  Arguing by contradiction, suppose that such a plane $W$ exists. 
     Writing $g_2$ in its (real) Jordan canonical form, we deduce the existence of a $g_2$-invariant plane $W'$ such that $\R^3=L\oplus W'$. The non zero $g_2$-subspace $W\cap W'$ cannot be a line because $[L]$ is the only real eigenspace of $g_2$.  Hence $W=W'$ and in particular $g_1$ stabilizes $W'$.  
    However, a  direct computation shows that $W'=\textrm{Span}\left( (3,0,2), (-1,2,0)  \right)$ and that $g_1  (3,0,2)^t = (9,2,2)^t \not\in W'$. Contradiction.  \\

Let $X:=\R \times S^1$ embedded in $\R^3$ using the cartesian coordinates $(t,\xi)$ with $t\in \R$ and $\xi\in S^1$. A  bounded portion of $X$ is represented in the gray  
 cylinder of  Figure 1. Its axis is $L=\LL_{\mu}=\R(1,0,0)$ and the symmetry with respect to the origin of $\R^3$ (the centroid of the cylinder in Figure 1) acts naturally on $X$. 
 The space $X$ is a two-fold cover of the open dense subset $O=\textrm{P}^2(\R)\setminus [L]$ of the projective plane. The latter is     then thought   as a one-point compactification of $X/\{\pm 1\}$, the direction of the line $L$ being  the point at infinity.   Identifying $\R/L$ with the $yz$-plane plane $\tilde{L}$, we   let   $G$ act on $X$  by the formula \eqref{easybutcrucial} of Section \ref{geobehind}. By theorem \ref{existence_uncite} and Section \ref{geobehind}, there exists a  unique $\mu$-stationary stationary measure $\tilde{\nu}$ on  $X$ which is $\{\pm 1\}$ invariant.  
 By Theorem \ref{limitset}, $\textrm{Supp}(\tilde{\nu})$ is a two-fold cover of the limit set $\Lambda(T)$ of $T_{\mu}$ (Theorem \ref{limitset}), and the unique such one which is $\pm 1$ invariant. \\

The blue points in Figure 1 below live on the cylinder $X$ and represent     the points $\pm Z(\omega)$ where $Z(\omega)\in X$ and  $[Z(\omega)]$ is given by Theorem \ref{existence_uncite}.  Hence, the picture is a simulation of $\tilde{\nu}$. The points that are in the transparent face of the cylinder are exactly identical to the visible ones by symmetry. In Figure 2, we plot the projection of the   points of Figure 1 on $S^1$.
 By the discussion of Section \ref{geobehind}, Picture 2 is then a simulation of   a two-fold cover of the limit set of the i-p semigroup of $\textrm{SL}_2(\R)$ generated by $\pi(g_1)$ and $\pi({g_2})$, projection of $g_1$ and $g_2$ on $\GL(\R^3/L)$.  We  observe in Figure 1    the fibered structured described in Section \ref{geobehind}:  each fiber  is contained in an affine line with (horizontal) direction $L$ and projects to a point of Figure 2.

 \begin{center}
 
 \includegraphics[scale=0.65]{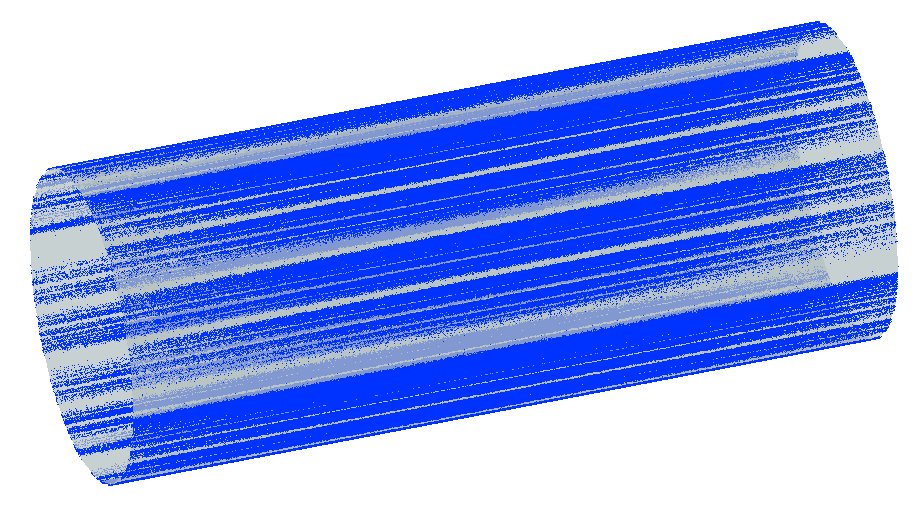}
\captionof{figure}{Simulation of $\tilde{\nu}$}

\end{center}

  \begin{center}
 \includegraphics[scale=0.27]{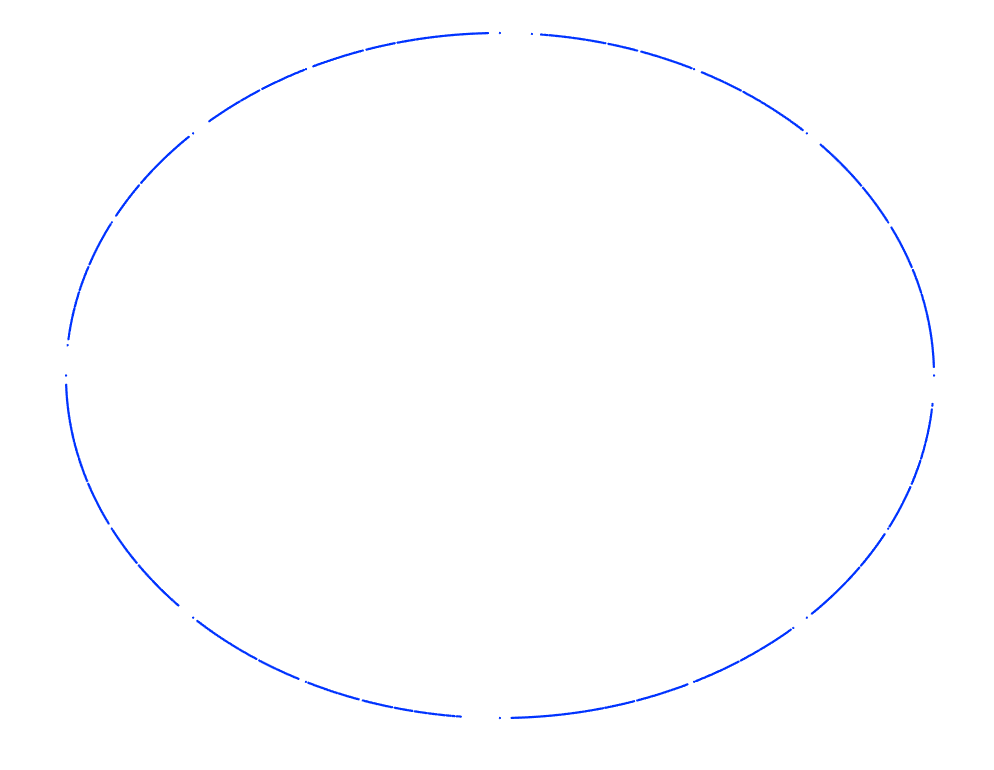}
\captionof{figure}{Projection on $S^1$}
\end{center}
   \noindent The support of $\nu$ is not compact in $O$ as suggested by Figure 1.
   Indeed, the unipotent structure of $g_1$ gives that 
 $g_1^n =  \left(\begin{matrix} 
 1 & \psi_1(n) & \psi_2(n) \\
 0 & 1 &  \psi_3(n)\\
 0 & 0 & 1
 \end{matrix}\right)$, 
 with $\psi_1(n) = O(n)$, $\psi_2(n)=O(n^2)$ and $\psi_3(n)=O(n)$. In particular, for every $[x]\in O=\textrm{P}^2(\R)\setminus [L]$, every limit point of $\left(g_1^n[x]\right)_{n\in \N}$ belongs to the point  at infinity $[L]$. Hence, the closure of the orbit of any point of $O$ under $T_{\mu}$ meets $[L]$. 
We conclude by item 3.~ of Corollary \ref{cor-limit}.  
  \paragraph{Example 2:   compact support in $O$}

We replace $g_1$  and $g_2$ of Example 1 with $$g_1=\left(\begin{matrix} 
 0.5 & 2 & 3 \\
 0 & 1 &  1\\
 0 & 0 & 1
 \end{matrix}\right)\,\,,\,\, 
 g_2=\left(\begin{matrix} 
0.5 & 1 & 2 \\
 0 & 0& -1\\
 0 &   1 & 0\end{matrix}\right)$$ 
instead of $g_1$ and $g_2$. \\

\noindent Again $\lambda_1>\lambda_2$, $\mathcal{L}_{\mu}=L$ and $\mathcal{U}_{\mu}=\R^3$ (same argument as above with 
$W'=\textrm{Span}\left((0,0,1), (2,-1,0)\right)$ instead and $g_1(0,0,1)^t = (3,1,1)\not\in W'$).  
  We obtain the following simulation of the unique $\pm 1$-stationary measure $\tilde{\nu}$ on the cylinder $X$.  
 \begin{center}
 \includegraphics[scale=0.7]{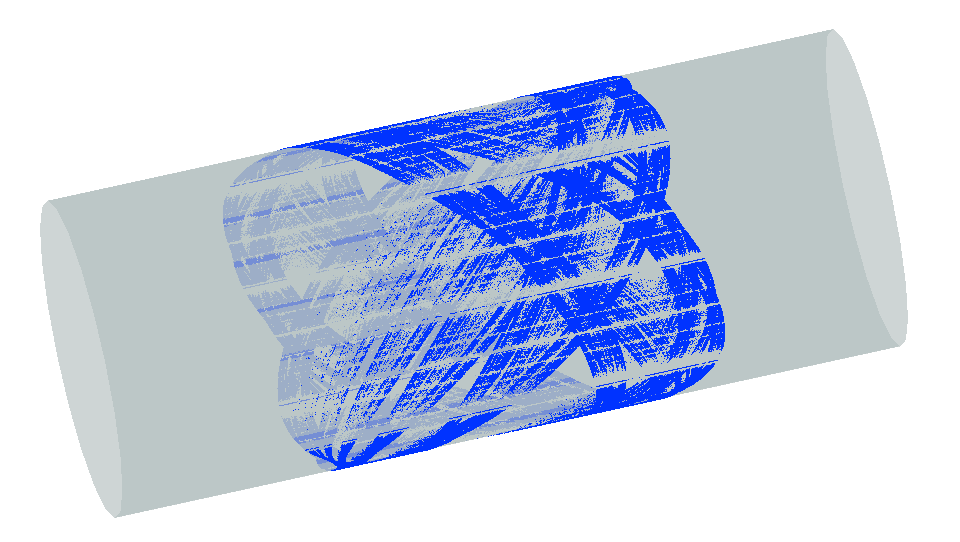}
 \captionof{figure}{Simulation of $\tilde{\nu}$}
\end{center}
Note that the projection of $\tilde{\nu}$ on $S^1$ is identical to the one given by Figure 2 above as the two semigroups of Example 1 and 2 have the same projection on $\GL(\R^3/L)$. 
 The support of $\nu$ is compact in $O$ as suggested by the picture.    Indeed, by Proposition \ref{compactness-criterion}, it is enough to check that 
for $C_1:= \left(\begin{matrix} 
 
 1 &  1\\
 0 & 1
 \end{matrix}\right)$ and $C_2:= \left(\begin{matrix} 
 
  0 &  -1\\
  1 & 0
 \end{matrix}\right)$, one has that 
\begin{equation}\mathfrak{R}(\{C_1^{-1}, C_2^{-1}\})<2.\label{takhodna}\end{equation}
   Consider now the the operator norm on $\mathcal{M}_2(\R)$ induced by the $L^2$ norm on $\R^2$. We have: 
   
   $$||C_1^{-1}||=||C_1||=\sqrt{\rho_{\textrm{spec}}(C_1 C_1^t)}  = \sqrt{\frac{3+\sqrt{5}}{2}}<2.$$
   Clearly, $||C_2||=1$. Hence $\max \{||C_1^{-1}||, ||C_2^{-1}||\}<2$. 
    But by definition of the joint spectral radius, we have $\mathfrak{R}(\Sigma)\leq \max \{||g||; g\in \Sigma\}$ for every bounded subset $\Sigma$ of $\mathcal{M}_d(\R)$ and for every matrix norm. Hence (\ref{takhodna}) is fulfilled and the support of $\nu$ is compact in $O$. 
    Observe that the freedom in the choice of the norm was crucial in the previous proof (as for instance neither the $L^1$ nor the $L^{\infty}$ norm would help fulfilling the criterion).

\paragraph{Example 3}
 Here we present another example for a behavior similar to the one of Example 1 in order to justify  item 4.~ of Remark \ref{remark_support}. 
Let $\mu$ be the uniform probability measure on the set $\{g_1, g_2, g_3\}$ with 
 $g_1= \begin{pmatrix} 
0.5 & 2 & 3 \\
0 &    4 & 1 \\
0 & 0 & 0.25
\end{pmatrix}$,   $g_2= \begin{pmatrix} 
0.5 & 1 &2 \\
0 &    0 & -1 \\
0 & 1 & 0
\end{pmatrix}$ and  $g_3= \begin{pmatrix} 
0.5 & 1 & 1 \\
0 &    0.25  & -1 \\
0 & 0 & 4
\end{pmatrix}$. We have also $\lambda_1>\lambda_2$, $\LL_{\mu}=L$ and $\mathcal{U}_{\mu}=\R^3$.    We will show that the support of the unique stationary measure on $O$  is not compact, although $\rho_{\textrm{spec}}(A)\leq \frac{1}{2} < 1 \leq \rho_{\textrm{spec}}(C)$ for every    $\begin{pmatrix} 
A & B \\
0 & C 
\end{pmatrix} \in T$. 
Denote by $C_i\in GL_2(\R)$ the projection of $g_i$ on $\R^3/L$ and observe that   $C_3=C_1^{-1}$.  Since $g_3$ is  a proximal element of $\GL_3(\R)$ that belongs to   $T$, we have  by Theorem \ref{limitset} that $p^+(g_3)\in \textrm{Supp}(\nu)$. We will check that  the orbit of $p^+(g_3)$ under the cyclic group generated by $g_1$ is not compact in $O$. This is enough to conclude as $\textrm{Supp}(\nu)$ is $T$-invariant.   
To do so, we use the identification 
$O \simeq X/\{\pm 1\}$ of Section \ref{geobehind} and write 
 $p^+(g_3)=\pm (t_0, \xi_0)$, with $t_0 = \frac{22} { 7\sqrt{241}}\in L\simeq \R$ and $\xi_0=\frac{1}{\sqrt{241}} (-4,15)\in S^1$ a 
 normalized eigenvector for the top eigenvalue $4$ of  $C_3=C_1^{-1}$. 
But $\xi_0$ is also a fixed point of $C_1$ for the natural action on $S^1$ as $\xi_0$ is an eigenvector of $C_1$    for its least eigenvalue $0.25>0$. 
Hence, by the 
 cocycle property shared by $\sigma$ (notation of Section \ref{geobehind}):

  $$\forall n\in \N, g_1^n(t_0, \xi_0) = \left(\sigma(g_1, \xi_0)^n(t_0)\,,\, \xi_0\right).$$
 
\noindent Now $\sigma(g_1, \xi_0)$ is the affine map of the real line $t \mapsto 8t+4 \langle (2,3), \xi_0\rangle$, 
with $\langle \cdot, \cdot \rangle$ the Euclidean inner product. Since its linear part  has absolute value $>1$ and since $t_0$ is not equal to its unique fixed point (by a direct computation), then       $\sigma(g_1, \xi_0)^n(t_0) \underset{n\rightarrow +\infty}{\longrightarrow} +\infty$ in $\R$. In particular, $g_1^n p^+(g_3) \underset{n\rightarrow +\infty}{\longrightarrow} [L]$ in $\textrm{P}^2(\R)$. This ends the proof. 
 
\section{Regularity of the stationary measure} 
\subsection{Introduction}
Let $V$ be a vector space over the local field ${\kk}$ of dimension $d\geq 2$. We use the same notation as Section \ref{fubinisection} concerning the choice of the norm on $V$  and the Fubini-Study metric on $\PV$. 
In this section, we prove that under an exponential moment of
$\mu$,   the stationary measure given by Theorem \ref{existence_uncite}  has
H\"older regularity, and more precisely the following

  \begin{theo}
Let $\mu$ be a probability measure on $\GL(V)$ with an
exponential moment such that $\lambda_1>\lambda_2$. Let $\nu$ be the unique $\mu$-stationary
probability measure on $\PV$ such that  $\nu([\mathcal{L}_{\mu}])=0$. Then there exists $\alpha>0$ such that, 
\begin{equation} 
\sup_{[f] \in \textrm{P}(V^*)\setminus [\LL_{\check{\mu}}]} {\delta\left([f], \PS \setminus [\mathcal{L}_{\check{\mu}}]\right)} \int_{\PV}{\delta^{-\alpha}([x],[\textrm{Ker}(f)])\,d\nu([x])}<+\infty \label{aprouver}\end{equation}
 \label{regularite} \end{theo}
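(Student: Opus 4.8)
The plan is to reduce the statement to an already well-understood situation by the two standard devices used throughout the paper: restrict to $\mathcal{U}_\mu$ via the representation $\rho$ of Remark \ref{restU}, so that we may assume $\mathcal{U}_\mu = V$, and then pass to the quotient $\pi : V \to V/\mathcal{L}_\mu$, on which the action is strongly irreducible and proximal (Lemma \ref{degenere}) with simple top Lyapunov exponent. On the quotient, H\"older regularity of the pushforward stationary measure $\pi\star\nu$ is exactly Guivarc'h's theorem from the i-p setting \cite{Guivarch3}; so the real content is to transfer H\"older regularity from $V/\mathcal{L}_\mu$ back up to $V$, i.e.\ to control $\delta([x],[\mathrm{Ker}(f)])$ from below in terms of the distance of the image point $[\bar x]$ to the image hyperplane, uniformly for $f \notin \mathcal{L}_{\check\mu}$. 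Here the skew-product picture of Section \ref{geobehind} is the right language: writing $V = \mathcal{L}_\mu \oplus \tilde{\mathcal{L}}_\mu$ and $x = t + \xi$ with $\xi \in \tilde{\mathcal{L}}_\mu \simeq V/\mathcal{L}_\mu$, the distance to a hyperplane $\mathrm{Ker}(f)$ with $f \notin \mathcal{L}_{\check\mu}^{\,0\,c}$ ($=\mathcal{U}_\mu = V$ here) is comparable, up to the factor $\delta([f],[\mathcal{L}_{\check\mu}])$ that already appears in \eqref{aprouver}, to $\delta([\bar x],[\mathrm{Ker}(\bar f)])$ plus a term coming from the affine coordinate $t$; but since $f \notin \mathcal{L}_{\check\mu}$ means $\bar f \neq 0$, the quotient part dominates, and one gets $\delta([x],[\mathrm{Ker}(f)]) \gtrsim \delta([f],[\mathcal{L}_{\check\mu}])\,\delta([\bar x],[\mathrm{Ker}(\bar f)])$.

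Concretely I would proceed as follows. First, by the duality Lemma \ref{duality} rephrase the normalizing factor: $\delta([f],\mathrm{P}(V^*)\setminus[\mathcal{L}_{\check\mu}])$ is just a uniform lower bound for $\delta([f],[\mathcal{L}_{\check\mu}])$, and $[\mathcal{L}_{\check\mu}] = [\mathcal{U}_\mu^{\,0}]$, so after restricting to $\mathcal{U}_\mu$ we may assume $\mathcal{U}_\mu = V$ and $\mathcal{L}_{\check\mu}$ is a proper subspace of $V^*$, with $f \notin \mathcal{L}_{\check\mu}$. Second, fix once and for all an orthogonal decomposition $V = \mathcal{L}_\mu \oplus \tilde{\mathcal{L}}_\mu$ and a dual decomposition of $V^*$; write $\bar f$ for the component of $f$ in $\mathcal{L}_\mu^{\,0} \simeq (V/\mathcal{L}_\mu)^*$. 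A linear-algebra lemma in the spirit of Lemma \ref{dist_droite_sev} — comparing $\|x \wedge y\|$ for $y \in \mathrm{Ker}(f)$ with the norm of the image wedge in $\bigwedge^2(V/\mathcal{L}_\mu)$ — gives the estimate
\begin{equation}
\delta\bigl([x],[\mathrm{Ker}(f)]\bigr) \;\geq\; c\,\delta\bigl([f],[\mathcal{L}_{\check\mu}]\bigr)\,\delta\bigl([\bar x],[\mathrm{Ker}(\bar f)]\bigr)
\end{equation}
for a constant $c>0$ depending only on the chosen norms and the fixed decomposition. Third, integrate: since $\nu([\mathcal{L}_\mu]) = 0$ and $\pi\star\nu$ is the unique i-p stationary measure on $\mathrm{P}(V/\mathcal{L}_\mu)$ (Remark \ref{V/L} and the uniqueness part of Theorem \ref{existence_uncite}), and since $\pi(\mu)$ inherits an exponential moment from $\mu$, Guivarc'h's theorem \cite{Guivarch3} supplies $\alpha>0$ with
\begin{equation}
\sup_{\bar H} \int \delta^{-\alpha}\bigl([\bar x],[\bar H]\bigr)\, d(\pi\star\nu)([\bar x]) < +\infty,
\end{equation}
the supremum over hyperplanes $\bar H$ of $V/\mathcal{L}_\mu$. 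Feeding the pointwise estimate into this, multiplying through by $\delta([f],[\mathcal{L}_{\check\mu}])^{\alpha}$ and bounding the latter by $1$ when needed, yields \eqref{aprouver} on $V$; finally, transporting back through $\rho$ — where $\mathcal{U}_\rho(\mu) = \mathcal{U}_\mu$, $\mathcal{L}_{\rho(\mu)} = \mathcal{L}_\mu \cap \mathcal{U}_\mu$, and hyperplanes of $\mathcal{U}_\mu$ are exactly the traces of hyperplanes of $V$ not containing $\mathcal{U}_\mu$ — gives the statement of Theorem \ref{th1}, and then \ref{regularite} in full.

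The main obstacle is the pointwise comparison estimate between $\delta([x],[\mathrm{Ker}(f)])$ in $V$ and $\delta([\bar x],[\mathrm{Ker}(\bar f)])$ in $V/\mathcal{L}_\mu$, uniformly in $f$. The naive bound degenerates when $f$ approaches $\mathcal{L}_{\check\mu}$ (equivalently when $\bar f \to 0$), which is precisely why the factor $\delta([f],[\mathcal{L}_{\check\mu}])$ must sit in front of the integral in \eqref{aprouver}; getting the dependence on this factor exactly right, and making sure the affine-coordinate contribution of $t$ never helps $\mathrm{Ker}(f)$ get closer to $[x]$ than the quotient distance allows, requires a careful choice of norm (orthogonal, as set up in Section \ref{fubinisection}) and a Gram-type computation analogous to the proof of Lemma \ref{dist_droite_sev}. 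The non-Archimedean case is handled uniformly by the same argument since all the orthogonality tools of Section \ref{nonarchisection} are available; one only needs to be mindful that "orthogonal complement" is not unique, but any fixed choice works throughout.
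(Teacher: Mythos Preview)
Your reduction to $\mathcal{U}_\mu = V$ via $\rho$ is sound, and the factor $\delta([f],[\mathcal{L}_{\check\mu}])$ indeed arises precisely from that step: for $x\in\mathcal{U}_\mu$ one has the exact identity
\[
\delta_V\bigl([x],[\mathrm{Ker}(f)]\bigr)=\frac{\|f|_{\mathcal{U}_\mu}\|}{\|f\|}\cdot\delta_{\mathcal{U}_\mu}\bigl([x],[\mathrm{Ker}(f|_{\mathcal{U}_\mu})]\bigr)
=\delta\bigl([f],[\mathcal{L}_{\check\mu}]\bigr)\cdot\delta_{\mathcal{U}_\mu}\bigl([x],[\mathrm{Ker}(f|_{\mathcal{U}_\mu})]\bigr),
\]
using Lemma~\ref{dist_droite_sev} on the dual side. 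So after this reduction $\mathcal{L}_{\check\mu}=\{0\}$ and the target becomes the uniform bound over all hyperplanes of $V$.

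The genuine gap is the second reduction, to $V/\mathcal{L}_\mu$. The pointwise comparison
\[
\delta\bigl([x],[\mathrm{Ker}(f)]\bigr)\;\geq\; c\,\delta\bigl([\bar x],[\mathrm{Ker}(\bar f)]\bigr)
\]
is simply false. Writing $V=\mathcal{L}_\mu\oplus\tilde{\mathcal{L}}_\mu$, $x=t+\xi$, $f=f_1+f_2$, one has $f(x)=f_1(t)+f_2(\xi)$, and nothing prevents $f_1(t)=-f_2(\xi)$, so that $[x]\in[\mathrm{Ker}(f)]$ while $[\bar x]\notin[\mathrm{Ker}(\bar f)]$. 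No choice of norm can repair this: the cancellation is algebraic. Since you need $\delta^{-\alpha}$, this wrong-way failure is fatal---the integrand in $V$ can blow up at points where the quotient integrand is bounded, and the i-p estimate on $\pi_\star\nu$ gives you no control there. The ``affine-coordinate contribution of $t$'' that you flag as a worry is not a technicality to be managed by a Gram computation; it is the whole difficulty.

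The paper's proof is structurally different and does not factor through the i-p theorem on the quotient (indeed the paper is written so as never to invoke i-p results as black boxes). Instead of a static pointwise comparison, it is dynamical: one realizes $\nu$ as the law of $[Z]=\lim R_n[x]$ and estimates $\nu\{[x]:\delta([x],[H])\le e^{-\epsilon n}\}$ by combining (i) the exponential convergence $\E\,\delta(R_n[x],[Z])\le e^{-\beta n}/\delta([x],[\mathcal{L}_\mu])$ of Theorem~\ref{direction1}, (ii) the deterministic Lemma~\ref{distance_hyperplan}, which for each $g$ stabilizing $\mathcal{L}_\mu$ produces \emph{some} basis vector $x$ of $\mathcal{L}_\mu^\perp$ with $\delta(g[x],[\mathrm{Ker}(f)])\gtrsim \|g^t f\|/(\|g^t\|\,\|f\|)$ on the event $\{\|g|_{\mathcal{L}_\mu}\|/\|g\|\text{ small}\}$, and (iii) the large-deviation Corollary~\ref{rapport} for the ratio $\|R_n^t f\|/(\|R_n^t\|\,\|f\|)$ applied to $\check\mu$. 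A dyadic summation then converts the resulting exponential tail bound into the $\delta^{-\alpha}$ integrability. The point is that Lemma~\ref{distance_hyperplan} gives a lower bound on $\delta(R_n[x],[\mathrm{Ker}(f)])$ for a \emph{well-chosen} $x$ (depending on $R_n$), not for all $x$; this sidesteps exactly the cancellation that kills your pointwise inequality.
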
 
\vspace{0.5cm}

A crucial step is to show that  the random walk converges exponentially fast towards its stationary measure uniformly on compact subsets of $\PV\setminus [\LL_{\mu}]$, namely: 

\vspace{0.5cm}

 \begin{theo}
Let $\mu$ be a probability measure on $\GL(V)$ with an
exponential moment such that $\lambda_1>\lambda_2$. Let     $\nu$
be the unique $\mu$-stationary measure on $\PV$ such that
$\nu([\mathcal{L}_{\mu}])=0$.   Then, there exists a random variable $[Z]$ on  $\PV$ with law $\nu$, there exist $\beta>0$
and  $n_0\in \N^*$ such that for every  $n\geq n_0$ and every $[x]\in \PV\setminus [\LL_{\mu}]$,     
\begin{equation}  \E \left( \delta(R_n[x], [Z] )\right)\leq   \frac{\exp(-n \beta)}{\delta([x], [\mathcal{L}_{\mu}])}. \label{est1}\end{equation}
\label{direction1}\end{theo}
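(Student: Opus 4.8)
The plan is to reduce to the case $\mathcal{U}_\mu = V$ via the restriction representation $\rho : G_\mu \to \GL(\mathcal{U}_\mu)$ (as in Remark \ref{restU} and part iii of the proof of Proposition \ref{propunicite}), noting that $\mathcal{L}_{\rho(\mu)} = \mathcal{L}_\mu \cap \mathcal{U}_\mu$, that $\rho(\mu)$ still has a simple top Lyapunov exponent and an exponential moment, and that $\delta([x],[\mathcal{L}_\mu])$ and $\delta([x],[\mathcal{L}_{\rho(\mu)}])$ are comparable up to a multiplicative constant for $[x] \in [\mathcal{U}_\mu]$; since $\nu([\mathcal{U}_\mu])=1$ this handles the general estimate. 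So assume $\mathcal{U}_\mu = V$; then by Lemma \ref{degenere} the action on the quotient $V/\mathcal{L}_\mu$ is strongly irreducible and proximal, $\mathcal{L}_{\check\mu} = \{0\}$ on $V^*$, and by Remark \ref{V/L} the top exponent of $\pi(\mu)$ on $V/\mathcal{L}_\mu$ is simple with $\mathcal{L}_{\pi(\mu)}=\{0\}$.

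First I would recall from the proof of Proposition \ref{propunicite} (part i) that there is a random variable $[Z]$ of law $\nu$ such that, almost surely, $R_n\,\nu$ converges vaguely to $\delta_{[Z]}$, obtained from limit points of $R_n/N(R_n)$, which are rank-one matrices. The quantitative input I need is: (a) by Lemma \ref{dist_droite_sev}, $\delta(R_n[x],[Z]) \le \|\bigwedge^2 R_n\| \cdot \|R_n\|^{-2} \cdot \frac{\|R_n\|\,\|x\|}{\|R_n x\|} \cdot \frac{\|R_n\|\,\|w\|}{\|R_n^t\check{w}\| \text{-type quantity}}$ — more precisely, writing $R_n = k_n a_n u_n$ in $KAK$ form, one has the standard bound
\begin{equation}
\delta(R_n[x],[Z]) \le C\,\frac{a_2(R_n)}{a_1(R_n)}\cdot \frac{1}{\delta(k(R_n^t)[e_1^*],[x])} \nonumber
\end{equation}
valid off the bad set, where $[Z] = \lim k_n[e_1]$. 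The factor $a_2(R_n)/a_1(R_n)$ decays exponentially in $n$ by large deviations for the gap $\lambda_1 - \lambda_2 > 0$ — here the exponential moment assumption enters, giving $\p(a_2(R_n)/a_1(R_n) > e^{-2\beta_0 n}) \le C e^{-c n}$ for suitable $\beta_0, c > 0$ (Le Page / Bougerol-type large deviation estimate, which holds under an exponential moment and $\lambda_1 > \lambda_2$, and which is available for the i-p quotient $V/\mathcal{L}_\mu$). The denominator $\delta(k(R_n^t)[e_1^*],[x])$ measures the angle between $x$ and the most contracted direction of $R_n^t$; since $R_n^t$ acts on $V^*$ with $\mathcal{L}_{\check\mu}=\{0\}$, its least-expanding direction $[\check{Z}_n]$ has law converging to $\check\nu$, and $\check\nu$ is non-degenerate on $\mathcal{U}_{\check\mu} = V^*$ (Proposition \ref{propstat}), with a Hölder regularity bound that can be obtained from the i-p theory applied to $V^* / \mathcal{L}_{\check\mu}^{\,0} = V^*/\mathcal{U}_\mu^{\,0}$; the factor $\delta([x],[\mathcal{L}_\mu])$ on the right of \eqref{est1} accounts for how close $[x]$ can be to $[\mathcal{L}_\mu]$, since the relevant hyperplanes $[\mathrm{Ker}(\check{Z}_n)]$ stay away from $[\mathcal{L}_\mu]$ only controllably.

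The technical heart, and the main obstacle, is combining these two ingredients by a Cauchy–Schwarz (or Hölder-with-exponent) splitting of the expectation $\E(\delta(R_n[x],[Z]))$: one splits into the event where $a_2(R_n)/a_1(R_n)$ is exponentially small (giving the main term, controlled using a negative-moment bound $\E(\delta(k(R_n^t)[e_1^*],[x])^{-s}) \le C/\delta([x],[\mathcal{L}_\mu])^{s'}$ uniform in $n$ and $[x]$) and its complement (of exponentially small probability, handled crudely since $\delta \le 1$). The uniform negative-moment bound is exactly the type of estimate that Theorem \ref{regularite}/Proposition \ref{hitting} will later sharpen, but at this stage I need a self-contained version: it follows from a regeneration/submultiplicativity argument for $\E(\delta(R_n[x],[H])^{-s})$ for small $s>0$, uniformly over hyperplanes $[H]$ with $[H] \not\supset \mathcal{U}_\mu$, using the i-p structure on $V/\mathcal{L}_\mu$ to get a strict contraction of the relevant transfer operator for small $s$, and carefully tracking the dependence on $\delta([H],[\mathcal{L}_{\check\mu}])$, resp.~$\delta([x],[\mathcal{L}_\mu])$, through the skew-product description of Section \ref{geobehind}. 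Once the uniform-in-$n$, uniform-in-$[x]$ bound $\E(\delta(R_n[x],[Z])) \le C e^{-\beta n}/\delta([x],[\mathcal{L}_\mu])$ is in hand for $\mathcal{U}_\mu = V$, transporting it back through $\rho$ finishes the proof; I would then remark that $R_n$ and $L_n$ have the same law when an $L_n$-statement is preferred, and that the constant $n_0$ absorbs the finitely many small $n$.
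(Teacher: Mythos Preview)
Your approach has a genuine gap and is more complicated than necessary. The central difficulty you identify --- the ``uniform negative-moment bound'' $\E\big(\delta(k(R_n^t)[e_1^*],[x])^{-s}\big) \le C/\delta([x],[\mathcal{L}_\mu])^{s'}$ --- is essentially a H\"older regularity statement for the dual walk, of the same nature as Theorem~\ref{regularite}. But in the paper's logical order Theorem~\ref{regularite} is \emph{deduced from} Theorem~\ref{direction1}, not the other way around; you cannot invoke it here. Your fallback (``i-p theory on $V^*/\mathcal{U}_\mu^{\,0}$'') does not help either: after your reduction to $\mathcal{U}_\mu=V$ one has $\mathcal{U}_\mu^{\,0}=\{0\}$, so the quotient is $V^*$ itself, and the dual action is still genuinely reducible (only $\mathcal{L}_{\check\mu}=\{0\}$ is guaranteed, not irreducibility). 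The phrase ``regeneration/submultiplicativity argument\ldots carefully tracking the dependence'' is where the actual work would lie, and you have not supplied it.

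The paper sidesteps this entirely by a Cauchy-sequence trick: instead of estimating $\E\big(\delta(R_n[x],[Z])\big)$ directly, it bounds the one-step increment $\E\big(\delta(R_n[x],R_{n+1}[x])\big)$. Writing $R_{n+1}=R_n X_{n+1}$ and using $\|gx\|\ge\|\pi(g)\overline{x}\|$ to pass to the quotient $V/\mathcal{L}_\mu$ in the denominators, one gets
\[
\delta(R_n[x],R_{n+1}[x]) \;\le\; \frac{1}{\delta([x],[\mathcal{L}_\mu])^2}\,\|X_{n+1}\|\,\|X_{n+1}^{-1}\|\,\exp\big(s(R_n,Y_n)\big),
\]
where $s$ is an explicit cocycle on $G_\mu\times\big(\textrm{P}(\wedge^2 V)\times \textrm{P}(V/\mathcal{L}_\mu)^2\big)$ with drift $l\le\lambda_2-\lambda_1<0$. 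A single application of the negative-drift case of the cocycle Lemma~\ref{cocycle}, plus Cauchy--Schwarz to separate $X_{n+1}$ from $R_n$, gives exponential decay of the increment. Summing and Fatou then yield the estimate for $\delta(R_n[x],[Z])$. No reduction to $\mathcal{U}_\mu=V$, no regularity of $\nu$ or $\check\nu$, and no KAK-based angle control are needed. The moral: comparing $R_n[x]$ to $R_{n+1}[x]$ rather than to $[Z]$ replaces your hard negative-moment input by a soft cocycle estimate whose drift is exactly the Lyapunov gap.
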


\begin{remarque}
The above statement is stronger than just saying that $\nu$ is a
$\mu$-boundary. Indeed, it implies that, for any probability
measure $\eta$ on $\PV$ that gives zero mass to $[\mathcal{L}_{\mu}]$,

$$\mu^n \star \eta \underset{n\rightarrow +\infty}{\overset{\textrm{weakly}}{\longrightarrow}} \nu.$$

\label{weakly}\end{remarque}

 \vspace{0.25cm}

 When $T_{\mu}$ is irreducible (or equivalently i-p), Theorem \ref{regularite} was shown  in \cite{Guivarch3} using the spectral gap property   \cite{lepage}.
Other alternative proofs were then proposed     \cite{aoun-comptage},  \cite{BQbook}. When $T_{\mu}$ is a non degenerate sub-semigroup   of the affine group of ${\kk}^d$, our result on Hausdorff dimension is new. Here are 
the main ingredients of the proof.\\

  A first step is Theorem \ref{direction1} above. It consists of showing that   $R_n[x]$ converges exponentially fast towards the stationary measure, with exponential speed and uniformly on compact subsets of $\PV\setminus[\mathcal{L}_{\mu}]$. In the i-p case, this is known (see \cite{bougerol} for the convergence and \cite{aoun-free} for the speed). For affine groups in the contracting setting, this is straightforward by direct computation. When $\lambda_1>\lambda_2$ and  $G_{\mu}$ is any group of upper triangular matrix blocs, such as a subgroup of the automorphism group of the Heisenberg group (see Section \ref{guiding}),   this result is new. \\

The second step is the deterministic Lemma \ref{distance_hyperplan}. This lemma will imply  that estimating the distance from $R_n[x]$ to a fixed hyperplane $H$ consists, with probability exponentially close to one, of establishing large deviation estimates of the ratio  of norms $\frac{||R_n^{t} f||}{||f||\,||R_n||}$ uniformly on $f\in V^*$. \\

 In both steps, we need large deviation inequalities for norms ratios. This is done using a classical cocycle lemma (see Lemma \ref{cocycle} below). Since we do not need the more delicate large deviation estimates for the norms themselves, we do not aim to give the optimal  formulation  (see Corollary \ref{rapport}). We refer to \cite{bqtcl} for related estimates for cocycles.\\

 In terms of techniques,  we note that even though our result applies to the interesting case $\mathcal{L}_{\mu}\neq \{0\}$ (as the contracting case in the context of affine groups),  our proof  uses heavily different passages through the easier case $\mathcal{L}_{\mu}=\{0\}$  (as the expansive case for affine groups or the irreducible groups)  via  group representations.  We refer to  Remark \ref{V/L} for more on this condition. 
     \subsection{Cocycles}\label{sec-cocyle}
We begin by recalling a cocycle lemma:  Lemma \ref{cocycle} below. The case a)  allows us to
obtain large deviations estimates of cocycles whose average is
negative. It is due to Le Page \cite{lepage} and was crucial in order to establish fine limit theorems for the norm of
matrices.  Case b) treats the case
where the average of the cocycle is zero and appears in
\cite{Guivarch3}, \cite{aoun-free}.
\begin{lemme}(Cocycle lemma)\cite{lepage,aoun-free}\label{cocycle}
Let $G$ be a semigroup acting on a space $X$, $s$ an additive
cocycle on  $G \times X$, $\mu$ a probability measure on $G$ such
that  there exists  $\tau>0$
satisfying:
\begin{equation}\label{condition}\E\left(\exp\left(\tau \,\sup_{x\in X}{|s(X_1,x)|}\right) \right)< \infty.\end{equation} Set $l=\underset
{n\rightarrow \infty}{\lim} \;\frac{1}{n}\sup_{x\in
X}\;{\E(s(R_n,x))}$.

\begin{enumerate}
\item[a)] If $l<0$, then there exist  $\lambda>0$, $r_0>0$,
$n_0\in \N^*$ such that for every $0<r<  r_0$ and
$n> n_0$:\; $\sup_{x\in X}\;\E\left(\exp
(r\, s(R_n,x) ) \right)\leq \exp(-n r \lambda)$.

   \item[b)]  If $l=0$, then for every $\gamma>0$, there exist $r(\gamma)>0$, $n
(\gamma)\in \N^*$ such that for every
$0<r<r(\gamma)$ and  $n>n(\gamma)$: $\sup_{x\in X}\;\E\left(\exp
(r\, s(R_n,x) ) \right)  \leq \exp(n r \gamma)
$.\end{enumerate}
\end{lemme}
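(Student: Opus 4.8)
The plan is to analyse the normalized Laplace transform
$\phi_n(r):=\sup_{x\in X}\E\!\left(\exp\!\big(r\,s(R_n,x)\big)\right)$ for small $r>0$, show that $\log\phi_n(r)$ is subadditive in $n$, and then get the desired exponential decay (resp.\ slow growth) by a second-order Taylor expansion performed on a well-chosen block. First I would use the additive cocycle identity: writing $R_{n+m}=R_n\,R'_m$ with $R'_m:=X_{n+1}\cdots X_{n+m}$ independent of $R_n$ and equidistributed with $R_m$, one has $s(R_{n+m},x)=s(R_n,R'_m\!\cdot x)+s(R'_m,x)$, so conditioning on $R'_m$ and bounding the inner conditional expectation by $\phi_n(r)$ gives $\phi_{n+m}(r)\le\phi_n(r)\,\phi_m(r)$. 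Assumption \eqref{condition} makes $\phi_1(r)<\infty$ for $0<r\le\tau$, hence $\phi_n(r)<\infty$ and $n\mapsto\log\phi_n(r)$ is subadditive, so $\tfrac1n\log\phi_n(r)$ converges to its infimum $\Lambda(r)$. The same conditioning shows $n\mapsto\sup_x\E(s(R_n,x))$ is subadditive, so $l$ is well defined and equals its infimum over $n$.

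The quantitative input I would isolate is a \emph{uniform} second-order expansion. From $e^y\le 1+y+\tfrac12 y^2 e^{|y|}$ and the elementary bound $y^2e^{\frac{\tau}{2}|y|}\le C_\tau\,e^{\tau|y|}$, one gets, for $0<r\le\tau/2$,
$\phi_1(r)\le 1+r\,\sup_x\E(s(X_1,x))+\tfrac{r^2}{2}\,C'$, where $C'=C_\tau\,\E\!\big(e^{\tau\sup_y|s(X_1,y)|}\big)<\infty$ by \eqref{condition} and is independent of the base point. To run the argument at the level of a block $R_{n_1}$, I would first check that $R_{n_1}$ still has a uniform exponential moment: since $\sup_x|s(R_{n_1},x)|\le\sum_{k=1}^{n_1}\sup_y|s(X_k,y)|$, Jensen's inequality applied to $t\mapsto t^{1/n_1}$ gives $\E\!\big(e^{(\tau/n_1)\sup_x|s(R_{n_1},x)|}\big)\le\E\!\big(e^{\tau\sup_y|s(X_1,y)|}\big)<\infty$, so the same expansion yields $\phi_{n_1}(r)\le 1+r\,\sup_x\E(s(R_{n_1},x))+\tfrac{r^2}{2}\,C_{n_1}$ for $0<r\le\tau/(2n_1)$.

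For part a), using $l<0$ I would pick $n_1$ with $\sup_x\E(s(R_{n_1},x))\le n_1 l/2<0$; then for $r$ below an explicit threshold $r_0$ the linear term beats the quadratic one, so $\phi_{n_1}(r)\le 1-\tfrac14 r|l|n_1\le\exp(-\tfrac14 r|l|n_1)$. Writing a general $n=kn_1+j$ with $0\le j<n_1$, submultiplicativity gives $\phi_n(r)\le\phi_{n_1}(r)^k\,\phi_j(r)$, and the finitely many remainder factors $\phi_j(r)$ are bounded by $e^{rD}$ for all $0<r\le r_0$ (again from the expansion), whence $\phi_n(r)\le\exp\!\big(-r(\tfrac14|l|kn_1-D)\big)\le\exp(-nr\lambda)$ with $\lambda:=|l|/8$ once $n\ge n_0$, all constants being independent of $r\in(0,r_0)$. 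Part b) is the same argument with $l=0$: given $\gamma>0$, I would pick $n_1$ with $\sup_x\E(s(R_{n_1},x))\le n_1\gamma/2$, deduce $\phi_{n_1}(r)\le\exp(rn_1\gamma/2)$ for $0<r<r(\gamma)$, and after the block decomposition absorb the bounded remainder $e^{rD}$ into the exponent for $n\ge n(\gamma)$, obtaining $\phi_n(r)\le\exp(nr\gamma)$.

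The hard part will not be any single estimate but the uniformity bookkeeping: one must arrange that the quadratic remainder in the Taylor expansion is dominated by a \emph{single} constant independent both of the base point $x\in X$ and of $r$ in a fixed interval — and this is exactly where the uniform-in-$x$ exponential moment \eqref{condition} enters essentially — and then that the threshold $r_0$ (resp.\ $r(\gamma)$) and the index $n_0$ (resp.\ $n(\gamma)$) can be chosen so the conclusion holds simultaneously for every admissible $r$, rather than merely pointwise in $r$ as a naive appeal to $\tfrac1n\log\phi_n(r)\to\Lambda(r)$ would give.
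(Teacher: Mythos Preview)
The paper does not actually prove this lemma: it is stated with citations to Le Page \cite{lepage} (for case a)) and to \cite{Guivarch3}, \cite{aoun-free} (for case b)), and the authors proceed directly to its application in Corollary~\ref{rapport}. So there is no ``paper's own proof'' to compare against.

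That said, your sketch is correct and is essentially the classical argument one finds in those references. The three ingredients are exactly right: (i) submultiplicativity $\phi_{n+m}(r)\le\phi_n(r)\phi_m(r)$ from the cocycle relation and independence; (ii) subadditivity of $n\mapsto\sup_x\E(s(R_n,x))$, which makes $l$ well defined as an infimum and lets you pick a block $n_1$ with a prescribed first-order behaviour; (iii) a second-order Taylor bound on $\phi_{n_1}(r)$ uniform in $x$, enabled by the uniform exponential moment you manufacture for $R_{n_1}$ via $\sup_x|s(R_{n_1},x)|\le\sum_k\sup_y|s(X_k,y)|$ and Jensen. Your identification of the ``hard part'' --- the bookkeeping ensuring that the quadratic remainder constant is independent of $x$ and of $r$ in a fixed interval, so that the final thresholds $r_0,n_0$ (resp.\ $r(\gamma),n(\gamma)$) work simultaneously for all admissible $r$ --- is exactly the point of the argument, and you handle it correctly. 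Minor cosmetic point: in part b) you should pick $n_1$ with $\sup_x\E(s(R_{n_1},x))\le n_1\gamma/4$ (rather than $n_1\gamma/2$) if you want to land cleanly on $\phi_{n_1}(r)\le\exp(rn_1\gamma/2)$ after absorbing the quadratic term; this is immaterial to the conclusion.
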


\begin{corollaire}(Controlling ratio norms)
Let $\mu$ be a probability measure on $\GL(V)$ such that $\mu$ has an exponential moment and 
$\lambda_1>\lambda_2$.   Then, for every $\epsilon>0$,
there exist $\beta=\beta(\epsilon)>0$, $n_0=n(\epsilon)\in \N^*$ such
that for every $n\geq n_0$ and   every $[x]\in  \PV\setminus [\mathcal{L}_{\mu}]$,

\begin{equation}
 {\p \left( \frac{||L_n x||}{||L_n||\,||x||} \leq
\exp(-n\epsilon) \right)}\leq   
\frac{\exp(-n \beta)}{\delta([x], [\mathcal{L}_{\mu}])}.\label{ratio-sous}\end{equation}
\label{rapport}\end{corollaire}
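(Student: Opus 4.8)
\textbf{Proof strategy for Corollary \ref{rapport}.}

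The plan is to reduce the estimate to Lemma \ref{cocycle} a) applied to a well-chosen cocycle, after first passing to a situation where $\mathcal{L}_{\mu}=\{0\}$. First I would observe that by Lemma \ref{duality} it is equivalent (up to replacing $\mu$ by $\check\mu$, which has the same Lyapunov exponents and for which $\mathcal{U}_{\check\mu}=\mathcal{L}_\mu^0$) to bound the probability that $\|L_n^t f\|/(\|L_n\|\,\|f\|)$ is small for $[f]\in \PS\setminus[\mathcal{L}_{\check\mu}]$; equivalently, writing $\pi\colon G_{\check\mu}\to\GL(V^*/\mathcal{L}_{\check\mu})$ and noting $\mathcal{L}_{\pi(\check\mu)}=\{0\}$ (Remark \ref{V/L}, item 3) with the top Lyapunov exponent of $\pi(\check\mu)$ still equal to $\lambda_1$ and still simple, it suffices to treat the statement when $\mathcal{L}_{\mu}=\{0\}$. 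Actually it is cleaner to work directly: by Lemma \ref{dist_droite_sev} the quantity $\delta([x],[\mathcal{L}_\mu])$ equals $\|\bar x\|/\|x\|$ where $\bar x$ is the image of $x$ in $V/\mathcal{L}_\mu$, and $\|L_n x\|\geq \|\overline{L_n x}\|$; so the left-hand side of \eqref{ratio-sous} is bounded by
\begin{equation}
\p\left(\frac{\|\overline{L_n x}\|}{\|L_n\|\,\|\bar x\|}\leq \frac{\exp(-n\epsilon)}{\delta([x],[\mathcal{L}_\mu])}\right),\nonumber
\end{equation}
and since $\|L_n\|$ and the norm of the induced operator on $V/\mathcal{L}_\mu$ differ by at most a factor growing subexponentially (indeed $\lambda_1(V/\mathcal{L}_\mu)=\lambda_1$ by Corollary \ref{corollaire-triangulaire}), this is essentially the same statement on $V/\mathcal{L}_\mu$, where now $\mathcal{L}=\{0\}$.

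Next, assuming $\mathcal{L}_\mu=\{0\}$, I would fix $[x]\in\PV$ and introduce on $G\times \PV$ the additive cocycle $s(g,[x]) = \log\frac{\|x\|\,\|g\|}{\|gx\|}$ (this is a cocycle for the $\GL(V)$-action on $\PV$ once one checks the usual multiplicativity, using that $\log\|g_1g_2\|$ only differs from $\log\|g_1\|+\log\|g_2\|$ by a controlled amount — more precisely one uses the cocycle $\sigma(g,[x])=\log\frac{\|gx\|}{\|x\|}$ and the norm cocycle and combines them). The hypothesis of an exponential moment of $\mu$ gives the integrability condition \eqref{condition} for some $\tau>0$, since $|\log\|g\|-\log\frac{\|gx\|}{\|x\|}|\leq \log\|g\|+\log\|g^{-1}\|$ uniformly in $[x]$. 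The key point is then that
\begin{equation}
l := \lim_n \frac{1}{n}\sup_{[x]\in\PV}\E\left(\log\frac{\|x\|\,\|L_n\|}{\|L_n x\|}\right) = \lambda_1 - \sup_{[x]}\liminf_n\frac1n\E\log\frac{\|L_n x\|}{\|x\|}\nonumber
\end{equation}
is strictly negative: indeed $\frac1n\E\log\|L_n\|\to\lambda_1$, while by item 2 of Corollary \ref{corollaire-unicite} (together with Remark \ref{liapou_remarque}, using $\mathcal{L}_\mu=\{0\}$) the quantity $\frac1n\E\log\frac{\|L_n x\|}{\|x\|}$ converges to $\lambda_1$ \emph{uniformly} on $\PV$ (which is compact when $\mathcal{L}_\mu=\{0\}$). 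Hence $l=0$; so in fact we are in case b) of Lemma \ref{cocycle}, not case a). Then for every $\gamma>0$ small (to be chosen $\gamma<\epsilon$) there are $r=r(\gamma)>0$, $n_1(\gamma)$ with $\sup_{[x]}\E\bigl(\exp(r\,s(L_n,[x]))\bigr)\leq \exp(nr\gamma)$ for $n\geq n_1$.

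Finally I would apply Markov's inequality: for $n\geq n_1$,
\begin{equation}
\p\left(\frac{\|L_n x\|}{\|L_n\|\,\|x\|}\leq e^{-n\epsilon}\right)
= \p\bigl(s(L_n,[x])\geq n\epsilon\bigr)
\leq e^{-nr\epsilon}\,\E\bigl(e^{r\,s(L_n,[x])}\bigr)
\leq e^{-nr\epsilon}e^{nr\gamma}
= e^{-nr(\epsilon-\gamma)},\nonumber
\end{equation}
which gives the bound with $\beta=r(\epsilon-\gamma)>0$ and no $\delta([x],[\mathcal{L}_\mu])$ in the denominator (here it equals $1$ since $\mathcal{L}_\mu=\{0\}$). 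For the general case one unwinds the reduction of the first paragraph: the passage to $V/\mathcal{L}_\mu$ replaces $\|x\|$ by $\|\bar x\|=\|x\|\,\delta([x],[\mathcal{L}_\mu])$ and introduces exactly the factor $\delta([x],[\mathcal{L}_\mu])^{-1}$ in front of $e^{-n\beta}$, after absorbing the subexponential discrepancy between $\|L_n\|$ and the operator norm on the quotient into a slightly smaller $\beta$ and a larger $n_0$. The main obstacle is establishing $l=0$, i.e.\ the \emph{uniform} convergence $\frac1n\E\log\frac{\|L_n x\|}{\|x\|}\to\lambda_1$ on all of $\PV$ in the case $\mathcal{L}_\mu=\{0\}$; this is precisely item 2 of Corollary \ref{corollaire-unicite} combined with the compactness of $\PV\setminus[\mathcal{L}_\mu]=\PV$, and the uniform integrability argument given there. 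Everything else is a routine application of the cocycle lemma and Markov's inequality.
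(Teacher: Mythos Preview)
Your overall strategy is the paper's --- pass to the quotient $V/\mathcal{L}_\mu$ using Lemma \ref{dist_droite_sev}, show the relevant drift $l$ equals $0$ via Corollary \ref{corollaire-unicite}/Remark \ref{liapou_remarque}, then apply case b) of Lemma \ref{cocycle} with $\gamma<\epsilon$ and conclude by Markov. The genuine gap is that your function $s(g,[x])=\log\frac{\|g\|\,\|x\|}{\|gx\|}$ is \emph{not} an additive cocycle: while $\sigma(g,[x])=\log\frac{\|gx\|}{\|x\|}$ is one, $g\mapsto\log\|g\|$ is only subadditive, and the defect $\log\|g_1g_2\|-\log\|g_1\|-\log\|g_2\|$ is unbounded below (take $g_2=g_1^{-1}$). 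You flag this (``only differs \ldots\ by a controlled amount'') but do not resolve it, and Lemma \ref{cocycle} genuinely requires the cocycle identity in its proof. A related loose end is your ``subexponential discrepancy between $\|L_n\|$ and the operator norm on the quotient'': this holds almost surely, but to feed it into a probability bound with exponential tails you would again need a large-deviation estimate that you have not established.

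The paper's fix handles both issues at once. Rather than reducing fully to $V/\mathcal{L}_\mu$, one keeps $\|L_n\|$ as the norm on $V$ and replaces it by $\|L_n e_i\|$ via the elementary bound $\|g\|\leq d\max_i\|ge_i\|$ (costing only a harmless factor $d^{r+1}$). One then works with the \emph{genuine} cocycle
\[
s\bigl(g,([\bar x],[y])\bigr)=\log\frac{\|gy\|\,\|\bar x\|}{\|\pi(g)\bar x\|\,\|y\|}
\]
on the product space $Z=\textrm{P}(V/\mathcal{L}_\mu)\times\PV$, which is a difference of two honest norm cocycles. The verification $l=0$ for this $s$ uses exactly the inputs you cite (Remark \ref{liapou_remarque}, parts 1 and 2), and from that point your Markov-inequality conclusion goes through verbatim, with the factor $\delta([x],[\mathcal{L}_\mu])^{-r}\leq\delta([x],[\mathcal{L}_\mu])^{-1}$ appearing exactly as you describe.
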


\begin{proof}
Endow $V/\LL_{\mu}$ with the quotient norm. Let $(e_1, \cdots, e_d)$ be an orthonormal    basis of $V$. For every $x\in V$, denote by $\overline{x}$ its projection on
the quotient vector space $V/\mathcal{L}_{\mu}$. Let $\pi$ be the morphism
action of $G_{\mu}$ on $V/\mathcal{L}_{\mu}$. Recall that  
$\delta([x],[\mathcal{L}_{\mu}])=\frac{||\overline{x}||}{||x||}$  (Lemma \ref{dist_droite_sev})   and $||gx||\geq ||\pi(g)\overline{x}||$ for every $g\in \GL(V)$ and $x\in V\setminus \{0\}$. 
Fix now $[x]\in \PV\setminus [\LL_{\mu}]$ and    $\epsilon>0$. Then for every $r>0$ and every $n\in \N^*$, 
\begin{eqnarray}
 \p \left( \frac{||L_n x||}{||L_n||\,||x||} \leq \exp(-n\epsilon) \right)& = & \p \left[ \left(\frac{||L_n  ||\,||x||}{||L_n  x ||}\right)^{r} \geq 
 \exp(n\epsilon r) \right]\nonumber\\
 & \leq & \exp(-n\epsilon r) \,\E \left[ \left(\frac{||L_n||\,||{x}||}{||L_n {x}||}\right)^{r} \right]\nonumber\\
   & \leq &  \frac{ \exp(-n\epsilon r)}{ \delta^r([x],[\mathcal{L}_{\mu}]) }\E \left[ \left(\frac{||L_n||\,||\overline{x}||}{||\pi(L_n) \overline{x}||}\right)^{r} \right]\nonumber.\end{eqnarray}
   Since $||g||\leq d \max\{||g e_i||; i=1, \cdots, d\}$ for every $g\in \GL(V)$ and since the expectation of the maximum of $d$ random real variables is less than $d$ times the maximum of the expectations, we get that:

\begin{eqnarray}
 \p \left( \frac{||L_n x||}{||L_n||\,||x||} \leq \exp(-n\epsilon) \right)&  \leq &   \frac{d^{r+1} \exp(-n \epsilon r)}{\delta^r([x],[\mathcal{L}_{\mu}])}\max_{i=1, \cdots , d}{ \E \left[ \left(\frac{||L_n e_i||\,||\overline{x}||}{||\pi(L_n) \overline{x}||}\right)^{r} \right]}\nonumber\\
      & \leq &   \frac{d^{r+1} \exp(-n \epsilon r)}{\delta^r([x],[\mathcal{L}_{\mu}])} \sup_{z \in \textrm{P}(V/\mathcal{L}_{\mu}) \times \PV} {\E \left[\exp \left(r s (R_n, z)\right)\right]}, 
      \label{ineq}\end{eqnarray}

\noindent where $s$ is the function defined on $G_{\mu}\times \left(\textrm{P}(V/\mathcal{L}_{\mu})\times \PV\right)$  by $$s\left( g,([\overline{x}],[y])\right):=\log
 \frac{||gy|| |\, ||\overline{x}||}{||\pi(g)\overline{x}||\,||y||}.$$  
 Now let $G_{\mu}$ act naturally on the product space
$Z:=\textrm{P}(V/\mathcal{L}_{\mu})\times \PV$. It is  immediate to see that
 $s: G_{\mu}\times Z \longrightarrow \R$ is a cocycle. Since $\mu$ has an exponential moment, then
 condition
 \eqref{condition} of Lemma \ref{cocycle} is satisfied.
With the notations of the aforementioned lemma, let us show that
$l=0$.
 Since $ \mathcal{L}_{\pi(\mu)}=\{0\}$ (see Remark \ref{V/L}) and $\lambda_1\left(\pi(\mu)\right)=\lambda_1(\mu)=\lambda_1$, then Corollary
 \ref{corollaire-unicite} (and Remark \ref{liapou_remarque} part 2.) shows that:
  $$\inf_{[\overline{x}]\in \textrm{P}(V/\mathcal{L}_{\mu})}
 \frac{1}{n} \E\left(\log\frac{||\pi(L_n)\overline{x}||}{||\overline{x}||}\right)
\underset{n\rightarrow +\infty}{\longrightarrow} \lambda_1.$$ Moreover, by Remark \ref{liapou_remarque} part 1.,     $\underset{[y]\in
\PV}{\sup}\frac{1}{n}\E\left(\log\frac{||L_ny||}{||y||}\right) \underset{n\rightarrow +\infty}{\longrightarrow}  \lambda_1$.
Hence,
$$l:=\underset {n\rightarrow \infty}{\lim} \;\frac{1}{n}\sup_{z\in
Z}\;{\E(s(R_n,z))} = 0.$$
  Applying the cocycle lemma for $\gamma:=\epsilon/2$ gives some $r=r(\epsilon)>0$, $n=n(\epsilon)\in \N^*$ such that for every   $0<r<r(\epsilon)$ and every 
$n>n(\epsilon)$,

\begin{equation} \sup_{z \in \textrm{P}(V/\mathcal{L}_{\mu}) \times \PV} {\E \left[\exp \left(r s (R_n, z)\right)\right]} \leq \exp(n r \epsilon/2).\label{interm}\end{equation}
 
\noindent Without loss of generality, one can assume $0<r<1$. Since the Fubini-Study metric is bounded by one, we obtain the desired estimate by combining   \eqref{interm} and \eqref{ineq}.

\end{proof}
 
 \subsection{Exponential convergence in direction}
 In this section, we prove Theorem \ref{direction1} stated above. 
  \begin{proof}
\textbf{Step 1}: First, we check  that it is enough to show the
following statement: there exists $\beta>0$, $n_0\in \N^*$ such that
for every $[x]\in \PV \setminus [\mathcal{L}_{\mu}]$, and every
$n\geq n_0$,
\begin{equation} {\E \left( \delta(R_n[x], R_{n+1}[x] )\right)\leq \frac{\exp(-n \beta)}{\delta([x], [\mathcal{L}_{\mu}])}}. \label{est2} \end{equation}
Indeed  \eqref{est2} would imply that for every $x\not\in \mathcal{L}_{\mu}$,  $(R_n[x])_{n\in \N^*}$ is almost surely a Cauchy sequence in the complete space $\PV$. Hence, it converges to a random variable $[Z_x]\in \PV$. By item 3.~ of Corollary \ref{corollaire-unicite}, $[Z_x]=[Z]$ is almost surely independent of $x$ and has law $\nu$. Now  \eqref{est1} would follow immediately from  \eqref{est2} by applying Fatou's lemma and the triangular inequality. \\

  \noindent\textbf{Step 2}: 
\noindent  
 Next, we give an upper bound of the left
 side of estimate  \eqref{est2}. We denote by $\pi:
G_{\mu}\longrightarrow \GL(V/\mathcal{L}_{\mu})$ the morphism of the projection of
$G_{\mu}$ onto $V/\mathcal{L}_{\mu}$.  Let $[x]\in \PV \setminus [\mathcal{L}_{\mu}]$. 
For every
$n\in \N^*$, the following almost sure estimates hold: 
  \begin{eqnarray}
   \delta(R_n[x], R_{n+1}[x] )&=& \delta (R_n[x], R_n X_{n+1}[x])\nonumber\\
   &= &\frac{||\bigwedge^2 R_n (x \wedge X_{n+1} x)||}{||R_n x||\, ||R_n X_{n+1} x||}\nonumber\\
   &\leq &  \frac{||\bigwedge^2 R_n (x \wedge X_{n+1} x)||}{||\pi(R_n) \overline{x}|| \, ||\pi(R_n)  \pi( X_{n+1})  \overline{x}||}\label{est3}\end{eqnarray}
We let $G_{\mu}$ act naturally on  
$Z:=\textrm{P}(\bigwedge^2 V) \times \textrm{P}(V/\mathcal{L}_{\mu})^2$ and set, for every
$z=([a \wedge b], [\overline{c}], [\overline{d}])\in Z$ and every $g\in
G_{\mu}$,

 $$s(g,z):= \log \frac{||\bigwedge^2 g (a \wedge b)||\,
||\overline{c}||\,||\overline{d}||}{||a\wedge b||\,
||\pi(g)\overline{c}||\, ||\pi(g)\overline{d}||}.$$ 
Hence if $Y_n$ denotes the following random variable in $Z$, $Y_n:= \left([x \wedge X_{n+1} x], [\overline{x}], [\pi(X_{n+1})\overline{x}] \right)$, \eqref{est3} becomes, 

\begin{equation}\delta(R_n[x], R_{n+1}[x] ) \leq \exp\left(s(R_n, Y_n)\right)\,\times\, \frac{||x \wedge X_{n+1} x||}{||\overline{x}||\,||\pi(X_{n+1}) \overline{x}||}.\label{est3b}\end{equation}
\noindent By combining
 \eqref{est3b}, the equality $\delta([x], \mathcal{L}_{\mu}) = \frac{
||\overline{x}||}{||x||}$ and
the inequalities $||x \wedge y||\leq ||x||\,||y||$, $||gx||\geq
\frac{||x||}{||g^{-1}||}$, $||\pi(g)||\leq ||g||$ true for every
$x,y\in V\setminus \{0\}$ and $g\in G_{\mu}$, we obtain the following almost sure inequality ($[x]$ is always fixed): 
\begin{equation}
 {  \delta(R_n[x], R_{n+1}[x] ) }\leq \frac{1}{\delta([x], [\mathcal{L}_{\mu}])^2}
||X_{n+1}||\,||X_{n+1}^{-1}||\,    {\exp\left( s (R_n,
Y_n)\right)}.  \label{est4} \end{equation} 

\noindent Using Cauchy-Schwarz inequality and the   fact that $R_n=X_1 \cdots X_n$ and $Y_n$ are independent random variables, we deduce that for every $\alpha>0$ (to be chosen in Step 3 below), 

\begin{equation}
\E\left(  {  \delta^{ {\alpha}/{2}}(R_n[x], R_{n+1}[x] ) }\right)\leq \frac{1}{\delta([x],[\mathcal{L}_{\mu}])^{\alpha}}\,
\sqrt{\E\left(||X_{n+1}||^{ \alpha}\,||X_{n+1}^{-1}||^{ \alpha}\right)}\,   \sqrt{\underset{z\in Z}{\sup}\, \E\left({\exp\left(  \alpha s (R_n,
z)\right)}\right)}.  \label{est5} \end{equation} 

\noindent \textbf{Step 3}: Finally,
we check that we are in the case b) of  the cocycle lemma (Lemma
\ref{cocycle}). The   map $s: G_{\mu}\times Z \longrightarrow
\R$ is clearly a cocycle on $G_{\mu}\times Z$. Since $\mu$ has an
exponential moment, condition \eqref{condition} is fulfilled.
Moreover the representation $\pi$ satisfies $ \mathcal{L}_{\pi(\mu)}=0$. Hence, by
Corollary \ref{corollaire-unicite},  $\inf_{[\overline{c}]\in
\textrm{P}(V/\mathcal{L}_{\mu})}\, {\frac{||\pi(R_n)
\overline{c}||}{||\overline{c}||}} \underset{n\rightarrow +
\infty}{\longrightarrow} \lambda_1$. Consequently, $l \leq
\lambda_2-\lambda_1<0$. The cocycle lemma gives then $\alpha_1>0$ and $\beta>0$ such
that for every $\alpha\in [0,\alpha_1)$ and every large $n$,
\begin{equation}
\sup_{z\in Z} {\E \left(  \exp\left( \alpha s (R_n, z)\right)
\right)} \leq \exp(-\beta n).\label{est6}
\end{equation}
Since $\mu$ has an exponential moment, there exists $\alpha_2>0$
such that for every $\alpha\in [0,\alpha_2)$, $\E \left(
||X_{n+1}||^{\alpha}\,||X_{n+1}^{-1}||^{\alpha}\right)<+\infty$.
Apply now  \eqref{est5} for  $\alpha=\min\{\alpha_1,\alpha_2,1\}$. Since  the  Fubini-Study metric $\delta$ is bounded  by one,  we obtain   the desired estimate  \eqref{est2}. Theorem \ref{direction1} is then proved.

  \end{proof}

\subsection{Proof of the regularity of the stationary measure}
We begin with the following deterministic lemma. 
\begin{lemme}
Let ${\kk}$ be a local field, $V$ a vector space over ${\kk}$ of dimension $d\geq 2$ endowed with the norm described in 
Section \ref{fubinisection}, $L$ a subspace of $V$   and $F$   an orthonormal basis of an orthogonal supplement to $L$ in $V$  (see Section \ref{preliminaries} when $\kk$ is non-Archimedean). 
 Let $C({\kk},d)=\frac{1}{\sqrt{d+1}}$ if ${\kk}$ is Archimedean and $C({\kk},d)=1$ otherwise. Then for any $g\in \GL(V)$ such that $g(L)=L$ and for any $f\in V^*\setminus\{0\}$,   there exists $x\in F$ such that:

$$\delta\left(g[x],[\mathrm{Ker}(f)]\right) \geq C  \frac{||g^t
f||}{||g^t||\,||f||} \,\mathds{1}_{C  \frac{||g^t
f||}{||g^t||\,||f||} > \frac{||g_{| L}||}{||g||}}.$$
  \label{distance_hyperplan}\end{lemme}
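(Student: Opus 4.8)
The statement compares two quantities: the distance from $g[x]$ to the projective hyperplane $[\mathrm{Ker}(f)]$, and the ratio $\|g^t f\|/(\|g^t\|\,\|f\|)$, which measures how much $g^t$ (equivalently $g$) "sees" the direction $f$. The indicator only forces the inequality to carry content when this ratio strictly exceeds $\|g_{|L}\|/\|g\|$, i.e.\ when $g$ expands in the direction of $f$ more than it does on $L$; on $L$ the map $g$ may collapse things onto $\mathrm{Ker}(f)$, so we must choose $x$ \emph{outside} $L$ to be safe. I would first recall the formula for the distance to a hyperplane: using the Fubini--Study metric and Lemma~\ref{dist_droite_sev} (or directly the definition $\delta([y],[H]) = |f(y)|/(\|f\|\,\|y\|)$ for $H=\mathrm{Ker}(f)$, which is the standard identity valid in both the Archimedean and non-Archimedean cases with our choice of norms). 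So the goal becomes: find $x\in F$ with
$$\frac{|f(gx)|}{\|f\|\,\|gx\|} \;\geq\; C\,\frac{\|g^t f\|}{\|g^t\|\,\|f\|}$$
whenever the right-hand side exceeds $\|g_{|L}\|/\|g\|$.

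The key step is to produce, from the orthonormal basis $F = (v_{r+1},\dots,v_d)$ of an orthogonal complement $\tilde L$ of $L$, a single basis vector $x=v_j$ on which $|f(gx)| = |(g^t f)(x)|$ is a substantial fraction of $\|g^t f\|$. Write $g^t f = h$; this is a linear functional on $V$, and $\|h\| = \|g^t f\|$. Decompose $h$ according to the orthogonal splitting $V = L \oplus \tilde L$: restricting $h$ to $L$ gives $h_{|L}$ with $|h_{|L}(y)| = |h(y)|$ for $y\in L$. The point is that $\|h\|$ is controlled (up to the constant $C(\kk,d)$) by $\max(\|h_{|L}\|, \max_j |h(v_j)|)$. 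Now the hypothesis that the ratio exceeds $\|g_{|L}\|/\|g\|$ will be used to rule out the possibility that $\|h\|$ is dominated by the $L$-part: indeed $\|h_{|L}\| = \sup_{y\in L, \|y\|=1}|f(gy)| \leq \|f\|\,\|g_{|L}\|$, so if $\|h\| = \|g^t f\| > \|f\|\,\|g_{|L}\|\cdot(\|g^t\|/\|g\|) $... I need to match the normalizations carefully here, using $\|g^t\| = \|g\|$ for the operator norm (valid since transposition is an isometry for our norms, or at least $\|g^t\|=\|g\|$ in the Archimedean case and needs the orthogonality setup in the non-Archimedean case). Granting $\|g^t\|=\|g\|$, the inequality $C\|g^tf\|/(\|g^t\|\|f\|) > \|g_{|L}\|/\|g\|$ rearranges to $C\|h\| > \|g_{|L}\|\,\|f\| \geq \|h_{|L}\|$, forcing the "fiber part" $\max_j |h(v_j)|$ to be comparable to $\|h\|$; more precisely, by the orthogonal-decomposition estimate for functionals (in the Archimedean case $\|h\|^2 = \|h_{|L}\|^2 + \sum_j |h(v_j)|^2 \leq \|h_{|L}\|^2 + (d+1)\max_j|h(v_j)|^2$ gives the constant $1/\sqrt{d+1}$; in the non-Archimedean case the ultrametric identity $\|h\| = \max(\|h_{|L}\|,\max_j|h(v_j)|)$ gives constant $1$), we get $\max_j |h(v_j)| \geq C\|h\|$ once the $L$-part is dominated. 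Pick $x = v_j$ realizing this maximum.

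Finally I would assemble the bound: with this $x = v_j \in F$, $\|x\|=1$, and $\|gx\| \leq \|g\| = \|g^t\|$, so
$$\delta(g[x],[\mathrm{Ker}(f)]) = \frac{|f(gx)|}{\|f\|\,\|gx\|} = \frac{|h(v_j)|}{\|f\|\,\|gx\|} \geq \frac{C\|h\|}{\|f\|\,\|g^t\|} = C\,\frac{\|g^t f\|}{\|g^t\|\,\|f\|},$$
which is exactly the claimed inequality (and when the indicator is $0$ there is nothing to prove since $\delta \geq 0$). The main obstacle I anticipate is bookkeeping the constant $C(\kk,d)$ correctly and handling the non-Archimedean case with the right orthogonality lemmas from Section~\ref{nonarchisection} --- in particular making sure the decomposition $\|h\| = \max(\|h_{|L}\|, \max_j |h(v_j)|)$ really holds for the dual norm when $F$ is an orthonormal basis of an orthogonal complement, which should follow from Lemma~\ref{Gram-Schmidt} and Remark~\ref{quotient_norm} applied to $V^*$ --- and verifying the identity $\|g^t\| = \|g\|$ in that setting. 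The rest is a short computation.
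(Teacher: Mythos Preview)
Your proposal is correct and follows essentially the same approach as the paper: both arguments use the identity $\delta(g[x],[\mathrm{Ker}(f)])=|f(gx)|/(\|f\|\,\|gx\|)$, decompose $h=g^tf$ along the orthogonal splitting $V=L\oplus L^\perp$, use the indicator hypothesis (together with $\|h_{|L}\|\le \|f\|\,\|g_{|L}\|$ and $\|g^t\|=\|g\|$) to rule out the $L$-part dominating, and then extract a basis vector $v_j\in F$ with $|h(v_j)|$ comparable to $\|h\|$, finishing via $\|gv_j\|\le\|g\|$. The only slip is in your Archimedean constant bookkeeping: the sum $\sum_j|h(v_j)|^2$ has at most $d$ terms (not $d+1$), and it is this bound by $d\max_j|h(v_j)|^2$ that, after solving $C=\sqrt{(1-C^2)/d}$, yields exactly $C=1/\sqrt{d+1}$ --- which you already flagged as the point needing care.
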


\begin{proof}
Let $d'=\dim(L)$,  $L^{\perp}$ an orthogonal of $L$ in $V$,   $B=\{e_1, \cdots, e_d\}$ an orthonormal basis of $V$ such that $B':=(e_1, \cdots, e_{d'})$ is a basis of
$L$ and $F=B\setminus B'$ a basis for  $L^{\perp}$. 
 Assume first that ${\kk}$ is non-Archimedean. Then
  the following relation is true for every $g\in \GL(V)$,

\begin{eqnarray}
\frac{ ||g^t f ||}{||g^t||\,||f||} &=& \max_{1\leq i\leq d}{\frac{|f(ge_i)|}{||g^t|| \,||f||}}\nonumber\\
&= & \max\Big\{ \max_{1\leq i \leq d'}{\frac{|f(ge_i)
 |}{||g|| \,||f||}}\,,\, \max_{d'+1\leq i \leq
 d}{\frac{|f(ge_i)|}{||g|| \,||f||} }\Big\}\label{note1}\\
&\leq & \max \Big\{ \frac{||g_{| L}||}{||g||}\,,\,
\max_{d'+1\leq i \leq
 d}{\delta\left(g[e_i],[\textrm{Ker}(f)]\right)}\Big\}\label{note2}
\end{eqnarray}

\noindent Equality \eqref{note1}  holds because  $||g^t||=||g||$
and inequality  \eqref{note2} is true because  for any $x\in V$,
$\frac{|f(x)|}{||f||}\leq ||x||$ and
$$\delta\left(g [x],[\textrm{Ker}(f)]\right)=
\frac{|f(gx)|}{||gx||\,||f||}\geq \frac{|f(gx)|}{||g||\,||f||\,||x||}.$$
Estimate  \eqref{note2}  shows that the lemma is true for $C=1$.
When  ${\kk}$ is  Archimedean, estimate  \eqref{note2} is replaced by:

$$\left(\frac{ ||g^t f ||}{||g^t||\,||f||} \right)^2 \leq
\left(\frac{||g_{| L}||}{||g||} \right)^2+
\sum_{i=d'+1}^d{\delta\left(g[e_i],[\textrm{Ker}(f)]\right)^2}.$$
Hence, for any $C<1$,   $$C \frac{||g^t f||}{||g^t||\,||f||}\geq   \frac{||g_{|L}||}{||g||} \,\Longrightarrow\, \underset{d'\leq i\leq d}{\max}{\delta\left(g[e_i], [\mathrm{Ker}(f)]\right)}\geq \sqrt{\frac{1-C ^{2}}{ d}} \frac{||g^t f||}{||g^t||\,||f||}.$$
 The constant $C({\kk},d):=\frac{1}{\sqrt{d+1
}}$ solves the equation  $C=\sqrt{\frac{1-C^{2}}{d}}$. \end{proof}

\begin{proof}[Proof of Theorem \ref{regularite}:]

      Let $[Z]\in \PV$ be the random variable given by Theorem \ref{direction1}. 
 Let $ f \in V^* \setminus \mathcal{L}_{\check{\mu}}$ and $H=\mathrm{Ker}(f)$.   
Since the Lyapunov exponent of the restriction to $\mathcal{L}_{\mu}$ is less than $\lambda_1$, one can show using the same techniques as the proof of   Corollary \ref{rapport}  that   there exists $\beta_1>0$ such that  $\frac{||{R_n}_{| L}||}{||R_n||} \leq \exp(-\beta_1 n)$, with probability tending to one exponentially fast.  
  Corollary \ref{rapport} applied to the
measure $\check{\mu}$, together with $\delta\leq 1$, show   then that  for any $C>0$ there exists $\beta_2>0$ and $n_0\in \N$ (both independent of $f$) such that for all $n\geq n_0$, 
\begin{equation}\p\left(  C \frac{||R_n^t f||}{||R_n^t||\,||f||} \geq
\frac{||{R_n}_{| L}||}{||R_n||}\right)\geq 1-\frac{\exp(-\beta_2 n)}{\delta([f], [\mathcal{L}_{\check{\mu}}])}.\label{oulbaa}\end{equation}
Take now $C$ to be the constant $C({\kk},d)$ given by Lemma  \ref{distance_hyperplan}. The aforementioned lemma 
  together with estimate  \eqref{oulbaa} imply that   for   every $n\geq n_0$: 

$$\p\left(\exists x\in F;
\delta\left(R_n[x],[H]\right) \geq C \frac{||R_n^t
f||}{||R_n^t||\,||f||}\right)\geq 1 - \frac{\exp(-\beta_2 n)}{\delta([f], [\mathcal{L}_{\check{\mu}}])}.$$

\noindent Hence, for every $\epsilon>0$,

$$
\p\left(\delta\left([Z], [H]\right)\leq \exp(-\epsilon n)\right)  \leq \frac{\exp(-\beta_2 n)}{\delta([f], [\mathcal{L}_{\check{\mu}}])} +
\sum_{x\in F} {\p\left( C \frac{||R_n^t f||}{||R_n^t||\,||f||} \leq
\exp(-\epsilon n)+ \delta(R_n[x],[Z])\right)}.$$

\noindent 
But by Theorem \ref{direction1} and   Markov's inequality,  one deduces that  there exist  $\beta_3=\beta_3(\epsilon),\beta_4=\beta_4(\epsilon)>0$ such
that for all $n$ large enough, 

$$
\p\left(\delta\left([Z],[H]\right)\leq \exp(-\epsilon n) \right)  \leq \frac {\exp(-\beta_3 n) }{\delta([f], [\mathcal{L}_{\check{\mu}}])}+
\sum_{x\in F} {\p\left( \frac{||R_n^t f||}{||R_n^t||\,||f||} \leq
\exp(-\beta_4 n)\right)}.$$

\noindent Using Corollary \ref{rapport} and the fact that $[Z]$ has law $\nu$, we deduce finally that for every $\epsilon>0$ there exists $\beta=\beta(\epsilon)>0$ and $n_0=n_0(\epsilon)\in \N$  (both independent of $f\in V^*\setminus [\LL_{\check{\mu}}]$) such that for every $n\geq n_0$ , 
 
 \begin{equation}\nu\left\{[x]\in \PV; \delta([x],[H]) \leq \exp(-\epsilon n) \right\} \leq  \frac{\exp(-\beta n)}{ {\delta([f], [\mathcal{L}_{\check{\mu}}])}}.\label{energy}\end{equation}
  \noindent Let now $A_n:=\{[x]\in \PV; \delta([x],[H])\in ( e^{-(n+1)}, e^{-n }] \}$, $n\in \N$. On the one hand, $(A_n)_{n\in \N}$ cover $\PV\setminus [H]$. 
On the other hand, $\nu( [H])=\nu(  [H] \cap [\mathcal{U}_{\mu}])=0$ because $\nu$ is not degenerate on $[\mathcal{U}_{\mu}]$ (Theorem \ref{existence_uncite}) and  $H\not\subset \mathcal{U}_{\mu}$ (as   $f\not\in \LL_{\check{\mu}}$, see Lemma \ref{duality}). Estimate  \eqref{energy} applied for $\epsilon =1$ gives then 
some $\beta>0$ and some $n_0\in \N$ (both independent on $f$)
 such that for any $\alpha>0$, 
\begin{eqnarray}
\int_{\PV}{\delta^{-\alpha}([x],[H])\,d\nu([x]) }&=&  \sum_{n=0}^{n_0-1}{\int_{A_{n}}\delta^{-\alpha}([x],[H])\,d\nu([x]) }+\sum_{n=n_0}^{+\infty}{\int_{A_{n}}{\delta^{-\alpha}([x],[H])\,d\nu([x]) }}\nonumber\\
&\leq & n_0 \exp(\alpha n_0) +\frac{1}{\delta([f], [\mathcal{L}_{\check{\mu}}])} \sum_{n=n_0}^{+\infty}{\exp( \alpha (n+1)  ) \exp(-  \beta n )}\nonumber\\
&\leq & \frac{1}{\delta([f], [\mathcal{L}_{\check{\mu}}])}
\left( n_0 \exp(\alpha n_0 ) +\exp(\alpha ) \sum_{n=n_0}^{+\infty} {\exp\left(- (\beta  -\alpha) n  \right)}\right)\nonumber
\end{eqnarray}
Hence ${{\delta([f], [\mathcal{L}_{\check{\mu}}])}}\int_{\PV}{\delta^{-\alpha}([x],[H])\,d\nu([x]) }$ is finite (and independent of $[f]\in \PS\setminus [\mathcal{L}_{\check{\mu}}]$) as soon 
  as $0<\alpha< \beta$. 
\end{proof}

\vspace{1cm}

\noindent Finally, we show how    to conclude easily from Theorem \ref{regularite}  the proof of some results stated in Section \ref{section_regularity}. \\
Proposition \ref{hitting}  concerning the exponential decay of the probability of hitting a hyperplane follows immediately from Theorem \ref{regularite} and Theorem \ref{direction1} proved above.  \\
Also, Corollary \ref{dimaffine} concerning the positivity of the Hausdorff dimension of the unique stationary measure in the affine space in the context of affine groups in the contracting case follows also from Theorem \ref{regularite} and  Example 2 of Section \ref{example_guiding}.

\end{document}